\date{13 November 2010}
\numberwithin{equation}{section}
\theoremstyle{plain}
\newtheorem{thm}{Theorem}[section]
\newtheorem{cor}[thm]{Corollary}
\newtheorem{prop}[thm]{Proposition}
\newtheorem{lem}[thm]{Lemma}
\theoremstyle{definition}
\newtheorem{notation}[thm]{Notation}
\def\R{\mathbb{R}}
\def\N{\mathbb{N}}
\def\P{\mathbb{P}}
\def\E{\mathbb{E}}
\def\I{\infty}
\newcommand{\be}{\begin{equation}}
\newcommand{\ee}{\end{equation}}
\newcommand{\bea}{\begin{eqnarray}}
\newcommand{\eea}{\end{eqnarray}}
\newcommand{\beann}{\begin{eqnarray*}}
\newcommand{\eeann}{\end{eqnarray*}}
\newcommand{\benn}{\begin{equation*}}
\newcommand{\eenn}{\end{equation*}}
\def\ra{\rightarrow}
\def\I{\infty}
\def\writefig#1 #2 #3 {\rlap{\kern #1 truecm
\raise #2 truecm \hbox{\protect{\small #3}}}}
\DeclareMathSymbol{\leqsymb}{\mathalpha}{AMSa}{"36}
\def\leqs{\mathrel\leqsymb}
\DeclareMathSymbol{\geqsymb}{\mathalpha}{AMSa}{"3E}
\def\geqs{\mathrel\geqsymb}
\DeclareMathOperator{\dd}{d}            % integration measure
\DeclareMathOperator{\e}{e}             % Napier number
\DeclareMathOperator{\icx}{i}           % imaginary number
\DeclareMathOperator{\Tr}{Tr}           % trace
\DeclareMathOperator{\Id}{Id}           % identity matrix
\DeclareMathOperator{\Cov}{Cov}         % covariance matrix
\DeclareMathOperator{\re}{Re}		% real part
\DeclareMathOperator{\im}{Im}		% imaginary part
\def\const{\text{\it const}}
\def\6#1{\dd\!#1}                       % integration measure
\def\abs#1{\lvert#1\rvert}                      % absolute value
\def\norm#1{\left\|#1\right\|}          % norm
\def\Order#1{{\mathcal O}(#1)}                  % Order relation
\def\pscal#1#2{\langle #1,#2 \rangle}	% scalar product 
\newcommand{\cB}{{\mathcal B}}  % calligraphic B
\newcommand{\cD}{{\mathcal D}}  % calligraphic D
\newcommand{\cO}{{\mathcal O}}  % calligraphic O
\newcommand{\cR}{{\mathcal R}}  % calligraphic R
\newcommand{\cS}{{\mathcal S}}  % calligraphic S
\def\Vbar{\overline V}
\def\vbar{\bar v}
\begin{document}
 
\title{Hunting French Ducks in a Noisy Environment}
\author{Nils Berglund\thanks{MAPMO,
CNRS -- UMR 6628, Universit\'{e} d'Orl\'{e}ans, F\'{e}d\'{e}ration Denis
Poisson -- FR 2964, B.P. 6759, 45067 Orl\'{e}ans Cedex 2, France.} 
\thanks{Supported by ANR project MANDy, Mathematical Analysis of Neuronal
Dynamics, ANR-09-BLAN-0008-01.}
\and Barbara Gentz\thanks{Faculty of Mathematics, University of 
Bielefeld, P.O. Box 10 01 31, 33501 Bielefeld,
Germany.} 
\thanks{Supported by the DFG-funded CRC 701, Spectral Structures and Topological
Methods in Mathematics, at the University of Bielefeld.}
\and Christian Kuehn\thanks{Max Planck Institute for Physics of Complex Systems,
Noethnitzer Str.\ 38, 01187 Dresden, Germany.}}

\maketitle

\begin{abstract}
We consider the effect of Gaussian white noise on fast--slow dynamical systems
with one fast and two slow variables, containing a folded-node singularity. In
the absence of noise, these systems are known to display mixed-mode
oscillations, consisting of alternating large- and small-amplitude oscillations.
We quantify the effect of noise and obtain critical noise intensities above
which the small-amplitude oscillations become hidden by fluctuations.
Furthermore we prove that the noise can cause sample paths to jump away from
so-called canard solutions with high probability before deterministic orbits do.
This early-jump mechanism can drastically influence the local and global
dynamics of the system by changing the mixed-mode patterns.
\end{abstract}

\noindent 
{\it Mathematical Subject Classification.\/} 
37H20, 34E17 (primary), 60H10 (secondary)

\noindent 
{\it Keywords and phrases.\/} 
Singular perturbation, 
fast--slow system, 
invariant manifold, 
dynamic bifurcation, 
folded node,
canard, 
mixed-mode oscillation, 
random dynamical system, 
first-exit time,
concentration of sample paths.

%%%%%%%%%%%%%%%%%%%%%%%%%%%%%%%%%%%%%%%%%%%%%%%%%%%%%%%%
\section{Introduction}
\label{sec:intro}

Our main focus of study are stochastic dynamical systems with multiple time
scales. In particular, we are going to study a special bifurcation (\lq\lq a
folded node\rq\rq) in a three-dimensional fast--slow stochastic differential
equation (SDE) with one fast and two slow variables. The detailed technical
discussion including all relevant definitions and precise statements and proofs
of our results starts in Section \ref{sec:fastslow}. In this section we want to
outline our motivation and state our main results in a non-technical way.
There are two main motivations for our work: 

\begin{itemize}
 \item[(M1)] We want to develop an analogue to the intricate deterministic
bifurcation theory for random dynamical systems by linking stochastic sample-path techniques and the well understood deterministic theory.

 \item[(M2)] A detailed analysis of noise effects in multi-scale stochastic
systems is often crucial in applications; in particular, many biological systems
have widely separated time scales and are influenced by various random effects.
\end{itemize}  
  
We are going to describe our two main motivations in more detail, starting
with (M2). Complex oscillatory patterns have been discovered in many different
applications. Chemical systems \cite{PetrovScottShowalter,DegnOlsenPerram,Koper}
and neuronal dynamics
\cite{RotsteinWechselbergerKopell,GuckCK3,BronsKrupaRotstein} provide ample
examples. Recent work has shown \cite{KuehnMMO,Izhikevich} that fast--slow
systems can be used to model a wide variety of oscillatory patterns. A
classification of local and global fast--slow \lq\lq mechanisms\rq\rq\ can be
used to analyze each pattern. Mixed-mode oscillations (MMO) alternate between
small-amplitude oscillations (SAOs) and large-amplitude oscillations (LAOs).
Figure \ref{fig:fig6} shows a typical MMO time series where the deterministic
time series has been generated by an MMO model proposed in
\cite{BronsKrupaWechselberger}. The time series shows alternations between $L=1$
LAOs and $s=7$ SAOs which is denoted as the MMO pattern $L^s=1^7$.

\begin{figure}[htbp]
\includegraphics[width=1\textwidth,clip=true]{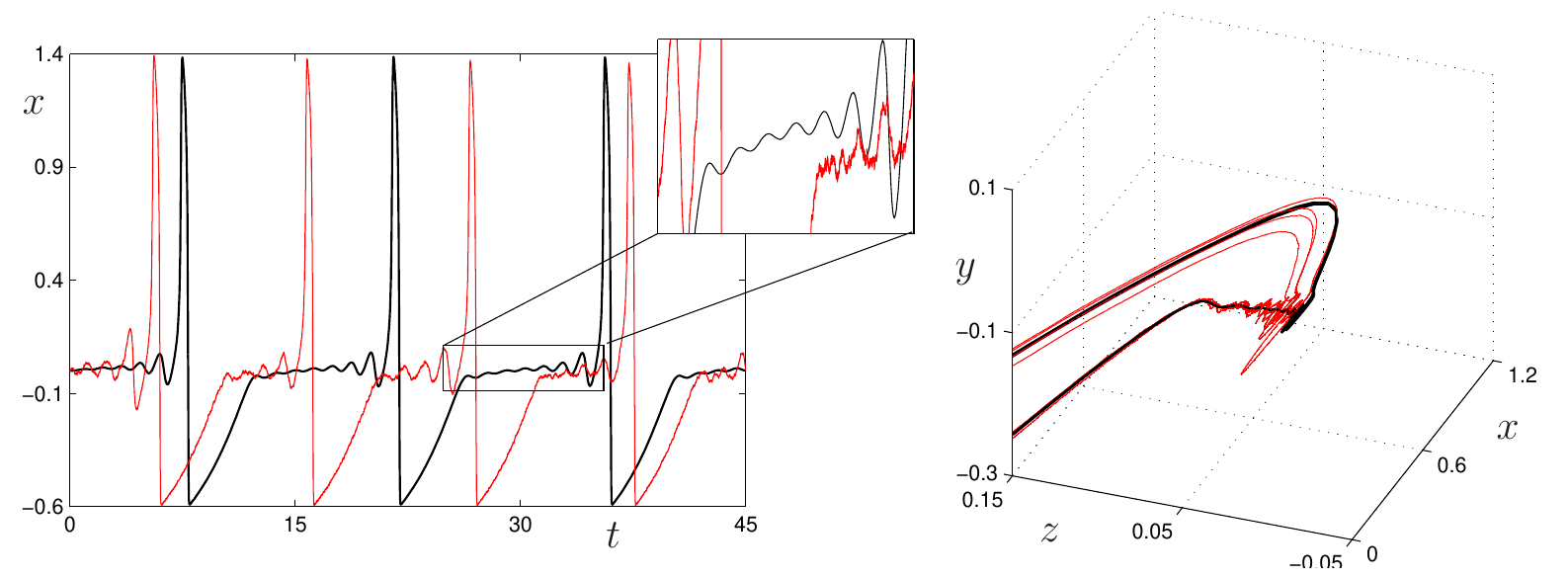}
	\caption{\label{fig:fig6} Comparison of a deterministic (black) and 
	stochastic (red) time trajectory. The left-half of the figure shows 
	the time series exhibiting a $1^7$ MMO; see also the zoom near the
SAOs. 
	The right part shows part of the trajectory in 
	three-dimensional phase space and illustrates the early jumps of the
stochastic 
	sample path.}	
\end{figure}

Although a deterministic model is able to explain a some experiments 
(see, e.g., experimental results by Hudson et al. \cite{HudsonHartMarinko}) it 
fails to accurately model realistic MMOs due to the presence of noise 
(see, e.g., the experiments by Dickson et al. \cite{Dicksonetal1}). 
Figure \ref{fig:fig6} also shows a sample path which is a stochastic 
version of the deterministic orbit perturbed by Gaussian white noise. 
Two important observations that can be made about the stochastic MMO pattern are
\begin{itemize}
\item	Part of the SAOs become indistinguishable from the random fluctuations
so that counting SAOs below a certain amplitude is impossible. 
\item	The stochastic sample path typically jumps before the deterministic 
solution makes an LAO. 
\end{itemize}
In this work, we are going to provide rigorous formulations and the proofs of
both observations.\\

Our motivation (M1) is to contribute to a better understanding of bifurcations in random dynamical systems by relating deterministic and stochastic methods for
fast--slow systems. Within the last two decades substantial progress on
deterministic fast--slow systems has been made. The analysis of hyperbolic fast
dynamics has been completed in a series of works by Fenichel \cite{Fenichel}
(see also \cite{Jones}) at the end of the 1970s; the theory focuses upon
perturbations of normally hyperbolic critical manifolds to nearby slow
manifolds. Near bifurcation points of the fast dynamics major developments in
the 1980s used nonstandard analysis
\cite{CallotDienerDiener78,BenoitCallotDienerDiener} and asymptotic methods
\cite{Eckhaus,BaerErneuxI,BaerErneuxII,MishchenkoRozov}. In the 1990s, two main
geometric methods were introduced into multiple time scale systems. Switching
between hyperbolic fast and slow dynamics was analyzed using the exchange lemma
\cite{JonesKopell,Tin}. The geometry of non-hyperbolic or singular dynamics has
been uncovered using the blow-up technique \cite{DRvdP,Dumortier1}. Since then
the blow-up method has been applied successfully for many singular-perturbation
problems \cite{KruSzm1,KruSzm2,KruSzm3,KruSzm4}. In particular, folded nodes
\cite{Wechselberger,WechselbergerFN,KrupaWechsFSN,MKKR_B} are a very interesting
case since they occur already in generic fast--slow systems with one fast and
two slow variables and have a highly nontrivial unfolding. Near a folded node
the canard phenomenon occurs, i.e., orbits stay near a repelling manifold for a
long time. This effect can generate MMOs
\cite{BronsKrupaWechselberger,KuehnMMO}; see also the discussion for (M2) above
and Figure~\ref{fig:fig6}. \\

Noise acting on a system with multiple time scales can induce new phenomena such
as early transitions~\cite{TM,SHA,SMC,JL,Kuske} and stochastic
resonance~\cite{BSV,Nicolis,McNW,Fox,GMSMP,GHM}. The mathematical theory of fast--slow stochastic differential equations has mainly been developed during the last decade and is still far from being complete. Classical work on random perturbations
of dynamical systems~\cite{FreidlinWentzell}, which mainly focused on
large-deviation aspects, can be applied to situations with a time-scale
separation exponentially large in the noise
intensity~\cite{Freidlin1,Freidlin2,ImkellerPavlyu02,HerrmannImkeller05,
HerrmannImkellerPeithmann2006}. A different approach, based on a detailed
description of sample-path properties, applies to situations with time-scale
separation and noise intensity of comparable magnitude~\cite{BG1,BG2,BG3,BG4}.
This method led to a general theory for the behaviour of sample paths near
normally hyperbolic invariant manifolds~\cite{BG6,BGbook}. Other approaches
include \cite{KabanovPergamenshchikov_2003}, which is based on moment
estimates, and \cite{SchmalfussSchneider},
which adopts the viewpoint of random dynamical systems in
the sense of~\cite{ArnoldSDE}.
The associated methods and results have important applications in climate
dynamics~\cite{BSV,Nicolis,Timmermann03,BG5}, the theory of critical
transitions~\cite{Schefferetal,KuehnCT}, classical and quantum atomic
physics~\cite{AllmanBetz2008,AllmanBetzHairer2010,AguilarBerglund08} and
neuroscience~\cite{Tuckwell,Longtin,Longtin2000,LaingLordBook,BG_neuro09}. In
particular, canards in the stochastic FitzHugh--Nagumo system describing the
action potential of neurons have been considered from the points of view of
large deviations~\cite{DevilleVandeneijndenMuratov2005,DossThieullen2009}, and
of convergence of sample-paths~\cite{Sowers08}. Stochastic MMOs have also been
considered in certain planar
systems~\cite{MuratovVanden-Eijnden,MuratovVanden-EijndenE} and in coupled
oscillators~\cite{YuKuskeLi}. 

The theory of stochastic differential equations with higher-dimensional
singularities and multiple slow variables is not yet as advanced. Here we make a
first step towards bridging this gap between the generic higher-dimensional
deterministic theory and stochastic sample-path analysis. The non-technical
statements of our two main results are:

\begin{enumerate}
\item[(R1)] Sample paths near a folded node stay inside a tubular neighbourhood
of an attracting deterministic solution. The neighbourhood is explicitly defined
by the covariance matrix of a linearized process. The relation between the noise
level, the time scale separation and a system parameter determines when small
oscillations near a folded node become indistinguishable from noisy
fluctuations. This relation can be calculated explicitly to lowest asymptotic
order.  
\item[(R2)] Sample paths near a folded node typically escape from a repelling
deterministic solution earlier than their deterministic counterparts. The typical escape
time can be determined rather precisely and depends on the same parameters
as the relation in (R1). The probability of observing atypical escape times can be shown to be small.
\end{enumerate}

Both results have important implications from theoretical and applied
perspectives. In particular, we show how to control stochastic sample paths
near a multi-dimensional bifurcation point. Therefore it is expected that the
methods we develop have a much wider applicability beyond folded nodes, e.g.\ to
singular Hopf bifurcations \cite{GuckenheimerSH} or other stochastic bifurcation
problems \cite{BGbook,ArnoldRPB}. The precise quantitative estimates on
the relation between noise level and a parameter controlling the number of 
SAOs are immediately useful in applications. Furthermore, the effect of early
jumps could potentially regularize the complicated flow maps near a folded node
\cite{GuckenheimerFNFSN,GuckenheimerScheper} and simplify the local--global
decomposition of return maps \cite{KuehnRetMaps}. \\

The paper is organized as follows. In Section \ref{sec:fastslow} we review the
necessary theory for deterministic fast--slow systems and fix the notation. In
Section \ref{sec:foldednode} we state the known results about folded nodes and
explain why they produce small-amplitude oscillations. In Section
\ref{sec:fn_new} we consider a variational equation around a special canard
solution, called the weak canard. The solution of the variational equation can
be transformed into a \lq\lq canonical form\rq\rq\ which allows us to prove a
result on the spacing of canard solutions  up cross-sections near or including
the folded node. The proof is postponed to
Appendix~\ref{appendix:canonical_form}. In Section \ref{sec:SDEfastslow} we
develop the main setup for stochastic fast--slow SDEs and recall a result on
attracting normally hyperbolic slow manifolds away from bifurcation points.
Several notations that we use throughout are introduced as well. In Section
\ref{sec:SDEfoldednode} we give the rigorous formulation of our main results (R1)--(R2) for stochastic folded nodes. The result (R1) on covariance tubes for the linearized process is
proved in Appendix \ref{appendix:covariance_tubes}. The influence of nonlinear
contributions is dealt with in Appendix \ref{appendix:proof_nonlinear_SDE}. The
result (R2) on early jumps is proven in Appendix \ref{appendix:proof_escape}.
Section \ref{sec:numerics} develops numerical simulations to visualize the
analytical results. We conclude by giving a summary of parameter regimes and
discussing the influence of early jumps on the global return mechanism and LAOs
in Section \ref{sec:discussion}. 

\subsubsection*{Acknowledgments}
It's a pleasure for the authors to thank Mathieu Desroches for inspiring discussions. 
N.B.\ and C.K.\ thank the CRC 701 at University of Bielefeld, B.G.\ thanks the
MAPMO at Universit\'e d'Orl\'eans and C.K.\ thanks Cornell University 
for hospitality and financial support.

%%%%%%%%%%%%%%%%%%%%%%%%%%%%%%%%%%%%%%%%%%%%%%%%%%%%%%%%
\section{Fast--Slow Systems}
\label{sec:fastslow}

We are only going to give a brief introduction to multiple time scale dynamics. 
A detailed reference covering many more topics is currently 
being written \cite{KuehnBook}; other, excellent references are
\cite{MisRoz,Grasman} for 
asymptotic methods and \cite{Fenichel,ArnoldEncy,Jones} for geometric methods. 
Many important discoveries were first made using nonstandard analysis 
\cite{BenoitCallotDienerDiener,DienerDiener}; in particular, many results we 
review in Section \ref{sec:foldednode} were discovered by 
Beno\^{i}t \cite{Benoit4,Benoit1,Benoit5}. However, we are not going to use 
any nonstandard methods and focus on the geometric viewpoint.\\

A \texttt{fast-slow system} of ordinary differential equations (ODEs) is given
by
\be
\label{eq:gen_fast_slow}
\begin{array}{rcrcl}
\epsilon \frac{\6 x}{\dd s}&=& \epsilon \dot{x} &=& f(x,y,\mu,\epsilon)\;,\\
\frac{\6 y}{\6 s}&=& \dot{y} &=& g(x,y,\mu,\epsilon)\;,\\
\end{array}
\ee
where $(x,y)\in\R^m\times \R^n$ are phase-space coordinates, $\mu\in\R^p$ 
are parameters and $0<\epsilon \ll 1$ represents the ratio of time scales. 
We shall assume that $f,g$ are sufficiently smooth. By a rescaling we can 
change from the \texttt{slow time} $s$ to the \texttt{fast time}
$t=s/\epsilon$; 
this transforms \eqref{eq:gen_fast_slow} to 
\be
\label{eq:gen_fast_slow1}
\begin{array}{rcrcl}
\frac{\6x}{\6t}&=& x' &=& f(x,y,\mu,\epsilon)\;,\\
\frac{\6y}{\6t}&=& y' &=& \epsilon g(x,y,\mu,\epsilon)\;.\\
\end{array}
\ee 
\textit{Remark:} The more common notation for the slow time would be $\tau$ 
but we shall reserve $\tau$ for stopping times of stochastic processes; see
Section 
\ref{sec:SDEfastslow}.\\

The first step to analyze fast--slow systems is to consider the 
\texttt{singular limit} $\epsilon\ra 0$. From \eqref{eq:gen_fast_slow1} we obtain
\be
\label{eq:gen_fast_sub}
\begin{array}{rcl}
x' &=& f(x,y,\mu,0)\;,\\
y' &=& 0\;.\\
\end{array}
\ee 
which is an ODE for the fast variables $x$ where the slow variables $y$ act as
parameters. 
We call \eqref{eq:gen_fast_sub} the \texttt{fast subsystem} or \texttt{layer
equations}; 
the associated flow is called the \texttt{fast flow}. Considering $\epsilon\ra
0$ in 
\eqref{eq:gen_fast_slow} we find a differential--algebraic equation for the slow
$y$-variables
\be
\label{eq:gen_slow_sub}
\begin{array}{rcl}
0&=& f(x,y,\mu,0)\;,\\
\dot{y} &=& g(x,y,\mu,0)\;,\\
\end{array}
\ee  
called the \texttt{slow subsystem} or \texttt{reduced system}; the flow induced
by \eqref{eq:gen_slow_sub} is called the \texttt{slow flow}. The slow subsystem
is defined on the \texttt{critical manifold}
\benn
C_0:=\{(x,y)\in\R^{m+n}:f(x,y,\mu,0)=0\}\;.
\eenn 
Observe that $C_0$ can also be interpreted as a manifold of equilibria for the
fast subsystem. Note also that $C_0$ does not have to be a manifold
\cite{KruSzm4} but we only consider the manifold case in this paper. If the
Jacobian matrix $(D_xf)(p)$ has maximal rank at $p\in C_0$ then the implicit-function theorem describes $C_0$ locally as a graph
\benn
h_0:\R^n\ra \R^m, \qquad f(h_0(y),y,\mu,0)=0
\eenn  
near $p$. This allows us to write the slow subsystem more concisely as 
\be
\label{eq:gen_sf_graph}
\dot{y}=g(h_0(y),y,\mu,0)\;.
\ee
We can strengthen the assumption on $(D_xf)(p)$ and require that it is a
hyperbolic matrix, i.e., $(D_xf)(p)$ has no eigenvalues with zero real part. In
this case we say that $C_0$ is \texttt{normally hyperbolic} at $p$. If all the
eigenvalues of $(D_xf)(p)$ have negative (positive) real parts we say that $C_0$
is \texttt{attracting (repelling)} with respect to the fast variables. The
following theorem shows that normal hyperbolicity is the key regularity
assumption for fast--slow systems. 

\begin{thm}[\texttt{Fenichel's Theorem} \cite{Fenichel}]
\label{thm:fenichel1}
Suppose $M_0$ is a compact normally hyperbolic submanifold
(possibly with boundary) of the critical manifold $C_0$ and 
that $f,g \in C^r$, $1\leq r < \infty$. Then for $\epsilon > 0$ sufficiently 
small the following holds:  

\begin{itemize}
\item[(F1)] There exists a locally invariant manifold $M_\epsilon$
diffeomorphic 
to $M_0$. Local invariance means that $M_\epsilon$ can have boundaries through 
which trajectories enter or leave.  
\item[(F2)] $M_\epsilon$ has a Hausdorff distance of $\cO(\epsilon)$ from $M_0$.
\item[(F3)] The flow on $M_\epsilon$ converges to the slow flow as $\epsilon \to
0$.
\item[(F4)] $M_\epsilon$ is $C^r$-smooth and can locally be given as a graph 
$h_\epsilon:\R^n\ra \R^m$.
\item[(F5)] $M_\epsilon$ is normally hyperbolic with the same stability 
properties with respect to the fast variables as $M_0$. 
\item[(F6)] $M_\epsilon$ is usually not unique. In regions that remain at a 
fixed distance from the boundary of $M_\epsilon$, all manifolds satisfying 
(F1)--(F5) lie at a Hausdorff distance $\cO(\e^{-K/\epsilon})$ from each other 
for some $K > 0$ with $K = \cO(1)$. 
\end{itemize} 
\end{thm}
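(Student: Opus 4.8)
The plan is to realise $M_\epsilon$ as the fixed point of a graph transform, using crucially that at $\epsilon=0$ the manifold $M_0$ consists entirely of equilibria of the fast subsystem \eqref{eq:gen_fast_sub} and is therefore trivially invariant, with the normal rates dominating a \emph{vanishing} tangential rate. I would first localise: fix a compact tubular neighbourhood $\cV$ of $M_0$ and, using normal hyperbolicity, introduce Fenichel-type coordinates $(\xi,a,b)$ on $\cV$ in which $M_0=\{a=0,\,b=0\}$ (with $\xi$ parametrising $M_0$, $a$ the stable and $b$ the unstable fast directions), so that the linear part of the fast flow is uniformly contracting in $a$ and uniformly expanding in $b$. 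Multiplying $g$ and the higher-order part of $f$ by a smooth cutoff supported in $\cV$ makes the modified vector field globally defined while leaving it unchanged near $M_0$, so that any invariant manifold of the modified system contained in the region where the cutoff equals $1$ is automatically invariant for the original one. For concreteness I describe the attracting case, where there are no unstable fast directions (the only one needed in the sequel); the general normally hyperbolic case follows by constructing centre-stable and centre-unstable manifolds in the same way and intersecting them.

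On the fast time scale the modified system then reads $\xi'=\cO(\epsilon)$, $a'=A(\xi)a+\cO(|a|^2+\epsilon)$ with $\mathrm{spec}\,A(\xi)\subset\{\re z\le-c\}$ for some $c>0$, and I would seek $M_\epsilon$ as a graph $a=h_\epsilon(\xi)$ over $M_0$. Let $\mathcal{G}$ be the complete metric space of Lipschitz sections with $\|h\|_{C^0}\le\delta$ and $\mathrm{Lip}(h)\le\delta$; the time-$T$ map $\phi^T$ of the modified fast flow carries the graph of $h\in\mathcal{G}$ onto the graph of a new section $\Gamma h$ (the base projection of $\phi^T(\mathrm{graph}\,h)$ is a diffeomorphism because the $\xi$-motion is $\cO(\epsilon T)$-close to the identity and the $a$-direction is contracted), and a direct estimate gives $\|\Gamma h_1-\Gamma h_2\|_{C^0}\le(\e^{-cT}+\cO(\epsilon+\delta))\|h_1-h_2\|_{C^0}$ and $\mathrm{Lip}(\Gamma h)\le\delta$, so for $\delta$ and $\epsilon$ small $\Gamma$ is a contraction on $\mathcal{G}$ with a unique fixed point $h_\epsilon$, whose graph is $M_\epsilon$. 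Since $\Gamma$ displaces the zero section by $\cO(\epsilon)$ only, $\|h_\epsilon\|_{C^0}=\cO(\epsilon)$, which is (F2). Local invariance, the graph form and the diffeomorphism with $M_0$ are built into the construction, giving (F1) and (F4); pulling the induced flow back to $M_0$ exhibits it as an $\cO(\epsilon)$-perturbation of the slow flow \eqref{eq:gen_sf_graph}, giving (F3); and the variational dynamics normal to $M_\epsilon$ is an $\cO(\epsilon)$-perturbation of $A(\xi)$ in an adapted norm, so its spectrum keeps the same sign, giving (F5).

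For the $C^r$ part of (F4) I would run the fibre-contraction theorem: differentiating the fixed-point relation up to order $r$ produces a triangular system of fixed-point equations on the bundle of $r$-jets, in which at jet-order $k$ the contraction factor is essentially $\e^{-cT}$ competing with $k$ copies of the $\cO(\epsilon)$ tangential rate, so the spectral-gap conditions $k\cdot\cO(\epsilon)<c$ hold simultaneously for all $k\le r$ once $\epsilon$ is small; bootstrapping from $C^0$ through $C^1$ to $C^r$ then yields $h_\epsilon\in C^r$ with bounds uniform in $\epsilon$. Finally (F6): the manifold above depends on the cutoff and on the boundary data imposed where trajectories leave $\cV$, but two admissible choices produce sections that agree along the characteristics entering from the exit set, and their difference is transported by the flow with the normal contraction $\e^{-c(t-t_0)}$ measured in fast time; a point in the interior at fixed slow-time distance $\cO(1)$ from the boundary is reached only after fast time $\cO(1/\epsilon)$, so the two manifolds differ there by $\cO(\e^{-K/\epsilon})$ for some $K=\cO(1)$.

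I expect the main obstacle to be the uniform-in-$\epsilon$ smoothness estimates: one has to track how every constant in the fibre-contraction argument depends on $\epsilon$ through the cutoff and through the $\cO(\epsilon)$ and $\cO(|a|^2)$ terms, and verify that the jet-level contraction genuinely dominates the growth of the tangential derivatives at each order $k\le r$ on one and the same neighbourhood; choosing the cutoff and the adapted norms so that all these estimates coexist is the technical heart of the argument. Alternatively one may invoke the general persistence theorem for normally hyperbolic invariant manifolds, applied to $M_0$ viewed as an equilibrium manifold for the $\epsilon=0$ flow under the $\cO(\epsilon)$-perturbation $\epsilon g$; then (F1)--(F6) follow as corollaries, but the quantitative statements (F2) and (F6) still rest on essentially the estimates just sketched.
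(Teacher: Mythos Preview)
The paper does not prove this theorem at all: Theorem~\ref{thm:fenichel1} is stated with attribution to Fenichel~\cite{Fenichel} and used as a black box throughout, so there is no ``paper's own proof'' to compare your proposal against. Your sketch is a reasonable outline of the standard graph-transform/fibre-contraction argument that underlies Fenichel's original proof and its modern expositions (e.g.\ Jones~\cite{Jones}); the localisation, the contraction on Lipschitz sections, the $\cO(\epsilon)$ distance estimate, the bootstrap to $C^r$ via fibre contraction, and the exponential-closeness argument for (F6) are all the right ingredients. If you intend to actually carry this out rather than cite it, the place where your sketch is thinnest is exactly where you flag it: the uniform-in-$\epsilon$ $C^r$ bounds require careful bookkeeping of how the cutoff, the adapted norms, and the jet-level contraction factors interact, and in the general saddle-type case the intersection of centre-stable and centre-unstable manifolds needs its own transversality argument. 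For the purposes of this paper, however, a citation suffices.
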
 

We call a perturbed manifold $M_\epsilon$ a \texttt{slow manifold}. Sometimes
we refer to \lq\lq the slow manifold\rq\rq\ despite the non-uniqueness (F6) as it
will be often irrelevant which of the $\cO(\e^{-K/\epsilon})$-close manifolds
we pick. 

A simple example where normal hyperbolicity fails is given by
\be
\label{eq:simple_fold_ex}
\begin{array}{rcl}
\epsilon \dot{x}&=& y-x^2\;,\\
\dot{y} &=& \mu-x\;.\\
\end{array}
\ee  
The critical manifold $C_0=\{(x,y)\in\R^2:y=x^2\}$ splits into three parts 
$C_{0}=C_0^a\cup \{(0,0)\}\cup C_0^r$ where
\benn
C^a_0=C_0\cap \{x>0\}\;, \qquad \text{and} \qquad C^r_0=C_0\cap\{x<0\}\;.  
\eenn
$C^a_0$ is attracting and $C^r_0$ is repelling. At $(x,y)=(0,0)$ the 
critical manifold is not normally hyperbolic and has a generic 
\texttt{fold singularity} \cite{KruSzm1}; observe that $(x,y)=(0,0)$ is 
a fold (or saddle--node) bifurcation of the fast subsystem. Figure 
\ref{fig:fig1} illustrates the dynamics near the fold point of 
\eqref{eq:simple_fold_ex}.\\ 

\begin{figure}[htbp]
\includegraphics[width=1\textwidth,clip=true]{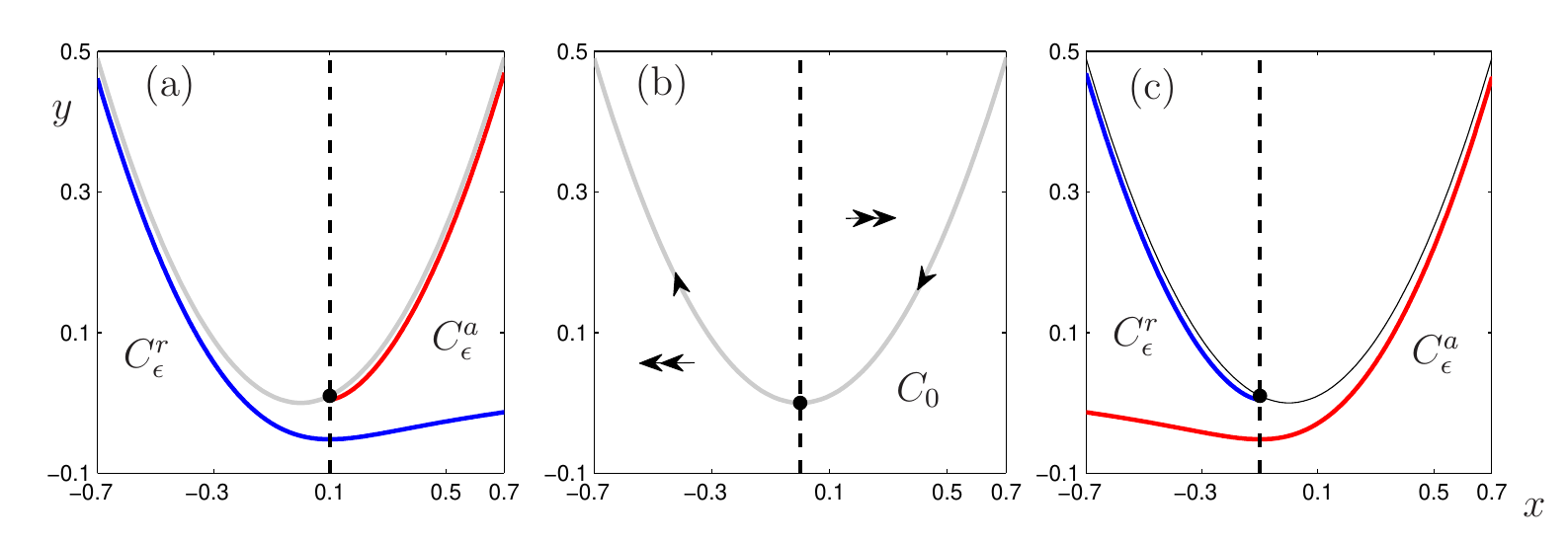}
	\caption{\label{fig:fig1} Planar fold near a singular Hopf bifurcation
of \eqref{eq:simple_fold_ex}; for (a) and (c) we have fixed $\epsilon=0.05$. (a)
$\mu=0.1$: The equilibrium point $(x_0,y_0)=(\mu,\mu^2)$ is determined as the
intersection of $C_0$ (grey) and the nullcline $\{x=\mu\}$ (dashed black). The
equilibrium is stable and the slow manifolds $C^a_\epsilon$ (red) and
$C^r_\epsilon$ (blue) do not interact. (b) $\mu=0$: Only the slow flow on $C_0$
(single arrow) and the fast flow (double arrows) are shown. $C_0$ coincides with
a maximal singular canard. (c) $\mu=-0.1$: After the Hopf bifurcation the slow
manifolds have \lq\lq exchanged sides\rq\rq\ suggesting an intersection for some
$\mu$ near $0$.}	
\end{figure}

To calculate the slow subsystem on $C_0$ we could consider the two graphs 
$x=h_0(y)=\pm \sqrt{y}$ as suggested by \eqref{eq:gen_sf_graph}. For 
\eqref{eq:simple_fold_ex} it is more convenient to differentiate $y=x^2$ 
implicitly with respect to $s$. This gives 
\benn
\dot{y}=2x\dot{x}\qquad \Rightarrow \quad \dot{x}=\frac{\mu-x}{2x}\;,
\eenn  
which shows that the slow flow is undefined at $(0,0)$ if $\mu\neq 0$. 
Fenichel's Theorem provides slow manifolds $C^a_\epsilon$ and $C^r_\epsilon$. 
A major step in the theory of fast--slow systems was to consider the dependence of the dynamics
of \eqref{eq:simple_fold_ex} on the value of $\mu$ 
\cite{DRvdP,BaerErneuxI,KruSzm2}.  Note that for $\mu=0$ the 
slow flow is well-defined and there is a special trajectory that passes from 
$C^a_0$ to $C^r_0$ and that a singular Hopf bifurcation \cite{KruSzm2,Braaksma} 
occurs for $\mu=0$ and $0<\epsilon\ll 1$. The slow manifolds can be extended
under 
the flow into the fold point region. Comparing Figure \ref{fig:fig1}(a) to
Figure 
\ref{fig:fig1}(c) we expect that there is a parameter value $\mu$ for which the 
slow manifolds intersect/coincide. This intersection marks what has become 
known as a canard explosion \cite{KruSzm2,KuehnCanLya}.\\

More generally, suppose that $\gamma_\epsilon$ is a trajectory of a fast--slow 
system \eqref{eq:gen_fast_slow}.
%-\eqref{eq:gen_fast_slow1} 
Then we call
$\gamma_\epsilon$ a \texttt{maximal canard} if it lies in the intersection 
of an attracting and a repelling slow manifold; for $\epsilon=0$ we also
refer to $\gamma_0$ as a \texttt{maximal singular canard}. Canards in planar 
fast--slow systems are of codimension one whereas for higher-dimensional 
systems we do not need an additional parameter. In the next section we 
are going to focus on canards in three dimensions.     

%%%%%%%%%%%%%%%%%%%%%%%%%%%%%%%%%%%%%%%%%%%%%%%%%%%%%%%%
\section{Folded Nodes}
\label{sec:foldednode}

A general three-dimensional fast--slow system with one fast variable and two 
slow variables can be written as
\be
\label{eq:3D_general}
\begin{array}{rcl}
\epsilon \dot{x} &=& f(x,y,z,\mu,\epsilon)\;,\\
\dot{y} &=& g_1(x,y,z,\mu,\epsilon)\;,\\
\dot{z} &=& g_2(x,y,z,\mu,\epsilon)\;.\\
\end{array}
\ee
We assume that the critical manifold $C_0=\{(x,y,z)\in\R^3:f(x,y,z,\mu,0)=0\}$ 
of \eqref{eq:3D_general} is a folded surface near the origin; suitable 
non-degeneracy conditions \cite{KuehnMMO,Wechselberger} are
\be
\label{eq:fold_3D}
\begin{array}{ll}
f(0,0,0,\mu,0)=0\;,\qquad & f_x(0,0,0,\mu,0)=0\;,\\
f_y(0,0,0,\mu,0)\neq 0\;,\qquad & f_{xx}(0,0,0,\mu,0)\neq 0\;,\\ 
\end{array}
\ee 
where subscripts denote partial derivatives.\\

\begin{figure}[htbp]
\includegraphics[width=1\textwidth,clip=true]{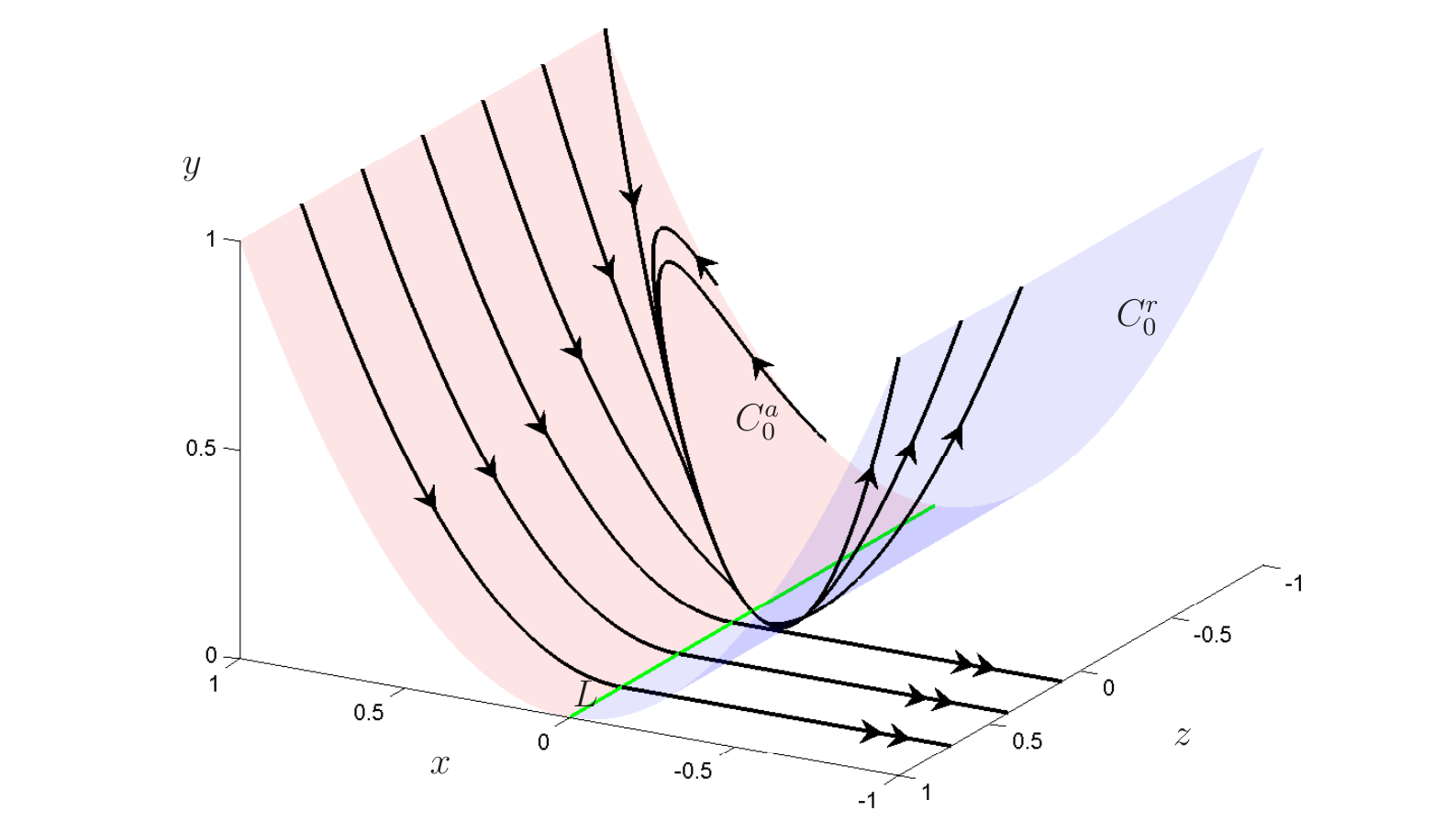}
	\caption{\label{fig:fig2} Singular limit $\epsilon=0$ for the normal
form \eqref{eq:main_ex_eps} with $\mu=0.15$. The attracting manifold $C_0^a$
(red), the fold line $L$ (green) and the repelling manifold $C_0^r$ (blue)
partition the critical manifold. Trajectories of the slow (fast) subsystem are
indicated by single (double) arrows.}	
\end{figure}

The critical manifold again decomposes into three parts
\benn
C_0=C^r_0\cup L \cup C^a_0\;,
\eenn 
where $C^r_0=C_0\cap \{f_x>0\}$ is repelling, $C^a_0=C_0\cap \{f_x<0\}$ is 
attracting and $L=C_0\cap \{f_x=0\}$ is the curve of fold points; see 
Figure \ref{fig:fig2}. Note that the assumption $f_y(0,0,0,\mu,0)\neq 0$ in 
\eqref{eq:fold_3D} implies that the fold curve $L$ can be locally 
parametrized by $z$. To obtain the slow subsystem we again differentiate
$f(x,y,z,\mu,0)=0$ implicitly with respect to $s$
\benn
\dot{x}f_x+\dot{y}f_y+\dot{z}f_z=0\;.
\eenn 
This implies that the slow subsystem is 
\be
\label{eq:sf_reg_3D}
\begin{array}{rcl}
f_x\dot{x}&=&-f_yg_1-f_zg_2\;,\\
\dot{z}&=& g_2\;,\\
\end{array}
\ee
where all functions are evaluated for $p=(x,y,z)\in C_0$ and $\epsilon=0$. 
On $L$ the ODE \eqref{eq:sf_reg_3D} is singular but we can 
rescale time $s\mapsto -s/f_x$ to obtain the \texttt{desingularized 
slow subsystem}
\be
\label{eq:sf_desing_3D}
\begin{pmatrix} \dot{x} \\ \dot{z} \\ \end{pmatrix}=
\left.\begin{pmatrix} f_yg_1+f_zg_2 \\ -f_xg_2 \\
\end{pmatrix}\right|_{p\in C_0}. 
\ee  
Note that the time rescaling has reversed the orientation of trajectories 
of \eqref{eq:sf_reg_3D} on $C^r_0$ and that \eqref{eq:sf_desing_3D} is a 
well-defined planar ODE. We define 
\benn
l(z):=(f_yg_1+f_zg_2)|_{p\in L}
\eenn   
and make the assumptions that
\bea
l(0)&=&0 \label{eq:lfunc1}\\
l(z)&\neq& 0\;, \quad \text{for $z\neq 0$\;.} \label{eq:lfunc2}
\eea
Observe that \eqref{eq:lfunc1} and $f_x(0,0,0,\mu,0)=0$ imply that 
$(x,z)=(0,0)$ is an equilibrium point for \eqref{eq:sf_desing_3D} that 
lies on the fold curve. We say that $(x,y,z)=(0,0,0)$ is a folded 
singularity. Points $(x,y,z\neq0)\in L$ are called jump points as 
trajectories have to make a transition from the slow to the fast 
flow at these points; the condition \eqref{eq:lfunc2} is also called 
the \texttt{normal switching condition}. Generic folded singularities 
can be classified according to their equilibrium type into 
\texttt{folded saddles}, \texttt{folded foci} and \texttt{folded nodes} 
\cite{Wechselberger,Benoit1}. Folded nodes are the most interesting 
folded singularities. Without loss of generality we may assume that 
the folded node is stable for \eqref{eq:sf_desing_3D} with associated 
eigenvalues $\lambda_1, \lambda_2$ for the linearization of 
\eqref{eq:sf_desing_3D}. 

\begin{prop}[\cite{BronsKrupaWechselberger,WechselbergerFN}]
\label{prop:nform_deter}
Suppose that \eqref{eq:3D_general} satisfies \eqref{eq:fold_3D}, 
\eqref{eq:lfunc1}, \eqref{eq:lfunc2} and that we are in the 
folded-node scenario. Then there exist a smooth 
coordinate change and a smooth change of time which bring \eqref{eq:3D_general} 
near $(x,y,z)=(0,0,0)$ into the form
\be
\label{eq:main_ex_eps_old}
\begin{array}{rcl}
\epsilon \dot{x} &=& y-x^2+\cO(yx^2,x^3,xyz)+\epsilon \cO(x,y,z,\epsilon)\;,\\
\dot{y} &=& -(\mu+1)x-z+\cO(y,\epsilon,(x+z)^2)\;,\\
\dot{z} &=& \frac{\mu}{2}\;,\\
\end{array}
\ee
with $\lambda_1=-\mu$ and $\lambda_2=-1$ being the eigenvalues for the linearization of 
\eqref{eq:sf_desing_3D}. 
\end{prop}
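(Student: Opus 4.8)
The result is due to \cite{BronsKrupaWechselberger,WechselbergerFN} and its proof is constructive: one arrives at \eqref{eq:main_ex_eps_old} by composing three families of transformations --- a coordinate change that puts the folded surface into normal form, a reparametrisation of time that freezes the drift in the $z$-direction, and a linear change of the $(x,z)$-coordinates that normalises the linearisation of the desingularised slow subsystem \eqref{eq:sf_desing_3D} at the folded singularity. The content of the proposition is that these can be performed in a compatible way and that the resulting eigenvalues come out as exactly $-\mu$ and $-1$; I would carry this out as follows.

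\emph{Fold and drift.} Since $f_y(0)\neq0$, the implicit-function theorem writes $C_0$ near the origin as a graph $y=H(x,z,\mu)$, and \eqref{eq:fold_3D} gives $H_x(0)=-f_x/f_y=0$, $H_{xx}(0)=-f_{xx}/f_y\neq0$, so $H=H(0,z,\mu)+\tfrac12 H_{xx}(0,z,\mu)x^2+\cdots$. Translating $y$ by $H(0,z,\mu)$ (a function of $z$ only, so the $\dot z$-equation is untouched) and rescaling $x$ brings the zero set of $f$ to $\{y-x^2+(\text{h.o.t.})=0\}$; since $\partial_y f\neq0$ one may factor $f$ and, by a further rescaling of time together with an $\epsilon\cO(x,y,z,\epsilon)$ correction, divide the fast equation by the non-vanishing prefactor to obtain $\epsilon\dot x=y-x^2+R_1$. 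Next, the folded-node hypothesis entails $g_2(0,0,0,\mu,0)\neq0$ (the reduced flow crosses the fold transversally at the singularity, cf.\ \eqref{eq:lfunc1}--\eqref{eq:lfunc2}), so for any constant $c\neq0$ the map $\dd\tilde s:=\tfrac2c\,g_2\,\dd s$ is a smooth change of time near the origin: in the $\dot z$-equation of \eqref{eq:3D_general} the factor $g_2$ cancels exactly, giving $\dot z\equiv c/2$, while the other right-hand sides acquire a factor $1+\cO(x,y,z,\epsilon)$, which does not disturb the structure just obtained. A monomial-by-monomial analysis of which remaining terms can be removed by further near-identity changes leaves $R_1$ of the stated type $\cO(yx^2,x^3,xyz)$.

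\emph{Linear normalisation and eigenvalues.} On $C_0$ one has $y=x^2+\cdots$; differentiating $f=0$ along the reduced flow as in \eqref{eq:sf_reg_3D} and desingularising as in \eqref{eq:sf_desing_3D} gives a planar system whose linearisation at the origin is $\bigl(\begin{smallmatrix}a&b\\c&0\end{smallmatrix}\bigr)$, where $ax+bz$ is the linear-in-$(x,z)$ part of $g_1$ on $C_0$ and $c$ is the constant from the previous step. The folded-node assumption says this is a (stable) node, so $\lambda_1,\lambda_2$ are real, nonzero and of the same sign, whence $a<0$, $bc<0$; relabelling so that $|\lambda_1|\le|\lambda_2|$, put $\mu:=\lambda_1/\lambda_2\in(0,1]$. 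A constant rescaling of time together with a rescaling of $z$ --- both leaving the fold normal form intact, since $z$ does not enter the fast equation to leading order --- suffices to bring this matrix to $\bigl(\begin{smallmatrix}-(\mu+1)&-1\\\mu&0\end{smallmatrix}\bigr)$, because the node condition forces the scale-invariant quantity $\operatorname{tr}^2/\det$ to equal $(\mu+1)^2/\mu$, so the two scalings can match $a,b,c$ simultaneously; in particular $c$ is driven to $\mu$, i.e.\ $\dot z=\mu/2$. Reading $g_1$ back off gives $\dot y=-(\mu+1)x-z+\cO(y,\epsilon)+(\text{quadratic and higher in }(x,z))$, and a quadratic near-identity change eliminates all quadratic terms except the resonant one supported on the weak eigendirection $(1,-1)$, i.e.\ on the line $\{x+z=0\}$, leaving the advertised $\cO((x+z)^2)$. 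Finally the characteristic polynomial of $\bigl(\begin{smallmatrix}-(\mu+1)&-1\\\mu&0\end{smallmatrix}\bigr)$ is $(\lambda+\mu)(\lambda+1)$, confirming $\lambda_1=-\mu$ and $\lambda_2=-1$.

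\emph{Main obstacle.} Each individual transformation is elementary; the difficulty lies entirely in the bookkeeping. One has to fix an order of the reductions in which no step undoes an earlier one and, more delicately, track how the time rescalings --- whose multipliers depend on $\epsilon$ --- feed back into the $\epsilon\dot x$-equation; this is precisely why \eqref{eq:main_ex_eps_old} records the specific remainders $\cO(yx^2,x^3,xyz)$, $\cO(y,\epsilon,(x+z)^2)$ and $\epsilon\cO(x,y,z,\epsilon)$ rather than an unstructured higher-order symbol, and checking that nothing outside these classes can survive (using that $y=x^2+\cdots$ on $C_0$, so that various monomials are interchangeable modulo higher order) is the genuinely lengthy part of the argument.
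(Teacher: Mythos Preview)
The paper does not supply its own proof of this proposition: it is stated as a result imported from \cite{BronsKrupaWechselberger,WechselbergerFN}, and the text immediately moves on to use the normal form~\eqref{eq:main_ex_eps_old}, only remarking that the $\cO(\cdot)$ terms will be shown to be higher order after the blow-up rescaling in Section~\ref{sec:fn_new}. So there is nothing to compare your argument against within the paper itself.

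Your sketch is a faithful outline of the standard derivation in the cited references: put the fold in normal form via the implicit-function theorem and a preparation-type factorisation, rectify the $z$-drift by a time change (using $g_2(0)\neq 0$, which indeed follows from the node condition since otherwise the linearisation of~\eqref{eq:sf_desing_3D} would be degenerate), and then normalise the $2\times 2$ linear part of the desingularised slow flow. The one place where your write-up is a bit loose is the claim that a constant time rescaling together with a rescaling of $z$ alone suffices to match the three entries $a,b,c$ of the linearisation to $-(\mu+1),-1,\mu$; in general one also needs a rescaling of $x$ (which preserves the fold normal form $y-x^2$ provided $y$ is rescaled quadratically in tandem), and the compatibility of these rescalings with the already-achieved $\epsilon\dot x = y-x^2+\cdots$ is exactly the bookkeeping you flag at the end. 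Modulo that, the outline is sound.
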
   

We shall show in Section \ref{sec:fn_new} that the terms in
\eqref{eq:main_ex_eps_old} 
denoted by $\cO(\cdot)$ are indeed higher-order for the analysis near the folded
node. 
Hence we can work with the normal form
\be
\label{eq:main_ex_eps}
\begin{array}{rcl}
\epsilon \dot{x} &=& y-x^2\;,\\
\dot{y} &=& -(\mu+1)x-z\;,\\
\dot{z} &=& \frac{\mu}{2}\;.\\
\end{array}
\ee 
The critical manifold of \eqref{eq:main_ex_eps} is
\benn
C_0=\{(x,y,z)\in\R^3:y=x^2\}\;.
\eenn
It splits into three components
\benn
C_0=C^a_0\cup L \cup C^r_0\;,
\eenn
where $C^a_0=C_0\cap \{x>0\}$ is attracting, $C^r_0=C_0\cap \{x<0\}$ is repelling, 
and $L$ is now a line of fold points. Parametrizing over the slow variables, we 
can also write $C^{a,r}_0=\{x=\pm \sqrt{y}\}$, i.e., $h_0^{a,r}(y,z)=\pm\sqrt{y}$. 

\begin{figure}[htbp]
\includegraphics[width=1\textwidth,clip=true]{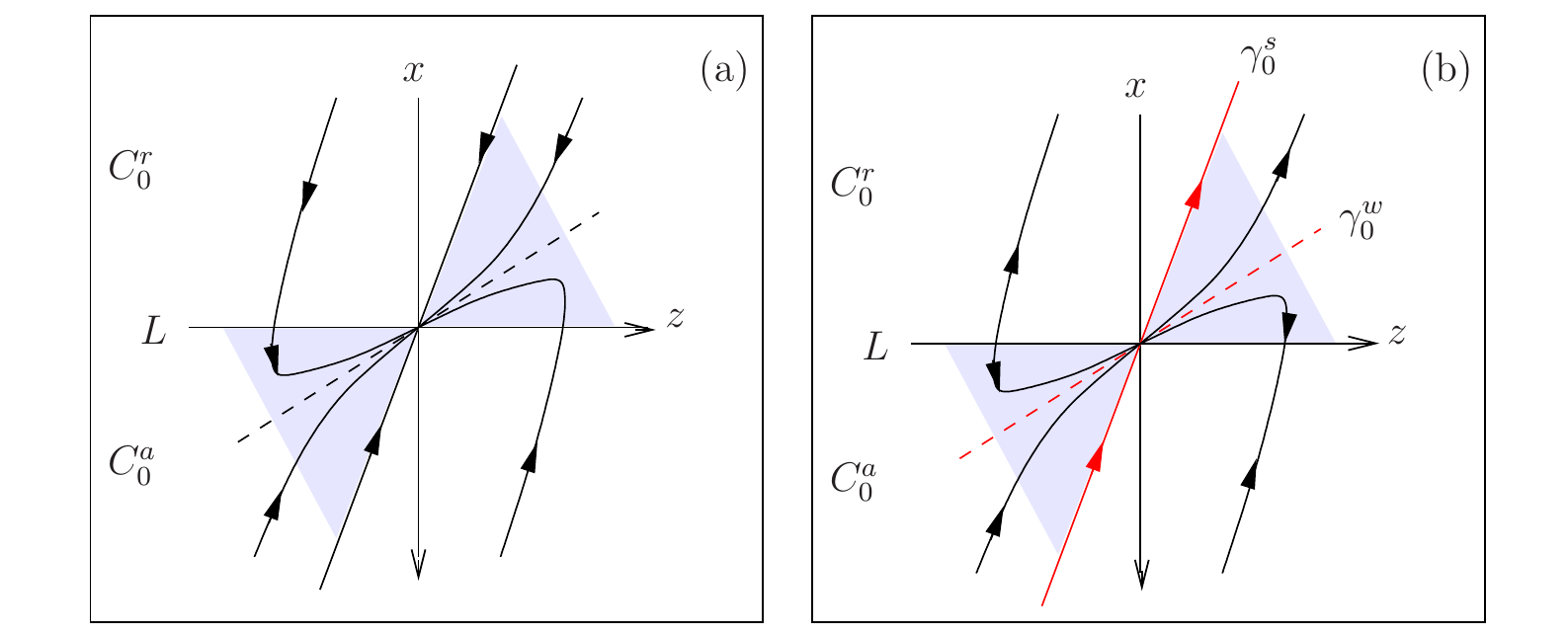}
	\caption{\label{fig:fig3}(a) The desingularized slow flow
\eqref{eq:slowflow_desing_ex} is sketched for some $\mu>0$ with a stable node at
the origin. (b) The slow flow \eqref{eq:slowflow_ex} is illustrated. The strong
eigendirection (solid red) defines the strong singular canard $\gamma^s_0$; the
weak eigendirection (dashed red) defines the weak singular canard
$\gamma^w_0$.}	
\end{figure}

Again differentiating $y=x^{2}$ implicitely, we see that the slow subsystem is given by
\be
\label{eq:slowflow_ex}
\begin{array}{rcl}
2x\dot{x}&=& -(\mu+1)x-z\;,\\
\dot{z}&=& \frac\mu2\;,\\
\end{array}
\ee
and the desingularized slow subsystem (see also Figure \ref{fig:fig3}(a)) is of the form
\be
\label{eq:slowflow_desing_ex}
\begin{array}{lcl}
\dot{x}&=& -(\mu+1)x-z\;,\\
\dot{z}&=& \mu x\;.\\
\end{array}
\ee
The system \eqref{eq:slowflow_desing_ex} is linear with an equilibrium point 
at $(x,z)=(0,0)$, and the eigenvalues are $(\lambda_1,\lambda_2):=(-1,-\mu)$. We 
assume  from now on that 
\benn
\mu\in(0,1)
\eenn 
so that $(0,0)$ is a stable node for the desingularized slow subsystem 
\eqref{eq:slowflow_desing_ex}. Hence we also denote the eigenvalues as 
\benn
\lambda_1=-1=:\lambda_s \qquad\text{and} \qquad \lambda_2=-\mu=:\lambda_w
\eenn
to emphasize the strong and weak eigendirections. Note that
$\mu=\lambda_w/\lambda_s$ 
precisely represents the ratio of eigenvalues and attains all resonances
$\mu^{-1}\in \N$ 
for $\mu\in(0,1)$. The associated (unnormalized) eigenvectors are
\be
\gamma^s_0=(-1/\mu,1)^T\qquad \text{and} \qquad \gamma_0^w=(-1,1)^T\;,
\ee 
which also represent directions for two maximal singular canards; see Figure
\ref{fig:fig3}. 
Observe that the \texttt{singular strong canard} $\gamma^s_0$ and $L$ bound a
funnel region 
on $C^a_0$ of trajectories that all flow into the folded node; see Figure
\ref{fig:fig3}. 
The funnel region has an opening angle $\cos^{-1}(\mu/\sqrt{1+\mu^2})$ which
converges to 
$\pi/2$ as $\mu\ra 0$. The funnel on $C^a_0$ is located in the
$\{x>0,z<0\}$-quadrant and 
the \texttt{singular weak canard} $\gamma_0^w$ is given by the anti-diagonal
$\{z=-x\}$.

\begin{thm}[\cite{Wechselberger,Benoit1,Benoit4,WechselbergerFN}]
\label{thm:canardsR3}
Suppose \eqref{eq:3D_general} has a generic folded node (i.e.\ Proposition 
\ref{prop:nform_deter} applies). Then for $\epsilon > 0$ sufficiently small 
the following holds:
\begin{itemize}
\item[(C1)] The singular strong canard $\gamma_0^s$ always perturbs to a
maximal canard $\gamma^s_\epsilon$. If $\mu^{-1} \not\in \N$, then the singular
weak canard $\gamma^w_0$ also perturbs to a maximal canard $\gamma^w_\epsilon$. 
We call $\gamma^s_\epsilon$ and $\gamma^w_\epsilon$ \texttt{primary canards}.  
 
\item[(C2)] Suppose $k>0$ is an integer such that 
\benn
2k+1<\mu^{-1}<2k+3\qquad \text{and} \qquad \mu^{-1}\neq 2(k+1)\;. 
\eenn
Then, in addition to $\gamma^{s,w}_\epsilon$, there are $k$ other maximal
canards, 
which we call \texttt{secondary canards}. 
\item[(C3)] The secondary canards converge to the strong primary canard as
$\epsilon \ra 0$.
\item[(C4)] The primary weak canard of a folded node undergoes a transcritical
bifurcation for odd $\mu^{-1} \in \N$ and a pitchfork bifurcation for even 
$\mu^{-1} \in \N$. 
\end{itemize}
\end{thm}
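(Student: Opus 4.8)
\emph{Proof strategy.} The folded singularity at the origin fails to be normally hyperbolic, so Fenichel's Theorem~\ref{thm:fenichel1} cannot be applied there and the natural tool is the blow-up method \cite{DRvdP,Dumortier1}. Writing the normal form~\eqref{eq:main_ex_eps} in fast time as $x'=y-x^2$, $y'=\epsilon(-(\mu+1)x-z)$, $z'=\epsilon\mu/2$, I would desingularise the origin by the quasi-homogeneous blow-up $x=r\bar x$, $y=r^2\bar y$, $z=r\bar z$, $\epsilon=r^2\bar\epsilon$, with $r\geq 0$ and $(\bar x,\bar y,\bar z,\bar\epsilon)$ on the corresponding weighted sphere. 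After dividing the blown-up field by $r$, the invariant sets on $\{r=0\}$ become partially hyperbolic: in the directional charts $\bar x=\pm1$ one recovers the critical-manifold branches $C^a_0,C^r_0$ together with the Fenichel manifolds $C^a_\epsilon,C^r_\epsilon$ of Theorem~\ref{thm:fenichel1}, now extended all the way down to the blow-up locus as normally hyperbolic objects.

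The decisive chart is the rescaling chart $\bar\epsilon=1$, i.e.\ $r=\sqrt\epsilon$: setting $x=\sqrt\epsilon\,\tilde x$, $y=\epsilon\,\tilde y$, $z=\sqrt\epsilon\,\tilde z$ and introducing the new time $\tilde s=s/\sqrt\epsilon$, the normal form~\eqref{eq:main_ex_eps} turns into the $\epsilon$-independent system $\dot{\tilde x}=\tilde y-\tilde x^2$, $\dot{\tilde y}=-(\mu+1)\tilde x-\tilde z$, $\dot{\tilde z}=\mu/2$. Because $\tilde z$ drifts linearly, I would use it as the new time variable; eliminating $\tilde y$ and linearising around the weak-canard solution leaves a single scalar second-order linear ODE for the deviation, which — this is exactly the canonical form obtained in Section~\ref{sec:fn_new} — is a \emph{Weber (parabolic cylinder) equation} with index determined by $\mu$. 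A maximal canard of~\eqref{eq:main_ex_eps} is, in the blown-up picture, an orbit of the rescaled system backward-asymptotic to the extended $C^a_0$ and forward-asymptotic to the extended $C^r_0$; in the Weber picture these are precisely the solutions with at most algebraic growth at both ends, i.e.\ combinations of parabolic cylinder functions $D_\nu$ with $\nu=\nu(\mu)$.

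Assertions (C1)--(C4) then follow from the classical properties of the $D_\nu$. The distinguished "fastest" connection always exists, and transported back through the $\bar x=+1$ chart it limits onto the strong eigendirection $\gamma^s_0$ — this is the strong primary canard of (C1); the "slowest" connection is a genuine bounded solution exactly when the resonance condition $\mu^{-1}\in\N$ fails (at resonance the relevant $D_\nu$ degenerates, as for the classical smoothness obstruction of node linearisations), giving the weak primary canard $\gamma^w_\epsilon$ under $\mu^{-1}\notin\N$. Counting the remaining bounded connections reduces to a rotation-number count for $D_\nu$ between the two sheets, yielding precisely $k$ secondary canards when $2k+1<\mu^{-1}<2k+3$ (with the middle value $\mu^{-1}=2(k+1)$ excluded); and (C3) follows since, back in the original coordinates, all of these canards sit in an $\Order{\sqrt\epsilon}$ ball around the fold and enter it along $\gamma^s_0$, so they coalesce onto the strong singular canard as $\epsilon\to0$. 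Finally, for $\mu^{-1}=n\in\N$ the resonant $D_n$ is a Hermite function of parity $(-1)^n$; inserting this parity into the bifurcation equation for the intersection $C^a_\epsilon\cap C^r_\epsilon$ produces an odd (transcritical) branching when $n$ is odd and an even (pitchfork) branching when $n$ is even, which is (C4). Note that, unlike the planar canards discussed in Section~\ref{sec:fastslow}, this intersection is of codimension zero — no unfolding parameter is needed — because the slow drift in $z$ supplies the extra dimension.

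\textbf{The main obstacle} is the analysis inside the rescaling chart: turning the blown-up flow rigorously into a Weber equation and extracting from the qualitative behaviour of $D_\nu$ both the exact number of canards and the precise bifurcation type. In other words, one must control the passage map across the blow-up sphere sharply enough to detect the integrality and the parity of $\mu^{-1}$; everything else — the Fenichel extension away from the fold and the $C^0$-matching of the chart pieces — is comparatively routine.
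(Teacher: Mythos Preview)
The paper does not prove this theorem. It is quoted from the literature (Beno\^{\i}t, Wechselberger, and coauthors, as indicated by the citation \cite{Wechselberger,Benoit1,Benoit4,WechselbergerFN} in the theorem header) and is used only as background for the stochastic analysis that follows. So there is nothing in the paper to compare your proposal against line by line.

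That said, your sketch is a faithful outline of how the cited works actually establish the result, and it is compatible with the material the present paper does develop: the rescaling chart $\bar\epsilon=1$ you describe is precisely the transformation~\eqref{eq:rescale1} used in Section~\ref{sec:fn_new}, and the Weber/Riccati--Hermite reduction you invoke is exactly the one the paper recalls in the remark around~\eqref{eq:Weber}--\eqref{eq:Weber2}. Your identification of the main obstacle --- sharp control of the transition map in the rescaling chart via the asymptotics of parabolic cylinder functions --- is also where the real work lies in \cite{Wechselberger,WechselbergerFN}. One small correction: the parity convention you state for (C4) should be checked carefully, since the literature sometimes indexes by the number of zeros of the Hermite solution rather than by $\mu^{-1}$ directly; make sure your odd/even assignment matches the statement of (C4) as given.
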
 

We emphasize the results (C1)--(C3) which will be of major importance for our 
stochastic analysis; (C4) describes the behaviour near resonances and will not 
be considered here. The next theorem provides a geometric viewpoint for the 
generation of maximal canards near a folded node. We say that a \texttt{twist} 
corresponds to a half rotation (i.e.\ a rotation by an angle of $\pi$). 

\begin{thm}[\cite{Wechselberger,WechselbergerFN}]
\label{thm:rotation_det}
Assume\/ $2k+1<\mu^{-1}<2k+3$, for some $k \in \N$, and $\mu^{-1}\neq 2(k+1)$. Then
the following holds: 
\begin{itemize}
 \item[(C5)] The primary strong canard $\gamma^s_0$ twists once around the
primary weak 
 canard $\gamma^w_\epsilon$.
 \item[(C6)] The $j$-th secondary canard $\gamma^j_\epsilon$, $1 \leqs j \leqs
k$, twists 
 $2 j+ 1$ times around the primary weak canard $\gamma^w_\epsilon$. 
 \item[(C7)] The twisting/rotation occurs in an $\cO(\sqrt\epsilon)$
neighborhood of the 
 folded node for \eqref{eq:3D_general}. 
\end{itemize} 
\end{thm}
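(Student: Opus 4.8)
\emph{Proof strategy.} The plan is to detect the twisting in the rescaling chart that blows the $\cO(\sqrt\epsilon)$-neighbourhood of the folded node up to $\cO(1)$ size. Applying the standard folded-node scaling $x=\sqrt\epsilon\,\bar x$, $y=\epsilon\,\bar y$, $z=\sqrt\epsilon\,\bar z$ together with a rescaling of time by $\sqrt\epsilon$ turns the normal form \eqref{eq:main_ex_eps} into the $\epsilon$-independent system $\dot{\bar x}=\bar y-\bar x^{2}$, $\dot{\bar y}=-(\mu+1)\bar x-\bar z$, $\dot{\bar z}=\mu/2$. One checks directly that this rescaled system has two distinguished solutions of the form $\bar x=-\tfrac\mu2\,t$ and $\bar x=-\tfrac12\,t$ (with $\bar z=\tfrac\mu2 t$ and $\bar y$ determined accordingly); by (C1) and Theorem~\ref{thm:canardsR3} these match, for $\epsilon>0$ small, the primary weak canard $\gamma^w_\epsilon$ and the primary strong canard $\gamma^s_\epsilon$. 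Twisting of a canard around $\gamma^w_\epsilon$ then becomes literal rotation of the trajectory about this linear-in-time solution, and can be counted in the chart.

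Next I would linearise the rescaled system about the weak-canard solution — this is exactly the variational equation studied in Section~\ref{sec:fn_new} — and eliminate one variable to obtain a scalar second-order equation $\ddot\xi-\mu t\,\dot\xi+\xi=0$ for the transverse deviation $\xi$. The exponential substitution $\xi=\e^{\mu t^{2}/4}u$ and the rescaling $\zeta=\sqrt\mu\,t$ bring this, via the canonical form of Appendix~\ref{appendix:canonical_form}, to Weber's equation $u''+\bigl(\nu+\tfrac12-\tfrac14\zeta^{2}\bigr)u=0$ with $\nu=\mu^{-1}$, whose solutions are parabolic cylinder functions. A twist (rotation by $\pi$) of a nearby canard about $\gamma^w_\epsilon$ corresponds to the argument of the complex pairing $u+\icx u'$ increasing by $\pi$, i.e.\ to one zero of the real solution $u$.

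The statements (C5)–(C6) then reduce to a classical oscillation count. In the oscillatory window $\nu+\tfrac12-\tfrac14\zeta^{2}>0$, i.e.\ $|\zeta|<\sqrt{4\nu+2}=\cO(1)$, a real parabolic cylinder solution of index $\nu=\mu^{-1}$ has a number of zeros governed by the resonance interval: when $2k+1<\mu^{-1}<2k+3$ the distinguished solution selected by the $j$-th secondary canard has exactly $2j+1$ sign changes, while the strong canard is the extreme solution and picks up a single sign change. This uses Sturm comparison between $\nu$ and the nearby integer indices together with the connection formulas matching $\zeta\to\pm\infty$ to the behaviour on the attracting and repelling slow manifolds outside the chart. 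The confinement statement (C7) is immediate from the same picture: the potential is positive only on $|\zeta|=\cO(1)$, which in the original variables is $|z|=\cO(\sqrt\epsilon)$; for $|\zeta|$ larger a WKB/monotonicity argument shows $u$ has no further zeros, so no additional twisting accrues.

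Finally I would transfer the count from \eqref{eq:main_ex_eps} to the full system \eqref{eq:3D_general}: the $\cO(\cdot)$ terms dropped in passing to the normal form are shown in Section~\ref{sec:fn_new} to be higher order in the chart, and at the chart boundary one matches the entry and exit of the Fenichel manifolds $C^a_\epsilon$ and $C^r_\epsilon$ to the solutions of the rescaled problem. The main obstacle, I expect, lies precisely in this last matching: the secondary canards are defined globally as transverse intersections of the (non-unique) manifolds $C^a_\epsilon$ and $C^r_\epsilon$, and one must show that this intersection corresponds to the parabolic cylinder solution with $2j+1$ zeros and that the $\cO(\epsilon\log(1/\epsilon))$ corrections incurred in crossing the chart boundaries do not spoil the leading-order oscillation count.
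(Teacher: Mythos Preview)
The paper does not prove this theorem. It is stated as a background result, with citations to \cite{Wechselberger,WechselbergerFN}, and is used in later sections (notably in the proof of Theorem~\ref{thm:dist_canards}) without any argument being supplied. So there is no ``paper's own proof'' to compare your proposal against.

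That said, your outline follows precisely the route taken in the cited references and echoed in the paper's Section~\ref{sec:fn_new}: pass to the blow-up chart via the rescaling~\eqref{eq:rescale1}, use Beno\^{\i}t's polynomial solutions (Lemma~\ref{lem:Benoit}) as reference orbits, linearise around the weak canard to obtain the Weber equation (the paper notes this explicitly in the Remark following~\eqref{eq:main_ex_var3}), and count zeros of parabolic cylinder solutions. Your identification of the main difficulty is also accurate: the delicate step in Wechselberger's proof is indeed the matching at the chart boundary, showing that the $j$-th transverse intersection of $C^a_\epsilon$ and $C^r_\epsilon$ corresponds to the Weber solution with exactly $2j+1$ zeros, and that the $\cO(\sqrt\epsilon)$ corrections do not shift the count. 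The present paper sidesteps all of this by citation; for a self-contained account you would need to consult \cite{Wechselberger,WechselbergerFN} directly.
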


In particular, the slow manifolds $C^a_\epsilon$ and $C^r_\epsilon$ start 
to spiral near the folded node creating transversal intersections away from
resonances. 
For visualizations of these manifolds in several different contexts see 
\cite{DesrochesKrauskopfOsinga3,KuehnMMO,DesrochesKrauskopfOsinga}.\\

\begin{figure}[htbp]
\includegraphics[width=1\textwidth,clip=true]{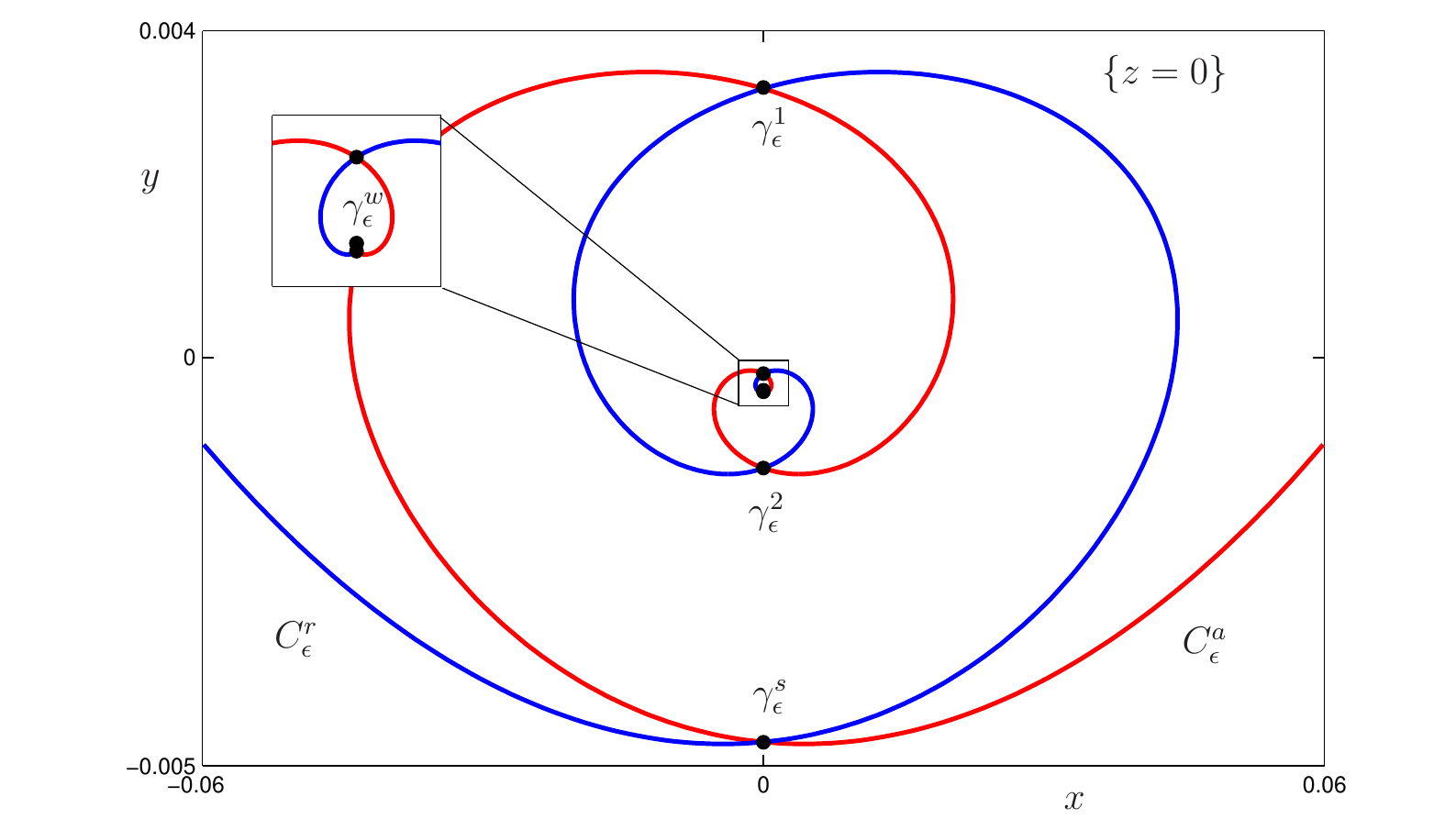}
	\caption{\label{fig:fig4} Canards and slow manifolds near a folded node
for 
	\eqref{eq:main_ex_eps} with $(\mu,\epsilon)=(0.08,0.01)$ on the
cross-section 
	$\{z=0\}$. The primary canard $\gamma_\epsilon^{s}$ and the first two
secondary canards 
	$\gamma^{1,2}_\epsilon$ are labeled. We also show a zoom near the primary 
	weak canard $\gamma_0^w$. All maximal canards are
indicated by 
	black dots.}	
\end{figure}

In Figure \ref{fig:fig4} we show the slow manifolds for \eqref{eq:main_ex_eps}
near 
the folded node on the cross-section $\{z=0\}$. The manifolds have been computed
by forward 
integration and using the symmetry
\be
\label{eq:symmetry}
(x,y,z,s)\mapsto (-x,y,-z,-s)\;.
\ee
The center of rotation is the weak canard $\gamma_0^w$. Since $\mu=0.08$ we
know by (C2) that there are five secondary canards in Figure \ref{fig:fig4}.
Five 
intersections are indeed detected numerically but $\gamma_0^{4,5}$ are very
close to
$\gamma_0^w$ on $\{z=0\}$.  All secondary canards 
$\gamma_0^j$ approach $\gamma_0^w$ when $z<0$ near the folded-node region on 
$C^a_\epsilon$. The canards move away from each other for $z>0$ (see Figure 
\ref{fig:fig2}). The next theorem shows that the maximal canards organize the 
rotational properties of trajectories passing through a folded-node region. 

\begin{thm}[\cite{BronsKrupaWechselberger}]
\label{thm:sectors}
Fix two sections 
\benn
\begin{array}{lcll}
\Sigma^1&:=&\{(x,y,z)\in\R^3:y=K_1\}\qquad & \text{for some $0<K_1=\cO(1)$}\\  
\Sigma^2&:=&\{(x,y,z)\in\R^3:y=K_2\epsilon\} \qquad & \text{for some
$0<K_2=\cO(1)$}\\
\end{array}
\eenn
for the system \eqref{eq:main_ex_eps}. Consider the intersection points 
of maximal canards in $\Sigma^1\cap C^a_\epsilon$. Let $k$ be the number 
of secondary canards. Then, for $\epsilon>0$ sufficiently small, the 
following holds:
\begin{itemize}
 \item[(C8)] The secondary canards are $\cO(\epsilon^{(1-\mu)/2})$ close to 
 the primary strong canard.
 \item[(C9)] There exist $(k+1)$ so-called \texttt{sectors of rotation} $I_j$,
$1\leqs j\leqs k+1$, 
 between the two primary canards labeled in increasing order starting from the 
 strong primary canard. The size of the sectors $I_j$ for $1\leqs j\leqs k$ 
 is $\cO(\epsilon^{(1-\mu)/2})$ while the size of the sector $I_{k+1}$ is
$\cO(1)$.
 \item[(C10)] The Poincar\'{e} map from $\Sigma^1$ to $\Sigma^2$ is a
contraction 
 with rate $\cO(\epsilon^{(1-\mu)/(2\mu)})$.
 \item[(C11)] All maximal canards are separated by $\cO(\sqrt\epsilon)$ 
 in their $z$-coordinate on $\Sigma^2$.
\end{itemize}   
\end{thm}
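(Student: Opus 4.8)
The statement collects results obtained by geometric singular perturbation theory and the blow-up method; I sketch the proof one would assemble, following \cite{Wechselberger,WechselbergerFN,BronsKrupaWechselberger} and dovetailing with the variational/canonical-form analysis of Section~\ref{sec:fn_new}.

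The plan is to resolve the folded-node singularity of \eqref{eq:main_ex_eps} by the rescaling (directional blow-up) $x=\sqrt\epsilon\,\bar x$, $y=\epsilon\,\bar y$, $z=\sqrt\epsilon\,\bar z$ together with $t=\bar t/\sqrt\epsilon$. This turns \eqref{eq:main_ex_eps} into the $\epsilon$-\emph{independent} canard equation $\bar x'=\bar y-\bar x^2$, $\bar y'=-(\mu+1)\bar x-\bar z$, $\bar z'=\mu/2$. Eliminating $\bar z$ via $\bar z=\tfrac{\mu}{2}\bar t$ and linearising the $(\bar x,\bar y)$-flow about the weak canard brings this equation into the canonical form of Section~\ref{sec:fn_new}, which is equivalent to Weber's (parabolic cylinder) equation with spectral parameter a function of $\mu^{-1}$. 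In this chart the maximal canards of \eqref{eq:main_ex_eps} are exactly the solutions asymptotic to the attracting branch $\{\bar y=\bar x^2,\ \bar x>0\}$ as $\bar t\to-\infty$ and to the repelling branch as $\bar t\to+\infty$; the classical count of such solutions of Weber's equation produces, under the hypotheses of Theorem~\ref{thm:canardsR3}, precisely the primary strong canard and $k$ secondary canards, and their twisting numbers (Theorem~\ref{thm:rotation_det}, which I take as given) are the winding numbers of these solutions, equal to the number of zeros of the associated parabolic cylinder functions.

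Claim (C11) is then immediate: the maximal canards are $\cO(1)$-separated in $\bar z$ in the chart, and $\Sigma^2=\{y=K_2\epsilon\}$ corresponds to $\bar y=K_2=\cO(1)$, still inside the chart, so in the original $z$-coordinate the separation is $\cO(\sqrt\epsilon)$. To reach the outer section $\Sigma^1=\{y=K_1\}$ one transports the maximal canards along $C^a_\epsilon$ using the reduced flow~\eqref{eq:slowflow_desing_ex} and Fenichel's Theorem~\ref{thm:fenichel1}; its linearisation at the folded node has eigenvalues $\lambda_s=-1$ and $\lambda_w=-\mu$. The secondary canards share the strong component of $\gamma^s_\epsilon$ and differ from it only along the weak eigendirection, and they traverse the region in the logarithmic time $\sim\tfrac12|\log\epsilon|$ characteristic of the strong direction; hence their weak-direction $z$-separation on $\Sigma^1$ exceeds that on $\Sigma^2$ by the factor $\epsilon^{-\mu/2}$, turning the $\cO(\sqrt\epsilon)$-spread of (C11) into the $\cO(\epsilon^{(1-\mu)/2})$-spread of (C8); the same estimate also comes straight out of the canonical form of Section~\ref{sec:fn_new}. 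For (C9) one defines the sectors $I_j$ as the arcs of $C^a_\epsilon\cap\Sigma^1$ between consecutive maximal canards, ordered by rotation number via Theorem~\ref{thm:rotation_det}: the first $k$ arcs lie between $\cO(\epsilon^{(1-\mu)/2})$-close secondary canards and so have that size, whereas $I_{k+1}$ runs from the last secondary canard to the weak canard, which is $\cO(1)$-separated from the strong one (the singular versions being the half-lines $z=-x$ and $z=-\mu x$), so $|I_{k+1}|=\cO(1)$.

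Finally (C10): the Poincar\'e map $\Sigma^1\to\Sigma^2$ factors through the fast contraction onto $C^a_\epsilon$, which is exponentially strong ($\cO(\e^{-c/\epsilon})$ in the fast direction and thus irrelevant for the stated rate), followed by the reduced flow on $C^a_\epsilon$ from the $\cO(1)$-scale down to the $\cO(\sqrt\epsilon)$-scale. A generic funnel trajectory approaches the folded node tangent to the weak eigendirection, so this passage takes the logarithmic time $T\sim|\log\epsilon|/(2\mu)$; integrating the gap $|\lambda_s|-|\lambda_w|=1-\mu$ of the two reduced-flow contraction rates over this time gives the asserted rate $\e^{-(1-\mu)T}=\epsilon^{(1-\mu)/(2\mu)}$. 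The real work --- and the main obstacle --- is making the blow-up construction rigorous and uniform: controlling the higher-order terms of \eqref{eq:main_ex_eps_old} and the linearisation error in the canard equation throughout the transitions between the outer Fenichel region, the rescaling chart and the two sections, and obtaining the sharp two-sided asymptotics of the parabolic cylinder functions --- including the matching \emph{lower} bounds implicit in (C8)--(C9) and uniformity as $\mu^{-1}$ approaches the resonances. This is precisely why one invokes the clean statements of \cite{BronsKrupaWechselberger,WechselbergerFN}, and why the present paper re-derives the canard-spacing estimates it needs through the explicit canonical form in Section~\ref{sec:fn_new}.
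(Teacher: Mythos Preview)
The paper does not prove this theorem: it is stated with a citation to \cite{BronsKrupaWechselberger} and no proof is given, so there is no ``paper's own proof'' to compare your sketch against. Your outline is a faithful summary of the argument one finds in \cite{Wechselberger,WechselbergerFN,BronsKrupaWechselberger} --- blow-up to the $\epsilon$-independent canard system, reduction to Weber's equation, counting and separating canards in the rescaled chart for (C11), and transporting the estimates to $\Sigma^1$ via the reduced flow with eigenvalue ratio $\mu$ for (C8)--(C10) --- and you are candid about where the genuine technical work lies. That is the right attitude here: the paper treats Theorem~\ref{thm:sectors} as background input and instead proves its own refinement (Theorem~\ref{thm:dist_canards}) on the section $\{z=0\}$ via the canonical form of Section~\ref{sec:fn_new}.
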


\noindent
\textit{Remark:} We note that the results from Theorems \ref{thm:canardsR3}, 
\ref{thm:rotation_det} and \ref{thm:sectors} also extend to
higher-dimensional 
fast--slow systems with at least two slow variables and at least one fast
variable 
\cite{Wechselberger1,BronsKrupaWechselberger} but that they do not provide a 
detailed analysis of canards beyond the section $\Sigma^2$.\\
 
Theorem \ref{thm:sectors} provides sectors of rotation that organize the
twisting 
of trajectories near the folded node. Once we know which sector an orbit enters 
we can predict the number of oscillations. Note that the oscillations can be 
classified as \lq\lq small oscillations\rq\rq\ due to (C7). Global returns can induce 
so-called \texttt{mixed-mode oscillations (MMOs)} which are found in a wide 
variety of applications; see \cite{KuehnMMO} for a review of MMO mechanisms 
in multiple time scale systems.
 
%%%%%%%%%%%%%%%%%%%%%%%%%%%%%%%%%%%%%%%%%%%%%%%%%%%%%%%%
\section{Canard Spacing}
\label{sec:fn_new}

Theorem \ref{thm:sectors} describes the spacing of maximal canards away from
the 
folded-node region. Since we are also interested in their spacing on the 
cross-section $\{z=0\}$ depending on $\mu$ we need a refined analysis near 
the folded node. The key component in the proofs of Theorems
\ref{thm:canardsR3}, 
\ref{thm:rotation_det} and \ref{thm:sectors} is a rescaling of 
\eqref{eq:main_ex_eps} near the folded node  
\be
\label{eq:rescale1}
(x,y,z,s)=\left(\sqrt\epsilon \bar{x}, \epsilon \bar{y}, 
\sqrt\epsilon \bar{z}, \sqrt\epsilon \bar{s}\right) 
\ee 
which can also be interpreted as a blow-up transformation 
\cite{Wechselberger,WechselbergerFN,BronsKrupaWechselberger}. We 
shall not introduce the blow-up method here but restrict ourselves 
to the analysis of the rescaled system
\be
\label{eq:main_ex_eps1}
\begin{array}{rcl}
\dot{\bar{x}} &=& \bar{y}-\bar{x}^2+\cO(\sqrt\epsilon)\;,\\
\dot{\bar{y}} &=& -(\mu+1)\bar{x}-\bar{z}+\cO(\sqrt\epsilon)\;,\\
\dot{\bar{z}} &=& \frac{\mu}{2}\;.\\
\end{array}
\ee
Therefore the $\cO(\cdot)$-terms in \eqref{eq:main_ex_eps_old} are 
indeed of higher order for the analysis near the folded node. Neglecting 
the small $\epsilon$-dependent terms and dropping the overbars in 
\eqref{eq:main_ex_eps1} for notational convenience yields  
\be
\label{eq:main_ex}
\begin{array}{rcl}
\dot{x}&=& y-x^2\;,\\
\dot{y}&=& -(\mu+1)x-z\;,\\
\dot{z}&=& \frac\mu2\;.\\
\end{array}
\ee
The ODEs \eqref{eq:main_ex} are our main focus of study in this section. 
Note that (C11) in Theorem~\ref{thm:sectors} implies that the maximal 
canards are all $\cO(1)$-finitely separated for \eqref{eq:main_ex} when they 
are an $\cO(1)$-distance away from $(x,y,z)=(0,0,0)$. Observe that we can 
always solve the last equation 
\benn
z(s)=\frac{\mu}{2}(s-s_0)+z_0
\eenn 
where $z_0=z(s_0)$ denotes the initial $z$-coordinate of the trajectory at 
the initial time $s=s_0$. Hence we can view $z$ as a time variable and 
re-write \eqref{eq:main_ex} if necessary as a planar non-autonomous ODE
\be
\label{eq:main_ex_z}
\begin{array}{rcl}
\mu\frac{\6x}{\6z}&=& 2y-2x^2\;,\\
\mu\frac{\6y}{\6z}&=& -2(\mu+1)x-2z\;.\\
\end{array}
\ee
Our first goal is to quantify the intersections of canard solutions with 
the section $\{z=0\}$. Therefore we are going to focus on the analysis 
of orbits arising as perturbations of slow subsystem trajectories inside 
the funnel on $C^a_0$ and assume
\benn
z_0\leqs z\leqs 0\;.
\eenn 
Let $(x^*(z),y^*(z))$ be any solution of \eqref{eq:main_ex_z} and set
\benn
u=(u_1,u_2):=(x-x^*,y-y^*)
\eenn
to derive the \texttt{variational equation}
\be
\label{eq:main_ex_var1}
\begin{array}{rcl}
\mu\frac{\6u_1}{\6z}&=& -4u_1x^*+2u_2-2u_1^2\;,\\
\mu\frac{\6u_2}{\6z}&=& -2(\mu+1)u_1\;.\\
\end{array}
\ee

A key observation by Beno\^it~\cite{Benoit2} was that there are some special solutions 
to \eqref{eq:main_ex}.

\begin{lem}[\cite{Benoit2}]
\label{lem:Benoit}
The ODE \eqref{eq:main_ex} admits two polynomial solutions
\be
\label{eq:poly_Benoit}
(x(s),y(s),z(s))=\left(\frac{\lambda}{2}s,\frac{\lambda^2}{4}s^2+
\frac{\lambda}{2},\frac{\mu}{2}s\right)
\ee 
with $\lambda\in\{-\mu,-1\}$ corresponding to the two primary singular canards.
\end{lem}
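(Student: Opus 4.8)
The plan is to verify directly that the claimed triples solve the system \eqref{eq:main_ex}, and then to identify them with the singular canards of Lemma~\ref{lem:Benoit}'s context. First I would substitute $(x(s),y(s),z(s))=\bigl(\tfrac{\lambda}{2}s,\tfrac{\lambda^2}{4}s^2+\tfrac{\lambda}{2},\tfrac{\mu}{2}s\bigr)$ into the three equations of \eqref{eq:main_ex}. The third equation $\dot z=\mu/2$ is immediate. The first equation requires $\dot x = \tfrac{\lambda}{2} = y-x^2 = \tfrac{\lambda^2}{4}s^2+\tfrac{\lambda}{2}-\tfrac{\lambda^2}{4}s^2 = \tfrac{\lambda}{2}$, which holds for every $\lambda$. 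The second equation requires $\dot y = \tfrac{\lambda^2}{2}s = -(\mu+1)x - z = -(\mu+1)\tfrac{\lambda}{2}s - \tfrac{\mu}{2}s$, i.e.\ $\lambda^2 s = -(\mu+1)\lambda s - \mu s$, so $\lambda$ must satisfy the characteristic equation $\lambda^2+(\mu+1)\lambda+\mu=0$, whose roots are exactly $\lambda=-\mu$ and $\lambda=-1$. This is the only real computation and it is short; there is no genuine obstacle, merely bookkeeping.

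Second, I would interpret these solutions geometrically. Projecting onto the $(x,z)$-plane gives the curve $(x,z)=\bigl(\tfrac{\lambda}{2}s,\tfrac{\mu}{2}s\bigr)$, a straight line through the origin with slope $dx/dz=\lambda/\mu$; for $\lambda=-1$ this is the direction $(-1/\mu,1)^T=\gamma_0^s$ and for $\lambda=-\mu$ it is $(-1,1)^T=\gamma_0^w$, matching the eigenvectors of the desingularized slow flow \eqref{eq:slowflow_desing_ex} computed earlier. Moreover the orbit lies on the parabolic cylinder $y=x^2+\tfrac{\lambda}{2}$, which is an $\cO(\sqrt\epsilon)$-graph-distance perturbation of the critical manifold $C_0=\{y=x^2\}$ in the blown-up scaling \eqref{eq:rescale1} once one recalls $y=\epsilon\bar y$; thus each of these two lines is a bounded trajectory of \eqref{eq:main_ex} that follows $C_0^a$ in from the funnel, passes through the fold, and continues along $C_0^r$ for $s>0$, i.e.\ it is precisely a maximal singular canard in the sense defined at the end of Section~\ref{sec:fastslow}. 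That these two lines are the primary singular canards $\gamma_0^{s,w}$ therefore follows by matching eigendirections with Theorem~\ref{thm:canardsR3}(C1) and the discussion preceding it.

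Since the whole argument reduces to a substitution plus a recognition step, there is no hard part in the usual sense; the only thing to be careful about is the choice of sign conventions for $\lambda$ versus the eigenvalues $\lambda_s=-1$, $\lambda_w=-\mu$, and the factor-of-two normalizations introduced by the time rescaling $z=\tfrac{\mu}{2}s$ — these must be tracked so that the labels \emph{strong}/\emph{weak} are assigned consistently with Figure~\ref{fig:fig3} and with \eqref{eq:main_ex_var1}. It is also worth noting for later use that the constant term $\tfrac{\lambda}{2}$ in the $y$-component encodes the $\cO(\sqrt\epsilon)$ shift of the canard off the critical manifold, a fact that will feed into the variational analysis of \eqref{eq:main_ex_var1} around these particular solutions in the remainder of this section.
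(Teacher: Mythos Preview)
Your proposal is correct and follows essentially the same approach as the paper, which simply states that the lemma can be checked by direct differentiation of~\eqref{eq:poly_Benoit}. Your additional geometric identification of the two solutions with the eigendirections $\gamma_0^{s,w}$ is a nice elaboration but goes beyond what the paper supplies for this lemma.
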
 

Lemma \ref{lem:Benoit} can be checked by direct differentiation of 
\eqref{eq:poly_Benoit}. We know that the weak canard is the center of 
rotation, and from \eqref{eq:poly_Benoit} with $\lambda=-\mu$ we find that
the variational equation around the weak canard is given by 
\be
\label{eq:main_ex_var2}
\mu\frac{\6u}{\6z}=
\left(\begin{array}{cc} 4 z & 2 \\ -2(\mu+1) & 0\\ \end{array}\right)
\left(\begin{array}{c}u_1 \\ u_2\\ \end{array}\right) + 
\left(\begin{array}{c} -2u_1^2 \\ 0\\\end{array}\right)\;.
\ee
We are interested in the detailed interaction of other maximal canards 
with the weak canard. 

\begin{prop}
\label{prop:attract}
If $\mu>0$ is sufficiently small and $z<0$ is bounded away from $0$ 
then solutions $u=u(z)$ of \eqref{eq:main_ex_var2} are attracted 
exponentially fast to $\{u_1=0=u_2\}$. 
\end{prop}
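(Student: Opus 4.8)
The plan is to construct a strict Lyapunov function for the variational equation \eqref{eq:main_ex_var2}, regard $z$ (which increases from $z_0$ towards $0$) as the evolution variable, and integrate the resulting differential inequality. Write
\[
A(z)=\begin{pmatrix} 4z & 2\\ -2(\mu+1) & 0\end{pmatrix}
\]
for the linear part. The first step is the elementary observation that $A(z)$ is uniformly Hurwitz on every compact subinterval $[z_0,z_1]\subset(-\infty,0)$: one has $\Tr A(z)=4z<0$ and $\det A(z)=4(\mu+1)>0$ for all $z<0$, and a short estimate on the roots of $\lambda^2-4z\lambda+4(\mu+1)=0$ gives $\re\lambda\le-\kappa$ for a constant $\kappa>0$ depending only on $z_0,z_1$, in particular not on $\mu\in(0,1)$. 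This already forces, at the linear level, decay of $u$ at rate $\kappa/\mu$ in $z$; the remaining work is to upgrade this to a rigorous estimate that absorbs both the $z$-dependence of $A(z)$ and the quadratic term in \eqref{eq:main_ex_var2}.

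The one genuine subtlety is that the vanishing $(2,2)$-entry of $A(z)$ makes \eqref{eq:main_ex_var2} behave, at the linear level, like a damped oscillator written in first-order form, so the naive quadratic form $(\mu+1)u_1^2+u_2^2$ is dissipated by the flow only in the $u_1$-direction and does not control $u_2$ by itself. I would therefore use the skew-corrected form
\[
V(u)=(\mu+1)\,u_1^2+u_2^2-\delta\,u_1u_2 ,
\]
with $\delta>0$ a small constant chosen below; for $\delta<2$ this is positive definite and comparable to $\norm{u}^2$ uniformly in $\mu$. Differentiating $V$ along the two scalar equations of \eqref{eq:main_ex_var2}, the $u_1u_2$-contributions of the linear part cancel and one obtains
\[
\mu\,\frac{\6 V}{\6 z}=2(\mu+1)(4z+\delta)\,u_1^2-2\delta\,u_2^2-4\delta z\,u_1u_2+\cO\!\bigl(\norm{u}^3\bigr),
\]
where the quadratic nonlinearity $-2u_1^2$ has been pushed entirely into the $\cO(\norm{u}^3)$ remainder, so it is genuinely of higher order.

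Completing the square on the cross-term $-4\delta z\,u_1u_2$, distributing it between the two squares with a weight proportional to $\abs z$ and using that $\abs z$ ranges over a compact subset of $(0,\infty)$, one checks that for $\delta$ small enough (depending only on $z_0,z_1$) the quadratic part is bounded above by $-c_0\norm{u}^2$ with $c_0>0$ independent of $\mu$. Hence on a fixed small ball $\norm{u}\le\rho$ the cubic remainder is dominated by half the quadratic part, giving $\mu\,\6 V/\6 z\le-\tfrac{c_0}{2}\norm{u}^2\le-c_1 V$ with $c_1>0$ independent of $\mu$; integrating, $V(u(z))\le V(u(z_0))\,\e^{-c_1(z-z_0)/\mu}$ and therefore $\norm{u(z)}\le C\norm{u(z_0)}\,\e^{-c_1(z-z_0)/(2\mu)}$ as long as the solution stays in the ball. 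Smallness of $\mu$ is what makes the contraction rate $c_1/(2\mu)$ large, i.e.\ makes the attraction ``exponentially fast'' in the intended sense. I expect the main obstacle to be precisely the construction and verification of the correction term $\delta\,u_1u_2$: one has to pick the single constant $\delta$ (and the weight in the Young inequality) so that the indefinite cross-term is absorbed simultaneously for all $z\in[z_0,z_1]$ and all admissible $\mu$ — which is exactly where ``$z$ bounded away from $0$'' is used — while keeping the analysis confined to a ball on which $-2u_1^2$ is negligible. A less computational alternative would be to observe that $\mu\,\6 u/\6 z=A(z)u+\cO(\norm{u}^2)$ with $A(z)$ uniformly Hurwitz fits the general theory of attracting slow manifolds for fast--slow systems, of which the Lyapunov estimate above is essentially the proof in the case at hand.
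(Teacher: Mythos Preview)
Your proof is correct but takes a different route from the paper. The paper's argument is a three-line invocation of Fenichel's Theorem: augment \eqref{eq:main_ex_var2} by $\dot z=0$, view it as a fast--slow system with fast variable $u$ and slow variable $z$ (legitimate because $\mu$ is small), observe that the critical manifold is $\{u=0\}$, and check that the fast linearization at $\mu=0$, namely $\bigl(\begin{smallmatrix}4z&2\\-2&0\end{smallmatrix}\bigr)$, has eigenvalues $2(z\pm\sqrt{z^2-1})$ with negative real part for $z<0$. Exponential attraction then follows from normal hyperbolicity.

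Your approach is more hands-on: you build the skew-corrected Lyapunov function $V=(\mu+1)u_1^2+u_2^2-\delta\,u_1u_2$ and verify the dissipation inequality directly. This is genuinely more elementary --- it avoids the Fenichel machinery entirely --- at the cost of the explicit bookkeeping with $\delta$ and the Young-inequality absorption of the cross term. You even anticipate the paper's argument in your final sentence. One small remark: your proof is explicitly local (it requires $\norm{u}\le\rho$ so that the cubic term is dominated), but so is the paper's, since Fenichel's Theorem is a local statement; in the intended application the solutions enter such a neighbourhood anyway. Also note that your choice of $\delta$ must shrink if $|z_0|$ is allowed to be large (since the admissible range is roughly $\delta\lesssim 4|z|/(1+z^2)$), so your argument implicitly uses that $z$ lies in a \emph{compact} subinterval of $(-\infty,0)$, which is consistent with how the proposition is used downstream (cf.\ the standing assumption $-1<z_0\le z<0$).
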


\begin{proof}
Augmenting the variational equation \eqref{eq:main_ex_var2} by $\dot{z}=0$ 
gives an autonomous fast--slow system with two fast variables $u=(u_1,u_2)$ 
and one slow variable $z$ (since $\mu$ is sufficiently small). The critical 
manifold is  
\benn
\{(u_1,u_2,z)\in\R^3:u_1=0,u_2=0\}\;.
\eenn
Computing the linearization with respect to the fast variables for $\mu=0$ gives a matrix 
\benn
\left(\begin{array}{cc} 4 z & 2 \\ -2 & 0\\ \end{array}\right)
\eenn
with eigenvalues $2(z\pm\sqrt{z^2-1})$. Hence if $z<0$ both eigenvalues have 
negative real parts and the result follows from Fenichel's Theorem.
\end{proof}

For $0<\mu\ll 1$, Proposition \ref{prop:attract} allows us to reduce the 
study of the nonlinear variational equation \eqref{eq:main_ex_var2} to a linear 
one by dropping the higher-order term $-2u_1^2$. This yields the linear 
non-autonomous ODE
\be
\label{eq:main_ex_var3}
\mu\frac{\6u}{\6z}=
\underbrace{\left(\begin{array}{cc} 
4 z & 2 \\ -2(\mu+1) & 0\\ \end{array}\right)}_{=:A(z)} u=A(z)u\;.
\ee
In particular, we must show what happens to solutions $u=u(u_1,u_2)$ 
near the weak canard and when $z$ is not bounded away from $0$.\\ 

\noindent 
\textit{Remark:} The variational equation \eqref{eq:main_ex_var3} has 
been analyzed \cite{Benoit2,Wechselberger} by re-writing it as a 
second-order equation
\be
\label{eq:Weber}
\frac{\6^2u_1}{\6s^2}-\mu s\frac{\6u_1}{\6s}+u_1=0\;.
\ee
Beno\^{i}t \cite{Benoit2} observed that using the time rescaling 
$s=\tilde{s}/\sqrt{\mu}$ of \eqref{eq:Weber} one gets
\be
\label{eq:Weber2}
\frac{\6^2u_1}{\6\tilde{s}^2}-\tilde{s}\frac{\6u_1}{\6\tilde{s}}+
\frac{1}{\mu}u_1=0
\ee 
which is referred to as \texttt{Weber equation} or 
\texttt{Ricatti-Hermite equation}. The ODE \eqref{eq:Weber2} has 
explicit solutions in terms of Hermite polynomials (see 
\cite{AS}, p.~781). Then the asymptotic properties of Hermite 
polynomials can be used to draw conclusions about the existence 
of maximal canards.\\

We develop two alternative ways to describe the variational equation 
\eqref{eq:main_ex_var3} directly. This provides new quantitative 
information about canard solutions and also gives estimates for the 
spacing of canards on cross-sections near the folded node. Our first 
approach in Section \ref{sec:averaging} uses averaging and provides a 
formal result. This result is then stated in Section 
\ref{sec:diagonal} and proven in Appendix \ref{appendix:canonical_form} 
using coordinate changes that are motivated by 
the formal calculation.

\subsection{Averaging}
\label{sec:averaging}

As shown in \eqref{eq:Weber} we can consider the variational equation 
\eqref{eq:main_ex_var3} as the second-order equation 
\be
\label{eq:Weber3}
\ddot{u}_1+u_1=\mu s\dot{u}_1\;.
\ee 
The form \eqref{eq:Weber3} suggests to view the problem as 
a damped oscillator; we assume that
\be
\label{eq:cond_range}
-\frac{2}{\mu}< s_0\leqs  s\leqs 0\;. 
\ee
Note that $\mu s=-2$ is precisely critical damping and for $\mu s\in(-2,0)$ 
the oscillator is underdamped. Therefore we expect that \eqref{eq:Weber3} 
describes harmonic oscillations with damping/contraction that is 
non-uniform in time. We make a change to polar coordinates
\benn
(u_1,\dot{u}_1)=(r(s)\cos(s+\psi(s)),-r(s)\sin(s+\psi(s)))
\eenn
which yields the ODEs
\be
\label{eq:polar}
\begin{array}{lcl}
\dot{r} &=& \mu s r\sin^2(s+\psi)\;,\\
\dot{\psi} &=& \mu s \cos(s+\psi)\sin(s+\psi)\;.\\
\end{array}
\ee
To simplify the analysis, consider the time rescaling 
$s=-\sqrt{-\tilde{s}}$. This converts \eqref{eq:polar} to
\be
\label{eq:polar1}
\begin{array}{lcl}
\frac{\6r}{\6\tilde{s}} &=& -\frac{\mu  r}{2}\sin^2(-\sqrt{-\tilde{s}}
+\psi)\;,\\
\frac{\6\psi}{\6\tilde{s}} &=& -\frac{\mu}{2}  
\cos(-\sqrt{-\tilde{s}} +\psi)\sin(-\sqrt{-\tilde{s}} +\psi)\;.\\
\end{array}
\ee
We consider \eqref{eq:polar1} on each time subinterval 
\benn
I_j:=[-((j+2)\pi)^2,-(j\pi)^2], \qquad 
\text{for $j\in\{0,2,4,6,\ldots\}$}
\eenn
by viewing \eqref{eq:polar1} as a vector field on $\R^+\times (I_j/\sim)$ 
where the equivalence relation $\sim$ identifies the endpoints of $I_j$. 
Then the vector field is in the form for averaging \cite{Verhulst}. The 
averaged equations are
\begin{align}
\frac{\6r_j}{\6\tilde{s}} &= -\frac{\mu  r_j}{2|I_j|}\int_{I_j}
\sin^2(-\sqrt{-\tilde{s}} +\psi_j)d\tilde{s}
=-\frac{(2j+2)\pi +\sin(2\psi_j)}{2 (4j+4)\pi}\mu r_j\;, \label{eq:polar2a}\\
\frac{\6\psi_j}{\6\tilde{s}} &= -\frac{\mu}{2|I_j|}\int_{I_j}  
\cos(-\sqrt{-\tilde{s}} +\psi_j)\sin(-\sqrt{-\tilde{s}} +\psi_j) d\tilde{s}
=-\frac{\mu \pi \cos(2\psi_j)}{2(4j+4)\pi^2}\;. \label{eq:polar2b}
\end{align}
In particular, we find that if we take a formal limit $j\ra \I$ the equation 
for the radius is 
\be
\label{eq:radius_sol}
\frac{\6r_\I}{\6\tilde{s}}=-\frac{\mu}{4}  r_\I\;.
\ee

\noindent
\textit{Remark:} Observe that one could also view 
\eqref{eq:polar2a}--\eqref{eq:polar2b} as an autonomous vector 
field and formally average over the angle $\psi_j$ to get
\benn
\frac{\6r_j}{\6\tilde{s}}=-\frac{(j+1)}{(4j+4)}\mu r_j\;,\qquad 
\frac{\6\psi_j}{\6\tilde{s}} =0\;,
\eenn
and then take the limit $j\ra \I$.\\

The solution of \eqref{eq:radius_sol} is given by
\be
\label{eq:sol_avg}
r_\I(\tilde{s})=r_\I(\tilde{s}_0)\e^{-\frac14\mu(\tilde{s}-\tilde{s}_0)}=
r_\I(s_0)\e^{\frac14\mu(s^2-s^2_0)}=r_\I((2/\mu)z_0)\e^{(z^2-z^2_0)/\mu}
\ee
which shows that the leading-order behaviour of the solutions 
to the variational equation \eqref{eq:main_ex_var3} for $z<0$ consists 
of a contraction towards the weak canard, given by \eqref{eq:sol_avg}, 
combined with a rotation. If we assume that $\psi_0(0)=0$, 
i.e., the rotation ends at angle $0$ on section $\{z=0\}$, then solving 
\eqref{eq:polar2b} yields 
\benn
\psi_0(\tilde{s})=-\text{tan}^{-1}\left[\text{tanh}\left[ 
\frac{\tilde{s}\mu}{8(1+j)\pi} \right]\right]\;.
\eenn
In principle we can now calculate $\psi_0(-(2\pi)^2)$, use this 
result as an initial condition for $\psi_1$ and then repeat the 
process to get a very detailed description of the rotational properties 
of trajectories near a folded node.

\subsection{Diagonalization}
\label{sec:diagonal}

To give rigorous arguments instead of the above formal calculation, we start by considering 
the variational equation in first-order form \eqref{eq:main_ex_var3}. The 
matrix $A(z)$ has eigenvalues 
\benn
2z\pm 2\icx\omega(z)\;,\qquad \text{where $\omega(z)=\sqrt{1-z^2+\mu}$\;.}
\eenn
We assume that $|z|<1$ so that $\omega(z)$ is real and bounded away 
from $0$. Furthermore $A(z)$ has trace $4z$. We expect that the solution 
$u(z)$ for \eqref{eq:main_ex_var3} consists of a contraction and a rotation 
for 
\be
\label{eq:cond_range_z}
1<z_0\leqs z<0\;.
\ee
Observe that \eqref{eq:cond_range_z} corresponds to the condition 
\eqref{eq:cond_range}. 

\begin{thm}[Canonical form]
\label{thm:var1}
There exists a matrix 
\be
\label{eq:ccA}
S(z)=
\frac{1}{\sqrt{\omega(z)}}
\begin{pmatrix} -z+\omega(z) & 
-z-\omega(z) \\ 1 & 1\\ \end{pmatrix} + \cO(\mu)
\ee
such that the coordinate change $u(z)=S(z)\tilde{u}(z)$ transforms the
variational equation \eqref{eq:main_ex_var3} into canonical form 
\be
\label{eq:canonical_form_diag}
\mu \frac{\6 \tilde{u}}{\6 z}=
\begin{pmatrix}
a(z) & \varpi(z) \\ 
-\varpi(z) & a(z)  
\end{pmatrix}
\tilde{u}\;,
\ee
where
\begin{align}
\nonumber
a(z) &= 2z + \Order{\mu^2}\;,\\
\varpi(z) &= 2\omega(z) + \Order{\mu}\;.
\label{eq:canonical_form_diagB}
\end{align}
As a consequence, we can write the solution of \eqref{eq:main_ex_var3} in the
form 
\be
\label{eq:main_sol_var1}
u(z)=
\e^{\alpha(z,z_0)/\mu}S(z)U(z,z_0)S(z_0)^{-1}u(z_0)
\ee
for $|z|< 1$ and $|z_0|<1$, where 
\be
\label{eq:main_sol_var2}
\alpha(z,z_0) = \int_{z_0}^z a(s)\,\6s
= z^2 - z_0^2 + \Order{\mu^2}
\ee
and $U(z,z_0)$ is the orthogonal matrix 
\be
\label{eq:fund_rot}
U(z,z_0)=
\begin{pmatrix}
\cos(\varphi(z,s)/\mu) & \sin(\varphi(z,s)/\mu) \\
-\sin(\varphi(z,s)/\mu) & \cos(\varphi(z,s)/\mu)
\end{pmatrix}\;,
\qquad 
\varphi(z,z_0)=\int_{z_0}^z \varpi(s)\,\6s+\cO(\mu)\;.
\ee
\end{thm}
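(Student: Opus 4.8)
The plan is to diagonalize $A(z)$ up to an $\cO(\mu)$-correction, then absorb the residual off-diagonal terms into a near-identity coordinate change so that the remaining matrix is of the stated canonical (rotation-plus-scaling) form. First I would compute the eigenvalues $2z \pm 2\icx\omega(z)$, $\omega(z) = \sqrt{1-z^2+\mu}$, and note that for $|z|<1$ these are genuinely complex with $\omega$ bounded away from $0$; this guarantees that the diagonalizing matrix is well-conditioned on the relevant range. The natural real "diagonalizer" is the matrix $S_0(z)$ whose columns are the real and imaginary parts of a complex eigenvector: with eigenvector $(-z+\icx\omega, 1)^T$ (so that $A(z)(-z+\icx\omega,1)^T = (2z+2\icx\omega)(-z+\icx\omega,1)^T$, checked directly from the entries of $A$) one obtains, after a convenient normalization by $1/\sqrt{\omega(z)}$, precisely the leading term displayed in \eqref{eq:ccA}. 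Conjugating $A(z)$ by $S_0(z)$ turns the matrix part into $\left(\begin{smallmatrix} 2z & 2\omega \\ -2\omega & 2z \end{smallmatrix}\right)$ exactly; the normalization $1/\sqrt\omega$ is chosen so that $\det S_0$ is constant, which keeps $S_0^{-1}$ uniformly bounded.

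The next step handles the two perturbative complications. First, the original matrix is $A(z)$, not its $\mu=0$ version: the entry $-2(\mu+1)$ rather than $-2$ produces an $\cO(\mu)$ perturbation of $A$, which after conjugation by $S_0$ contributes $\cO(\mu)$ to both diagonal and off-diagonal entries. Second, and more importantly, one must account for the fact that $S$ depends on $z$: the change of variables $u = S(z)\tilde u$ in $\mu\,\6u/\6z = A(z)u$ yields $\mu\,\6\tilde u/\6z = \big(S^{-1}A S - \mu\, S^{-1}\,\6S/\6z\big)\tilde u$. Since $\6S_0/\6z = \cO(1)$ on $|z|<1$ bounded away from the endpoints, the correction term $\mu\, S^{-1}\,\6S/\6z$ is $\cO(\mu)$ — it is this term, together with the $\cO(\mu)$ from $A$ itself, that produces the $\Order{\mu}$ error in $\varpi$ and the $\Order{\mu^2}$ error in $a$. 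To upgrade $S_0$ to the $S$ of \eqref{eq:ccA} I would look for $S(z) = S_0(z)(\Id + \mu R(z))$ with $R$ chosen to cancel the $\cO(\mu)$ \emph{symmetric-traceless} part of the conjugated matrix (i.e. to restore the exact form $a\cdot\Id + \varpi\cdot J$, $J = \left(\begin{smallmatrix}0&1\\-1&0\end{smallmatrix}\right)$); solving the associated linear (Sylvester-type) equation for $R$ is possible because the unwanted part lives in the $2$-dimensional complement of $\mathrm{span}\{\Id, J\}$ and the relevant operator is invertible there, with inverse of size $\cO(1/\omega)$. This fixes $a(z) = 2z + \Order{\mu^2}$ and $\varpi(z) = 2\omega(z) + \Order{\mu}$, with the $\Order{\cdot}$ terms smooth in $z$.

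The final step is bookkeeping: given the canonical form \eqref{eq:canonical_form_diag}, the fundamental solution factors as $\exp\!\big(\tfrac1\mu\int_{z_0}^z a(s)\,\6s\big)$ times the solution of $\mu\,\6v/\6z = \varpi(z) Jv$, and the latter is exactly the rotation $U(z,z_0)$ of \eqref{eq:fund_rot} with $\varphi(z,z_0) = \int_{z_0}^z \varpi(s)\,\6s$; conjugating back by $S(z)$ and $S(z_0)^{-1}$ gives \eqref{eq:main_sol_var1}. The integral $\alpha(z,z_0) = \int_{z_0}^z a(s)\,\6s = z^2 - z_0^2 + \Order{\mu^2}$ is immediate. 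I expect the main obstacle to be neither the algebra of $S_0$ nor the rotation bookkeeping, but \emph{tracking the $z$-dependence of all error terms uniformly} — in particular making sure the $\cO(\mu)$ and $\Order{\mu^2}$ remainders in $a$, $\varpi$ and $S$ remain $\cO(1)$-smooth and bounded as $z$ ranges over $|z|<1$ bounded away from $\pm 1$ (where $\omega$ degenerates), since the whole construction is divided by powers of $\omega(z)$. This is exactly the kind of careful but routine estimate that the excerpt says is deferred to Appendix~\ref{appendix:canonical_form}, and it is why the theorem is stated with the hypothesis $|z|<1$ rather than on all of $z<0$.
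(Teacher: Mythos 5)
Your overall strategy — conjugate by the real eigenvector matrix $S_0$, then correct by a near-identity factor $\Id+\mu R(z)$ — is the same one the paper uses (the paper works in complex eigencoordinates first and converts to real form at the end, but that is cosmetic). However, there is a genuine gap in the step where you determine $R$. You frame it as an algebraic Sylvester problem: cancel the component of the conjugated matrix orthogonal to $\mathrm{span}\{\Id,J\}$ by inverting the commutator operator, which has inverse of size $\Order{1/\omega}$. That computation is correct at order $\mu$, but it only kills the $\Order{\mu}$ residual; because $S$ and hence $R$ depend on $z$, the change of variables produces a term $\mu^{2}(\Id+\mu R)^{-1}\,\6R/\6z$ that you drop. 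Theorem~\ref{thm:var1} asserts the \emph{exact} canonical form~\eqref{eq:canonical_form_diag} (this exactness is precisely what makes the solution formula~\eqref{eq:main_sol_var1} a genuine representation rather than an approximation), so the equation determining $R$ is not a pointwise linear-algebra problem: written out, it is a nonautonomous nonlinear ODE of Riccati type, $\mu\,\6v/\6z = -2\icx\omega_2(z)v - \mu^{2}\bar\rho_2(z) v^{2} + \rho_2(z)$ in the paper's notation, cf.\ equation~\eqref{eq:Ricatti}.

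The analytical crux of the appendix is therefore absent from your proposal: one has to \emph{prove} that this Riccati equation admits a solution that stays bounded on $\abs{z}<1$ (so that $S_2=\Id+\Order{\mu}$ is well defined and uniformly invertible), and the paper does this by comparing the nonlinear flow with the linear one via the Gronwall-type Lemma~\ref{lem:Smale}, exploiting that $\omega_2$ and $\rho_2$ are $\Order{1}$ and that the quadratic term carries a prefactor $\mu^{2}$. Without this boundedness argument the claimed exact diagonalization is unjustified. Your final paragraph correctly anticipates that ``the main obstacle'' is controlling $z$-dependence uniformly away from $\abs{z}=1$, but misidentifies it as mere bookkeeping of smooth remainders: the real obstacle is the ODE solvability for $R$ (equivalently $v$), which is a different kind of argument. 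Everything else — the choice of $S_0$, the $1/\sqrt{\omega}$ normalization keeping $\det S_0$ constant, the factorization of the fundamental solution into $\e^{\alpha/\mu}$ times the rotation $U(z,z_0)$, and the evaluation of $\alpha$ and $\varphi$ — matches the paper and is sound.
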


The proof is given in Appendix~\ref{appendix:canonical_form}.
We remark that Theorem \ref{thm:var1} easily extends to the linearization around
solutions other than the weak canard. 

\subsection{The Distance Estimate}

From the solution of the variational equation in Theorem \ref{thm:var1} we can
give asymptotic estimates on the distance of the secondary canards to the weak
canard 
on the section
\benn
\Sigma^0:=\{(x,y,z)\in \R^3:z=0\}\;.
\eenn

\begin{thm}[Canard spacing]
\label{thm:dist_canards}
The distance of the $k$-th secondary canard $\gamma^k$ to the weak canard $\gamma^w$ on
$\Sigma^0$ is given by 
$\cO(\e^{-c_{0}(2k+1)^2\mu})$ as $\mu\ra 0$ where $0<c_{0}=\Order{1}$ is a constant. 
\end{thm}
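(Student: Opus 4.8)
The strategy is to express the distance through the explicit solution of the variational equation in Theorem~\ref{thm:var1} and to use the twist counts of Theorem~\ref{thm:rotation_det} to decide, for each secondary canard, over how long a $z$-interval it stays close to $\gamma^w$ and is therefore exposed to the contraction factor $\e^{\alpha(z,z_0)/\mu}$: the more a canard twists around $\gamma^w$, the deeper it penetrates, and the larger the resulting contraction on $\Sigma^0$.

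Let $\gamma^w$ denote the weak canard \eqref{eq:poly_Benoit} with $\lambda=-\mu$, and for the $k$-th secondary canard $\gamma^k$ set $u^k:=\gamma^k-\gamma^w$. Since $z$ is an affine function of $s$ along each trajectory, $u^k$ may be viewed as a function of $z$, and as such it is an exact solution of the nonlinear variational equation \eqref{eq:main_ex_var2}. The symmetry \eqref{eq:symmetry} fixes $\gamma^w$ and permutes the maximal canards while preserving their twist number, hence fixes each $\gamma^k$; consequently $u_1^k$ is odd and $u_2^k$ even in $z$, in particular $u_1^k(0)=0$ and the interval on which $\gamma^k$ stays near $\gamma^w$ is symmetric about $z=0$. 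For $0<\mu\ll1$, Proposition~\ref{prop:attract} and the ensuing reduction replace \eqref{eq:main_ex_var2} by the linear equation \eqref{eq:main_ex_var3} on the part of that interval on which $u^k$ is small, at the cost of an $\cO(1)$ multiplicative error; there Theorem~\ref{thm:var1} applies verbatim.

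Next I would pin down an ``incoming'' value $z_0^{(k)}<0$ for $\gamma^k$: the $z$ at which $\gamma^k$ has separated from $\gamma^w$ by a fixed distance $\delta_0$, or, if $\gamma^k$ never comes within $\delta_0$ of $\gamma^w$, the edge $z_0^{(k)}=-(1-\delta)$ of the validity range of Theorem~\ref{thm:var1}. On $[z_0^{(k)},-z_0^{(k)}]$ the solution is, by \eqref{eq:main_sol_var1}--\eqref{eq:fund_rot}, the product of the orthogonal rotation $U$ by angle $\varphi(z,z_0^{(k)})/\mu$ and the conformal contraction $\e^{\alpha(z,z_0^{(k)})/\mu}$; since the coordinate change $u=S(z)\tilde u$ is $\cO(1)$ and slowly varying, the rotation of $u^k$ about $\gamma^w$ over that interval differs from $\varphi(-z_0^{(k)},z_0^{(k)})/\mu$ only by $\cO(1)$. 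Matching this with the twist count $2k+1$ of Theorem~\ref{thm:rotation_det} gives
\be
\label{eq:plan-twist}
\frac1\mu\int_{z_0^{(k)}}^{-z_0^{(k)}}\varpi(s)\,\6s=c_2\,(2k+1)+\cO(1)\;,\qquad c_2=\Order1\;.
\ee
Since $\varpi(s)=2\sqrt{1-s^2+\mu}+\cO(\mu)$ obeys $0<c'\le\varpi(s)\le C'$ on compact subintervals of $(-1,1)$, and $k\le\tfrac12\mu^{-1}$ keeps $|z_0^{(k)}|$ bounded away from $1$, \eqref{eq:plan-twist} yields $|z_0^{(k)}|=\Theta\bigl(\mu(2k+1)\bigr)$; in particular $|z_0^{(k)}|\ge c_1\mu(2k+1)$ with $c_1=\Order1$.

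Finally, evaluating \eqref{eq:main_sol_var1} between $z_0^{(k)}$ and $0$, and using that $U$ is orthogonal, that $\norm{S(z)^{\pm1}}=\cO(1)$ for $z$ bounded away from $\pm1$, and that $\alpha(0,z_0)=-z_0^2+\Order{\mu^2}$ by \eqref{eq:main_sol_var2}, one gets
\be
d_k=\norm{u^k(0)}\le\cO(1)\,\norm{u^k(z_0^{(k)})}\,\e^{\alpha(0,z_0^{(k)})/\mu}=\cO(1)\,\e^{-(z_0^{(k)})^2/\mu+\Order\mu}\;,
\ee
and inserting $|z_0^{(k)}|\ge c_1\mu(2k+1)$ gives $d_k=\cO\bigl(\e^{-c_1^2(2k+1)^2\mu}\bigr)$, i.e.\ the claim with $c_0=c_1^2=\Order1$. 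I expect the main obstacle to be \eqref{eq:plan-twist}: rigorously translating the geometric twist count $2k+1$ of Theorem~\ref{thm:rotation_det} into the analytic rotation integral of the canonical form. This requires controlling the angular distortion produced by the $\cO(1)$ change of variables $S(z)$, ruling out that the nonlinear term $-2u_1^2$ alters the rotation count by more than $\cO(1)$ (which should follow from an averaging argument in the spirit of Section~\ref{sec:averaging}), and giving a clean definition of the symmetric ``near-$\gamma^w$'' interval that works uniformly over all secondary canards, including those with few twists, for which $(2k+1)^2\mu=\cO(1)$ and the bound is essentially trivial. The contraction estimate in the last display, by contrast, is immediate from Theorem~\ref{thm:var1}.
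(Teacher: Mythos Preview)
Your proposal is correct and takes essentially the same route as the paper: both arguments combine the explicit canonical-form solution of Theorem~\ref{thm:var1} with the twist count (C6) from Theorem~\ref{thm:rotation_det} to determine $z_0\asymp(2k+1)\mu$, and then read off the contraction factor $\e^{-z_0^2/\mu}$ from~\eqref{eq:main_sol_var2}. The only cosmetic difference is that the paper works on the half-interval $[z_0,0]$ with $(2k+1)/2$ twists rather than your symmetric interval, and bounds $\int_{z_0}^0\omega(s)\,\6s$ directly via concavity to pin down $z_0\in[-(2k+1)\mu,-\tfrac{\pi}{4}(2k+1)\mu]$ rather than via your two-sided $\varpi$ bounds; the outcome is identical. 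Your flagged ``main obstacle''---the translation of the geometric twist count into the analytic rotation condition~\eqref{eq:plan-twist}---is handled in the paper exactly as you do, namely by direct appeal to (C6) without a separate justification of the $\cO(1)$ angular distortion or nonlinear correction, so your caution there is well placed but not a divergence from the paper's level of rigor.
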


\begin{proof}
By Theorem \ref{thm:rotation_det}, part (C6), the $k$-th secondary canard makes
$(2k+1)/2$ twists around the weak canard for $z$ going from $z_0$ to $0$. Using this
fact together with the rotational part \eqref{eq:fund_rot} of the solution
\eqref{eq:main_sol_var1} in Theorem \ref{thm:var1} we find the condition 
\be
\label{eq:int_above}
\frac{\varphi(0,z_0)}{\mu}=\frac{1}{\mu}\int_{z_0}^0
\varpi(s)\6s\stackrel{!}{=}\pi\frac{2k+1}{2}
\ee 
to leading order. Using concavity of the integral $z\mapsto \int_{z}^0 \omega(s)\6s$ for $z\in(-1,0)$, we find  
\be
\label{eq:int_eval}
-\frac{\pi}{4}z_0\sqrt{1+\mu}\leqs \int_{z_0}^0 \omega(s)\6s\leqs
-z_0\sqrt{1+\mu}\;,
\ee
which enables us to approximate the integral in~\eqref{eq:int_above}. Hence as $\mu\ra 0$ we get that 
\be
\label{eq:z0_order}
z_0\in\left[-(2k+1)\mu,-\frac{\pi}{4}(2k+1)\mu\right]=:[z_0^{(1)},z_0^{(2)}] \;,
\ee
where $z_0^{(1)}$ is the starting-point estimate for maximal canards that start to
spiral around $\gamma^w$ near $z\gtrapprox -1$ and $z_0^{(2)}$ the estimate for
maximal canards that start to spiral around $\gamma^w$ near $z\lessapprox 0$. In
particular, we find that $z_0=\cO((2k+1)\mu)$. Note that the contraction term
towards $\gamma^w$ in \eqref{eq:main_sol_var1} for $-1<z_0\leqs z<0$ has order
\be
\label{eq:contract_term}
\cO\left(\e^{(z^2-z_0^2)/\mu}\right)\;.
\ee
The result follows upon evaluating \eqref{eq:contract_term} on $\Sigma^0$ and substituting \eqref{eq:z0_order}.  
\end{proof}

%%%%%%%%%%%%%%%%%%%%%%%%%%%%%%%%%%%%%%%%%%%%%%%%%%%%%%%%
\section{Stochastic Fast-Slow Systems}
\label{sec:SDEfastslow}

As for deterministic fast--slow systems we only give a brief introduction to 
stochastic fast--slow systems. For a detailed introduction to stochastic 
multiple time-scale dynamics consider \cite{BGbook,KuehnBook}; we are 
also going to assume standard results on stochastic differential 
equations (SDEs) \cite{Oksendal,Kallenberg}. All SDEs and stochastic 
integrals in this paper are considered in the It\^{o} interpretation.\\

We associate to the deterministic fast--slow system 
\eqref{eq:gen_fast_slow} a \texttt{stochastic fast-slow system} given by
\be
\label{eq:gen_fast_slow_SDE}
\begin{array}{lcl}
\6x_s&=& \frac1\epsilon  f(x,y,\mu,\epsilon)\6s
+\frac{\sigma}{\sqrt\epsilon}F(x_s,y_s) \6W_s\;,\\
\6y_s&=& g(x,y,\mu,\epsilon)ds+\sigma'G(x_s,y_s) \6W_s\;,\\
\end{array}
\ee 
where $F:\R^{m+n}\ra \R^{m\times k}$, $G:\R^{m+n}\ra \R^{n\times k}$, 
$\{W_s\}_{s\geqs 0}$ is a $k$-dimensional standard Brownian 
motion on some probability space $(\Omega,\mathcal{F},\P)$ and 
$\sigma,\sigma'>0$ are parameters controlling the \texttt{noise level}
\benn
\sqrt{\sigma^2+(\sigma')^2}\;.
\eenn 
We also define $\rho:=\sigma'/\sigma$ and assume that $\rho$ is bounded above and below by positive constants. The initial conditions 
$(x_0,y_0):=(x_{s_0},y_{s_0})$ are chosen to be square-integrable and independent of $\{W_s\}_{s\geqs s_{0}}$. Furthermore we 
assume that $f,g,F,G$ are sufficiently smooth and satisfy the standard assumptions that guarantee the existence and pathwise uniqueness of strong solutions $(x_s,y_s)$ to 
the SDE \eqref{eq:gen_fast_slow_SDE}. Note that these conditions also imply the existence of a continuous version of $(x_s,y_s)$. 
The law of the process $(x_s,y_s)$, starting at time $s_{0}$ in $(x_0,y_0)$, is denoted by $\P^{s_0,(x_0,y_0)}$ and 
the corresponding expectation by $\E^{s_0,(x_0,y_0)}$. Our approach to 
understand the dynamics of \eqref{eq:gen_fast_slow_SDE} is to analyze 
the time a sample path spends in a given Borel-measurable set 
$\mathcal{A}\subset \R^{m+n}$. Suppose $(x_0,y_0)\in\mathcal{A}$ and 
define the \texttt{first-exit time} from $\mathcal{A}$ as
\benn
\tau_\mathcal{A}:=\inf\{s\in[s_0,\infty):(x_s,y_s)\notin \mathcal{A}\}\;.
\eenn   
In this paper we are always going to choose sets $\mathcal{A}$ so that 
$\tau_{\mathcal{A}}$ is a stopping time with respect to the filtration 
generated by $\{(x_s,y_s)\}_{s\geqs s_0}$. \\

Our first goal is to state an analog of Fenichel's Theorem. This theorem 
is going to describe the typical spreading of sample paths near an attracting critical 
manifold bounded away from folded singularities. Suppose the deterministic 
version of \eqref{eq:gen_fast_slow_SDE} with $\sigma=0=\sigma'$ has a 
compact attracting normally hyperbolic critical manifold 
\benn
C_0=\{(x,y)\in\R^{m+n}:x=h_0(y),y\in\mathcal{D}_0\}\;. 
\eenn
Let $C_\epsilon$ be the slow manifold 
obtained from Fenichel's Theorem \ref{thm:fenichel1}. Our strategy is 
to construct a neighborhood $\mathcal{B}(r)$ for $C_\epsilon$ that 
contains the sample paths with high probability 
\cite{BG6,BGbook}. Define
\be
\label{eq:SDE_deviate}
\xi_s=x_s-h_\epsilon(y_s)\;.
\ee  
Observe that $\xi_s$ measures the deviation of the fast components from 
$C_\epsilon$. Applying It\^{o}'s formula to \eqref{eq:SDE_deviate} gives:
\bea
\label{eq:SDE_deviate1}
\6\xi_s &=& \6x_s-D_yh_\epsilon(y_s)\6y_s+\cO\left((\sigma')^2\right)\6s \\
&=& \frac{1}{\epsilon}\left[ f(h_\epsilon(y_s)+\xi_s,y_s,\mu,\epsilon)
-\epsilon D_yh_\epsilon(y_s)g(h_\epsilon(y_s)+\xi_s,y_s,\mu,\epsilon)
+\cO(\epsilon(\sigma')^2) \right]\6s \nonumber\\
&& +\frac{\sigma}{\sqrt\epsilon}\left[ F(h_\epsilon(y_s)+\xi_s,y_s)
-\rho\sqrt\epsilon D_yh_\epsilon(y_s)G(h_\epsilon(y_s)+\xi_s,y_s)\right]
\6W_s\;. \nonumber
\eea 
From now on, we suppress the arguments $\mu$ and $\epsilon$ for brevity. 
Consider the linear approximation of \eqref{eq:SDE_deviate} in $\xi_s$, 
neglect the It\^{o} term $\cO(\epsilon(\sigma')^2)$ and replace $y_s$ by 
its deterministic version $y^{\det}_s$ to obtain
\be
\label{eq:SDE_deviate_approx}
\begin{array}{lcl}
\6\xi^0_s&=&\frac1\epsilon A_\epsilon(y^{\det}_s)\xi^0_s\6s
+\frac{\sigma}{\sqrt\epsilon}F^0_\epsilon(y^{\det}_s)\6W_s\;,\\
\6y^{\det}_s&=& g(h_\epsilon(y^{\det}_s),y^{\det}_s)\6s\;,\\
\end{array}
\ee
where the two matrices $A_\epsilon$ and $F^0_\epsilon$ are defined as
\benn
\begin{array}{lcl}
A_\epsilon(y)&=& D_xf(h_\epsilon(y),y)-\epsilon 
~D_yh_\epsilon(y)~D_xg(h_\epsilon(y),y)\;,\\
F^0_\epsilon(y) &=& F(h_\epsilon(y),y)-\rho \sqrt{\epsilon}
D_yh_\epsilon(y)G(h_\epsilon(y),y)\;.\\
\end{array}
\eenn
Observe that $A_0(y)=D_xf(h_0(y),y)$ and $F^0_0(y)=F(h_0(y),y)$. To 
solve \eqref{eq:SDE_deviate_approx}, we pick an initial condition on 
the slow manifold $(\xi^0_0,y^{\det}_0)=(0,y^{\det}_0)$. Then the 
solution of \eqref{eq:SDE_deviate_approx} is the It\^{o} integral
\benn
\xi^0_s=\frac{\sigma}{\sqrt\epsilon} \int_0^s U(s,r)
F^0_\epsilon(y^{\det}_r)\6W_r
\eenn
where $U(s,r)$ denotes the principal solution of the homogeneous linear system 
$\epsilon \dot{\nu} = A_\epsilon(y^{\det}_s)\nu$. If we fix a time $s$ then 
$\xi^0_s$ is a Gaussian random variable of mean zero and covariance matrix
\benn
\Cov(\xi^0_s)=\frac{\sigma^2}{\epsilon}\int_0^s U(s,r)
F^0_\epsilon(y^{\det}_s) F^0_\epsilon(y^{\det}_s)^TU(s,r)^T\6r\;.
\eenn
Note carefully that $X_s:=\sigma^{-2}\Cov(\xi^0_s)$ does satisfy a 
fast--slow ODE given by
\be
\label{eq:SDE_deviate_ODE}
\begin{array}{lcl}
\epsilon \dot{X}&=& A_\epsilon(y)X+XA_\epsilon(y)^T+
F^0_\epsilon(y)F^0_\epsilon(y)^T\;,\\
\dot{y}&=& g(h_\epsilon(y),y)\;.
\end{array}
\ee
The system \eqref{eq:SDE_deviate_ODE} has a critical manifold $S^\xi_0$ 
given by the equation
\benn
A_0(y)X+XA_0(y)^T+F^0_0(y)F^0_0(y)^T=0\;.
\eenn
By the remarks above we see that this is equivalent to solving
\be
\label{eq:SDE_deviate_critman}
(D_xf)(h_0(y),y)X+X[(D_xf)(h_0(y),y)]^T+F(h_0(y),y)F(h_0(y),y)^T=0\;.
\ee
Again this manifold can be locally described as a graph 
\benn
S^\xi_0=\{(X,y)\in \R^{m+n}:X=H_0(y)\}\;.
\eenn
The next lemma states that $S^\xi_0$ is normally hyperbolic and attracting.

\begin{lem}[\cite{Bellman}]
Let $M_1$ and $M_2$ be square matrices of dimension $m$ with 
eigenvalues $\lambda_{1,1},\ldots\lambda_{1,m}$ and $\lambda_{2,1},
\ldots\lambda_{2,m}$, respectively. Then the linear map $L:\R^{m\times m}
\ra\R^{m\times m}$ defined by
\benn
L(X)=M_1X+XM_2
\eenn 
has $m^2$ eigenvalues given by $\{\lambda_{1,i}+\lambda_{2,j}\}$ 
for $i,j\in\{1,2,\ldots,m\}$.
\end{lem}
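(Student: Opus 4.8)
The plan is to reduce to the situation where $M_1$ and $M_2$ are upper triangular and then to exhibit an explicit triangular matrix representing $L$ in the standard basis of matrix units, whose diagonal entries turn out to be precisely the $m^2$ numbers $\lambda_{1,i}+\lambda_{2,j}$.

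First I would complexify: although $L$ acts on the real space $\R^{m\times m}$, by ``eigenvalue'' we mean an eigenvalue of the complexified operator on $\C^{m\times m}$, so we may work over $\C$ throughout and there is no loss in doing so even when $M_1,M_2$ are real with non-real spectrum. By the Schur decomposition there exist unitary matrices $U,V$ with $U^*M_1U=T_1$ and $V^*M_2V=T_2$ upper triangular, and with $(T_1)_{ii}=\lambda_{1,i}$, $(T_2)_{jj}=\lambda_{2,j}$ in some ordering. The invertible linear change of variables $X\mapsto Y:=U^*XV$ on $\C^{m\times m}$ conjugates $L$ to the map $Y\mapsto T_1Y+YT_2$, since $U^*(M_1X+XM_2)V=T_1(U^*XV)+(U^*XV)T_2$; as conjugate operators have the same characteristic polynomial, it suffices to treat upper triangular $M_1=T_1$, $M_2=T_2$.

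For the triangular case I would compute $L$ on the standard basis $\{E_{ij}\}_{1\le i,j\le m}$ of matrix units. A direct computation using upper triangularity of $T_1,T_2$ gives
\[
L(E_{ij})=T_1E_{ij}+E_{ij}T_2=(\lambda_{1,i}+\lambda_{2,j})E_{ij}+\sum_{k<i}(T_1)_{ki}E_{kj}+\sum_{l>j}(T_2)_{jl}E_{il}\;.
\]
Hence, if the basis is ordered so that $E_{kj}$ comes after $E_{ij}$ whenever $k<i$ and $E_{il}$ comes after $E_{ij}$ whenever $l>j$ --- for instance, order the pairs $(i,j)$ by decreasing $i$, breaking ties by increasing $j$ --- the matrix of $L$ in this basis is lower triangular with diagonal entries $\lambda_{1,i}+\lambda_{2,j}$. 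Therefore the characteristic polynomial of $L$ equals $\prod_{i=1}^m\prod_{j=1}^m\bigl(\lambda-\lambda_{1,i}-\lambda_{2,j}\bigr)$, which is exactly the assertion, counting algebraic multiplicities.

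The computation is entirely routine; the only points requiring a little care are the passage to $\C$ so that the notion of eigenvalue is unambiguous, and checking that the chosen ordering of $\{E_{ij}\}$ genuinely places every off-diagonal contribution on one side. I do not expect any real obstacle. An equivalent route would identify $L$, via vectorization of matrices, with the Kronecker sum $I\otimes M_1+M_2^{\mathrm{T}}\otimes I$, and use that the two commuting summands can be simultaneously triangularized with diagonals $\lambda_{1,i}$ and $\lambda_{2,j}$; this yields the same factorization of the characteristic polynomial.
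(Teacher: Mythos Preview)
Your proof is correct. The paper does not actually prove this lemma; it simply quotes it from Bellman's textbook (the \texttt{\textbackslash cite\{Bellman\}} in the lemma header) and uses it as a black box to conclude that the critical manifold $S^\xi_0$ for the covariance equation is normally hyperbolic attracting. So there is no ``paper's own proof'' to compare against.

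Your argument via Schur triangularization and the matrix-unit basis is the standard textbook proof and is carried out cleanly: the computation of $L(E_{ij})$ is correct, and the ordering you give (decreasing $i$, then increasing $j$) does place all off-diagonal contributions below the diagonal. The alternative you mention---vectorizing to identify $L$ with the Kronecker sum $I\otimes M_1 + M_2^{\mathrm T}\otimes I$---is essentially equivalent and is the way Bellman presents it. Either route is entirely adequate here.
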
 

Therefore Fenichel's Theorem \ref{thm:fenichel1} provides us with a 
slow manifold 
\benn
S^\xi_\epsilon=\{(X,y)\in \cD\subset \R^{m+n}:
X=H_\epsilon(y)=H_0(y)+\cO(\epsilon)\}\;,
\eenn 
which we will now use to describe the typical spreading of sample paths.

\begin{thm}[\cite{BG6,BGbook}] Suppose the norms 
$\|H_\epsilon(y)\|$ and $\|H^{-1}_\epsilon(y)\|$ are uniformly bounded. 
Define the neighborhood $\mathcal{B}(r)$ around the deterministic 
slow manifold as
\benn
\mathcal{B}(r):=\{(x,y)\in \cD:
\pscal{[x-h_\epsilon(y)]}{H^{-1}_\epsilon(y)[x-h_\epsilon(y)]}<r^2\}\;.
\eenn
Then, for $\epsilon>0$ and $\sigma>0$ sufficiently small, sample paths starting 
on $C_\epsilon$ remain in $\mathcal{B}(r)$ 
with high probability; in particular, we have
\be
\label{eq:B(h)} 
\P^{s_0,(x_0,y_0)}\bigl\{\tau_{\mathcal{B}(r)}<s\wedge 
\tau_{\mathcal{D}_0}\bigr\}<K_1(s,\epsilon)\e^{-K_2r^2/2\sigma^2}\;.
\ee
with $K_{1,2}>0$.
\end{thm}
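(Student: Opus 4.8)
The plan is to control the fast deviation $\xi_s = x_s - h_\epsilon(y_s)$ from \eqref{eq:SDE_deviate}, whose It\^o dynamics \eqref{eq:SDE_deviate1} I split into the linearised Gaussian part $\xi^0_s$ solving \eqref{eq:SDE_deviate_approx} and a remainder gathering the nonlinear terms in $\xi_s$, the replacement of $y_s$ by $y^{\det}_s$, and the It\^o cross-terms. Since $\xi^0_s$ is centred Gaussian with covariance $\sigma^2 X_s$, and $X_s$ converges to, and stays comparable with, the graph $H_\epsilon(y^{\det}_s)$ of the slow manifold $S^\xi_\epsilon$ by Fenichel's Theorem~\ref{thm:fenichel1} applied to \eqref{eq:SDE_deviate_ODE} (so that $X_s = H_\epsilon(y^{\det}_s) + \cO(\epsilon)$ after a short transient), the matrices $X_s$ and $H_\epsilon(y^{\det}_s)$ are equivalent up to uniform constants. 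Consequently, at a fixed time, $\sigma^{-2}\pscal{\xi^0_s}{H^{-1}_\epsilon(y^{\det}_s)\xi^0_s}$ is to leading order $\chi^2_m$-distributed, so $\P\{\pscal{\xi^0_s}{H^{-1}_\epsilon\xi^0_s} \geqs r^2\}$ is bounded by $\e^{-r^2/2\sigma^2}$ up to a polynomial correction in $r^2/\sigma^2$.

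Next I would upgrade this fixed-time bound to an estimate on the whole excursion on $[s_0, s\wedge\tau_{\cD_0}]$. The standard device is to partition this interval into subintervals of length $\cO(\epsilon)$, on each of which the principal solution $U(\cdot,\cdot)$ of $\epsilon\dot\nu = A_\epsilon\nu$ is essentially constant and the increment of $\xi^0$ is a martingale with quadratic variation $\cO(\sigma^2)$; applying a Bernstein-type exponential inequality to that It\^o integral on each subinterval and combining the $\cO(s/\epsilon)$ resulting estimates by a union bound controls $\sup_s \pscal{\xi^0_s}{H^{-1}_\epsilon\xi^0_s}$. This step is what produces the prefactor $K_1(s,\epsilon)$ --- growing at most polynomially in $s/\epsilon$ --- while the leading exponent $\e^{-K_2 r^2/2\sigma^2}$ survives.

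Then I would transfer the estimate from $\xi^0_s$ to the true $\xi_s$. Up to the exit time $\tau_{\cB(r)}$ one has $\norm{\xi_s}\leqs Cr$, so the nonlinear contribution to \eqref{eq:SDE_deviate1} is $\cO(r^2/\epsilon)$ and the diffusion-coefficient difference is governed by $\norm{y_s - y^{\det}_s}$; using that $A_\epsilon(y)$ is uniformly contracting (the manifold is attracting normally hyperbolic), a Gronwall estimate on the stopped difference $\xi_s - \xi^0_s$ shows it is $\cO(r^2)$, hence negligible relative to $r$ provided $r\leqs r_0$ for a fixed $r_0$ depending only on the geometry of $C_0$. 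Equivalently, and more directly, one derives via It\^o a scalar inequality for $Q_s := \pscal{\xi_s}{H^{-1}_\epsilon(y_s)\xi_s}$ of the shape $\6 Q_s \leqs \tfrac1\epsilon(-c\,Q_s + C\sigma^2)\6s + \6 M_s + R_s$, where $c>0$ comes from $H_\epsilon$ nearly solving the Lyapunov equation \eqref{eq:SDE_deviate_critman}, $M_s$ is a martingale whose quadratic variation grows like $(C\sigma^2/\epsilon)\,Q_s$, and $R_s$ collects higher-order terms; since $Q_{s_0}=0$, an exponential-supermartingale argument applied to $Q_s$ then yields the claimed bound \eqref{eq:B(h)} directly.

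The main obstacle is that the comparison between $\xi_s$ and its Gaussian surrogate, and the control of the remainder $R_s$, are \emph{a priori} valid only up to $\tau_{\cB(r)}$, which makes the argument look circular; this is resolved by working systematically with stopped processes and closing the estimate self-consistently, but one must check that every error term contributes only to lower order in the exponent --- i.e.\ is $o(r^2/\sigma^2)$ --- so that the rate $K_2$ is not degraded, and one must track carefully how the number $\cO(s/\epsilon)$ of partition intervals enters the prefactor $K_1(s,\epsilon)$. A secondary point is verifying that the uniform bounds on $\norm{H_\epsilon(y)}$ and $\norm{H^{-1}_\epsilon(y)}$ indeed make $H_\epsilon$ a Lyapunov matrix with a contraction rate $c$ uniform over $y\in\cD$.
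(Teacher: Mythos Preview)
The paper does not prove this theorem; it is quoted from \cite{BG6,BGbook} and stated without proof in Section~\ref{sec:SDEfastslow}. Your outline is nonetheless the correct one and matches the method those references use --- and which the paper itself adapts in Appendix~\ref{appendix:proof_nonlinear_SDE} for the analogous Theorem~\ref{thm:nonlinear_SDE}: split $\zeta$ into a Gaussian martingale part $\zeta^0$ and a nonlinear remainder $\zeta^1$, apply a Bernstein-type exponential bound (there, Lemma~5.1.8 of \cite{BGbook}) to $\Upsilon^0_u=U(s,u)\zeta^0_u$ on each subinterval of a partition defined through the contraction integral $\alpha$ (cf.~\eqref{eq:p_nlSDE:26}), control the drift of $Q_s(u)$ against $Q_s(t)$ via the differential identity~\eqref{eq:p_nlSDE:17}--\eqref{eq:p_nlSDE:20}, render the nonlinear term deterministically harmless up to $\tau_{\cB(r)}$ by choosing $R_1$ so that $P_1=0$ (your stopped-process self-consistency), and take a union bound over the subintervals to produce the prefactor. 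In the uniformly hyperbolic setting the partition count reduces to your $\cO(s/\epsilon)$, and the paper remarks just after the statement that in fact $K_1(s,\epsilon)$ grows at most like $s^2$ while $K_2$ can be taken close to $1$.
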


Detailed discussions of the factors $K_{1}(s,\epsilon)$ and $K_2$ can 
be found in \cite{BG6,BGbook}. In particular,
$K_{1}(s,\epsilon)$ grows at most like $s^2$, while $K_2$ does not depend on
time and can be taken close to $1$. The probability in~\eqref{eq:B(h)} thus
remains small on long time spans as soon as we choose $r\gg\sigma$. 
This implies that 
sample paths stay for exponentially long times near an attracting 
slow manifold before they jump away unless they come close to the boundary 
of $C_0$ before, i.e., the $y$-coordinates leave the set $\mathcal{D}_0$.

%%%%%%%%%%%%%%%%%%%%%%%%%%%%%%%%%%%%%%%%%%%%%%%%%%%%%%%%
\section{Stochastic Folded Nodes}
\label{sec:SDEfoldednode}

%%%%%%%%%%%%%%%%%%%%%%%%%%%%%%%%%%%%%%%%%%%%%%%%%%%%%%%%
\subsection{Blow-Up}
\label{ssec:blowup}

Proposition \ref{prop:nform_deter} shows that Equation \eqref{eq:main_ex_eps} 
is a normal form for deterministic fast--slow systems with a folded node. We 
study the associated SDE 
\be
\label{eq:main_ex_SDE}
\begin{array}{lcl}
\6x_s&=& \frac1\epsilon (y_s-x_s^2)\6s + \frac{\sigma}{\sqrt\epsilon}
\6W^{(1)}_s\;,\\
\6y_s&=& \left[-(\mu+1)x_s-z_s\right]\6s+\sigma' \6W^{(2)}_s\;,\\
\6z_s&=& \frac\mu2\6s\;,\\
\end{array}
\ee
where $W^{(1)}_s$, $W^{(2)}_s$ are independent standard Brownian motions; to
simplify the notation we also define $W_s:=(W^{(1)}_s,W^{(2)}_s)^T$. Since $z$
plays the role of a time variable we do not add noise to the $z$-component. We
will always assume that the noise terms are of equal order, i.e., that
$\rho=\sigma'/\sigma$ is bounded above and below by positive constants. Note
that 
\eqref{eq:main_ex_SDE} fits into the framework of a general fast--slow SDE 
\eqref{eq:gen_fast_slow_SDE} with
\benn
F(x,y)=\begin{pmatrix}1 & 0\\ \end{pmatrix}\qquad \text{and} 
\qquad G(x,y)=\begin{pmatrix}0 & 1\\\end{pmatrix}\;.
\eenn
We apply the rescaling/blowup given by \eqref{eq:rescale1} to 
\eqref{eq:main_ex_SDE} to get
\benn
\begin{array}{rcl}
\epsilon^{1/2}\6\bar{x}_{\bar{s}}&=& \epsilon^{1/2}(\bar{y}_{\bar{s}}-
\bar{x}_{\bar{s}}^2)\6\bar{s} + \frac{\sigma}{\sqrt\epsilon} 
\6W^{(1)}_{\epsilon^{1/2}\bar{s}}\;,\\
\epsilon \6\bar{y}_{\bar{s}}&=& \epsilon\left[-(\mu+1)\bar{x}_{\bar{s}}
-\bar{z}_{\bar{s}}\right]\6\bar{s}+\sigma'
\6W^{(2)}_{\epsilon^{1/2}\bar{s}}\;,\\
\6\bar{z}_{\bar{s}}&=& \frac\mu2\6\bar{s}\;.\\
\end{array}
\eenn
We use the scaling law of Brownian motion, divide the first equation by 
$\sqrt\epsilon$ and the second one by $\epsilon$ and drop the overbars for 
notational convenience, obtaining
\be
\label{eq:SDE_bu}
\begin{array}{lcl}
\6x_s&=& (y_s-x_s^2)\6s + \frac{\sigma}{\epsilon^{3/4}} \6W^{(1)}_{s}\;,\\
\6y_s&=& \left[-(\mu+1)x_s-z_s\right]\6s+\frac{\sigma'}{\epsilon^{3/4}} 
\6W^{(2)}_{s}\;,\\
\6z_s&=& \frac\mu2\6s\;.\\
\end{array}
\ee
Therefore rescaling the noise intensities as 
\be
\label{eq:rescale_noise}
(\sigma,\sigma')=(\epsilon^{3/4}\bar{\sigma},\epsilon^{3/4}\bar{\sigma}')
\ee
removes $\epsilon$ from the equations and yields
\be
\label{eq:SDE_bu1}
\begin{array}{lcl}
\6x_s&=& (y_s-x_s^2)\6s + \sigma \6W^{(1)}_{s}\;,\\
\6y_s&=& \left[-(\mu+1)x_s-z_s\right]\6s+\sigma' \6W^{(2)}_{s}\;,\\
\6z_s&=& \frac\mu2\6s\;,\\
\end{array}
\ee
where the overbars from \eqref{eq:rescale_noise} have again been dropped. To
study \eqref{eq:SDE_bu1} we consider the variational equation around a
deterministic solution (which may be the weak primary canard, a secondary
canard, or even any other solution with initial condition $(x_0,y_0,z_0)$
sufficiently close to $C^a_0$). This approach is a generalization of Section
\ref{sec:SDEfastslow} where we considered the variation around the slow
manifold. Viewing \eqref{eq:SDE_bu1} as a planar non-autonomous system with time
$z=(\mu/2)s$ and setting
\benn
(x_z,y_z)=(x^{\det}_z+\xi_z,y^{\det}_z+\eta_z)\;,
\eenn
we get the SDE
\be
\label{eq:vareq_SDE}
\begin{array}{lcl}
\6\xi_z&=&\frac2\mu(\eta_z-\xi_z^2-2x_z^{\det} \xi_z)\6z
+\sigma\sqrt{\frac2\mu} \6W^{(1)}_z\;,\\
\6\eta_z&=&-\frac2\mu(\mu+1)\xi_z\6z
+\sigma'\sqrt{\frac2\mu}\6W^{(2)}_z\;.\\
\end{array}
\ee

%%%%%%%%%%%%%%%%%%%%%%%%%%%%%%%%%%%%%%%%%%%%%%%%%%%%%%%%
\subsection{Covariance Tubes}
\label{ssec:covariance_tubes}

We linearize \eqref{eq:vareq_SDE} and denote by $\zeta^0_z=(\xi^0_z,\eta_z^0)^T$ the solution 
which satisfies the SDE
\be
\label{eq:lin_var_SDE}
\6\zeta^0_z=\frac{1}{\mu}
\underbrace{\begin{pmatrix} -4x_z^{\det} & 2 \\ 
-2(\mu+1) & 0 \\\end{pmatrix}}_{=:A(x^{\det}_z)} \zeta^0_z\6z 
+ \frac{\sigma}{\sqrt{\mu}}
\underbrace{\begin{pmatrix} \sqrt{2} & 0 \\ 0 & \sqrt{2}\rho
\\ 
\end{pmatrix}}_{=:F^0} \6W_z\;.
\ee 
Then \eqref{eq:lin_var_SDE} is solved by the following Gaussian 
process
\benn
\zeta^0_z=U(z,z_0)\zeta^0_{z_0}+\frac{\sigma}{\sqrt{\mu}}\int_{z_0}^z
U(z,r)F^0\6W_r \;,
\eenn
where $U(z,r)$ is the principal solution to the 
deterministic homogeneous non-autonomous linear system 
$\mu\dot{\zeta}=A(x^{\det}_z)\zeta$. The two-by-two covariance 
matrix $\Cov(\zeta^0_z)=:\Cov(z)$ is given by
\be
\label{eq:cov_matrix}
\Cov(z)=
\frac{\sigma^2}{\mu}\int_{z_0}^z U(z,r)(F^0)(F^0)^TU(z,r)^T\6r\;.
\ee
Differentiating $V(z):=\sigma^{-2}\Cov(z)$ we find that it 
satisfies the ODE
\be
\label{eq:cov_matrix1}
\mu\frac{\6V}{\6z}
=A(x^{\det}_z)V+VA(x^{\det}_z)^T+(F^0)(F^0)^T
\ee
with initial condition $V(z_0)=0$. 
The following result describes the behaviour of the solutions
of~\eqref{eq:cov_matrix1}, in particular as $z$ approaches $0$. 

\begin{thm}[Behaviour of the covariance matrix]
\label{thm:covariance_tubes}
Fix an initial time $z_0<0$. There exist constants $c_+>c_->0$ such that, for all $0<\mu\ll 1$,
the solution of~\eqref{eq:cov_matrix1} with initial condition $V(z_0)=0$
satisfies
\begin{align}
\label{eq:cov_matrix2A}
\frac{c_-}{\abs{z}} &\leqs V_{11}(z), V_{22}(z) \leqs \frac{c_+}{\abs{z}} 
&& \text{for $z_0+\Order{\mu\abs{\log\mu}} \leqs z \leqs -\sqrt{\mu}$\;,} \\
\frac{c_-}{\sqrt{\mu}} &\leqs V_{11}(z), V_{22}(z) \leqs
\frac{c_+}{\sqrt{\mu}} 
&& \text{for $-\sqrt{\mu} \leqs z \leqs \sqrt{\mu}$\;,} 
\label{eq:cov_matrix2B}
\end{align}
and
\begin{align}
\nonumber
\abs{V_{12}(z)} = \abs{V_{21}(z)} &\leqs c_+
&& \text{for $z_0\leqs z \leqs \sqrt{\mu}$\;,} \\
\abs{V_{22}(z)-V_{11}(z)} &\leqs c_+
&& \text{for $z_0\leqs z \leqs \sqrt{\mu}$\;.} 
\label{eq:cov_matrix3}
\end{align}
Furthermore, let $\Vbar(z)$ be any solution of~\eqref{eq:cov_matrix1} with
positive definite initial condition $\Vbar(z_0)$. More precisely, we require
both $\Vbar(z_0)$ and $\Vbar(z_0)^{-1}$ to have all elements uniformly bounded
in $\mu$. Then the matrix elements of $\Vbar(z)$ satisfy~\eqref{eq:cov_matrix2B}
and~\eqref{eq:cov_matrix3}, and~\eqref{eq:cov_matrix2A} holds for \textbf{all}
$z\in[z_0,-\sqrt{\mu}\,]$.
\end{thm}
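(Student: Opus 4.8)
\textit{Proof strategy.} The starting point is the integral representation
\be
V(z) = \frac1\mu\int_{z_0}^z U(z,r)(F^0)(F^0)^T U(z,r)^T\,\6r
\ee
of the solution of~\eqref{eq:cov_matrix1} with $V(z_0)=0$, where $U(z,r)$ is the principal solution of $\mu\dot\zeta=A(x^{\det}_z)\zeta$. Since the deterministic reference solution $x^{\det}_z$ differs from the weak canard $x=-z$ by at most $\Order\mu$, the matrix $A(x^{\det}_z)$ is an $\Order\mu$-perturbation of the matrix in~\eqref{eq:main_ex_var3}, so by the remark following Theorem~\ref{thm:var1} the canonical form still applies: $U(z,r)=\e^{\alpha(z,r)/\mu}S(z)R(z,r)S(r)^{-1}$, with $S,S^{-1}$ uniformly bounded for $|z|$ bounded away from $1$, $\alpha(z,r)=z^2-r^2+\Order{\mu^2}$, and $R(z,r)$ the rotation matrix of angle $\varphi(z,r)/\mu$, $\varphi(z,r)=\int_r^z\varpi(s)\,\6s+\Order\mu$, $\varpi=2\omega+\Order\mu$. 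Conjugating by $S(z)$ and writing $G(r):=S(r)^{-1}(F^0)(F^0)^TS(r)^{-T}$ (symmetric positive definite with eigenvalues in a fixed compact subinterval of $(0,\infty)$) gives
\be
S(z)^{-1}V(z)S(z)^{-T}=\frac1\mu\int_{z_0}^z \e^{2\alpha(z,r)/\mu}\,R(z,r)\,G(r)\,R(z,r)^T\,\6r\;,
\ee
and every claimed bound is read off this expression together with $S(z)S(z)^T=\frac2{\omega(z)}\bigl(\begin{smallmatrix}1+\mu & -z\\ -z & 1\end{smallmatrix}\bigr)+\Order\mu$, whose entries are $\Order1$ and whose determinant is bounded away from $0$.

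For the diagonal estimates, note that $\partial_r\alpha(z,r)=-a(r)=-2r+\Order{\mu^2}$ is bounded away from $0$ once $r$ is bounded away from $0$, so the weight $\e^{2\alpha(z,r)/\mu}$ equals $1$ at $r=z$ and decays like $\e^{-4|z|(z-r)/\mu}$; a Watson-type estimate gives $\frac1\mu\int_{z_0}^z\e^{2\alpha(z,r)/\mu}\,\6r\asymp\min\{|z|^{-1},(z-z_0)/\mu\}$. Sandwiching $R(z,r)G(r)R(z,r)^T$ between $g_-\e^{2\alpha/\mu}\Id$ and $g_+\e^{2\alpha/\mu}\Id$ in the order of quadratic forms yields $c_-|z|^{-1}\Id\leqs S(z)^{-1}V(z)S(z)^{-T}\leqs c_+|z|^{-1}\Id$ as soon as $z\geqs z_0+C\mu\abs{\log\mu}$ (the true transient is $\Order{\mu/|z_0|}$; the logarithm is kept so that, simultaneously, a non-zero initial covariance becomes polynomially small — see below), and conjugating back by the bounded, boundedly invertible $S(z)$ gives~\eqref{eq:cov_matrix2A}. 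On the inner region $\abs z\leqs\sqrt\mu$ the rate $a$ degenerates and the weight no longer localises, so instead one propagates the already established value $V(-\sqrt\mu)$ (which satisfies $c_-\mu^{-1/2}\Id\leqs V(-\sqrt\mu)\leqs c_+\mu^{-1/2}\Id$) via $V(z)=U(z,-\sqrt\mu)V(-\sqrt\mu)U(z,-\sqrt\mu)^T+\frac1\mu\int_{-\sqrt\mu}^z U(z,r)(F^0)(F^0)^TU(z,r)^T\6r$: on $[-\sqrt\mu,\sqrt\mu]$ one has $\alpha(z,-\sqrt\mu)=z^2-\mu+\Order{\mu^2}\in[-\mu+\Order{\mu^2},\Order{\mu^2}]$, so $U(z,-\sqrt\mu)$ and its inverse are uniformly bounded, while the inhomogeneous term has norm between two positive multiples of $\mu^{-1/2}$; this gives $c_-\mu^{-1/2}\Id\leqs V(z)\leqs c_+\mu^{-1/2}\Id$, hence~\eqref{eq:cov_matrix2B}.

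For the near-diagonal structure~\eqref{eq:cov_matrix3}, split $G(r)=\frac12(\Tr G(r))\Id+G_0(r)$ with $G_0$ symmetric traceless. The $\Id$-part contributes a scalar multiple of the identity to $S(z)^{-1}V(z)S(z)^{-T}$; after conjugation by $S(z)$ it produces an off-diagonal entry of order $\Order{|z|\cdot|z|^{-1}}=\Order1$ and two diagonal entries differing by $\Order{\mu|z|^{-1}}=\Order1$. The $G_0$-part is rotated, so each of its entries is $\re$ of $\frac1\mu\int_{z_0}^z g(r)\e^{2(\alpha(z,r)+\icx\varphi(z,r))/\mu}\,\6r$ for smooth bounded $g$; since $\partial_r(\alpha+\icx\varphi)=-(a(r)+\icx\varpi(r))$ is bounded away from $0$ for $\abs r<1$, one integration by parts turns the prefactor $1/\mu$ into $\Order1$ (boundary term at $r=z$ is $\Order1$ as $\e^{2(\alpha+\icx\varphi)/\mu}=1$ there, the one at $r=z_0$ is $\Order{\e^{2\alpha(z,z_0)/\mu}}=\Order1$, the remaining integral is $\Order{\frac1\mu\int\e^{2\alpha/\mu}\,\6r}=\Order1$). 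Thus the off-diagonal entries and the difference of the diagonal entries of $V(z)$ are $\Order1$ on $[z_0,-\sqrt\mu]$, and the same bookkeeping through the propagation formula extends this to $[-\sqrt\mu,\sqrt\mu]$. Finally, for a general initial condition with $\Vbar(z_0),\Vbar(z_0)^{-1}$ bounded, $\Vbar(z)-V(z)=U(z,z_0)\Vbar(z_0)U(z,z_0)^T$ is positive semidefinite, so $\Vbar(z)\geqs V(z)$ and all lower bounds carry over; moreover $\Vbar(z)-V(z)$ is bounded below (up to the bounded $S$'s) by $c\,\e^{2\alpha(z,z_0)/\mu}\Id$, which combined with the lower bound on $V$ yields~\eqref{eq:cov_matrix2A} for \emph{all} $z\in[z_0,-\sqrt\mu]$, transient included; for the upper bounds, $\norm{U(z,z_0)\Vbar(z_0)U(z,z_0)^T}\leqs C\e^{2\alpha(z,z_0)/\mu}\leqs C$ on $[z_0,\sqrt\mu]$ (since $\alpha(z,z_0)=z^2-z_0^2+\Order{\mu^2}\leqs\Order\mu$ there), negligible against $|z|^{-1}$ and $\mu^{-1/2}$, so~\eqref{eq:cov_matrix2B} and~\eqref{eq:cov_matrix3} hold for $\Vbar$ as well.

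The main obstacle is the sharp, uniform-in-$\mu$ Laplace/oscillatory-integral analysis combined with the matching across the three $z$-ranges — especially the inner region $\abs z\leqs\sqrt\mu$, where $a(z)=2z+\Order{\mu^2}$ degenerates so that the naive $|z|^{-1}$ bound would spuriously blow up and one must instead carry the boundary value at $z=-\sqrt\mu$ through an interval on which $U$ is merely bounded. Tracking all $\Order\mu$ error terms from the canonical form through these estimates, and making the integration-by-parts argument for~\eqref{eq:cov_matrix3} uniform, are the remaining points requiring care.
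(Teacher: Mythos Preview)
Your argument is correct but follows a genuinely different route from the paper's. Both routes start from the canonical form (Theorem~\ref{thm:var1}), but the paper works at the ODE level: it writes the three covariance components as a linear system $\mu\dot v=B(z)v+E$, transforms to canonical coordinates, and then observes that the trace $\tilde v_1+\tilde v_2$ satisfies a scalar equation that is solved by a direct Laplace estimate (their Lemma~B.4), while the pair $(\tilde v_1-\tilde v_2,\tilde v_3)$ satisfies a two-dimensional system undergoing a \emph{dynamic Hopf bifurcation}, so that Neishtadt's theorem pins it to an $\cO(1)$ slow manifold through $z=0$. You instead stay with the integral representation of $V$, factor the propagator via $S(z)R(z,r)S(r)^{-1}$, and split the conjugated forcing $G(r)$ into its scalar and traceless parts. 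The scalar part reduces to the same Laplace integral as in the paper; for the traceless part you use that $R(\theta)G_0R(\theta)^T$ oscillates at frequency $2\varphi/\mu$, so one integration by parts against the nonvanishing complex phase $\partial_r(\alpha+\icx\varphi)=-(a+\icx\varpi)$ turns the $1/\mu$ prefactor into an $\cO(1)$ bound. This is effectively a hands-on proof, in this special case, of exactly the delayed-Hopf estimate the paper imports from Neishtadt. Your approach is more self-contained and gives slightly more explicit control of the error terms; the paper's approach has the virtue of identifying the mechanism (bifurcation delay) behind the $\cO(1)$ bound on $V_{12}$ and $V_{11}-V_{22}$.

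Two small points to clean up. First, after one integration by parts the remaining integral is $\int_{z_0}^z h(r)\e^{2\alpha(z,r)/\mu}\6r$ with bounded $h$, \emph{without} a $1/\mu$; it is this integral that is $\cO(1)$ (in fact $\cO(\mu/(|z|+\sqrt\mu))$), not $\frac1\mu\int\e^{2\alpha/\mu}\6r$ as you wrote. Second, in transferring the $\cO(1)$ bound on the traceless part of $\tilde V$ back to the off-diagonal of $V$, the key cancellation is that the scalar part $c(z)\Id$ contributes $c(z)(SS^T)_{12}=\cO(|z|^{-1})\cdot\cO(|z|)=\cO(1)$ on $[z_0,-\sqrt\mu]$ and $\cO(\mu^{-1/2})\cdot\cO(\sqrt\mu)=\cO(1)$ on $[-\sqrt\mu,\sqrt\mu]$; you state this, but it is worth making explicit that the matching of orders between $c(z)$ and $(SS^T)_{12}\sim -2z/\omega$ is what makes the argument close in both regimes.
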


The proof is given in Appendix~\ref{appendix:covariance_tubes}, where we also
give some additional information on how the covariance can be approximated by
asymptotic expansions.

Equations~\eqref{eq:cov_matrix2A} and~\eqref{eq:cov_matrix2B} show that the
variances of $\xi^0_z$ and $\eta^0_z$ grow like $\sigma^2/\abs{z}$ up to time
$-\sqrt{\mu}$, and then stay of order $\sigma^2/\sqrt{\mu}$ up to time
$\sqrt{\mu}$. Thus we expect the fluctuations of stochastic sample paths around
the deterministic solution to increase when the fold at $z=0$ is approached. 
The restriction $z\geqs z_0+\Order{\mu\abs{\log\mu}}$ is due to the fact that
the variances are initially equal to zero, and need some time to build up. 

Equations~\eqref{eq:cov_matrix3} show that the covariance of $\xi^0_z$ and
$\eta^0_z$ remains bounded, of order $\sigma^2$, up to time $\sqrt{\mu}$, and
the same holds true for the difference between the variances. This implies that
fluctuations become more isotropic as $z$ approaches $0$. \\

We now turn to the analysis of the full nonlinear SDE~\eqref{eq:vareq_SDE}
satisfied by the difference $\zeta_z=(\xi_z,\eta_z)$ between stochastic sample
paths and deterministic solutions. This SDE can be written in vectorial form as 
\begin{equation}
\label{eq:nonlin01}
\6\zeta_z = \frac{1}{\mu}\left[A(x^{\det}_z)\zeta_z 
+ b(\zeta_z) \right] \6z + \frac{\sigma}{\sqrt{\mu}} F^0\6W_z\;, 
\end{equation} 
where $A$ and $F^0$ have been defined in~\eqref{eq:lin_var_SDE}, and
$b(\zeta)^T=(-\xi^2,0)$ denotes the nonlinear term. We expect the covariance
matrix of $\zeta_z$ to be close to $\Cov(\zeta^0_z)=\sigma^2 V(z)$, where $V(z)$
is the solution of~\eqref{eq:cov_matrix1} with initial condition $V(z_0)=0$.
Thus sample paths should be concentrated in a tube surrounding the deterministic
solution, with elliptical cross-section determined by $V(z)$. Note that the
elliptical cross-section becomes close to circular as $z$ approaches $0$, since
the variances $V_{11}, V_{22}$ are then of larger order than the covariance
$V_{12}$ and the difference $V_{22}-V_{11}$.

The fact that $V(z_0)$ is not invertible causes some technical complications.
Therefore, in the following we let $\Vbar(z)$ be the solution
of~\eqref{eq:cov_matrix1} with an initial condition $\Vbar(z_0)$ which is
positive definite. Observe that
the difference between $\Vbar(z)$ and $V(z)$ decreases exponentially fast. We
fix a $z_0<0$ and define the covariance-tube as 
\begin{equation}
 \label{eq:nonlin02}
\cB(r) = \bigl\{(x,y,z) \colon z_0 \leqs z \leqs \sqrt{\mu}, 
\pscal{[(x,y)-(x^{\det}_z,y^{\det}_z)]}
{\Vbar(z)^{-1}[(x,y)-(x^{\det}_z,y^{\det}_z) ] } < r^2
\bigr\} \;.
\end{equation} 
The cross-section of $\cB(r)$ at any plane $\{z=\const\}$ is an ellipsoid
whose axes are determined by $\Vbar(z)$, while the scaling parameter $r$
controls the size of the tube.

\begin{thm}[Concentration of sample paths in the covariance tube]
\label{thm:nonlinear_SDE}
There exist constants $\Delta_0, r_0,\mu_{0}>0$ such that for all $0<\Delta<\Delta_0$, all
$\sigma<r<r_0\mu^{3/4}$ and all $\mu\le\mu_{0}$, 
\begin{equation}
 \label{eq:nonlin03}
\P \bigl\{ \tau_{\cB(r)} < z \bigr\} 
\leqs 
C_+(z,z_0) \e^{-\kappa_0 r^2/2\sigma^2}
\end{equation}  
holds for all $z\leqs \sqrt{\mu}$, where the exponent $\kappa_0$ satisfies 
\begin{equation}
 \label{eq:nonlin04A} 
\kappa_0 =  1 - \Order{\Delta}  - \Order{r\mu^{-3/4}} 
- \Order{\sigma^2/r^2}\;,
\end{equation} 
and the prefactor is given by 
\begin{equation}
 \label{eq:nonlin04B}
C_+(z,z_0) =1+ \frac{\const}{\Delta\mu} 
\biggl( \frac{r}{\sigma} \biggr)^2
\int_{z_0}^{z} x^{\det}_s
\6s\;. 
\end{equation} 
\end{thm}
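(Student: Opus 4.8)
The plan is to control the nonlinear SDE~\eqref{eq:nonlin01} by comparison with the linear Gaussian process $\zeta^0_z$ whose covariance $\sigma^2\Vbar(z)$ is understood via Theorem~\ref{thm:covariance_tubes}. First I would introduce the ``whitened'' process $\tilde\zeta_z := \Vbar(z)^{-1/2}\zeta_z$, so that $\|\tilde\zeta_z\|^2 = \pscal{\zeta_z}{\Vbar(z)^{-1}\zeta_z}$ is exactly the quantity whose first passage above $r^2$ defines $\tau_{\cB(r)}$. Applying It\^o's formula, using $\6\Vbar/\6z$ from~\eqref{eq:cov_matrix1}, one finds that $\tilde\zeta_z$ satisfies an SDE of the form $\6\tilde\zeta_z = \frac{1}{\mu}\bigl[ B(z)\tilde\zeta_z + \tilde b_z \bigr]\6z + \frac{\sigma}{\sqrt\mu}\tilde F(z)\6W_z$, where the key point is that the linear drift matrix $B(z)$ is \emph{antisymmetric} up to lower-order corrections (this is the content of the canonical form in Theorem~\ref{thm:var1}: the whitening transforms the contraction-plus-rotation structure into pure rotation), and where $\tilde F(z)\tilde F(z)^T = \Id$ exactly by construction. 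The nonlinear term $\tilde b_z = \Vbar(z)^{-1/2}b(\zeta_z)$ carries the quadratic nonlinearity $-\xi_z^2$ and must be shown to be negligible on the set where $\|\tilde\zeta_z\| < r$.

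Next I would localize: on the event $\{\tau_{\cB(r)} > z\}$ we have the a priori bound $\|\zeta_z\| \leqs r\,\|\Vbar(z)^{1/2}\|$, and by Theorem~\ref{thm:covariance_tubes} the eigenvalues of $\Vbar(z)$ are $\Order{1/\abs{z}}$ for $z\leqs-\sqrt\mu$ and $\Order{1/\sqrt\mu}$ near $z=0$, so $\xi_z = \Order{r/\mu^{1/4}}$ in the worst regime, whence $\xi_z^2 = \Order{r^2/\sqrt\mu}$ and, after multiplying by $\Vbar(z)^{-1/2}$ and the $1/\mu$ prefactor, the nonlinear contribution to $\6\|\tilde\zeta_z\|^2$ is controlled by $\Order{r\mu^{-3/4}}$ times the linear terms --- this is precisely where the hypothesis $r < r_0\mu^{3/4}$ and the correction $-\Order{r\mu^{-3/4}}$ in $\kappa_0$ enter. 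For the stopped, linearized comparison process one derives a supermartingale-type estimate: writing $\6\|\tilde\zeta^0_z\|^2 = \bigl(\tfrac{2}{\mu}(\text{trace-type term}) + \tfrac{\sigma^2}{\mu}\,d\bigr)\6z + \text{martingale}$ and noting the antisymmetry kills the first-order drift, the quadratic variation of the martingale part is $\Order{\sigma^2 r^2/\mu}$ on $\{\|\tilde\zeta\|<r\}$. A standard exponential (Bernstein-type) martingale inequality then bounds $\P\{\sup_{[z_0,z]}\|\tilde\zeta^0_s\| \geqs r\}$ by $C_+(z,z_0)\exp(-\kappa_0 r^2/2\sigma^2)$, with the integral $\int_{z_0}^z x^{\det}_s\,\6s$ in the prefactor~\eqref{eq:nonlin04B} emerging from accumulating the drift-correction terms (whose size is set by $x^{\det}_z$) over the time interval.

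The technical scaffolding is the one from~\cite{BG6,BGbook}: partition $[z_0,\sqrt\mu\,]$ into subintervals on which $\Vbar$ is roughly constant, apply the martingale estimate on each piece, and use the Markov property together with a union bound over $\Order{1/\Delta}$ pieces to patch the estimates together; the parameter $\Delta$ measures the mesh of the partition and is responsible for the $\Order{\Delta}$ loss in $\kappa_0$ and the $1/\Delta$ in the prefactor. One also needs to bootstrap: the a priori bound $\|\zeta_z\|\leqs r\|\Vbar^{1/2}\|$ used to kill the nonlinearity is only valid up to $\tau_{\cB(r)}$, so the argument is run for the stopped process $\zeta_{z\wedge\tau_{\cB(r)}}$ throughout, which is legitimate since $\tau_{\cB(r)}$ is a stopping time.

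I expect the main obstacle to be handling the covariance near the fold $z=0$, where $\Vbar(z)^{-1}$ has eigenvalues of order $\sqrt\mu$ and the dynamics is no longer dominated by a clean contraction. There the canonical-form reduction degrades (the ``rotation frequency'' $\varpi(z) = 2\omega(z)+\Order\mu$ remains $\Order{1}$ but the amplitude equation's contraction rate $2z$ vanishes), so one cannot rely on exponential decay to absorb errors and must instead argue that the noise input $\sigma^2/\mu$ and the nonlinearity $\Order{r^2/\sqrt\mu}$ are both small compared to $r^2$ when $\sigma < r < r_0\mu^{3/4}$ --- balancing these three scales correctly on the window $[-\sqrt\mu,\sqrt\mu]$, and checking that the accumulated error there is still only a multiplicative $(1+\Order{\Delta})$-type correction to $\kappa_0$, is the delicate part. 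A secondary subtlety is making the replacement of $V(z)$ (which vanishes at $z_0$) by a positive-definite $\Vbar(z)$ harmless; this follows because $U(z,z_0)$ contracts, so $\Vbar(z)-V(z) = U(z,z_0)\Vbar(z_0)U(z,z_0)^T$ is exponentially small in $\abs{z-z_0}/\mu$ once $z\geqs z_0 + \Order{\mu\abs{\log\mu}}$, matching the restriction already appearing in Theorem~\ref{thm:covariance_tubes}.
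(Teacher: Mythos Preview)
Your overall strategy --- linearize, bound the nonlinear remainder deterministically on the stopped process, partition $[z_0,z]$, apply a Gaussian tail bound on each piece, and take a union bound --- is exactly the route the paper takes (adapted from \cite[Section~5.1.2]{BGbook}). The partition is chosen so that $\alpha(s_{k+1},s_k)=\Delta\mu$ on $[z_0,-\sqrt\mu\,]$ and $s_{k+1}-s_k=\Delta\sqrt\mu$ on $[-\sqrt\mu,\sqrt\mu\,]$, which accounts for both the $\Order{\Delta}$ loss in $\kappa_0$ and the $(\Delta\mu)^{-1}\int x^{\det}_s\,\6s$ prefactor, just as you anticipate. The stopping argument for the nonlinear term is also right, and in the paper yields $\norm{Q_s(t)\Upsilon^1_u}\leqs\const\,(\sqrt\mu+\abs{t})^{-3/2}r^2$, which is $\ll r$ precisely when $r\ll\mu^{3/4}$.

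There is, however, a gap in your proposed implementation. The claims about the whitened process $\tilde\zeta_z=\Vbar(z)^{-1/2}\zeta_z$ are not correct as stated: one does \emph{not} get $\tilde F\tilde F^T=\Id$ (that would require $F^0(F^0)^T=\Vbar$, which is false since the left-hand side is constant while $\Vbar$ grows like $1/(\abs{z}+\sqrt\mu)$), and the drift $B(z)$ is not antisymmetric --- from the constancy of $\Cov(\tilde\zeta^0_z)=\sigma^2\Id$ one finds $B+B^T=-\tilde F\tilde F^T\neq0$. Correspondingly, the linearized whitened process $\tilde\zeta^0_z$ is not a martingale, so a Bernstein-type inequality does not apply to it directly. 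You are also conflating the whitening by $\Vbar^{-1/2}$ with the canonical-form change of Theorem~\ref{thm:var1}: the latter produces the drift $\bigl(\begin{smallmatrix}a&\varpi\\-\varpi&a\end{smallmatrix}\bigr)$ with $a(z)=2z+\Order{\mu^2}$, which is manifestly \emph{not} antisymmetric.

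The paper sidesteps this by a different device: on each subinterval $[s,t]$ it pulls back by the principal solution, setting $\Upsilon_u=U(s,u)\zeta_u$, so that the linear part $\Upsilon^0_u=U(s,u)\zeta^0_u$ \emph{is} a genuine Gaussian martingale (the drift cancels exactly). Writing $\pscal{\zeta_u}{\Vbar(u)^{-1}\zeta_u}=\norm{Q_s(u)\Upsilon_u}^2$ with $Q_s(u)^2=U(u,s)^T\Vbar(u)^{-1}U(u,s)$, one then (i) applies the martingale tail estimate of \cite[Lemma~5.1.8]{BGbook} to $\norm{Q_s(t)\Upsilon^0}$ at the \emph{fixed} endpoint $t$, and (ii) shows $Q_s(u)$ and $Q_s(t)$ differ by $\Order{K_-(s)^2(t-s)/\mu}=\Order{\Delta}$ on the subinterval, using that $\mu\,\6 Q_s(u)^{-2}/\6u=U(s,u)F(u)F(u)^TU(s,u)^T$. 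This pullback-to-a-martingale step is what your whitening approach is missing; once you insert it, the rest of your outline goes through essentially as in the paper.
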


The proof is given in Appendix~\ref{appendix:proof_nonlinear_SDE}. This result
shows that the probability that sample paths leave the covariance-tube before
time $z$ is small, provided we choose $r \gg \sigma(\log(C_+(z,z_0)))^{1/2}$. Indeed, for
these values of $r$, the exponential term dominates the polynomial prefactor. We
thus say that sample paths are concentrated in the covariance tube~$\cB(r)$ for
$r$ slightly larger than $\sigma$, or that the typical spreading of sample
paths is given by~$\cB(\sigma)$. 

The condition $\sigma<r<r_0\mu^{3/4}$ implies that the theorem only applies to
noise intensities smaller than $\Order{\mu^{3/4}}$. What happens for
$\sigma\geqs\mu^{3/4}$ is that fluctuations become large already some time
before the fold line is reached, which completely smears out the small
oscillations present in the deterministic case. In fact, it is possible to show
that if $\sigma\geqs\mu^{3/4}$, the bound~\eqref{eq:nonlin03} still holds true
for $r\leqs r_0\abs{z}^{3/2}$, and thus sample paths are localised up to times
$z\ll-\sigma^{2/3}$. 

%%%%%%%%%%%%%%%%%%%%%%%%%%%%%%%%%%%%%%%%%%%%%%%%%%%%%%%%
\subsection{Small-Amplitude Oscillations and Noise}
\label{ssec:SAOs}

Our knowledge of the size of the covariance tubes now allows to determine when they start 
to overlap; from this we can deduce consequences for the existence of small-amplitude oscillations (SAOs) in the presence of noise. Recall from Theorem
\ref{thm:canardsR3} 
that there are two primary 
canards $\gamma^{s,w}_\epsilon$ and $K-1$ secondary canards $\gamma^k_\epsilon$ 
for $2K-1<\mu^{-1}<2K+1$. We want to consider the rotations around
$\gamma^w_\epsilon$ 
and denote the strong canard by $\gamma^0_\epsilon$, i.e., $k=0$. By Theorem
\ref{thm:dist_canards} 
the distance from the weak canard is given by $\cO(\exp(-c_0(2k+1)^2\mu))$ for
some 
positive constant $c_0\in[\pi/4,1]$. Theorem \ref{thm:covariance_tubes} implies
that on $z=0$ the width of the covariance tubes is 
given  by $\cO(\mu^{-1/4}\sigma)$.  Therefore the covariance tubes for sample 
paths starting on the $k$-th canard start to overlap with the weak canard 
for $\sigma\approx \mu^{1/4}  \exp(-c_0(2k+1)^2\mu)$. 
We define the functions 
\benn
\sigma_k(\mu):=\mu^{1/4}e^{-c_0(2k+1)^2\mu}.
\eenn

\begin{figure}[htbp]
\includegraphics[width=1\textwidth,clip=true]{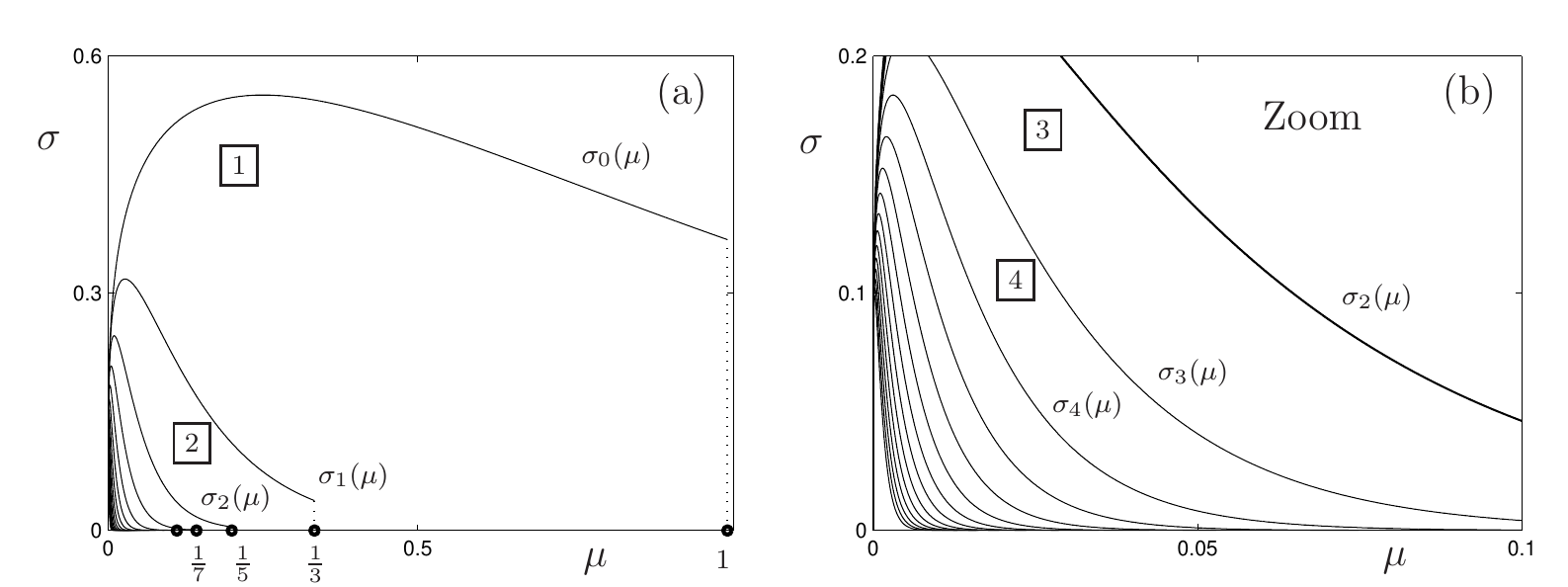}
	\caption{\label{fig:fig8}$(\mu,\sigma)$-parameter plane. (a) 
	The curves $\sigma_k(\mu)$ are shown for $c_0=1$. Above the $k$-th
curve 
	we cannot distinguish the oscillation induced by the $k$-th 
	canard from noisy fluctuations. The regions of different numbers 
	of SAOs are delimited by the curves $\sigma_k(\mu)$ and the 
	existence requirements for the $k$-th canard. The number of 
	visible canards is indicated in rectangular boxes. 
	(b) A zoom of (a) is shown that illustrates the structure of thin
regions 
	as $\mu\ra 0$ and $\sigma\ra 0$.}	
\end{figure}

The previous considerations yield the following result.

\begin{cor}[Noisy SAOs] 
\label{cor:noisySAOs}
On the section $\{z=0\}$ the covariance tubes of the $k$-th 
canard overlap if 
\be
\label{eq:noise_SAOs}
\sigma>\sigma_k(\mu)\;.
\ee
Therefore, depending on the noise level $\sigma$, the canard number and 
the parameter $\mu$, deterministic SAOs with $2k+1$ twists become
indistinguishable 
from noisy fluctuations whenever \eqref{eq:noise_SAOs} holds. 
\end{cor}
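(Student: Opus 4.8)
The plan is to deduce Corollary~\ref{cor:noisySAOs} by combining the two quantitative estimates already proved --- the canard-spacing bound of Theorem~\ref{thm:dist_canards} and the covariance bounds of Theorem~\ref{thm:covariance_tubes} --- with the elementary geometric fact that two covariance tubes built around nearby deterministic orbits overlap on a given cross-section exactly when the sum of their half-widths there exceeds the distance between the two orbits. In effect, the task is to turn the heuristic paragraph preceding the corollary into an argument, so no new machinery is required; one assumes throughout that the $k$-th secondary canard exists, i.e.\ that $2k+1<\mu^{-1}$ to leading order.

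First I would pin down the half-width of the covariance tube $\cB(r)$ on a cross-section close to $\Sigma^0$. By the last part of Theorem~\ref{thm:covariance_tubes}, for $-\sqrt{\mu}\leqs z\leqs\sqrt{\mu}$ the matrix $\Vbar(z)$ satisfies~\eqref{eq:cov_matrix2B} and~\eqref{eq:cov_matrix3}: its diagonal entries are of order $\mu^{-1/2}$, while $\abs{\Vbar_{12}(z)}$ and $\abs{\Vbar_{22}(z)-\Vbar_{11}(z)}$ stay of order $1$. Hence $\Vbar(z)$ is symmetric positive definite with both eigenvalues of order $\mu^{-1/2}$ and a nearly circular eigenellipse, so the cross-section of $\cB(r)$ at such $z$ is an ellipsoid with semi-axes of order $r\mu^{-1/4}$. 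Since the covariance of the linearized fluctuation $\zeta^0_z$ equals $\sigma^2\Vbar(z)$ (to exponential accuracy near $\Sigma^0$), the genuine standard deviation of the spread is $\sigma\sqrt{\Vbar_{ii}(z)}$, of order $\sigma\mu^{-1/4}$ --- not merely bounded by it --- and Theorem~\ref{thm:nonlinear_SDE} shows the full nonlinear sample paths are concentrated in $\cB(r)$ once $r$ exceeds $\sigma$ by at most a logarithmic factor (with $\sigma<r_0\mu^{3/4}$). Thus the typical half-width of the tube carrying the sample paths launched on the $k$-th canard is $\cO(\sigma\mu^{-1/4})$, which is the $\cO(\mu^{-1/4}\sigma)$ width quoted in the discussion above.

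Next I would invoke Theorem~\ref{thm:dist_canards}: on $\Sigma^0$ the distance between the $k$-th secondary canard $\gamma^k$ and the weak canard $\gamma^w$ is $\cO(\e^{-c_0(2k+1)^2\mu})$ with $c_0\in[\pi/4,1]$, and by part~(C6) of Theorem~\ref{thm:rotation_det} this distance is precisely the amplitude of the deterministic SAO with $2k+1$ twists about $\gamma^w$. The tube around $\gamma^k$ reaches $\gamma^w$ --- equivalently, the tubes around $\gamma^k$ and $\gamma^w$ overlap --- once its half-width $\cO(\sigma\mu^{-1/4})$ becomes of the order of $\e^{-c_0(2k+1)^2\mu}$ or larger; rearranging yields $\sigma\geqs\const\,\mu^{1/4}\e^{-c_0(2k+1)^2\mu}=\const\,\sigma_k(\mu)$, which is the condition~\eqref{eq:noise_SAOs}. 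When it holds, the deterministic rotation of amplitude $\cO(\e^{-c_0(2k+1)^2\mu})$ is dominated by fluctuations of equal or larger order, so the corresponding SAO cannot be told apart from the noise, giving the last assertion of the corollary.

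The one point that I expect to require care is matching the parameter regimes. Theorem~\ref{thm:nonlinear_SDE} guarantees concentration in $\cB(\sigma)$ only for $\sigma<r_0\mu^{3/4}$; for the largest admissible $k$ one has $(2k+1)^2\mu$ of order $\mu^{-1}$, so $\sigma_k(\mu)$ lies well below $\mu^{3/4}$ and the theorem applies directly, but for small $k$ one has $\sigma_k(\mu)\approx\mu^{1/4}$ and then $\sigma>\sigma_k(\mu)$ forces $\sigma$ out of that range. In the latter case, however, the remark following Theorem~\ref{thm:nonlinear_SDE} --- fluctuations already become large at times $z\ll-\sigma^{2/3}$, smearing out all small oscillations before the fold line is reached --- gives the conclusion a fortiori, so it is enough to note this rather than to argue separately. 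The only remaining caveat, which I would state rather than prove, is that ``overlap'' here is a comparison of orders of magnitude: sharpening the constant inside $\sigma_k(\mu)$ beyond the bracket $c_0\in[\pi/4,1]$ would require finer canard-spacing asymptotics than Theorem~\ref{thm:dist_canards} provides.
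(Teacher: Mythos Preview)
Your proposal is correct and follows essentially the same route as the paper: the corollary is stated immediately after the paragraph that derives it, and that paragraph does exactly what you do --- it reads off the tube width $\cO(\mu^{-1/4}\sigma)$ at $z=0$ from Theorem~\ref{thm:covariance_tubes}, reads off the canard-to-weak-canard distance $\cO(\e^{-c_0(2k+1)^2\mu})$ from Theorem~\ref{thm:dist_canards}, and equates the two. Your treatment is in fact more careful than the paper's, which does not discuss the compatibility of the parameter range $\sigma<r_0\mu^{3/4}$ from Theorem~\ref{thm:nonlinear_SDE} with $\sigma>\sigma_k(\mu)$ for small $k$; your handling of that point via the remark following Theorem~\ref{thm:nonlinear_SDE} is a reasonable addition.
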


In Figure \ref{fig:fig8} we show the curves $\sigma_k(\mu)$ and 
indicate in which regions of the $(\mu,\sigma)$-parameter plane 
one can distinguish which number of canards. The curves $\sigma_k(\mu)$ 
and the existence conditions for canards enclose bounded regions where 
precisely $k+1$ canards can be distinguished which yields $(2k+1)/2$ twists 
up to the section $\{z=0\}$; see also Theorem \ref{thm:rotation_det}. We can 
also study Figure \ref{fig:fig8} for fixed $\sigma$. In this case decreasing 
$\mu$ first increases the number of visible SAOs and then decreases it again.

%%%%%%%%%%%%%%%%%%%%%%%%%%%%%%%%%%%%%%%%%%%%%%%%%%%%%%%%
\subsection{Early Jumps}
\label{ssec:escape_canards}

We now turn to the behaviour for times $z>\sqrt{\mu}$. For definiteness, we
let $(x^{\det}_z,y^{\det}_z) = (-z,z^2-\mu/2)$ be the weak canard solution, and
define the set 
\begin{equation}
 \label{eq:esc_canard01}
\cD(\eta) = \bigl\{ (x,y,z) \colon z\geqs\sqrt{\mu}, 
(x-x^{\det}_z)^2 + (y-y^{\det}_z)^2 < \eta^2 z
\bigr\} \;.
\end{equation} 
$\cD(\eta)$ is a tube centred in the weak canard, whose width grows like
$\sqrt{z}$. The following result, which is proved in 
Appendix~\ref{appendix:proof_escape}, shows that sample paths are unlikely to
stay for very long in $\cD(\eta)$. 

\begin{thm}[Escape of sample paths from the primary canard]
\label{thm:escape_canards} 
There exist constants $\kappa=\kappa(\eta)>0$, $C_0>0$ and $\gamma_{1},\gamma_{2}>0$ such that,
whenever $\sigma\abs{\log\sigma}^{\gamma_{1}} \leqs \mu^{3/4}$, 
\begin{equation}
 \label{eq:esc_canard02}
\P \bigl\{ \tau_{\cD(\eta)} > z \bigr\}
\leqs C_0 \abs{\log\sigma}^{\gamma_{2}} \e^{-\kappa(z^2-\mu)/(\mu\abs{\log\sigma})}\;.
\end{equation} 
\end{thm}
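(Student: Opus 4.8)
The plan is to adapt the sample-path strategy for stochastic escape from repelling structures developed in~\cite{BGbook,BG6}. Work in the blown-up variables~\eqref{eq:SDE_bu1} and take $(x^{\det}_z,y^{\det}_z)=(-z,z^2-\mu/2)$, which for $z>0$ lies on the repelling sheet $C^r_0$; the deviation $\zeta_z=(\xi_z,\eta_z)=(x_z-x^{\det}_z,y_z-y^{\det}_z)$ then solves the nonlinear equation~\eqref{eq:nonlin01}, and one checks that $A(x^{\det}_z)$ is precisely the matrix $A(z)$ of~\eqref{eq:main_ex_var3}, with eigenvalues $2z\pm2\icx\omega(z)$, $\omega(z)=\sqrt{1-z^2+\mu}$. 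Since these have \emph{positive} real part for $z>0$, Theorem~\ref{thm:var1} (valid for $|z|<1$, with Fenichel's Theorem~\ref{thm:fenichel1} and the standard normal-hyperbolicity estimates taking over once $z$ is of order one) gives $\norm{U(z,z_0)}\asymp\e^{(z^2-z_0^2)/\mu}$ for $\sqrt\mu\leqs z_0\leqs z$, so a deviation present at time $z_0$ is amplified by $\e^{(z^2-z_0^2)/\mu}$ by time $z$. The escape phase is initialised at $z=\sqrt\mu$ by Theorem~\ref{thm:nonlinear_SDE}: applied with $r=\sigma\abs{\log\sigma}^{1/2}$, which is admissible because the hypothesis $\sigma\abs{\log\sigma}^{\gamma_1}\leqs\mu^{3/4}$ ensures $r<r_0\mu^{3/4}$, it shows that outside an event of $\sigma$-polynomially small probability (bounded using~\eqref{eq:nonlin03}) one has $\abs{\zeta_{\sqrt\mu}}\lesssim r\mu^{-1/4}\ll\eta\mu^{1/4}$, i.e.\ the path sits deep inside the tube $\cD(\eta)$.

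The core of the proof is a windowing/Markov estimate, carried out first for the Gaussian solution $\zeta^0$ of the linearised equation~\eqref{eq:lin_var_SDE} and the exit time $\tau^0=\inf\{z>\sqrt\mu:\abs{\zeta^0_z}\geqs2\eta\sqrt z\}$ from a slightly enlarged linearised tube. Put $w_0=\sqrt\mu$ and define $w_1<w_2<\cdots$ by $w_k^2-w_{k-1}^2=c\mu\abs{\log\sigma}$ for a constant $c>1$ to be fixed, so that the amplification over one window is $\e^{(w_k^2-w_{k-1}^2)/\mu}\asymp\sigma^{-c}$. On the $k$-th window write $\zeta^0_{w_k}=U(w_k,w_{k-1})\zeta^0_{w_{k-1}}+N_k$, where $N_k=\tfrac{\sigma}{\sqrt\mu}\int_{w_{k-1}}^{w_k}U(w_k,r)F^0\6W_r$ is a centred Gaussian independent of $\mathcal F_{w_{k-1}}$; the lower bound from the proof of Theorem~\ref{thm:covariance_tubes}, applied on $[w_{k-1},w_k]$ for $z>0$, gives $\Cov(N_k)\succeq C(\sigma)\,\eta^2 w_k\,\Id$ with $C(\sigma)\to\infty$ as $\sigma\to0$ once $c$ is chosen large enough — and this choice of $c$ is what fixes $\kappa$. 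Hence, whatever the $\mathcal F_{w_{k-1}}$-measurable value of the deterministic shift $U(w_k,w_{k-1})\zeta^0_{w_{k-1}}$, the vector $\zeta^0_{w_k}$ lies outside the ball of radius $2\eta\sqrt{w_k}$ with probability at least some fixed $p>0$; the Markov property then gives $\P\{\tau^0>w_k\mid\mathcal F_{w_{k-1}}\}\leqs1-p$ on $\{\tau^0>w_{k-1}\}$, whence $\P\{\tau^0>w_k\}\leqs(1-p)^k$. Iterating over the $K=\lfloor(z^2-\mu)/(c\mu\abs{\log\sigma})\rfloor$ windows contained in $[\sqrt\mu,z]$, and absorbing the rounding of $K$, the incomplete last window and the initialisation error of the first paragraph into a prefactor $C_0\abs{\log\sigma}^{\gamma_2}$, yields $\P\{\tau^0>z\}\leqs C_0\abs{\log\sigma}^{\gamma_2}\,\e^{-\kappa(z^2-\mu)/(\mu\abs{\log\sigma})}$ with $\kappa=-c^{-1}\log(1-p)>0$.

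It remains to transfer this bound from the linearised to the full nonlinear process, and this is the main obstacle. Inside $\cD(\eta)$ the nonlinear term $b(\zeta)^T=(-\xi^2,0)$ is pointwise small, $\abs{b(\zeta)}\leqs\eta\sqrt z\,\abs{\zeta}$, but because $\norm{U(z,z_0)}$ is exponentially large in $1/\mu$ a single global Gronwall comparison of~\eqref{eq:nonlin01} with~\eqref{eq:lin_var_SDE} over the whole escape phase picks up uncontrolled factors. The remedy, in the spirit of~\cite{BGbook}, is a multi-scale bootstrap: one runs the comparison window by window, so that each comparison spans a $z$-interval of length only $\Order{\mu\abs{\log\sigma}}$, on which the nonlinear correction accumulated while the path remains in $\cD(\eta)$ stays of lower order than the local tube width $\eta\sqrt z$ (one may in addition exploit that $-\xi^2\leqs0$ \emph{accelerates} rather than hinders escape in the half-plane $\{\xi<0\}$, which the spiralling dynamics necessarily visits). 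Combining this with the normal-hyperbolicity exit estimates valid once $z$ is of order one shows that $\tau_{\cD(\eta)}\leqs z$ whenever $\tau^0\leqs z$, up to an event of the same $\sigma$-polynomially small order as the initialisation error; substituting the bound of the second paragraph then yields~\eqref{eq:esc_canard02}.
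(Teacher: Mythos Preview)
Your windowing argument for the linearised process is sound and close in spirit to the paper's diffusion-dominated step, but the transfer to the nonlinear process has a real gap. On each window $[w_{k-1},w_k]$ with $w_k^2-w_{k-1}^2=c\mu\abs{\log\sigma}$, the principal solution satisfies $\norm{U(w_k,s)}=\e^{(w_k^2-s^2)/\mu}$, so the accumulated nonlinear correction while the path stays in $\cD(\eta)$ is
\[
\frac{1}{\mu}\int_{w_{k-1}}^{w_k}\norm{U(w_k,s)}\,\abs{b(\zeta_s)}\,\6s
\;\leqs\;\frac{\eta^2}{\mu}\int_{w_{k-1}}^{w_k}\e^{(w_k^2-s^2)/\mu}\,s\,\6s
\;=\;\frac{\eta^2}{2}\bigl(\sigma^{-c}-1\bigr)\;,
\]
which for fixed $\eta>0$ and any $c>0$ is \emph{not} of lower order than the tube width $\eta\sqrt{w_k}=\Order{1}$; your claimed bound fails. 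The fallback that $-\xi^2\leqs0$ ``accelerates escape'' does not rescue the argument either: in polar coordinates the radial component of $b(\zeta)$ is $r^2$ times a function of $\varphi$ with \emph{zero mean}, so the rotation makes it push inward just as often as outward.

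The paper's proof resolves this by a two-scale decomposition rather than a linear-to-nonlinear comparison. It introduces an inner tube $\cS(h)$ of width $h\hat\rho(z)$ with $h\asymp\sigma\abs{\log\sigma}$; inside $\cS(h)$ the nonlinearity really is negligible (the bound analogous to yours becomes $h^2\sqrt{\Delta}\e^{\Delta}/\mu^{3/4}$, which is small precisely under the hypothesis $\sigma\abs{\log\sigma}^{\gamma_1}\leqs\mu^{3/4}$), and a windowed Gaussian end-point estimate gives escape from $\cS(h)$. In the annulus $\cR=\cD(\eta)\setminus\cS(h)$ the paper works directly with the nonlinear process in polar coordinates and performs an averaging change of variable $\bar r=r+r^2w(\varphi,z)$ that kills the quadratic term, leaving a radial drift $a(z)\bar r+\Order{\bar r^3}+\Order{\mu\bar r^2}+\Order{\sigma^2/\bar r}\geqs\kappa z\bar r>0$ throughout $\cR$; escape from $\cR$ then follows from a one-sided martingale inequality. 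The two regimes are glued via a Laplace-transform lemma that also controls the probability of returning from $\partial\cS(h_2)$ to $\cS(h_1)$ before leaving $\cD(\eta)$. The averaging step is the missing idea in your proposal.
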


The probability that a sample paths stays in $\cD(\eta)$ thus becomes small as
soon as 
\begin{equation}
 \label{eq:esc_canard03}
z \gg \sqrt{\mu\abs{\log{\sigma}}/\kappa}\;. 
\end{equation} 
Unless the noise intensity $\sigma$ is exponentially small in $\mu$, the
typical time at which sample paths jump away from the canard is slightly (that
is, logarithmically) larger than $\sqrt{\mu}$. 

%%%%%%%%%%%%%%%%%%%%%%%%%%%%%%%%%%%%%%%%%%%%%%%%%%%%%%%%%%%%%%%%%%%%%%%%%%%
\section{Numerics and Visualization}
\label{sec:numerics}

In this section we briefly discuss how to compute canard solutions and their 
associated covariance tubes. Furthermore we visualize the early jumps after 
passage near a folded node in phase space for a model system with global
returns. 
We also compute the probability density 
of escaping trajectories on a cross-section for this example. SDEs have been 
integrated numerically by a standard Euler--Maruyama 
scheme \cite{Higham,KloedenPlaten}. Deterministic solutions have been computed 
using a stiff ODE solver \cite{ShampineReichelt,HairerWannerII}.

\subsection{Covariance Tubes}
\label{sec:visual1}

The maximal canards and their associated covariance tubes can be computed. 
Figure \ref{fig:fig5} shows an example for these computations where we used 
the blown-up normal form \eqref{eq:SDE_bu1} with $\mu=0.08$.\\ 

\begin{figure}[htbp]
\includegraphics[width=1\textwidth,clip=true]{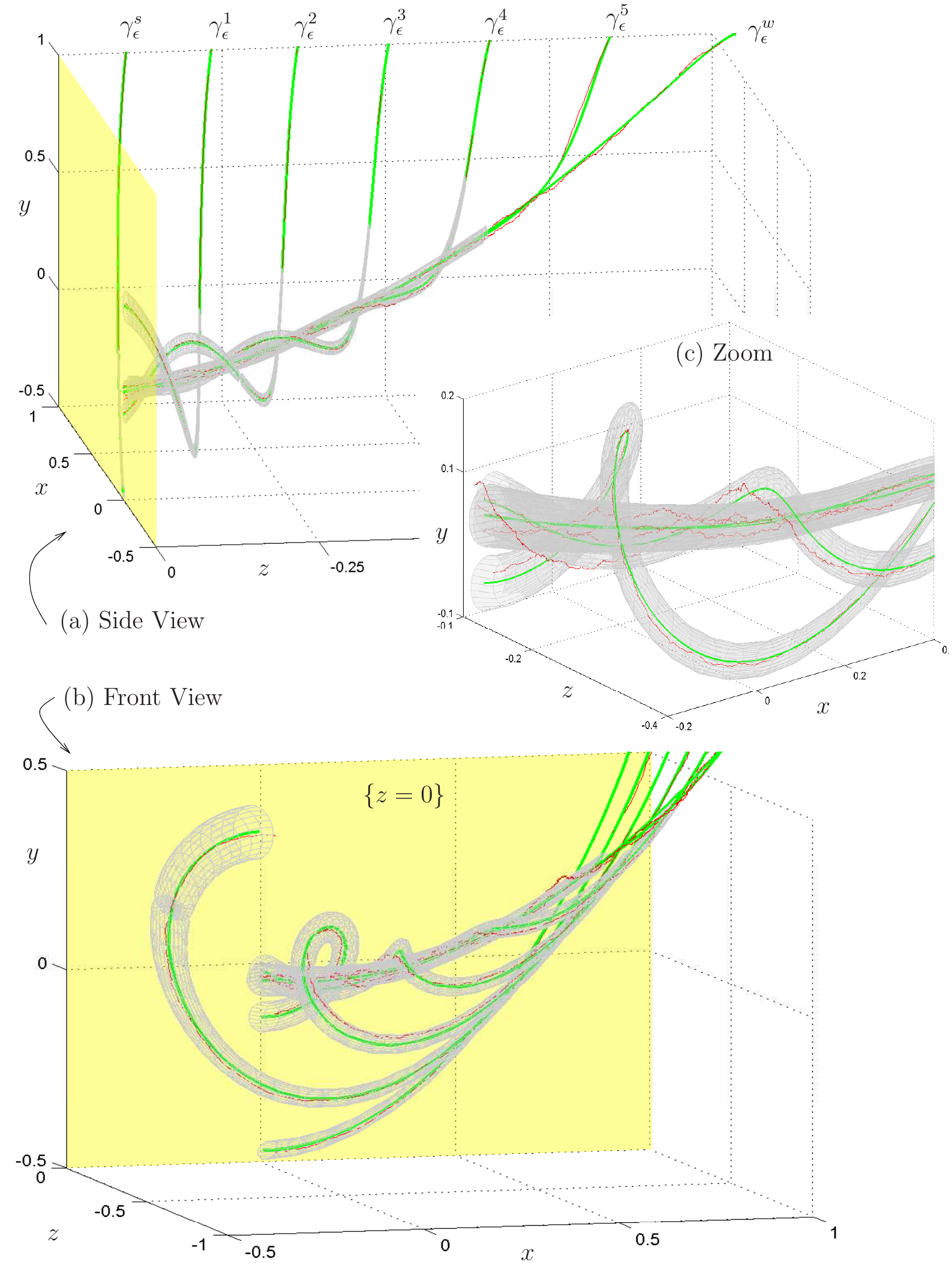}
	\caption{\label{fig:fig5} Computation of the canards and covariance
tubes 
	near a folded node in System \eqref{eq:SDE_bu1}. A detailed description
of 
	the figure can be found in Section \ref{sec:visual1}.}	
\end{figure}

First we compute the deterministic slow manifolds $C^a_\epsilon$ and
$C^r_\epsilon$ by 
forward respectively backward integration (see \cite{KuehnMMO}) up to the
section 
\benn
\Sigma^0=\{(x,y,z)\in\R^3:z=0\}\;.
\eenn 
The primary and secondary maximal canards have been computed as intersections 
of the slow manifolds $C^a_\epsilon\cap C^r_\epsilon$; see also Section 
\ref{sec:foldednode} and \cite{GuckHaiduc,DesrochesKrauskopfOsinga2}. The 
resulting maximal canards (thick green curves) are shown in 
Figure \ref{fig:fig5}. We also computed a sample path (thin red curves) for 
each maximal canard starting at the same point as the maximal canard with 
$z_0=-1$; the noise values were fixed at $\sigma=0.008=\sigma'$. The tubes 
defined by the covariance are shown in grey and have been computed using 
integration of the covariance differential equation \eqref{eq:cov_matrix1}; the 
section $\Sigma^0$ is drawn in yellow for better orientation.\\
 
Figure \ref{fig:fig5}(a) shows a side view that illustrates how the different 
primary canards $\gamma^{s,w}_\epsilon$ and secondary canards
$\gamma^j_\epsilon$ 
are organized with respect to $z$. We have only started to draw the covariance 
tubes $\cB(r)$ with $r^2=0.02$ a bit beyond the initial values at $x_0=1$. It is
clearly visible how the canards 
and their tubes are attracted towards the weak canard and then start to rotate 
around it. Figure \ref{fig:fig5}(b) shows a front view towards the section 
$\Sigma^0$. This view shows nicely how the tubes grow with increasing
$z$-values 
and that the ellipses defined by the covariance matrix are indeed close to 
circular near $z=0$. Furthermore we can see how the canards are organized on $\Sigma^0$; 
the maximal canard tubes for $\gamma^{s,1,2}_\epsilon$ do 
not overlap while all other tubes overlap near the weak canard. Figure 
\ref{fig:fig5}(c) shows a zoom that illustrates the twisting and also shows 
how the sample paths are indeed \lq\lq trapped\rq\rq\ inside the covariance tubes with 
very high probability. 

\subsection{Early Jumps}
\label{sec:visual2}

To visualize the effect of early jumps, we consider a folded node with global
returns 
given by 
\be
\label{eq:BKW_mod}
\begin{array}{lcl}
\6x &=& \frac{1}{\epsilon}(y-x^2-x^3) \6s + \frac{\sigma}{\sqrt{\epsilon}}
\6W_s^{(1)}\;, \\  
\6y &=& \left[-(\mu+1)x-z \right] \6s + \sigma' \6W^{(2)}_s\;, \\
\6z &=& \left[\frac{\mu}{2}+ax+bx^2\right] \6s\;, \\
\end{array}
\ee
which is a slight modification of a model system for folded-node MMOs
\cite{BronsKrupaWechselberger}. 
The critical manifold is cubic-shaped (or S-shaped) and given by
\benn
C_0=\{(x,y,z)\in\R^3:y=x^2+x^3\}=C^{a,-}_0\cup L_-\cup C^r\cup L^+ \cup
C^{a,+}\;
\eenn
where $C^{a,+}_0=C_0\cap \{x<-2/3\}$, $C^{r}_0=C_0\cap \{-2/3<x<0\}$,
$C^{a,+}_0=C_0\cap \{x>0\}$, 
$L_-=C_0\cap\{x=-2/3\}$ and $L_+=C_0\cap\{x=0\}$. The parameters $(a,b)\in\R^2$
help to adjust the global 
return mechanism. If $a,b$ are $\cO(1)$ then 
they do not influence the local behaviour of a folded node at the origin
$(x,y,z)=(0,0,0)$. Figure 
\ref{fig:fig7} shows the effect of early jumps after passage through a folded-node region. Parameters 
for the simulation are:
\be
\label{eq:paras_jumps}
\epsilon=0.01\;, \quad \mu=0.143\;, \quad a=0.2\;, \quad b=-1.1\;, \quad
\sigma=0.005\;, \quad \sigma'=0\;.
\ee
In Figure \ref{fig:fig7}(a) a deterministic trajectory (thick blue curve) has
been computed for $\sigma=0$. Then 
an SDE sample path for \eqref{eq:BKW_mod} has been started on a point (green
dot) of the deterministic solution 
and integrated forward. We define a cross-section
\benn
\Sigma^J:=\{(x,y,z)\in\R^3:x=-0.3\}\;.
\eenn

\begin{figure}[htbp]
\includegraphics[width=.9\textwidth,clip=true]{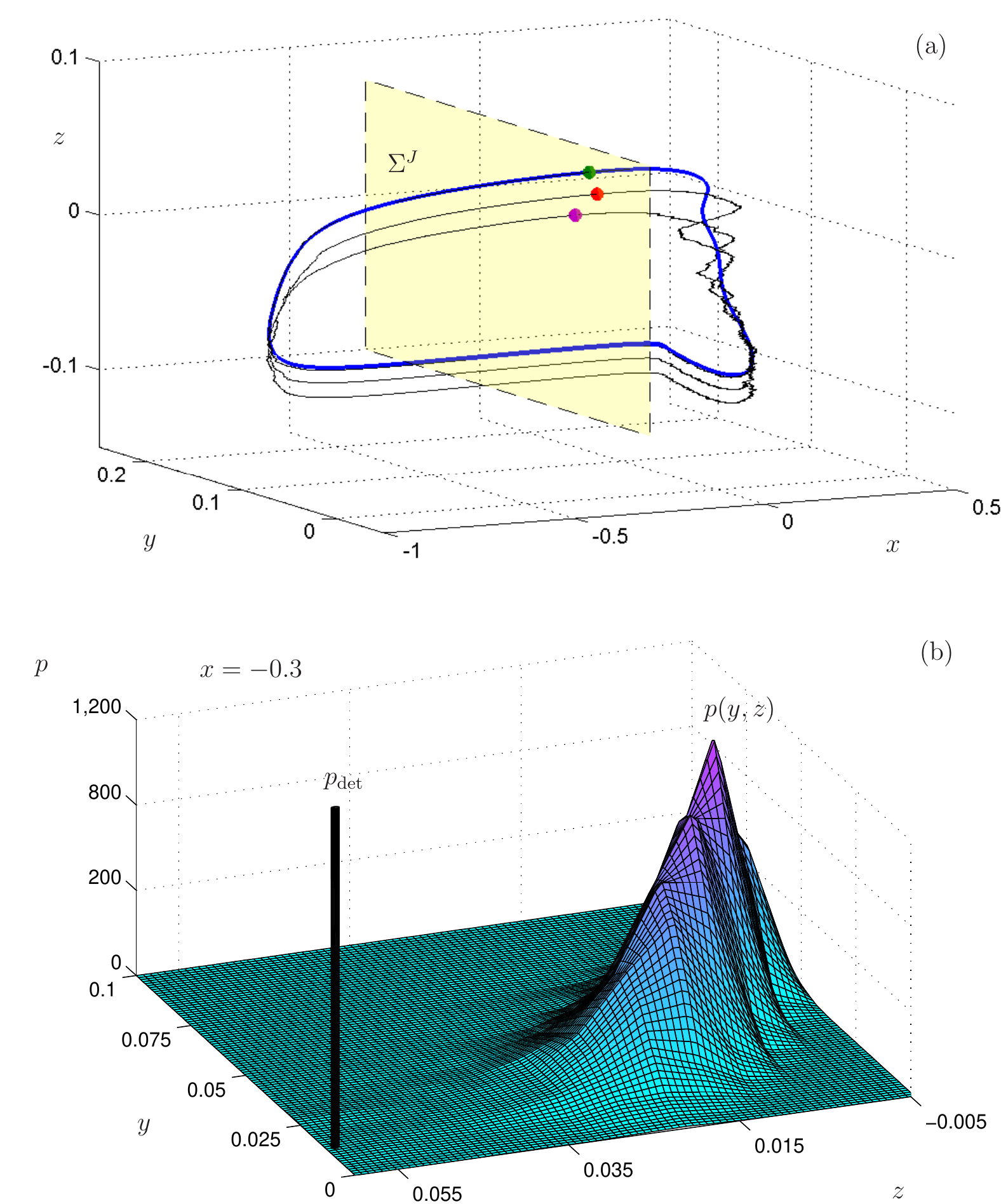}
	\caption{\label{fig:fig7} (a) Phase space plot of simulations for
\eqref{eq:BKW_mod}. The deterministic 
	solution (blue) and a stochastic sample path (black) are shown;
parameters are given by \eqref{eq:paras_jumps} 
	and for the deterministic solution we have $\sigma=0=\sigma'$.
Intersections with the cross-section $\Sigma^J$ (yellow) 
	are shown as thick dots; the SDE sample path is started on the
deterministic solution and on $\Sigma^J$ (green dot). 
	The next two intersections are shown as well (violet and red dots). (b)
Probability density $p(y,z)$ for 4000 
	sample paths on $\Sigma^J$ of sample path escapes from the folded node.
The deterministic point-mass density is 
	indicated as a bar $p_{\det}$ (black).}	
\end{figure}

Escapes of sample paths from the folded-node region are recorded on $\Sigma^J$.
The next two returns are also 
shown as points (violet and red) on the cross-section $\Sigma^J$. It is clearly
visible from 
Figure \ref{fig:fig7}(a) how in this realization the SDE sample path jumps before the deterministic
solution. 
Note that this causes the path to get re-injected into the folded-node region
after a large 
excursion at a point slightly different from the deterministic solution. Hence
the global return 
mechanism can potentially act as a control mechanism for the noise. To
investigate the early jumps 
further we show in Figure \ref{fig:fig7}(b) the probability density 
\benn
p(y,z)\qquad \text{on $\Sigma^J$\;.}  
\eenn
The density has been computed by recording the intersections with $\Sigma^J$
after passage through the 
folded node for $4000$ sample paths that have been integrated for a time
$s\in[0,20]$. The corresponding 
deterministic point measure $p_{\det}$ has been indicated as well. The density
$p(y,z)$ clearly shows 
that paths are likely to jump before the deterministic solution if we consider
the $z$-coordinate 
distance from the folded node. We also see that the density $p(y,z)$ is quite
concentrated and shows a 
multi-modal structure. This structure can be explained from the fact that sample
paths 
exit early but between different exit points they can make additional
deterministic small oscillations. 
The possible different numbers of these oscillations correspond to the different maxima of
$p(y,z)$. 

%%%%%%%%%%%%%%%%%%%%%%%%%%%%%%%%%%%%%%%%%%%%%%%%%%%%%%%%
\section{Final Remarks}
\label{sec:discussion}

In Section \ref{sec:SDEfoldednode} we stated our results on the relation between
the noise level, 
the parameter $\mu$ and the typical spreading of sample paths. Note that we proved 
and stated our results in blown-up (or re-scaled) 
coordinates removing the $\epsilon$-dependence. In particular, we worked in a
neighbourhood of the 
folded node that is of size $\cO(\sqrt\epsilon)$ in original coordinates. To
obtain the results in original 
coordinates one has to apply a blow-down transformation. First, we replace
$(x,y,z,\sigma,\sigma')$ by 
$(\bar{x},\bar{y},\bar{z},\bar{\sigma},\bar{\sigma}')$ (recall: we dropped the
overbars for notational convenience), cf.~\eqref{eq:rescale1} and \eqref{eq:rescale_noise}. 
Then the identity
\benn
(\bar{x},\bar{y},\bar{z},\bar{\sigma},\bar{\sigma}',\mu)=
(\epsilon^{-1/2}x,\epsilon^{-1}
y,\epsilon^{-1/2}z,\epsilon^{-3/4}\sigma,\epsilon^{-3/4}\sigma',\mu)
\eenn
provides the required blow-down transformation. This implies, e.g., that the
interaction of canards in 
Corollary \ref{cor:noisySAOs} is given, in original coordinates, by relations
of the form 
\benn
\sigma\approx \epsilon^{3/4}\mu^{1/4} e^{-c_0(2k+1)^2\mu}
\eenn
or that sample paths are likely to escape for
\benn
z\gg \sqrt{\frac{\mu\epsilon}{\kappa}\left|\log\sigma-\frac34\log\epsilon
\right|}
\eenn
as shown in Theorem \ref{thm:escape_canards}. Obviously one also has to
translate the assumptions in a similar 
way, e.g., $\bar{\sigma}\ll \mu^{3/4}$ becomes $\sigma\ll (\mu\epsilon)^{3/4}$.\\

\begin{figure}[tbp]
\includegraphics[width=1\textwidth,clip=true]{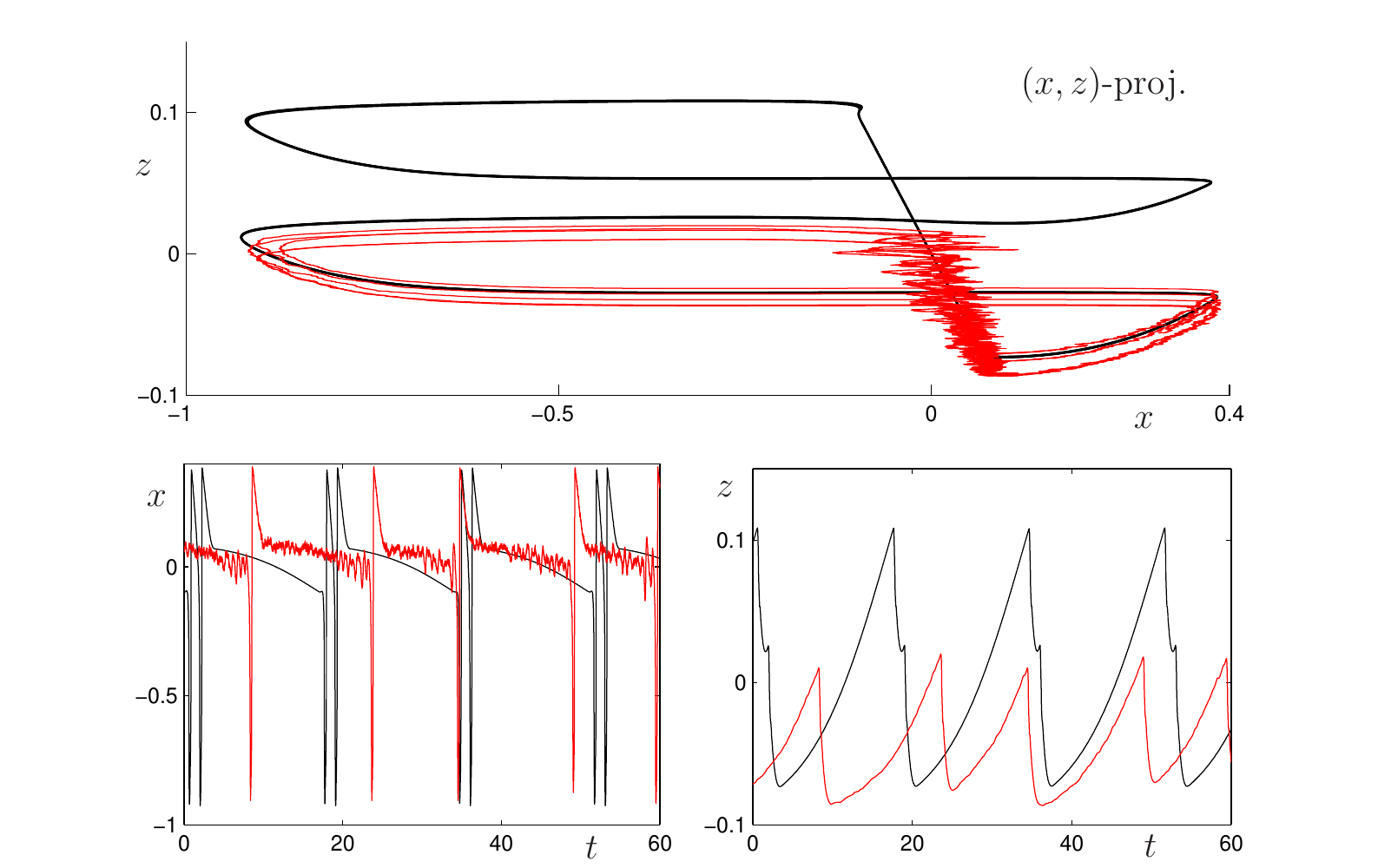}
	\caption{\label{fig:fig9} Simulation for the model system
\eqref{eq:BKW_mod} with parameter 
	values \eqref{eq:paras_jumps_2to1}. The upper plot shows a projection of
a deterministic solution (black) 
	and a stochastic sample path (red) into the $(x,z)$-plane. The lower two
plots show the associated 
	time series.}	
\end{figure}

Another important point is that we focused on the detailed analysis near the
folded node and 
did not consider different types of global return mechanisms. In Figure
\ref{fig:fig9} we show 
the interaction between global returns and noise-induced early jumps on
oscillatory  patterns. 
The simulation in Figure \ref{fig:fig9} has been carried out using the model
system \eqref{eq:BKW_mod} with parameter 
values 
\be
\label{eq:paras_jumps_2to1}
\epsilon=0.01\;, \quad \mu=0.029\;, \quad a=-0.1\;, \quad b=-0.5\;, \quad
\sigma=0.005\;, \quad \sigma'=0.005\;.
\ee
The deterministic solution in Figure \ref{fig:fig9} is an MMO with pattern $2^s$
where the number 
of small oscillations is difficult to count from the numerical results as $\mu$
is already extremely 
small. The SDE sample paths jump significantly earlier to $C^{a,-}_\epsilon$
than the deterministic solution as expected from our results. 
However, we see that the deterministic solution makes an additional large-amplitude oscillation (LAO)
given by the passage 
\benn
C^{a,-}_\epsilon\ra \text{jump near $L_-$ } \ra C^{a,+}_\epsilon \ra \text{jump
near $L_+$ } \ra C^{a,-}_\epsilon\;.
\eenn  
In particular, the early jumps change the number of LAOs in the MMO pattern from
$L=2$ to $L=1$. The noise also 
influences the number of SAOs but the crucial point is that it can also have a
global effect. Hence 
we end up with a stochastic-resonance-type mechanism for MMO patterns.\\

Furthermore, one could think about extending our results to capture the effect
of a global return map 
on the escape density after the passage through a folded-node region; see also
the brief discussion in 
Section \ref{sec:visual2}. Let $\Sigma^J$ denote a cross-section on which we
record the escape from 
the folded-node region and let $\Sigma^R$ denote a cross-section to the
deterministic flow slightly 
before the re-entry to the folded-node region; see also \cite{KuehnRetMaps}.
Then we have a global return map 
induced by the deterministic flow
\be
M: \Sigma^J\ra \Sigma^R\;.
\ee
If we are given a probability density $p$ on $\Sigma^J$, we can then consider the
induced density $p\circ M^{-1} / \abs{\det DM\circ M^{-1}}$ on $\Sigma^R$. 
This should allow us to calculate a distribution for different MMO patterns,
i.e.,
we can hope to assign a probability to each combination $L^s$ after one return
from 
$\Sigma^J$ to $\Sigma^J$. Although this approach seems possible it is beyond the
local analysis 
we focused on here. A solution of this problem crucially depends on the form of
the global returns which are described by 
the map $M$. 
   
%%%%%%%%%%%%%%%%%%%%%%%%%%%%%%%%%%%%%%%%%%%%%%%%%%%%%%%%

\newpage
%\section*{Proofs}

\appendix

%%%%%%%%%%%%%%%%%%%%%%%%%%%%%%%%%%%%%%%%%%%%%%%%%%%%%%%%
\section{Proof of Theorem \ref{thm:var1} (Canonical Form)}
\label{appendix:canonical_form}

We are going to need the following lemma for the proof of Theorem 
\ref{thm:var1}.

\begin{lem}
\label{lem:Smale}
Consider two non-autonomous vector fields $F,G:\R\times \R^N\ra \R^N$ for 
$(z,X)\in \R\times \R^N$. Suppose that on an open set $\mathcal{D}$, containing $(0,X_0)$,
both are continuous in $z$ and $C^1$ in $X$. Furthermore, suppose 
for all $(z,X)$ in $\mathcal{D}$ we have
\benn
\|F(z,X)-G(z,X)\|<\mu\;.
\eenn 
Let $K$ be a Lipschitz constant for $F$ in $X$, which is uniform in $z$. Suppose
$X(z)$ and $Y(z)$ solve $\frac{\6X}{\6z}=F(z,X)$ and $\frac{\6Y}{\6z}=G(z,Y)$
with $X(0)=X_0=Y(0)$, then
\be
\label{eq:Smale}
\|X(z)-Y(z)\|\leqs \frac{\mu}{K}\left(\e^{K|z|}-1\right)\;.
\ee
\end{lem}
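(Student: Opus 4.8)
The plan is to prove this by the classical \emph{fundamental inequality} for ordinary differential equations, i.e.\ a Gronwall-type estimate applied to the gap function $w(z):=\norm{X(z)-Y(z)}$. I would first fix $z\geqs 0$ and work on the sub-interval of $[0,z]$ along which both $(s,X(s))$ and $(s,Y(s))$ remain in the open set $\mathcal{D}$, so that the Lipschitz bound with constant $K$ and the closeness bound $\norm{F-G}<\mu$ are available along the two trajectories (the case $z<0$ being treated symmetrically at the end). Using that $X(0)=Y(0)=X_0$, I would write the solutions in integrated form and subtract,
\[
X(z)-Y(z)=\int_0^z\bigl[F(s,X(s))-G(s,Y(s))\bigr]\,\6s,
\]
then split the integrand as $\bigl[F(s,X(s))-F(s,Y(s))\bigr]+\bigl[F(s,Y(s))-G(s,Y(s))\bigr]$. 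The first bracket is bounded by $K\norm{X(s)-Y(s)}$ by the uniform Lipschitz hypothesis on $F$, and the second by $\mu$ by hypothesis, giving the integral inequality $w(z)\leqs\int_0^z\bigl(K\,w(s)+\mu\bigr)\,\6s$.

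To close this into the explicit bound \eqref{eq:Smale}, I would pass to the differential form of Gronwall's lemma: setting $\phi(z):=\int_0^z\bigl(K\,w(s)+\mu\bigr)\,\6s$ one has $w\leqs\phi$, $\phi(0)=0$, and $\phi'(z)=K\,w(z)+\mu\leqs K\,\phi(z)+\mu$. Multiplying through by the integrating factor $\e^{-Kz}$ yields $\frac{\6}{\6z}\bigl(\e^{-Kz}\phi(z)\bigr)\leqs\mu\,\e^{-Kz}$; integrating from $0$ to $z$ and rearranging gives $\phi(z)\leqs\frac{\mu}{K}\bigl(\e^{Kz}-1\bigr)$, hence $w(z)\leqs\frac{\mu}{K}\bigl(\e^{Kz}-1\bigr)$. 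For $z<0$ one runs the identical estimate on the interval $[z,0]$ (or, equivalently, applies the $z\geqs0$ case to the time-reversed fields $\widetilde F(\zeta,X)=-F(-\zeta,X)$, $\widetilde G(\zeta,X)=-G(-\zeta,X)$, which have the same Lipschitz constant $K$ and the same gap bound $\mu$), producing the bound with $|z|$ in place of $z$. Combining the two sign cases gives \eqref{eq:Smale} with $\e^{K|z|}$.

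I do not expect a genuine obstacle here, as the statement is a standard continuous-dependence estimate; the only two points needing a word of care are (i) the localization to $\mathcal{D}$ — so the hypotheses hold along the trajectories, which I would address by phrasing the conclusion for $z$ in the maximal interval on which both graphs $(z,X(z))$ and $(z,Y(z))$ stay in $\mathcal{D}$ — and (ii) making the $z<0$ argument explicit rather than merely asserting symmetry, since the $|z|$ in the exponent is exactly what that case contributes. Everything else is the routine Gronwall computation sketched above.
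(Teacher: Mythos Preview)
Your proposal is correct and follows essentially the same approach as the paper: the paper's proof simply states that ``a direct Gronwall-lemma argument suffices'' and refers to Hirsch--Smale--Devaney for details. You have written out precisely that standard argument, with the added care of handling the $z<0$ case and the localization to $\mathcal{D}$ explicitly.
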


\begin{proof}
A direct Gronwall-lemma argument suffices; for details see
\cite{HirschSmaleDevaney}, 
p.~399--400.
\end{proof}

The proof of Theorem \ref{thm:var1} proceeds in several steps that aim at
bringing Equation~\eqref{eq:main_ex_var3} into diagonal
form~\cite{Berglund_Thesis,BK2}.

\begin{proof}[Proof of Theorem \ref{thm:var1}]
As a first step we apply a rescaling 
\benn
u=\exp\left[\frac{1}{2\mu}\int_0^z \text{Tr}(A(s))\6s\right]u^{(0)}
=\e^{z^2/\mu}u^{(0)}
\eenn
where $u^{(0)}=(u_1^{(0)},u_2^{(0)})$ are new coordinates. This yields 
\be
\label{eq:var1_pf1}
\mu\frac{\6u^{(0)}}{\6z}=A_0(z)u^{(0)},\qquad \text{with }
A_0(z)=\begin{pmatrix} 2 z & 2 \\ -2(\mu+1) & -2z\\ \end{pmatrix}
\ee
where now $\text{Tr}(A_0(z))=0$. Therefore the principal solution has
determinant 
$1$ and is area preserving. We are going to show that the solution of 
\eqref{eq:var1_pf1} is a rotation up to a small error using a sequence of 
$z$-dependent coordinate transformations $S_j(z)$. We set
\benn
u^{(0)}=S_0(z)u^{(1)}, \qquad \text{where }
S_0(z)=\begin{pmatrix} \frac{-z-\icx\omega(z)}{1+\mu} & 
\frac{-z+\icx\omega(z)}{1+\mu} \\ 1 & 1\\ \end{pmatrix}\;.
\eenn
Then $S_0^{-1}A_0S_0$ is diagonal with entries $\pm 2\icx\omega(z)$. We get
\benn
\mu\frac{\6u^{(1)}}{\6z}=A_1(z)u^{(1)}\;,
\eenn
where the new matrix $A_1$ is given by
\benn
A_1(z)=S_0^{-1}A_0S_0-\mu S_0^{-1}\frac{\6S_0}{\6z}=
\begin{pmatrix} 2\icx\omega(z)+\mu \frac{\icx-\omega'(z)}{2\omega(z)} 
& \mu\frac{\icx+\omega'(z)}{2\omega(z)} \\
\mu\frac{-\icx+\omega'(z)}{2\omega(z)} 
& -2\icx\omega(z)+\mu\frac{-\icx-\omega'(z)}{2\omega(z)}\\ \end{pmatrix}\;.
\eenn
As above we want to have a zero trace so we compute
\benn
\frac{1}{2\mu}\int_0^z \text{Tr}(A_1(s))\6s=
-\frac{1}{2}\int_0^z\frac{\omega'(s)}{\omega(s)}\6s=
-\frac{1}{2}\log\omega(z) + \const\;.
\eenn
Hence we consider the scaling
\benn
u^{(1)}=S_1(z)u^{(2)}=\frac{1}{\sqrt{\omega(z)}}u^{(2)}
\eenn
which yields the equation
\benn
\mu\frac{\6u^{(2)}}{\6z}=A_2(z)u^{(2)},\qquad \text{with }
A_2(z)=\begin{pmatrix} \icx\omega_2(z) & \mu\bar{\rho}_2(z) \\ 
\mu\rho_2(z) & -\icx\omega_2(z)\\ \end{pmatrix}
\eenn
where overbar denotes complex conjugate and 
\be
\label{eq:omega2}
\omega_2(z)=2\omega(z)+\frac{\mu}{2\omega(z)}\;, \qquad \rho_2(z)
=\frac{-\icx+\omega'(z)}{2\omega(z)}\;.
\ee
The next transformation 
\benn
u^{(2)}=S_2(z)u^{(3)}
\eenn
yields
\be
\label{eq:var1_pf3}
\mu\frac{\6u^{(3)}}{\6z}=A_3(z)u^{(3)},\qquad A_3(z)=
S_2^{-1}\left[A_2S_2-\mu\frac{\6S_2}{\6z}\right]\;.
\ee
Instead of a given transformation we now impose the form of the matrices
\be
\label{eq:var1_pf_aux}
S_2(z)=\begin{pmatrix} 1 & \mu \bar{v}(z)\\ \mu v(z) & 1\\ 
\end{pmatrix},\qquad 
A_3(z)= \begin{pmatrix} \rho_1(z) & 0 \\ 0 & \bar{\rho}_1(z) 
\\ \end{pmatrix}
\ee
so that $S_2=\Id+\cO(\mu)$ and $A_3$ is diagonal. Substituting 
\eqref{eq:var1_pf_aux} into \eqref{eq:var1_pf3} leads to the equations 
\benn
\begin{array}{rcl}
0 & = & \icx\omega_2(z)+\mu^2\bar{\rho}_2(z)v-\rho_1(z)\\
\mu \frac{\6v}{\6z}&=& \rho_2(z)-i\omega_2(z)v-v\rho_1(z)\\
\end{array}
\eenn
and their complex conjugates. The first equation determines 
$\rho_1(z)$, and thus the second one becomes
\be
\label{eq:Ricatti}
\mu \frac{\6v}{\6z}=-2\icx\omega_2(z)v-\mu^2\bar{\rho}_2(z)v^2+\rho_2(z)\;.
\ee
If we can show that \eqref{eq:Ricatti} has a bounded solution for times 
$|z|<1$ then our prescribed coordinate change $S_2$ exists. 
Now let $a(z)=2z+\re \rho_1(z)=2z+\Order{\mu^2}$ and define $\alpha(z,z_0)$
by~\eqref{eq:main_sol_var2}. Then a last transformation 
\begin{equation}
\label{eq:complextoreal}
 u^{(3)}=S_3(z) \tilde u =
\e^{-\alpha(z,0)/\mu}
\frac{1}{1+\icx}
\begin{pmatrix}
\icx  & 1 \\ 
1 & \icx  
\end{pmatrix}\tilde u
\end{equation} 
will bring the equation into canonical form~\eqref{eq:canonical_form_diag},
with $\varpi(z)=\im\rho_1(z)=2\omega(z)+\Order{\mu}$. 
Composing all transformations, we get $u=S(z)\tilde u$, where
$S(z)=\e^{z^2/\mu}S_0(z)S_1(z)S_2(z)S_3(z)$ is indeed of the
form~\eqref{eq:ccA}.

To prove the existence of bounded solutions for 
\eqref{eq:Ricatti} note that $\rho_2(z)$ and $\omega_2(z)$ are bounded 
away from zero and that they are also have bounded norms for $|z|<1$. Now set
\benn
F(z,v):=-\frac{2\icx\omega_2(z)}{\mu}v+\frac{\rho_2(z)}{\mu}\;, \qquad
G(z,v):=-\frac{2\icx\omega_2(z)}{\mu}v-\mu\bar{\rho}_2(z)v^2+\frac{\rho_2(z)}{
\mu}\;.
\eenn
Note that $F$ is a vector field with bounded solutions on an $\cO(1)$-time
scale. 
Considering $F,G$ as real planar vector fields on $\R^2$, we can apply Lemma 
\ref{lem:Smale} to conclude that \eqref{eq:Ricatti} admits bounded solutions 
on time intervals of length $1$.
\end{proof}

%%%%%%%%%%%%%%%%%%%%%%%%%%%%%%%%%%%%%%%%%%%%%%%%%%%%%%%%
\section{Proof of Theorem \ref{thm:covariance_tubes} (Covariance Matrix)}
\label{appendix:covariance_tubes} 

In this Appendix, we discuss the equation 
\be
\label{eq:cov_matrix_repeated}
\mu\frac{\6V}{\6z}
=A(x^{\det}_z)V+VA(x^{\det}_z)^T+(F^0)(F^0)^T
\ee
describing the evolution of
$V(z)=\{V_{ij}(z)\}_{i,j\in\{1,2\}}:=\sigma^{-2}\Cov(z)$,  
the covariance matrix of the linearized variational equation 
around a deterministic solution of \eqref{eq:SDE_bu1}. Before proving Theorem
\ref{thm:covariance_tubes} on the small-$\mu$ asymptotics of the solutions, we
provide different approaches yielding information on the behaviour of $V(z)$. A
formal method based on iterative computations of a slow manifold is developed in
Section \ref{sec:iteration} to understand the asymptotics of
\eqref{eq:cov_matrix_repeated}. In Section \ref{sec:lyapunov} we provide
rigorous bounds using a Lyapunov function. In Section \ref{sec:DelayedHopf} we
refine the previous results by a transformation to real canonical form and
results about delayed Hopf bifurcation, thereby proving the main theorem. 

\begin{prop}
\label{prop:compVarB}
Let 
\begin{equation}
\label{eq:def_cov}
v(z) = 
\begin{pmatrix}
v_1(z) \\ v_2(z) \\ v_3(z) 
\end{pmatrix}\;,
\qquad
\text{where }
\begin{cases}
v_1(z) = V_{11}(z)\;, \\
v_2(z) = V_{22}(z)\;, \\
v_3(z) = V_{12}(z) = V_{21}(z)
\end{cases}
\end{equation}
denote the two variances and the covariance. Then $v(z)$ satisfies the ODE
\be
\label{eq:varBz}
\mu\frac{\6v}{\6z}=\underbrace{\begin{pmatrix} 
-8x(z) & 0 & 4 \\ 0 & 0 & -4(\mu+1) \\ -2(\mu+1) & 2 & -4x(z) \\
\end{pmatrix}}_{=:B(z)}v+\underbrace{\begin{pmatrix} 
2 \\ 2\rho^2 \\ 0 \\ \end{pmatrix}}_{=:E}
\ee
where we have abbreviated $x^{\det}(z)=:x(z)$. 
\end{prop}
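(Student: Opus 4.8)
The plan is to obtain \eqref{eq:varBz} by writing the matrix Lyapunov-type equation \eqref{eq:cov_matrix_repeated} out entry by entry. First I would record that $V(z)$ is symmetric for every $z$: this is immediate from its definition as $\sigma^{-2}\Cov(\zeta^0_z)$, and in any case the right-hand side of \eqref{eq:cov_matrix_repeated} maps symmetric matrices to symmetric matrices while the initial condition $V(z_0)=0$ is symmetric, so symmetry is preserved by the flow. Consequently $V$ is completely described by the three scalar functions $v_1=V_{11}$, $v_2=V_{22}$, $v_3=V_{12}=V_{21}$ of \eqref{eq:def_cov}.

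Next I would substitute the explicit data from \eqref{eq:lin_var_SDE}: the matrix $A(x^{\det}_z)=\left(\begin{smallmatrix} -4x & 2 \\ -2(\mu+1) & 0\end{smallmatrix}\right)$, where $x:=x^{\det}(z)$, and $(F^0)(F^0)^T=\left(\begin{smallmatrix} 2 & 0 \\ 0 & 2\rho^2\end{smallmatrix}\right)$. Using $V A(x^{\det}_z)^T=\bigl(A(x^{\det}_z)V\bigr)^T$, a short computation of $A(x^{\det}_z)V+V A(x^{\det}_z)^T+(F^0)(F^0)^T$ gives the $(1,1)$, $(2,2)$ and $(1,2)$ entries as $-8x\,v_1+4v_3+2$, $-4(\mu+1)v_3+2\rho^2$, and $-2(\mu+1)v_1+2v_2-4x\,v_3$ respectively; the $(2,1)$ entry equals the $(1,2)$ entry and carries no new information. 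Reading off the coefficients of $v_1,v_2,v_3$ in each of the three scalar identities yields exactly the matrix $B(z)$ and the inhomogeneity $E$ claimed in \eqref{eq:varBz}.

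The argument involves no genuine obstacle — it is a bookkeeping exercise — so the only care needed is in tracking the two off-diagonal contributions to each diagonal entry (this is the origin of the $-8x$ rather than $-4x$ in the first row of $B$) and the signs in $A(x^{\det}_z)$. As a consistency check one can note that the linear map $X\mapsto A X+X A^T$ acting on symmetric $2\times2$ matrices has eigenvalues $2\lambda_1,2\lambda_2,\lambda_1+\lambda_2$ (Bellman's lemma restricted to the symmetric subspace), hence trace $3\Tr A(x^{\det}_z)=-12x$, which matches $\Tr B(z)=-8x-4x=-12x$.
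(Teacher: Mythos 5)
Your proposal is correct and follows essentially the same route as the paper: write the Lyapunov-type equation \eqref{eq:cov_matrix_repeated} out entry by entry using the symmetry of $V$, which immediately gives the three scalar ODEs and hence $B(z)$ and $E$. The trace consistency check via Bellman's lemma is a nice addition not present in the paper, but the core bookkeeping computation is identical.
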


\begin{proof} 
Using the definitions from \eqref{eq:lin_var_SDE} and equation 
\eqref{eq:cov_matrix_repeated} we get
\begin{align*}
\mu\frac{\6V}{\6z}&=A(x^{\det}_z)V+VA(x^{\det}_z)^T+(F^0)(F^0)^T\\
&= \begin{pmatrix} -4x^{\det}_z & 2 \\ 
-2(\mu+1) & 0 \\\end{pmatrix} V+V 
\begin{pmatrix} -4x^{\det}_z & -2(\mu+1) \\ 
2 & 0 \\\end{pmatrix}+
\begin{pmatrix} \sqrt{2} & 0 \\ 0 & \sqrt{2}\rho \\\end{pmatrix}
\begin{pmatrix} \sqrt{2} & 0 \\ 0 & \sqrt{2}\rho \\ \end{pmatrix}\\
&=
\begin{pmatrix}4V_{12}-8V_{11}x^{\det}_z +2 & 
2V_{22}-4V_{12}x^{\det}_z-2V_{11}(1+\mu) \\ 
2V_{22}-4V_{12}x^{\det}_z-2V_{11}(1+\mu) &
-4V_{12}(1+\mu) +2\rho^2\\ \end{pmatrix}\;.
\end{align*}
Therefore the result follows.
\end{proof}

Our goal is to 
analyze \eqref{eq:varBz} for a given maximal canard solution $x(z)$. 
Observe that $B(z)$ has eigenvalues 
\benn
-4x(z),\qquad -4x(z)\pm 4i\sqrt{1-x(z)^2+\mu}=-4x(z)\pm 4\icx\omega_0(-x(z))\;.
\eenn
We assume that $z_0, z_1$ are chosen so that  
\be
\label{eq:condB}
-1<z_0< 0< z_1<1 \qquad\text{and}\qquad 1-x(z)^2+\mu>0
\qquad\forall z\in[z_0,z_1]\;.
\ee
In particular, the assumptions \eqref{eq:condB} are satisfied for 
any maximal canard solution approaching the folded-node region from 
the slow manifold $C^a_\epsilon$ for some $z_0<0<z_1$ of order $1$ and $\mu$ 
sufficiently small. 

\subsection{Iteration and Asymptotics}
\label{sec:iteration}

\begin{notation}
Henceforth, we write $x(z,\mu)\asymp y(z,\mu)$ if 
\begin{equation}
 \label{eq:asympt_notation}
c_-  y(z,\mu) \leqs x(z,\mu) \leqs c_+  y(z,\mu)
\end{equation} 
holds for all $z$ with positive constants $c_\pm$ independent of $z$
and $\mu$. 
\end{notation}

A formal derivation for the asymptotics as $\mu\ra 0$ for \eqref{eq:varBz} 
can be carried out using an iterative scheme \cite{Neishtadt1,Berglund_Thesis}. 
We set $v(z)=V_0^*(z)+V_1(z)$ where 
\be
\label{eq:iter1}
V_0^*(z):=-B(z)^{-1}E= 
-\frac{1}{4(1+\mu)x(z)}
\begin{pmatrix}
1+\mu+\rho^2 \\ 
(1+\mu)^2+(1+4x(z)^2+\mu)\rho^2\\
2\rho^2 x(z)\\  
\end{pmatrix}\;
\ee
defines the critical manifold for \eqref{eq:varBz} when 
viewed as a slowly time-dependent system. We get
\benn
\mu\frac{\6V_1}{\6z}=B(z)V_1+\mu E_1(z),\qquad E_1(z)=-\frac{\6}{\6z}V^*_0(z)\;.
\eenn
The same change procedure also works for any $n\geqs 1$ by setting
\be
\label{eq:slow_varB}
v(z)=\sum_{j=0}^n \mu^j V^*_j(z)+V_{n+1}(z)\;.
\ee
Then $V_{n+1}(z)$ satisfies the equation
\be
\label{eq:slow_varC}
\mu\frac{\6V_{n+1}}{\6z}=B(z)V_{n+1}+\mu^{n+1}E_{n+1}\;,
\ee
where $V^*_n$ and $E_n$ are given inductively by
\benn
V^*_{n+1}(z)=B(z)^{-1}\frac{\6}{\6z}V^*_n(z), \qquad 
E_{n+1}(z)=\frac{\6}{\6z}[B(z)^{-1}E_n(z)]\;.
\eenn

\noindent 
\textit{Remark:} Observe that \eqref{eq:slow_varB} is the asymptotic 
expansion for the slow manifold of \eqref{eq:varBz}. The iterative 
scheme we use here is very convenient for slowly time-dependent 
systems. Many other methods to calculate slow manifolds for general 
fast--slow systems have been explored; see 
\cite{ZagarisKaperKaper1,ZagarisKaperKaper2} and references therein.

\begin{prop}
\label{prop:asympB}
Assume that the deterministic maximal canard solution $x(z)$ satisfies 
\eqref{eq:condB}. Then the asymptotic expansion 
\eqref{eq:slow_varB} of $v(z)$ for $n\geqs 0$ has components of order
\be
\label{eq:asymp_mainB}
\mu^n V^*_{n,1}(z)\asymp \mu^n V^*_{n,2}(z)\asymp 
\frac{\mu^n}{|z|^{2n+1}}, \qquad \mu^n V^*_{n,3}(z)\asymp 
\frac{\mu^n}{|z|^{2n}}\;.
\ee
\end{prop}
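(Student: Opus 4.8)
The plan is to run an induction on $n$ that tracks, besides the order of magnitude, the leading coefficient at $z=0$ of each of the three components of $V^*_n$; keeping the latter under control is exactly what upgrades the easy upper bounds to the two-sided relations $\asymp$. Throughout I would use that along a maximal canard the function $x(z):=x^{\det}_z$ is smooth with a simple zero at $z=0$ (for the weak canard $x(z)=-z$ identically), so that, after possibly shrinking $z_0,z_1$, one may write $x(z)=z\,\hat x(z)$ with $\hat x$ smooth and $\hat x\asymp1$ on $[z_0,z_1]$; by~\eqref{eq:condB} one also has $x(z)^2\asymp1$ there. The structural input is an explicit inversion of the matrix $B(z)$ of~\eqref{eq:varBz}: a short computation gives $\det B(z)=-64(1+\mu)\,x(z)$, hence $\det B(z)\asymp|z|$, together with
\[
B(z)^{-1}=
\begin{pmatrix}
-\dfrac{1}{8x(z)} & -\dfrac{1}{8(1+\mu)x(z)} & 0\\[2.5mm]
-\dfrac{1+\mu}{8x(z)} & -\dfrac{4x(z)^2+1+\mu}{8(1+\mu)x(z)} & \dfrac12\\[2.5mm]
0 & -\dfrac{1}{4(1+\mu)} & 0
\end{pmatrix}\;.
\]
Two features of this matrix drive the whole argument: the $(1,3),(3,1),(3,3)$ entries vanish identically, and the third row is $O(1)$ --- it has \emph{no} $1/x(z)$ blow-up --- so taking the third component of $B(z)^{-1}w$ costs one power of $|z|$ less than taking the first two.

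For the base case $n=0$ one reads off from~\eqref{eq:iter1} (equivalently, from $B(z)V^*_0+E=0$) that $V^*_{0,1},V^*_{0,2}\asymp1/|x(z)|\asymp1/|z|$ while $V^*_{0,3}$ is a nonzero constant of order $1$; writing $V^*_{0,1}(z)=z^{-1}g_{0,1}(z)$, $V^*_{0,2}(z)=z^{-1}g_{0,2}(z)$, $V^*_{0,3}(z)=g_{0,3}(z)$, one also checks the normalisation $g_{0,2}(0)=(1+\mu)\,g_{0,1}(0)$, with $g_{0,i}(0)\neq0$. I would then propagate the hypothesis that
\[
V^*_{n,1}(z)=\frac{g_{n,1}(z)}{z^{2n+1}},\qquad
V^*_{n,2}(z)=\frac{g_{n,2}(z)}{z^{2n+1}},\qquad
V^*_{n,3}(z)=\frac{g_{n,3}(z)}{z^{2n}},
\]
with $g_{n,i}$ smooth, $g_{n,i}\asymp1$ on $[z_0,z_1]$, and $g_{n,2}(0)=(1+\mu)g_{n,1}(0)$, $g_{n,1}(0)\neq0$. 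In the inductive step, differentiating raises the first two orders by one (the leading coefficient being multiplied by the nonzero integer $-(2n+1)$) and turns $V^*_{n,3}$ into a term of order $|z|^{-(2n+1)}$ --- which, for $n=0$, is just the identically-zero derivative of a constant. Applying $B(z)^{-1}$ through the formula above: the genuinely dominant contributions to the first two components come from the $1/x(z)$-singular entries, giving order $|z|^{-(2n+3)}$, while the $O(1)$ entries keep all remaining contributions at the strictly smaller order; and the third component, produced only by the singularity-free third row, comes out at order $|z|^{-2(n+1)}$. A one-line computation of the coefficients at $z=0$ then shows that the relation $g_{n+1,2}(0)=(1+\mu)g_{n+1,1}(0)$ is preserved and that $g_{n+1,1}(0)=\frac{2n+1}{4\,\hat x(0)}\,g_{n,1}(0)\neq0$, so that $g_{n+1,2}(0),g_{n+1,3}(0)\neq0$ as well. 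This closes the induction, and multiplying by $\mu^n$ yields~\eqref{eq:asymp_mainB}.

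The delicate point --- and the only one that is not routine --- is obtaining the lower bounds, i.e.\ the constant $c_-$ in each $\asymp$: the upper bounds $|V^*_{n,i}(z)|\leqs\const\,\mu^n/|z|^{2n+1}$ (resp.\ $|z|^{2n}$) would follow from the orders alone, but ruling out cancellation of the leading term at every stage hinges on the algebraic invariant $g_{n,2}(0)=(1+\mu)g_{n,1}(0)$, which survives the recursion precisely because of the vanishing pattern $B(z)^{-1}_{13}=B(z)^{-1}_{31}=B(z)^{-1}_{33}=0$. Once this invariant is in place, the remaining verifications --- that the $O(1)$ entries of $B(z)^{-1}$ really do contribute only subleading terms, that $\hat x$ and the $g_{n,i}$ stay bounded away from zero on $[z_0,z_1]$ (this is where one may need to shrink $z_0,z_1$, in a way depending on $n$), and that the $g_{n,i}$ are smooth --- are all straightforward.
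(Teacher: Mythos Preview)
Your approach is essentially the same as the paper's---induction on $n$ using the explicit form of $B(z)^{-1}$ and its block structure---but you flesh out the lower bounds in the $\asymp$ relations via the algebraic invariant $g_{n,2}(0)=(1+\mu)\,g_{n,1}(0)$, which the paper's terse proof leaves implicit. One small slip: the assertion ``$x(z)^2\asymp1$'' is false (you have $x(0)=0$, and your own parametrization $x(z)=z\hat x(z)$ with $\hat x\asymp1$ gives $x(z)^2\asymp z^2$); what you actually need and use later is only that $x(z)^2=O(1)$, so that the $4x(z)^2$ piece of the $(2,2)$ entry of $B(z)^{-1}$ contributes at subleading order.
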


\begin{proof}
First observe that the symmetry \eqref{eq:symmetry} implies that $x(0)=0$ 
for any maximal canard. Using this fact and the form of the slow flow 
\eqref{eq:slowflow_desing_ex}, we find that $x(z)$ must have a Taylor 
expansion with non-vanishing linear term, i.e.,
\be
\label{eq:TaylorB}
x(z)=x_1 z+x_2z^2+\cdots
\ee
with $x_1<0$. The proof of \eqref{eq:asymp_mainB} then proceeds by 
induction as follows: The base step $n=0$ holds by formulas \eqref{eq:iter1}
and 
\eqref{eq:TaylorB}. The induction step from $n-1$ to $n$ follows from 
direct differentiation
\benn
\frac{\6}{\6z}\left(\frac{1}{z^{2n-1}}\right)=-\frac{2n-1}{z^{2n}}\;,\qquad
\eenn
and the calculation of $B(z)^{-1}$
\benn
B(z)^{-1}=- 
\begin{pmatrix}\frac{1}{8x(z)} & 
\frac{1}{8(1+\mu) x(z)} & 0 \\ \frac{1+\mu}{8x(z)} 
& \frac{1+\mu}{8x(z)}+\frac{x(z)}{2(1+\mu)} & 
\frac12 \\ 0 & \frac{1}{4(1+\mu)} & 0 
\end{pmatrix}
\eenn
almost immediately; we just have to observe the block structure of 
$B(z)^{-1}$.
\end{proof}

Proposition \ref{prop:asympB} is a formal asymptotic result. The 
asymptotic series \eqref{eq:slow_varB} becomes \lq\lq disordered\rq\rq\ for 
$|z|=\cO(\sqrt\mu)$, because in this case all the terms for the 
coordinates $v_1$ and $v_2$ are of order $1/\sqrt\mu$, while all 
terms for $v_3$ are of order $1$. Therefore we conjecture that 
\be
\label{eq:conjecB}
v_1=\cO\left(\frac{1}{\sqrt\mu}\right)\;, \qquad
v_2=\cO\left(\frac{1}{\sqrt\mu}\right)\;, \qquad
v_3=\cO(1)\;,
\ee
for $-\sqrt{\mu}<z_0\leqs z\leqs \sqrt{\mu}$.

\subsection{Lyapunov Function}
\label{sec:lyapunov}

The results in this section are not as sharp as the results obtained 
by coordinate changes in Section \ref{sec:DelayedHopf}, but they are 
obtained by a completely different technique which is of interest on 
its own in the context of folded nodes. Next 
we are going to establish an auxiliary result needed below in the proof of Proposition \ref{prop:Lyapunov}.

\begin{lem}
\label{lem:aux_Lya}
Consider the linear non-autonomous differential equation on $\R$ given by
\be
\label{eq:aux_lem_eq}
\mu\frac{\6X}{\6z}=k_1zX+k_2\frac{\mu^n}{(-z)^{2n}}
\ee
where $k_{1,2}=\cO(1)$ are two positive constants, $\mu>0$, and 
either $n=0$ and $z\geqs z_0$ or $n\geqs1$ and $z_0\leqs z<0$. 
Then 
\benn
X(z)\asymp 
\begin{cases}
\mu^n \abs{z}^{-(2n+1)}
& \text{for $z_0 + \Order{\mu\abs{\log\mu}}\leqs z\leqs -\sqrt{\mu}$\;,} \\
\mu^{n-1} \abs{z}^{-(2n-1)}
& \text{for $-\sqrt{\mu} \leqs z < 0$ if
$n\geqs1$\;,} \\
\mu^{-1/2}
& \text{for $-\sqrt{\mu} \leqs z \leqs \sqrt{\mu}$ if
$n=0$\;.} 
\end{cases}
\eenn
\end{lem}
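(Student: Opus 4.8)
The plan is to solve~\eqref{eq:aux_lem_eq} explicitly by variation of constants and then estimate the resulting integral by Laplace-type asymptotics, separately in each of the three regimes. With the natural initial condition $X(z_0)=0$, the integrating factor $\e^{-k_1z^2/(2\mu)}$ gives
\be
\label{eq:aux_Lya_voc}
X(z)=\frac{k_2}{\mu}\int_{z_0}^z\e^{k_1(z^2-s^2)/(2\mu)}\,\frac{\mu^n}{(-s)^{2n}}\,\6s
= k_2\,\mu^{n-1}\int_{z_0}^z\e^{k_1(z^2-s^2)/(2\mu)}(-s)^{-2n}\,\6s=:k_2\,\mu^{n-1}I(z)\;.
\ee
A bounded --- or even polynomially large in $\mu^{-1}$ --- initial value only adds the homogeneous term $X(z_0)\e^{k_1(z^2-z_0^2)/(2\mu)}$, which is negligible once $z\geqs z_0+\Order{\mu\abs{\log\mu}}$; this margin is needed both to make that term negligible and to let $X$ build up from $0$ to its quasi-stationary order before the estimate is claimed. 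For $z<0$ the substitution $s=-(\abs z+w)$ puts $I(z)$ into the form
\be
\label{eq:aux_Lya_I}
I(z)=\int_0^{\abs{z_0}-\abs z}\e^{-k_1(\abs z\,w+w^2/2)/\mu}\,(\abs z+w)^{-2n}\,\6w\;,
\ee
a Laplace integral concentrated near the endpoint $w=0$, with peak width $\sim\mu/(k_1\abs z)$ from the linear part of the exponent and a Gaussian cutoff at $w\sim\sqrt\mu$; the transition between the asymptotic regimes sits exactly where these two scales cross, i.e.\ at $\abs z\sim\sqrt\mu$.

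For the upper bounds I would use $(\abs z+w)^{-2n}\leqs\abs z^{-2n}$ throughout. Dropping the quadratic term in the exponent of~\eqref{eq:aux_Lya_I} gives $I(z)\leqs\abs z^{-2n}\int_0^\infty\e^{-k_1\abs z\,w/\mu}\,\6w=\frac{\mu}{k_1}\,\abs z^{-(2n+1)}$ for every $z<0$, which already yields the upper bound in the regime $z\leqs-\sqrt\mu$. Dropping instead the linear term gives $I(z)=\Order{\sqrt\mu\,\abs z^{-2n}}$; for $n\geqs1$ and $\abs z\leqs\sqrt\mu$ this is sharpened by splitting $I(z)=\int_0^{\abs z}+\int_{\abs z}^{\abs{z_0}-\abs z}$ and bounding the two pieces by $\abs z^{-(2n-1)}$ and by $\int_{\abs z}^{\sqrt\mu}w^{-2n}\,\6w+\mu^{-n}\int_{\sqrt\mu}^{\infty}\e^{-k_1w^2/(2\mu)}\,\6w=\Order{\abs z^{-(2n-1)}}+\Order{\mu^{-n+1/2}}=\Order{\abs z^{-(2n-1)}}$, where one uses $2n-1\geqs1$ and $\abs z\leqs\sqrt\mu$.

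For the lower bounds I would localise near $w=0$. On $z_0+\Order{\mu\abs{\log\mu}}\leqs z\leqs-\sqrt\mu$ the margin on $z$ guarantees $\mu/(k_1\abs z)\leqs\abs{z_0}-\abs z$, so restricting~\eqref{eq:aux_Lya_I} to $w\in[0,\mu/(k_1\abs z)]$ --- where the exponent is $\Order1$ and $\abs z+w\asymp\abs z$ --- gives $I(z)\geqs c\,\abs z^{-2n}\cdot\mu\,\abs z^{-1}$ for a constant $c>0$. On $-\sqrt\mu\leqs z<0$ with $n\geqs1$ one instead keeps the whole interval $[0,\abs z]$: there $\abs z\,w+w^2/2\leqs\frac32\abs z^2\leqs\frac32\mu$ and $(\abs z+w)^{-2n}\geqs(2\abs z)^{-2n}$, so $I(z)\geqs\e^{-3k_1/2}(2\abs z)^{-2n}\abs z\asymp\abs z^{-(2n-1)}$. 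The remaining case $n=0$, $\abs z\leqs\sqrt\mu$ (where $z$ may be positive) is handled by a direct estimate of $\int_{z_0}^z\e^{k_1(z^2-s^2)/(2\mu)}\,\6s$: since $z^2\leqs\mu$ one bounds it above by $\e^{k_1z^2/(2\mu)}\int_{\R}\e^{-k_1s^2/(2\mu)}\,\6s=\Order{\sqrt\mu}$ and below by the contribution of an $\Order{\sqrt\mu}$-subinterval of $[z_0,z]$ on which the exponent is $\Order1$, which gives $X(z)\asymp\mu^{-1/2}$. Inserting the matching two-sided bounds on $I(z)$ into $X(z)=k_2\,\mu^{n-1}I(z)$ then yields the three stated asymptotics, with constants independent of $z$ and $\mu$.

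The step I expect to be the real obstacle is the middle regime $-\sqrt\mu\leqs z<0$ with $n\geqs1$, and in particular its lower bound: one has to see that the solution saturates at order $\mu^{n-1}\abs z^{-(2n-1)}$ rather than tracking the quasi-stationary value $\frac{k_2\mu^n}{k_1(-z)^{2n+1}}\asymp\mu^n\abs z^{-(2n+1)}$ (the scalar analogue of $V^*_0$ in Section~\ref{sec:iteration}), which is much larger as $z\to0$. The mechanism is that the contraction rate $k_1\abs z/\mu$ in~\eqref{eq:aux_lem_eq} degenerates as $z\to0$, so the Laplace peak in~\eqref{eq:aux_Lya_I} becomes broader than $\abs z$ and $I(z)$ is then governed by $\int_{\abs z}^{\sqrt\mu}w^{-2n}\,\6w\asymp\abs z^{-(2n-1)}$, which is exactly where the condition $2n-1\geqs1$ enters. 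Carrying this out with $\mu$-uniform constants, and checking that the upper and lower estimates agree across the transition $\abs z\sim\sqrt\mu$, is the delicate part.
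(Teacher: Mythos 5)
Your proof is correct, and it takes a genuinely different route from the paper's. The paper writes $X(z)=X(z_0)\e^{k_1(z^2-z_0^2)/2\mu}+k_2 I_n(z)$ and extracts the asymptotics of $I_n$ from an integration-by-parts recursion
$k_1I_n(z)=\frac{\mu^n}{(-z)^{2n+1}}-\frac{\mu^n}{(-z_0)^{2n+1}}\e^{k_1(z^2-z_0^2)/2\mu}-(2n+1)I_{n+1}(z)$:
since $I_{n+1}\geqs 0$ this gives the upper bound at once, and feeding the resulting upper bound for $I_{n+1}$ back in yields the lower bound (this is where the $\Order{\mu\abs{\log\mu}}$ margin is used, to suppress the boundary exponential); the middle regime $-\sqrt{\mu}<z<0$ is then dispatched by restarting the integral at $z_0=-\sqrt{\mu}$, where all exponentials are $\Order1$. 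You instead substitute $s=-(\abs z+w)$ and carry out the Laplace-type asymptotics by hand, estimating the three regimes via explicit case splits in $w$. What the paper's IBP trick buys is brevity and a particularly clean mechanism for turning the upper bound at level $n+1$ into a lower bound at level $n$; what your argument buys is transparency about where the crossover $\abs z\sim\sqrt{\mu}$ comes from (the competition between the linear and quadratic parts of the exponent, i.e.\ peak width $\mu/\abs z$ versus Gaussian cutoff $\sqrt{\mu}$) and a fully explicit treatment of the middle regime, which the paper only sketches. Two minor points worth tightening: (i) the assertion that the margin $z\geqs z_0+\Order{\mu\abs{\log\mu}}$ alone guarantees $\mu/(k_1\abs z)\leqs\abs{z_0}-\abs z$ is only correct when combined with $\abs z\geqs\sqrt{\mu}$ and $\abs{z_0}=\Order1$ (the quantity $\abs z(\abs{z_0}-\abs z)$ is minimized at an endpoint of the allowed interval, and both endpoint checks use these extra facts); (ii) the implicit assumption $\abs{z_0}=\Order1$, also present but unstated in the paper, is needed for the interval splits and the negligibility of the homogeneous term, and should be flagged.
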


\begin{proof}
The solution of \eqref{eq:aux_lem_eq} can be written as 
\be
\label{eq:auxl1}
X(z)=X(z_0)\e^{k_1(z^2-z_0^2)/2\mu}+k_2I_n(z)
\ee
where 
\benn
I_n(z)=\mu^{n-1}\int_{z_0}^z \e^{k_1(z^2-t^2)/2\mu}\frac{1}{(-t)^{2n}}\6t\;.
\eenn
For $z\in[z_0,-\sqrt\mu\,]$ the leading-order asymptotics of \eqref{eq:auxl1}
is 
given by $I_n(z)$. Using integration by parts, we get an upper bound
\begin{align}
\nonumber
k_1I_n(z)&=k_1\mu^{n-1}\int_{z_0}^z
\left(-\frac{\mu}{k_1t}\frac{1}{(-t)^{2n}}\right) 
\left(-\frac{k_1t}{\mu}\e^{k_1(z^2-t^2)/2\mu}\right)\6t\\
\nonumber
&=\left. \mu^n\frac{1}{(-t)^{2n+1}}\e^{k_1(z^2-t^2)/2\mu} \right|_{z_0}^z -
\mu^n\int_{z_0}^z \frac{\6}{\6t}\left[\frac{1}{(-t)^{2n+1}} \right] 
\e^{k_1(z^2-t^2)/2\mu}\6t\\
\label{eq:auxl1B} 
&=
\frac{\mu^n}{(-z)^{2n+1}}-\frac{\mu^n}{(-z_0)^{2n+1}}\e^{k_1(z^2-z_0^2)/2\mu}
-(2n+1)I_{n+1}(z)\\
&\leqs \frac{\mu^n}{(-z)^{2n+1}}\;.
\nonumber
\end{align}
The lower bound follows by inserting the upper bound for $I_{n+1}(z)$ in~\eqref{eq:auxl1B}. Note that here the condition $z\geqs z_0 + \Order{\mu\abs{\log\mu}}$ is needed to make the term
$\e^{k_1(z^2-z_0^2)/2\mu}$ small. Let us also remark that, in particular, we showed $X(-\sqrt{\mu})\asymp1/\sqrt{\mu}$. 

Finally, to describe the behaviour for $-\sqrt{\mu}<z<0$, we 
replace $z_0$ by $-\sqrt{\mu}$ in~\eqref{eq:auxl1}. Then all
exponential terms are of order $1$, and the integral can be estimated directly.
\end{proof}

\begin{prop}
\label{prop:Lyapunov}
Suppose \eqref{eq:condB} holds, and let $x(z)$ be a maximal canard 
solution. Then solutions to the variational equation \eqref{eq:varBz} 
remain bounded by $\cO(1/(\abs{z}+\sqrt\mu))$ for $z_0\leqs z\leqs\sqrt{\mu}$. 
\end{prop}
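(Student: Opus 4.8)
The plan is to control every component of~\eqref{eq:varBz} by a single scalar Lyapunov functional built from a suitably weighted version of the covariance matrix, and then to reduce the resulting scalar estimate to Lemma~\ref{lem:aux_Lya}. It is cleaner to argue with the matrix equation~\eqref{eq:cov_matrix_repeated} for $V(z)$ than with its componentwise form~\eqref{eq:varBz}; recall that the solutions $V(z)$ of interest (the one with $V(z_0)=0$, or the positive-definite $\Vbar$ of Theorem~\ref{thm:covariance_tubes}) are symmetric positive semidefinite, and that, under~\eqref{eq:condB}, the deterministic maximal canard has $x(z):=x^{\det}_z\to0$ as $z\to0$ with $\6x/\6z=\Order{1}$ on $[z_0,\sqrt\mu\,]$. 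The key algebraic fact is the identity
\benn
A(x)^T P(x) + P(x) A(x) = -4x\,P(x)\;,\qquad
P(x):=\begin{pmatrix} \mu+1 & -x \\ -x & 1 \end{pmatrix}\;,
\eenn
where $A(x)=\left(\begin{smallmatrix}-4x & 2\\ -2(\mu+1)&0\end{smallmatrix}\right)$ is the drift matrix of~\eqref{eq:lin_var_SDE}; it follows from a one-line computation. By~\eqref{eq:condB}, $\det P(x(z))=1+\mu-x(z)^2>0$ on $[z_0,\sqrt\mu\,]$, and (since $x(z)$ also stays bounded away from $\pm1$ there) it is bounded away from $0$ uniformly in $\mu$, so $P(z):=P(x(z))$ and $P(z)^{-1}$ have uniformly bounded norms; moreover $\6P/\6z$ has only the off-diagonal entry $-\6x/\6z=\Order{1}$.

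Now set $\Psi(z):=\Tr\bigl(V(z)P(z)\bigr)$. Since $V(z)\succeq0$ and $P(z)$ is uniformly positive definite, $\Psi\geqs0$ and $\Psi(z)\asymp\norm{V(z)}$ with constants independent of $z$ and $\mu$. Differentiating $\Psi$ along~\eqref{eq:cov_matrix_repeated} and using cyclicity of the trace together with the identity above,
\begin{align*}
\mu\frac{\6\Psi}{\6z}
&= \Tr\bigl(V\,[A(x)^TP+PA(x)]\bigr) + \mu\,\Tr\bigl(V\,\tfrac{\6P}{\6z}\bigr) + \Tr\bigl((F^0)(F^0)^TP\bigr) \\
&= -4x(z)\,\Psi + \mu\,\Tr\bigl(V\,\tfrac{\6P}{\6z}\bigr) + \Tr\bigl((F^0)(F^0)^TP\bigr)\;.
\end{align*}
The middle term equals $-2\mu\,(\6x/\6z)\,V_{12}$, bounded by $\Order{\mu}\abs{V_{12}}\leqs\Order{\mu}\norm{V}\leqs\Order{\mu}\Psi$, and the last term equals $2(\mu+1)+2\rho^2$, which lies between two positive constants because $\rho$ is bounded above and below. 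Hence $\Psi$ obeys the scalar differential inequality
\benn
\mu\frac{\6\Psi}{\6z}\leqs \bigl(-4x(z)+\Order{\mu}\bigr)\,\Psi + C\;,\qquad
\Psi(z_0)=\Tr\bigl(V(z_0)P(z_0)\bigr)=\Order{1}\;,
\eenn
with $\Psi(z_0)=0$ in the case $V(z_0)=0$.

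It remains to integrate this inequality. Writing $x(z)=x_1z+\Order{z^2}$ with $x_1<0$ (as in the proof of Proposition~\ref{prop:asympB}), the coefficient of $\Psi$ is $k_1z+\Order{z^2}+\Order{\mu}$ with $k_1=-4x_1>0$; for the weak canard $x(z)=-z$ exactly, so it is simply $4z+\Order{\mu}$. An integration-by-parts estimate --- that of Lemma~\ref{lem:aux_Lya} with $n=0$ in the prototypical case, which carries over verbatim to this slightly more general coefficient, via a comparison with the solution of the corresponding scalar equation --- gives $\Psi(z)=\Order{1/\abs z}$ for $z_0+\Order{\mu\abs{\log\mu}}\leqs z\leqs-\sqrt\mu$ and $\Psi(z)=\Order{1/\sqrt\mu}$ for $-\sqrt\mu\leqs z\leqs\sqrt\mu$. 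Since $1/\abs z\asymp1/(\abs z+\sqrt\mu)$ on the first range and $1/\sqrt\mu\asymp1/(\abs z+\sqrt\mu)$ on the second, $\Psi(z)=\Order{1/(\abs z+\sqrt\mu)}$ for $z_0+\Order{\mu\abs{\log\mu}}\leqs z\leqs\sqrt\mu$; and on the short leftover interval near $z_0$ the coefficient $-4x(z)$ is bounded away from $0$, so $\Psi$ relaxes from $\Psi(z_0)=\Order{1}$ and stays $\Order{1}=\Order{1/(\abs z+\sqrt\mu)}$ there. As $\norm{V(z)}\asymp\Psi(z)$, every entry of $V(z)$ --- equivalently every component of $v(z)$ in~\eqref{eq:varBz} --- is $\Order{1/(\abs z+\sqrt\mu)}$ on $[z_0,\sqrt\mu\,]$, which is the assertion.

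The only genuinely creative step is the choice of the weight $P(x)$. A naive Euclidean Lyapunov function ($\norm v^2$, or $\Tr V^2$) fails: the Lyapunov map $W\mapsto A(x)^TW+WA(x)$ is strongly non-normal and, at the fold $x=0$, has all its eigenvalues on the imaginary axis, so no \emph{fixed} positive-definite weight is dissipative for $z<0$, where the weak would-be contraction rate $-4x(z)>0$ must be extracted despite the non-normality. One therefore has to spot the $x$-dependent weight $P(x)$ above for which the Lyapunov bracket collapses exactly to $-4x\,P(x)$. Once this identity is in hand, checking that the induced $\mu\,\6P/\6z$ correction is only $\Order{\mu}$ --- hence contributes at most a bounded multiplicative factor --- and invoking Lemma~\ref{lem:aux_Lya} are the remaining, routine steps.
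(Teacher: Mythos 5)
Your proof is correct and takes a genuinely different --- and arguably cleaner --- route than the paper's. The paper vectorises the covariance into $v\in\R^3$ and produces a $3\times3$ positive definite $M(z)$ solving $B(z)^TM+MB=-x(z)\Id$ (a six-equation linear system whose explicit solution at $\mu=0$ is fairly bulky), then bounds the quadratic form $Y=v^TMv$ by passing to $Z=\sqrt{Y}$ before applying Lemma~\ref{lem:aux_Lya}. You stay with the $2\times2$ matrix $V$ and observe that the Lyapunov operator $W\mapsto A(x)^TW+WA(x)$ has $\Tr A(x)=-4x$ among its eigenvalues, with a real, explicit, positive definite eigenvector $P(x)$; indeed $P(x)$ is, up to a factor, $\re(v\bar v^T)$ for a complex left eigenvector $v$ of $A(x)$, which is why the identity $A^TP+PA=-4xP$ is not a coincidence but a structural fact. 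This makes the linear functional $\Psi=\Tr(VP)$ close on the nose, $\mu\Psi'=(-4x+\Order{\mu})\Psi+\Order{1}$, with no square-root step, and the rest follows from Lemma~\ref{lem:aux_Lya} exactly as in the paper. Two points worth keeping in mind: your equivalence $\Psi\asymp\norm{V}$ relies on $V\succeq0$, which you note and which holds because the Lyapunov ODE~\eqref{eq:cov_matrix_repeated} propagates positive semidefiniteness, whereas the paper's $v^TMv\geqs0$ holds for arbitrary $v\in\R^3$; and you only treat the $n=0$ level, which is all Proposition~\ref{prop:Lyapunov} asserts, while the paper's argument simultaneously handles all $V_n$ of the asymptotic hierarchy (used to justify~\eqref{eq:slow_varB} as a genuine asymptotic expansion). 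Neither point affects the correctness of your proof of the stated proposition.
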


\begin{proof}
Throughout the proof we are going to introduce several positive constants 
$c_j=\cO(1)$ for $j\in \N$, whose actual values do not influence the asymptotic 
result. As a first step, we want to find a symmetric matrix $M(z)$ such that
\be
\label{eq:Mmat_def}
B(z)^TM(z)+M(z)B(z)=-x(z)\Id\;.
\ee
This can simply be accomplished by solving the six algebraic equations 
\eqref{eq:Mmat_def}. We find that 
\benn
\left.M(z)\right|_{\mu=0}=
\frac{1}{64(1+3x^2)}
\begin{pmatrix}
7+12x^2 & 1+12x^2 & -12x \\
1+12x^2 & 7+64x^2+48x^4 & -16x(1+3x^2) \\
-12x & -16x(1+3x^2) & 4(3+18x^2)\\
\end{pmatrix}\;,
\eenn
where we have abbreviated $x=x(z)$. Since \eqref{eq:TaylorB} holds 
for maximal canards, it is straightforward to check that 
the matrix $M(z)$ is positive definite, uniformly in $\mu$ and $z$. 
Therefore it defines a family of quadratic forms
\be
\label{eq:qform}
Y_n:=V_n^TM(z)V_n
\ee
where $V_n$ is defined by \eqref{eq:slow_varB}. The quadratic form 
\eqref{eq:qform} satisfies
\benn
c_1 \|V_n\|^2\leqs Y_n(z)\leqs c_2\|V_n\|^2\;.
\eenn 
for some constants $c_1,c_2>0$. Essentially $Y_n$ will act as a 
Lyapunov function to bound $\|V_n\|$. To show this, we compute the 
derivative. Using \eqref{eq:Mmat_def} and~\eqref{eq:slow_varC}, we get
\begin{align}
\label{eq:Lya_diff}
\mu\frac{\6Y_n}{\6z} &= \mu\frac{\6V_n^T}{\6z}M(z)V_n+ \mu V_n^TM(z)
\frac{\6V_n}{\6z}+\mu V_n^T\frac{\6M}{\6z}V_n \nonumber\\
&=-x(z)V_n^TV_n+\mu V_n^T \frac{\6M}{\6z}V_n+\mu^n\left[
E_n(z)^TM(z)V_n+V_n^TM(z)E_n(z)\right]\;.
\end{align}
Since $\|E_n\|=\cO(|z|^{-2n})$ and $\left\|\frac{\6M}{\6z}\right\|$ is bounded,
we can find 
constants $c_3,c_4>0$ such that \eqref{eq:Lya_diff} implies
\benn
\mu\frac{\6Y_n}{\6z}\leqs c_3(-x(z)+\mu)Y_n+c_4
\frac{\mu^n}{(-z)^{2n}}\sqrt{Y_n}\;.
\eenn
Setting $Y_n=Z_n^2$, we find that the last inequality is equivalent to 
\be
\label{eq:lin_tb_solved}
\mu\frac{\6Z_n}{\6z}\leqs c_5(-x(z)+\mu)Z_n+c_6\frac{\mu^n}{(-z)^{2n}}\;.
\ee
Using \eqref{eq:TaylorB} and Lemma \ref{lem:aux_Lya} we obtain that for 
$z_0$ of order $-1$,
\be
Z_n(z)\leqs c_7\frac{\mu^n}{\abs{z}^{2n+1}} \qquad 
\text{for $z_0\leqs z< 0$\;.}
\label{eq:lin_tb_solved++}
\ee
Since $Z_n$ is equivalent to $\|V_n(z)\|$, this shows that~\eqref{eq:slow_varC}
is indeed an asymptotic expansion in powers of $\mu/z^2$ for $z\leqs
-\sqrt{\mu}$, and in particular
all components of $v(-\sqrt{\mu})$ are of order $1/\sqrt{\mu}$. To complete the
proof up to time $z=\sqrt{\mu}$, we simply apply~\eqref{eq:lin_tb_solved++} in
the particular case $n=0$ (that is, for $V_0=v$ and $E_0=E$). 
\end{proof}

In view of our conjecture \eqref{eq:conjecB}, the bound on the covariance 
provided by Proposition~\ref{prop:Lyapunov} is not sharp since 
we have not yet shown that $v_3=\cO(1)$.

\subsection{Delayed Hopf Bifurcation}
\label{sec:DelayedHopf}

To obtain a sharp bound on the covariance we consider a similar coordinate 
change idea as in Section \ref{sec:diagonal}. This procedure will give a 
variational equation for the covariance that has desirable symmetry 
properties. 

\begin{lem}
\label{lem:SDE_canonical}
There exists a linear coordinate change $\zeta^0=S(z)\tilde\zeta^0$
transforming the linearized SDE~\eqref{eq:lin_var_SDE} into
\begin{equation}
 \label{eq:SDE_canonical01}
\6\tilde\zeta^0_z = \frac1\mu \widetilde A(z) \tilde\zeta^0_z \6z 
+ \frac{\sigma}{\sqrt{\mu}} \widetilde F(z) \6W_z\;,
\end{equation} 
where $\widetilde A(z)$ is in canonical form 
\begin{equation}
 \label{eq:SDE_canonical02}
\widetilde A(z) =  
\begin{pmatrix}
a(z) & \varpi(z) \\ -\varpi(z) & a(z) 
\end{pmatrix}\;,
\end{equation} 
with $a(z)=-2x(z)+\Order{\mu^{2}}$ and $\varpi(z)=2\omega(z)+\Order{\mu}$. 
The matrix $\widetilde F(z)$ is positive definite, with eigenvalues bounded
below and above uniformly in $z$.  
\end{lem}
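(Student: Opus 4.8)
The plan is to mimic the chain of coordinate changes used in the proof of Theorem~\ref{thm:var1} in Appendix~\ref{appendix:canonical_form}, applied now to the drift matrix $A(x^{\det}_z)$ appearing in the linearized SDE~\eqref{eq:lin_var_SDE} rather than to the deterministic variational matrix, and to track what happens to the diffusion coefficient $F^0$ under these transformations. Concretely, note that $A(x^{\det}_z)$ has the same algebraic structure as the matrix in~\eqref{eq:main_ex_var3} with the substitution $z\mapsto -x^{\det}(z)$: its eigenvalues are $-2x^{\det}(z)\pm 2\icx\,\omega_0(-x^{\det}(z))$, where $\omega_0(\cdot)=\sqrt{1-(\cdot)^2+\mu}$, and its trace is $-4x^{\det}(z)$. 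I would therefore reuse, essentially verbatim, the sequence $S_0(z),S_1(z),S_2(z),S_3(z)$ from the proof of Theorem~\ref{thm:var1}: first diagonalise the $\mu=0$ part to $\pm 2\icx\omega$, then rescale by $1/\sqrt{\omega}$ to kill the trace of the error term, then apply the near-identity transformation $S_2=\Id+\Order{\mu}$ obtained by solving the Riccati equation~\eqref{eq:Ricatti} (bounded for $|x^{\det}(z)|<1$, which holds by~\eqref{eq:condB}), and finally pass from complex to real canonical form. The composition $S(z)$ brings $A(x^{\det}_z)$ into the form~\eqref{eq:SDE_canonical02} with $a(z)=-2x^{\det}(z)+\Order{\mu^2}$ and $\varpi(z)=2\omega(z)+\Order{\mu}$, exactly as claimed.

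The new ingredient, compared to Theorem~\ref{thm:var1}, is the transformation law of the noise term: under $\zeta^0=S(z)\tilde\zeta^0$ the diffusion coefficient becomes $\widetilde F(z)=S(z)^{-1}F^0$, and one must verify that the resulting $\widetilde F(z)$ — or rather the relevant object $\widetilde F(z)\widetilde F(z)^T$ — is positive definite with eigenvalues bounded above and below uniformly in $z$ (and in $\mu$) on the interval under consideration. Since $F^0=\sqrt{2}\,\mathrm{diag}(1,\rho)$ is already positive definite with eigenvalues bounded away from $0$ and $\infty$ (recall $\rho$ is bounded above and below by positive constants), this reduces to showing that $S(z)$ and $S(z)^{-1}$ have uniformly bounded norm. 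This follows from the explicit form~\eqref{eq:ccA}: $S_0(z)S_1(z)$ is bounded with bounded inverse because $\omega(z)$ is bounded away from $0$ for $|z|<1$ (here one uses~\eqref{eq:condB}), $S_2(z)=\Id+\Order{\mu}$ is invertible for $\mu$ small, and $S_3(z)$ is a fixed rotation-type matrix times the scalar $\e^{-\alpha(z,0)/\mu}$ — and crucially the scalar prefactor $\e^{z^2/\mu}\e^{-\alpha(z,0)/\mu}$ is of order one, since $\alpha(z,0)=z^2+\Order{\mu^2}$ by~\eqref{eq:main_sol_var2}. (Strictly, the exponential prefactors attached to the trace are what make the transformed matrix traceless in the canonical form; I would absorb them into the definition of $S$ and check they cancel to leading order, leaving a genuinely bounded $S$.) I would then remark, as in the main text, that the symmetry of $\widetilde A(z)$ plus the positive-definiteness of $\widetilde F(z)\widetilde F(z)^T$ are precisely the structural features exploited in the subsequent covariance analysis.

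The main obstacle I anticipate is bookkeeping the exponential scalar factors carefully enough to see that $S(z)$ is genuinely bounded and not merely bounded up to a factor $\e^{\pm c/\mu}$: the transformations in Theorem~\ref{thm:var1} were designed to extract the full exponential contraction $\e^{\alpha(z,z_0)/\mu}$ into a separate scalar, so one has to decide whether the canonical form~\eqref{eq:SDE_canonical02} keeps that contraction inside the $\mathrm{diag}(a(z),a(z))$ block (it does, via $a(z)=-2x^{\det}(z)+\dots$, whose integral is $-2\int x^{\det}$) or pulls it out. Keeping it inside means $S(z)$ should \emph{not} carry the $\e^{\alpha/\mu}$ factor, so I would define $S(z)=S_0(z)S_1(z)S_2(z)\,R$ with $R$ the constant complex-to-real matrix and verify directly that this composition is $\Order{1}$ with $\Order{1}$ inverse, and that the conjugation produces exactly the advertised $a(z)$ and $\varpi(z)$. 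Once that is pinned down, the positive-definiteness of $\widetilde F(z)$ is immediate and the lemma follows.
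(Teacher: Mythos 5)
Your proposal takes essentially the same route as the paper, which simply says it suffices to apply the coordinate change $S(z)$ constructed in the proof of Theorem~\ref{thm:var1}, with the obvious modification that $x^{\det}(z)$ replaces the weak canard, and then observes that the new diffusion matrix is $\widetilde F(z)=S(z)^{-1}F^0$. Your worry about the exponential scalar prefactors is exactly the right thing to check, and resolves as you indicate: the trace-removing prefactor $\exp\bigl[\tfrac{1}{2\mu}\int_0^z\Tr A(x^{\det}_s)\,\6s\bigr]=\exp\bigl[-\tfrac{2}{\mu}\int_0^z x^{\det}_s\,\6s\bigr]$ and the factor $\e^{-\alpha(z,0)/\mu}$ built into $S_3$ differ only by $\e^{\Order{\mu}}$ since $a(z)=-2x^{\det}(z)+\Order{\mu^2}$, so their product is uniformly of order one; thus $S(z)$ and $S(z)^{-1}$ are $\Order{1}$ on the interval under consideration and $\widetilde F(z)\widetilde F(z)^T$ inherits the uniform positive definiteness of $F^0(F^0)^T$. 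The paper leaves this cancellation implicit; your accounting of it is a genuine (and welcome) addition of detail, but not a different proof.
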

\begin{proof}
It suffices to apply the coordinate change $\zeta^0_z=S(z)\tilde\zeta^0_z$
constructed in the proof of Theorem~\ref{thm:var1} (with an obvious modification
due to the fact that $x(z)$ is not necessarily given by the weak canard). The
new diffusion coefficient is then given by $\widetilde F(z)=S(z)^{-1}F^0$. 
\end{proof}

A computation  analogous to the one in the proof
of Proposition~\ref{prop:compVarB} then yields 

\begin{lem}
\label{lem:real}
The covariance matrix of $\zeta^0_z$ is given by $\sigma^2 \widetilde V(z)$, 
where the matrix elements of  $\widetilde V(z)$ satisfy the system 
\be
\label{eq:cov_matrix1real}
\mu\frac{\6\tilde{v}}{\6z}=\underbrace{\begin{pmatrix} 2a(z) & 0 &
2\varpi(z) \\
0& 2a(z) & -2\varpi(z) \\ -\varpi(z) & \varpi(z) & 2a(z) \\
\end{pmatrix}}_{=:\widetilde{B}(z)} \tilde{v}+\widetilde{E}
\ee
where $\widetilde{E}$ vector in $\R^3$ with $\cO(1)$-components.
\end{lem}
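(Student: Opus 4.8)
The plan is to repeat, essentially verbatim, the computation behind Proposition~\ref{prop:compVarB}, but starting from the canonical SDE~\eqref{eq:SDE_canonical01} instead of from~\eqref{eq:lin_var_SDE}. First I would note that the transformed process $\tilde\zeta^0_z$ solves a linear SDE with drift matrix $\frac1\mu\widetilde A(z)$ and diffusion matrix $\frac{\sigma}{\sqrt{\mu}}\widetilde F(z)$; applying It\^o's formula to the entries of $\tilde\zeta^0_z(\tilde\zeta^0_z)^T$ and taking expectations shows, exactly as~\eqref{eq:cov_matrix1} was derived from~\eqref{eq:lin_var_SDE}, that $\Cov(\tilde\zeta^0_z)=:\sigma^2\widetilde V(z)$ obeys the Lyapunov differential equation
\[
\mu\frac{\6{\widetilde V}}{\6z}=\widetilde A(z)\widetilde V+\widetilde V\widetilde A(z)^T+\widetilde F(z)\widetilde F(z)^T\;.
\]

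Next I would plug in the explicit canonical form~\eqref{eq:SDE_canonical02} of $\widetilde A(z)$ and write $\widetilde V=\bigl(\begin{smallmatrix}\tilde v_1 & \tilde v_3\\ \tilde v_3 & \tilde v_2\end{smallmatrix}\bigr)$. Since $\widetilde V$ is symmetric one has $\widetilde V\widetilde A^T=(\widetilde A\widetilde V)^T$, so it suffices to compute $\widetilde A\widetilde V$ once and symmetrise. A one-line multiplication yields the diagonal entries $2a\tilde v_1+2\varpi\tilde v_3$ and $2a\tilde v_2-2\varpi\tilde v_3$ and the off-diagonal entry $2a\tilde v_3-\varpi\tilde v_1+\varpi\tilde v_2$; reading off the coefficients of $(\tilde v_1,\tilde v_2,\tilde v_3)$ reproduces precisely the matrix $\widetilde B(z)$ in~\eqref{eq:cov_matrix1real}, and the inhomogeneity $\widetilde E$ is the vectorisation $\bigl((\widetilde F\widetilde F^T)_{11},(\widetilde F\widetilde F^T)_{22},(\widetilde F\widetilde F^T)_{12}\bigr)$ of the positive definite matrix $\widetilde F(z)\widetilde F(z)^T=S(z)^{-1}(F^0)(F^0)^T(S(z)^{-1})^T$.

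The only point needing a word — and it is immediate rather than a genuine obstacle — is to verify that the three components of $\widetilde E$ are of order $1$, uniformly in $z$ and in $\mu$; this follows at once from Lemma~\ref{lem:SDE_canonical}, which gives that $\widetilde F(z)$ is positive definite with eigenvalues bounded above and below by $\mu$- and $z$-independent constants, so that the entries of $\widetilde F(z)\widetilde F(z)^T$ are likewise bounded above and below. I would close by remarking that, in contrast to the matrix $B(z)$ in~\eqref{eq:varBz}, the matrix $\widetilde B(z)$ now inherits the skew/block structure of the canonical form, which is exactly the feature exploited in Section~\ref{sec:DelayedHopf} to turn the crude Lyapunov bound of Proposition~\ref{prop:Lyapunov} into the sharp estimates of Theorem~\ref{thm:covariance_tubes}.
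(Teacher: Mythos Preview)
Your proposal is correct and matches the paper's own justification, which simply reads ``A computation analogous to the one in the proof of Proposition~\ref{prop:compVarB} then yields'' the result. The only minor imprecision is your phrase that the entries of $\widetilde F\widetilde F^T$ are ``bounded above and below'': this is true for the diagonal entries, but for the off-diagonal entry one only has boundedness in absolute value --- which is all the $\cO(1)$ claim in the lemma requires.
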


It is already apparent from the form of \eqref{eq:cov_matrix1real} that 
the analysis of the variational equation simplifies. We can now prove
Theorem~\ref{thm:covariance_tubes}, which we restate as follows for convenience.

\begin{thm}[Theorem~\ref{thm:covariance_tubes}]
\label{thm:covariance} 
Suppose \eqref{eq:condB} holds. Then the solution $v=(v_1,v_2,v_3)$ for 
\eqref{eq:varBz} satisfies the following asymptotics as $\mu\ra 0$
\be
\label{eq:thm_asympB}
v_1\asymp\frac{1}{\abs{z}+\sqrt\mu}, \qquad v_2\asymp\frac{1}{\abs{z}+\sqrt\mu},
\qquad v_3=\cO(1), \qquad 
(v_1-v_2)=\cO(1)
\ee
for $z_0 + \Order{\mu\abs{\log\mu}} \leqs z \leqs \sqrt{\mu}$. 
\end{thm}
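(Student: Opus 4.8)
The plan is to work in the canonical coordinates supplied by Lemmas~\ref{lem:SDE_canonical} and~\ref{lem:real}, where the covariance system partially decouples, to estimate the resulting pieces separately, and to transfer the estimates back through the bounded coordinate change $S(z)$. In those coordinates the matrix $\widetilde V(z)=S(z)^{-1}V(z)S(z)^{-T}$ has entries $\tilde v=(\tilde v_1,\tilde v_2,\tilde v_3)$ solving~\eqref{eq:cov_matrix1real} with $\tilde v(z_0)=0$. Setting $p=\tilde v_1+\tilde v_2$ and $q=\tilde v_1-\tilde v_2$, one reads off from~\eqref{eq:cov_matrix1real} that $p$ \emph{decouples}, $\mu\dot p=2a(z)\,p+(\widetilde E_1+\widetilde E_2)$, while $(q,\tilde v_3)$ obeys a planar system $\mu\dot w=M(z)w+c(z)$ with $M(z)=\left(\begin{smallmatrix}2a&4\varpi\\-\varpi&2a\end{smallmatrix}\right)$, whose eigenvalues are $2a(z)\pm2\icx\varpi(z)$. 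Since $a(z)=-2x^{\det}_z+\Order{\mu^{2}}$ with $x^{\det}_z=x_1z+\cdots$, $x_1<0$ (cf.\ Proposition~\ref{prop:asympB}), since $\varpi(z)\asymp1$ by~\eqref{eq:condB}, and since $\widetilde E_1+\widetilde E_2=\Tr(\widetilde F\widetilde F^{T})$ is bounded away from $0$ and $\I$, the scalar equation for $p$ is exactly of the type treated in Lemma~\ref{lem:aux_Lya} with $n=0$; it therefore yields $p(z)\asymp(\abs z+\sqrt\mu)^{-1}$ on $[\,z_0+\Order{\mu\abs{\log\mu}},\sqrt\mu\,]$.

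Next I would show that $(q,\tilde v_3)=\Order1$ on $[z_0,\sqrt\mu]$, splitting the interval at $z=-\sqrt\mu$. On $[z_0,-\sqrt\mu]$ the eigenvalues of $M(z)$ have real part $2a(z)\asymp-\abs z<0$ and $\det M(z)=4a(z)^{2}+4\varpi(z)^{2}\asymp1$, so the quasi-stationary point $w^{\ast}(z)=-M(z)^{-1}c(z)$ and its derivative are $\Order1$; a standard slow-manifold estimate (as in the iterative scheme of Section~\ref{sec:iteration}, or by Fenichel's theorem) then gives $w(z)=\Order1$ there, in particular $w(-\sqrt\mu)=\Order1$. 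On $[-\sqrt\mu,\sqrt\mu]$ the contraction is lost; however $\bigl|\int_s^{z}2a(r)\,\6r\bigr|\leqs C\mu$ there (because $\abs{x^{\det}_r}\leqs C\sqrt\mu$ on this range), so the principal solution $U(z,s)$ of $\mu\dot w=M(z)w$ is bounded, $\norm{U(z,s)}=\Order1$, and to leading order a rotation at angular rate $\asymp\mu^{-1}$ (the eigenvalues $2a\pm2\icx\varpi$ having imaginary part $\asymp1$). Writing $w(z)=U(z,-\sqrt\mu)w(-\sqrt\mu)+\tfrac1\mu\int_{-\sqrt\mu}^{z}U(z,s)c(s)\,\6s$, the first term is $\Order1$, and one integration by parts in the second — the rapid rotation averages out the slowly varying $c$ — produces $\Order1$ as well. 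Hence $w=\Order1$ throughout $[z_0,\sqrt\mu]$.

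It then remains to transfer back. From $p\asymp(\abs z+\sqrt\mu)^{-1}\to\I$ and $q=\Order1$ we get $\tilde v_1=\tfrac12(p+q)\asymp\tilde v_2\asymp(\abs z+\sqrt\mu)^{-1}$ and $\tilde v_3=\Order1$, i.e.\ $\widetilde V(z)=\tfrac p2\Id+R(z)$ with $\norm{R(z)}=\Order1$. Using~\eqref{eq:ccA} (with $-z$ replaced by $x^{\det}_z$ and $\omega(z)=\sqrt{1-(x^{\det}_z)^{2}+\mu}$) one computes $SS^{T}=\tfrac1\omega\left(\begin{smallmatrix}2(x^{\det}_z)^{2}+2\omega^{2}&2x^{\det}_z\\2x^{\det}_z&2\end{smallmatrix}\right)+\Order\mu$, so that $SS^{T}\asymp\Id$, $(SS^{T})_{11}-(SS^{T})_{22}=2\mu/\omega+\Order\mu=\Order\mu$ and $(SS^{T})_{12}=2x^{\det}_z/\omega+\Order\mu$. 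Since $V=S\widetilde VS^{T}=\tfrac p2\,SS^{T}+SRS^{T}$, this gives at once $v_1,v_2\asymp p\asymp(\abs z+\sqrt\mu)^{-1}$; moreover $v_3=V_{12}=\tfrac p2(SS^{T})_{12}+\Order1=\Order{p\abs{x^{\det}_z}}+\Order{\mu p}+\Order1=\Order1$, because $p\abs{x^{\det}_z}\asymp p\abs z\leqs1$ and $\mu p\leqs\sqrt\mu$; and likewise $v_1-v_2=\tfrac p2\bigl((SS^{T})_{11}-(SS^{T})_{22}\bigr)+\Order1=\Order{\mu p}+\Order1=\Order1$.

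The hard part will be the estimate for $(q,\tilde v_3)$ across the turning point $z=0$ on $[-\sqrt\mu,\sqrt\mu]$: once $\re$ of the eigenvalues of $M(z)$ is no longer strictly negative, the $\Order1$ bound on the inhomogeneous contribution survives only because the fast rotation $\varpi/\mu\gg1$ averages out $c$, and making this precise requires the delayed-Hopf-bifurcation estimates of Section~\ref{sec:DelayedHopf} — uniform control of the near-rotation principal solution $U(z,s)$, together with sufficient smoothness of $a$, $\varpi$ and $c$ — in the spirit of~\cite{BGbook}. The decoupling of $p$ and the explicit form of $SS^{T}$ are precisely the two ingredients that yield the sharpening over Proposition~\ref{prop:Lyapunov}, namely $v_3=\Order1$ and $v_1-v_2=\Order1$ rather than only $\Order{(\abs z+\sqrt\mu)^{-1}}$.
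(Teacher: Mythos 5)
Your proposal follows the same strategy as the paper: pass to the canonical coordinates furnished by Lemmas~\ref{lem:SDE_canonical} and~\ref{lem:real}, observe that the trace $\tilde v_1+\tilde v_2$ decouples into a scalar equation handled by Lemma~\ref{lem:aux_Lya}, recognise the planar system for $(\tilde v_1-\tilde v_2,\tilde v_3)$ as undergoing a dynamic Hopf bifurcation at $z=0$, and transfer back via $V=S\widetilde V S^{T}$.

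The only place where your route diverges from the paper's is the treatment of the planar $(\tilde v_1-\tilde v_2,\tilde v_3)$ system across $z=0$: you split the interval at $z=-\sqrt\mu$, use a slow-manifold estimate on the left, and propose a direct oscillatory-integral/integration-by-parts argument on $[-\sqrt\mu,\sqrt\mu]$ (exploiting the fast rotation at rate $\varpi/\mu$ together with the $\Order1$ bound on the exponential factor there), which you correctly flag as the hard step. The paper instead invokes Neishtadt's theorem on delayed Hopf bifurcations \cite{Neishtadt1} in one stroke, which yields the $\Order1$ bound on $(\tilde v_1-\tilde v_2,\tilde v_3)$ uniformly for $z_0+\Order{\mu\abs{\log\mu}}\leqs z\leqs\Order1$, not merely up to $\sqrt\mu$; since the theorem's conclusion is only needed up to $z=\sqrt\mu$, your more elementary version suffices. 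Your explicit computation of $SS^{T}$ and the resulting breakdown of $v_3=\tfrac{p}{2}(SS^T)_{12}+\Order1$ and $v_1-v_2=\tfrac{p}{2}\bigl[(SS^T)_{11}-(SS^T)_{22}\bigr]+\Order1$, with $(SS^T)_{12}=\Order{\abs{x^{\det}_z}}$ and $(SS^T)_{11}-(SS^T)_{22}=\Order\mu$, is a welcome amplification of a step the paper handles in a single terse sentence, and it makes explicit exactly why the diverging trace part of $\widetilde V$ does not spoil the $\Order1$ bounds on $v_3$ and $v_1-v_2$.
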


\begin{proof}
We work in the coordinates provided by Lemma \ref{lem:real}. Summing the first
two equations of \eqref{eq:cov_matrix1real} 
we get 
\be
\label{eq:delayedHopf_eq0} 
\mu \frac{\6}{\6z}(\tilde{v}_1+\tilde{v}_2) = 2a(z)(\tilde{v}_1+\tilde{v}_2)  
+\tilde{e}_1+\tilde{e}_2  \;,
\ee
and we already know from Proposition \ref{prop:Lyapunov} (resp.\  
Lemma \ref{lem:aux_Lya}) that this yields 
\benn
(\tilde{v}_1+\tilde{v}_2)(z)\asymp\frac{1}{\abs{z}+\sqrt{\mu}}
\eenn
for $z_0 + \Order{\mu\abs{\log\mu}} \leqs z \leqs \sqrt{\mu}$. The difference of
the first two equations 
in \eqref{eq:cov_matrix1real} and the third equation can be combined as 
\be
\label{eq:delayedHopf_eq}
\mu \frac{\6}{\6z} \begin{pmatrix} \tilde{v}_1-\tilde{v}_2 \\ 
\tilde{v}_3 \\ \end{pmatrix} =
\begin{pmatrix} 2a(z) & 4\varpi(z) \\ -\varpi(z) & 2a(z) \\
\end{pmatrix}
\begin{pmatrix} \tilde{v}_1-\tilde{v}_2 \\ \tilde{v}_3 \\ 
\end{pmatrix} +
\begin{pmatrix} \tilde{e}_1 - \tilde{e}_2 \\ \tilde{e}_3 \\ 
\end{pmatrix}\;.
\ee
Considering \eqref{eq:delayedHopf_eq} as a fast--slow system with slow 
variable $z$ we find that the critical manifold is given by the equation  
\benn
\begin{pmatrix} (\tilde{v}_1-\tilde{v}_2)^* \\ \tilde{v}_3^*\\
\end{pmatrix} = -\begin{pmatrix} 2a(z) & 4\varpi(z) \\ 
-\varpi(z) & 2a(z) \\ \end{pmatrix}^{-1} \begin{pmatrix} 
\tilde{e}_1 - \tilde{e}_2 \\ \tilde{e}_3 \\ \end{pmatrix}\;.
\eenn
which is of order $1$. Observe that \eqref{eq:delayedHopf_eq} undergoes a 
delayed (or dynamic) Hopf bifurcation at $z=0$. Thus Neishtadt's theorem on 
delayed Hopf bifurcations \cite{Neishtadt1} applies, and shows that solutions 
of the variational equation satisfy 
\benn
(\tilde{v}_1-\tilde{v}_2)(z)=(\tilde{v}_1-\tilde{v}_2)^*(z) + \cO(\mu), \qquad
\tilde{v}_3(z)= v_3^*(z)+\cO(\mu)
\eenn  
for $z_0+\cO(\mu\abs{\log\mu})\leqs z\leqs \cO(1)$.
Now the result~\eqref{eq:thm_asympB}  follows from 
$V(z)=S(z)\widetilde V(z)S(z)^T$, by writing  $\widetilde V(z)$ as the sum of a
leading term proportional to the identity matrix and a remainder of order $1$.
\end{proof}

%%%%%%%%%%%%%%%%%%%%%%%%%%%%%%%%%%%%%%%%%%%%%%%%%%%%%%%%
\section{Proof of Theorem~\ref{thm:nonlinear_SDE} (Staying in covariance tubes)}
\label{appendix:proof_nonlinear_SDE} 

Applying the transformation of Lemma~\ref{lem:SDE_canonical} to the nonlinear
equation~\eqref{eq:nonlin01} and dropping the tildes yields the system 
\begin{equation}
 \label{eq:p_nlSDE:01} 
\6\zeta_z = \frac{1}{\mu} 
\bigl[ A(z) \zeta_z + b(\zeta_z,z) \bigr] \6z
+ \frac{\sigma}{\sqrt{\mu}} F(z) \6W_z\;,
\end{equation} 
where 
\begin{equation}
 \label{eq:p_nlSDE:02}
A(z) = 
\begin{pmatrix}
a(z) & \varpi(z) \\ -\varpi(z) & a(z)
\end{pmatrix}\;,
\end{equation} 
and $b(\zeta,z) = \Order{\norm{\zeta}^2}$. 
The solution of~\eqref{eq:p_nlSDE:01} with initial condition $\zeta_{z_0}=0$
can be written as 
\begin{equation}
 \label{eq:p_nlSDE:03}
\zeta_z = \frac{\sigma}{\sqrt{\mu}} \int_{z_0}^z U(z,s) F(s) \6W_s 
+ \frac{1}{\mu} \int_{z_0}^z U(z,s) b(\zeta_s,s) \6s
=: \zeta^0_z + \zeta^1_z\;, 
\end{equation} 
where $U(z,s)$ denotes the principal solution of the time-dependent linear
system $\mu\dot\zeta=A(z)\zeta$. Owing to the particular form of $A(z)$, we
have the explicit expression 
\begin{equation}
 \label{eq:p_nlSDE:04}
U(z,s) = \e^{-\alpha(z,s)/\mu}
\begin{pmatrix}
\cos(\varphi(z,s)/\mu) & \sin(\varphi(z,s)/\mu) \\
-\sin(\varphi(z,s)/\mu) & \cos(\varphi(z,s)/\mu)
\end{pmatrix}\;,
\end{equation} 
where
\begin{equation}
 \label{eq:p_nlSDE:05}
\alpha(z,s) = \int_s^z -a(u)\6u\;, 
\qquad
 \varphi(z,s) = \int_s^z \varpi(u)\6u\;.
\end{equation} 
Note in particular that since $-a(z) \asymp x(z)\asymp -z$ near $z=0$, we have
$\alpha(z,s)\asymp s^2 - z^2$. 

For a two-by-two matrix $M$, let $\norm{M}$ denote its $L^2$-operator norm,
i.e., $\norm{M}^2$ is the largest eigenvalue of $MM^T$. 

\begin{lem}
\label{lem:p_nlSDE01} 
Let 
\begin{equation}
 \label{eq:p_nlSDE:06}
\Theta(z) = \frac{1}{\mu} \int_{z_0}^z \norm{U(z,s)} \6s \;.
\end{equation} 
Then 
\begin{equation}
 \label{eq:p_nlSDE:07}
\Theta(z) = \cO\left(\frac{1}{\abs{z} + \sqrt\mu}\right) 
\end{equation} 
for all $z_0\leqs z\leqs\sqrt{\mu}$. 
\end{lem}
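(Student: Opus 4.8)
The plan is to reduce everything to a scalar Laplace-type integral by using the explicit form~\eqref{eq:p_nlSDE:04}: the matrix appearing there is $\e^{-\alpha(z,s)/\mu}$ times a rotation matrix, so its $L^2$-operator norm is simply
\[
\norm{U(z,s)} = \e^{-\alpha(z,s)/\mu}\;,
\]
and the rotational part plays no role. Thus $\Theta(z)=\frac1\mu\int_{z_0}^z\e^{-\alpha(z,s)/\mu}\,\6s$, and the whole estimate rests on the relation $\alpha(z,s)\asymp s^2-z^2$ noted just before the statement, which holds for $z_0\leqs s\leqs z\leqs 0$ by~\eqref{eq:condB} since $-a(u)=2x(u)+\Order{\mu^2}$ is positive and comparable to $\abs u$ for $u\leqs 0$ (using the Taylor expansion~\eqref{eq:TaylorB} with $x_1<0$).

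First I treat $z\leqs 0$. Then $s^2\geqs z^2$ on the whole range, so $\alpha(z,s)\geqs c_-(s^2-z^2)\geqs 0$ for some $c_->0$, hence $\norm{U(z,s)}\leqs\e^{-c_-(s^2-z^2)/\mu}$. Substituting $s=-(\abs z+r)$ with $r\geqs 0$ gives $s^2-z^2=2\abs z\,r+r^2$, and extending the upper limit to $+\I$,
\[
\Theta(z)\leqs\frac1\mu\int_0^{\I}\e^{-c_-(2\abs z\,r+r^2)/\mu}\,\6r\;.
\]
Bounding the integrand once by $\e^{-2c_-\abs z\,r/\mu}$ (dropping $r^2$) yields $\Theta(z)\leqs\frac{1}{2c_-\abs z}$; bounding it once by $\e^{-c_-r^2/\mu}$ (dropping the linear term) yields $\Theta(z)\leqs\frac12\sqrt{\pi/(c_-\mu)}$. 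Taking the minimum of the two bounds and using the elementary inequality $\min(\abs z^{-1},\mu^{-1/2})\leqs 2(\abs z+\sqrt\mu)^{-1}$ gives $\Theta(z)=\Order{1/(\abs z+\sqrt\mu)}$ for $z\leqs 0$.

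It remains to extend this to $0<z\leqs\sqrt\mu$. I split $\int_{z_0}^z=\int_{z_0}^0+\int_0^z$. On the short interval $[0,z]$ one has $\abs{\alpha(z,s)}\leqs\int_0^{\sqrt\mu}\abs{a(u)}\,\6u=\Order{\mu}$, because $\abs{a(u)}=\Order{\abs u}=\Order{\sqrt\mu}$ there, so $\norm{U(z,s)}=\Order{1}$ and this piece contributes $\frac1\mu\cdot\Order{\sqrt\mu}=\Order{1/\sqrt\mu}$. For $s\in[z_0,0]$ I use the cocycle splitting $\alpha(z,s)=\alpha(0,s)+\alpha(z,0)$ with $\abs{\alpha(z,0)}=\Order{\mu}$, so $\norm{U(z,s)}\leqs\e^{\Order{1}}\e^{-\alpha(0,s)/\mu}$ and this piece is at most a constant times $\Theta(0)$, which the previous step shows is $\Order{1/\sqrt\mu}$. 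Since $\abs z\leqs\sqrt\mu$ in this regime, $\Order{1/\sqrt\mu}=\Order{1/(\abs z+\sqrt\mu)}$, completing the proof. The only mildly delicate point is the two-sided estimate of the Laplace integral — the same quantity must be of order $1/\abs z$ when $\abs z\gg\sqrt\mu$ and of order $1/\sqrt\mu$ when $\abs z\lesssim\sqrt\mu$ — which is handled cleanly by the double bound above; the positivity of $-a(u)$ for $u\leqs0$, the cocycle splitting and the crude bound on $[0,z]$ are all routine.
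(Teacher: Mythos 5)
Your proof is correct and follows the paper's route: both observe that $U(z,s)$ is $\e^{-\alpha(z,s)/\mu}$ times a rotation, so $\norm{U(z,s)}=\e^{-\alpha(z,s)/\mu}$, and both then reduce $\Theta(z)$ to the scalar Laplace integral $\mu^{-1}\int_{z_0}^z\e^{-\alpha(z,s)/\mu}\6s$. The only difference is cosmetic: the paper at this point simply invokes Lemma~\ref{lem:aux_Lya} (which bounds the same integral via integration by parts and handles $-\sqrt{\mu}\leqs z\leqs\sqrt{\mu}$ by restarting at $-\sqrt\mu$), whereas you re-derive the bound self-containedly by writing $s^2-z^2=2\abs{z}r+r^2$, dropping one term at a time, taking the minimum of the two resulting bounds, and using a cocycle splitting at $s=0$ for positive $z$.
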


\begin{proof}
Since $U(z,s)U(z,s)^T = \e^{-2\alpha(z,s)/\mu}\Id$, we have 
\begin{equation}
 \label{eq:p_nlSDE:08}
\Theta(z) = \frac{1}{\mu} \int_{z_0}^z \e^{-\alpha(z,s)/\mu} \6s\;,
\end{equation} 
and the result follows from Lemma~\ref{lem:aux_Lya}.
\end{proof}

The next lemma provides bounds on the norms of $\Vbar(z)$ and $\Vbar(z)^{-1}$. 

\begin{lem}
 \label{lem:p_nlSDE02}
Let 
\begin{equation}
 \label{eq:p_nlSDE:09}
K_+(z)^2 = \norm{\Vbar(z)}\;,
\qquad
K_-(z)^2 = \norm{\Vbar(z)^{-1}}\;. 
\end{equation}  
Then 
\begin{equation}
 \label{eq:p_nlSDE:10}
K_+(z)^2 =  \cO\left(\frac{1}{\abs{z} + \sqrt\mu}\right) \;, 
\qquad
K_-(z)^2 = \Order{\abs{z} + \sqrt\mu}
\end{equation} 
for all $z_0\leqs z\leqs \sqrt\mu$. 
\end{lem}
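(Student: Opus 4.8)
Since $\Vbar(z)$ is symmetric and positive definite (it is the sum of the positive semidefinite integral term in the variation-of-constants formula and the pushforward $U(z,z_0)\Vbar(z_0)U(z,z_0)^{T}$ of a positive definite matrix), its operator norm equals $\lambda_{\max}(\Vbar(z))$ and $\norm{\Vbar(z)^{-1}}=1/\lambda_{\min}(\Vbar(z))$. So the claim is equivalent to the two eigenvalue bounds $\lambda_{\max}(\Vbar(z))=\Order{(\abs z+\sqrt\mu)^{-1}}$ and $\lambda_{\min}(\Vbar(z))\ge c\,(\abs z+\sqrt\mu)^{-1}$; equivalently, $\Vbar(z)$ is, up to an $\Order1$ correction, a scalar multiple of $\Id$ of size $(\abs z+\sqrt\mu)^{-1}$. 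The upper bound is the easy half: $\lambda_{\max}(\Vbar(z))\le\Tr\Vbar(z)=\Vbar_{11}(z)+\Vbar_{22}(z)$, and — working in the canonical coordinates already in force from~\eqref{eq:p_nlSDE:01} on — this trace satisfies the scalar equation~\eqref{eq:delayedHopf_eq0} with an $\Order1$ positive inhomogeneity and, since $\Vbar(z_0)$ is nondegenerate, an $\Order1$ initial value; by Lemma~\ref{lem:aux_Lya} (case $n=0$) it is $\asymp(\abs z+\sqrt\mu)^{-1}$ on all of $[z_0,\sqrt\mu]$. (Equivalently one may just quote Theorem~\ref{thm:covariance_tubes}.)

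The lower bound on $\lambda_{\min}$ is the substantive point, because near $z_0$ the $\Order1$ correction is comparable to the main term and the trace estimate gives nothing; here one must exploit the nondegeneracy of $\Vbar(z_0)$. Writing the solution of the Lyapunov equation~\eqref{eq:cov_matrix1} (in canonical form) as
\[
\Vbar(z) = U(z,z_0)\,\Vbar(z_0)\,U(z,z_0)^{T} + \frac1\mu\int_{z_0}^{z} U(z,s)\,F(s)F(s)^{T}\,U(z,s)^{T}\,\6s,
\]
we use that, because $A(z)$ is in canonical form, $U(z,s)U(z,s)^{T}=\e^{-2\alpha(z,s)/\mu}\Id$ with $\alpha$ as in~\eqref{eq:p_nlSDE:05}, and that $F(s)F(s)^{T}\succeq c\,\Id$ for a fixed $c>0$ by Lemma~\ref{lem:SDE_canonical}. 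Bounding the first summand below by $\e^{-2\alpha(z,z_0)/\mu}\lambda_{\min}(\Vbar(z_0))\,\Id$ and the integrand by $c\,\e^{-2\alpha(z,s)/\mu}\Id$ yields $\Vbar(z)\succeq\phi(z)\Id$, where
\[
\phi(z) := \lambda_{\min}\bigl(\Vbar(z_0)\bigr)\,\e^{-2\alpha(z,z_0)/\mu} + \frac{c}{\mu}\int_{z_0}^{z} \e^{-2\alpha(z,s)/\mu}\,\6s .
\]
Differentiating, $\phi$ solves $\mu\dot\phi = 2a(z)\phi + c$ with $\phi(z_0)=\lambda_{\min}(\Vbar(z_0))\asymp1$ (from the hypotheses on $\Vbar(z_0)$ and its inverse). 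Since $a(z)=-2x(z)+\Order{\mu^{2}}$ and $x(z)$ has a Taylor expansion $x_1 z+\cdots$ with $x_1<0$ by~\eqref{eq:TaylorB}, this is a lower-order perturbation of~\eqref{eq:aux_lem_eq} with $n=0$; applying Lemma~\ref{lem:aux_Lya} — the $\Order1$ initial value being precisely what controls $\phi$ on the $\Order{\mu\abs{\log\mu}}$-window near $z_0$, where the integral term has not yet saturated — gives $\phi(z)\asymp(\abs z+\sqrt\mu)^{-1}$ on $[z_0,\sqrt\mu]$, hence $\lambda_{\min}(\Vbar(z))\ge\phi(z)\asymp(\abs z+\sqrt\mu)^{-1}$ and $K_-(z)^{2}=\Order{\abs z+\sqrt\mu}$. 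If one prefers not to re-trace Lemma~\ref{lem:aux_Lya}, one can instead check that $z\mapsto c''/(\abs z+\sqrt\mu)$ is a subsolution of the $\phi$-equation for $c''$ small and $\le\phi(z_0)$ at $z_0$, and conclude by the scalar comparison principle.

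The main obstacle is exactly this behaviour near the initial time: neither the trace bound (the $\Order1$ part of $\Vbar$ is not dominated near $z_0$) nor the memory term $U\Vbar(z_0)U^{T}$ in isolation (it decays by a fixed power of $\mu$ over an $\Order{\mu\abs{\log\mu}}$ interval before the diffusion integral builds up) is strong enough by itself, so the two must be combined through the auxiliary scalar ODE for $\phi$. Everything else — reduction to eigenvalues, the rotational form of $U$ in canonical coordinates, the uniform bound $FF^{T}\succeq c\,\Id$, and the $\asymp$-asymptotics of the resulting exponential integral — is routine given the setup of Appendix~\ref{appendix:proof_nonlinear_SDE} and Lemma~\ref{lem:aux_Lya}.
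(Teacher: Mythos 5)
Your proof is correct, and it takes a genuinely different route from the paper's. The paper computes the eigenvalues of $\Vbar(z)\Vbar(z)^T$ explicitly in terms of $\vbar_1=V_{11}$, $\vbar_2=V_{22}$, $\vbar_3=V_{12}$, namely $\tfrac12\bigl[\vbar_1^2+\vbar_2^2+2\vbar_3^2\pm(\vbar_1+\vbar_2)\sqrt{(\vbar_1-\vbar_2)^2+4\vbar_3^2}\,\bigr]$, identifies the larger one with $K_+^4$ and the smaller with $K_-^{-4}$, and reads off the stated orders directly from the estimates $\vbar_1,\vbar_2\asymp(\abs z+\sqrt\mu)^{-1}$, $\vbar_3,\ \vbar_1-\vbar_2=\Order1$ of Theorem~\ref{thm:covariance_tubes}. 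You instead reduce to eigenvalues via symmetry/positive-definiteness, get $\lambda_{\max}$ from the trace (same input), and get $\lambda_{\min}$ from the operator inequality $\Vbar(z)\succeq\phi(z)\Id$ obtained from the variation-of-constants formula, the rotational form $U(z,s)U(z,s)^T=\e^{-2\alpha(z,s)/\mu}\Id$ in canonical coordinates, and the uniform ellipticity of $FF^T$; the scalar $\phi$ then falls under Lemma~\ref{lem:aux_Lya}. What your route buys is a cleaner treatment of the lower bound near $z_0$: the paper's argument ultimately needs $\det\Vbar=\vbar_1\vbar_2-\vbar_3^2\gtrsim(\abs z+\sqrt\mu)^{-2}$, which is immediate when $\abs z+\sqrt\mu$ is small (the product term dominates $\vbar_3^2=\Order1$) but, for $\abs z\asymp\abs{z_0}$ where all entries are order $1$, requires invoking the nondegeneracy of $\Vbar(z_0)$ once more — precisely the ingredient your $\phi(z_0)=\lambda_{\min}(\Vbar(z_0))\asymp1$ builds in from the start. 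What the paper's route buys is brevity: once Theorem~\ref{thm:covariance_tubes} is in hand, the eigenvalue formula discharges the claim in two lines without re-deriving an exponential-integral estimate. Both are sound.
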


\begin{proof}
Note that 
\begin{equation}
 \label{eq:p_nlSDE:11}
\Vbar(z)\Vbar(z)^T = 
\begin{pmatrix}
\vbar_1 & \vbar_3 \\ \vbar_3 & \vbar_2
\end{pmatrix}^2
= 
\begin{pmatrix}
\vbar_1^2 + \vbar_3^2 & (\vbar_1+\vbar_2)\vbar_3 \\
(\vbar_1+\vbar_2)\vbar_3 & \vbar_2^2 + \vbar_3^2
\end{pmatrix} 
\end{equation} 
has eigenvalues given by 
\begin{equation}
 \label{eq:p_nlSDE:12}
\frac{1}{2} 
\left[
\vbar_1^2 + \vbar_2^2 + 2\vbar_3^2 
\pm (\vbar_1+\vbar_2)\sqrt{(\vbar_1-\vbar_2)^2 + 4\vbar_3^2}\,
\right] \;.
\end{equation} 
The larger eigenvalue is equal to $K_+(z)^4$, while the smaller one is
equal to $K_-(z)^{-4}$. The result thus follows from the bounds obtained in
Theorem~\ref{thm:covariance}.
\end{proof}

We can now prove a local version of Theorem~\ref{thm:nonlinear_SDE}, on a small 
interval $[s,t]\subset[z_0,z]$. 

\begin{prop}
\label{prop:p_nlSDE01} 
Fix times $z_0\leqs s < t \leqs z$ such that $\alpha(t,s)\leqs\Order{\mu}$.
Then, for all $\mu$ and $\sigma$ small enough, for any $0<\gamma<1$, 
%\begin{equation}
\begin{multline}
 \label{eq:p_nlSDE:13}
\P 
\left\{
\sup_{s\leqs u\leqs t} \pscal{\zeta_u}{\Vbar(u)^{-1}\zeta_u} \geqs r^2
\right\} \\
\leqs
\frac{1}{1-\gamma} \exp
\left\{
-\frac{\gamma r^2}{2\sigma^2}
\left[
1 - \cO\biggl((\abs{s}+\sqrt{\mu}\,)\frac{t-s}{\mu}\biggr) 
- \cO\biggl(\frac{r}{(\abs{t}+\sqrt{\mu})^{3/2}}\biggr) 
\right]
\right\}\;.
\end{multline}
%\end{equation} 
\end{prop}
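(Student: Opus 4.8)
The plan is to work with the decomposition $\zeta_u=\zeta^0_u+\zeta^1_u$ of~\eqref{eq:p_nlSDE:03}: control the nonlinear part $\zeta^1$ pathwise for as long as the path stays in the tube, and control the Gaussian part $\zeta^0$ by an exponential supermartingale built from $\Vbar$. I first introduce $\tau=\inf\{u\geqs s\colon\pscal{\zeta_u}{\Vbar(u)^{-1}\zeta_u}\geqs r^2\}$, so that the event in~\eqref{eq:p_nlSDE:13} is contained in $\{\tau\leqs t\}$ and it suffices to bound $\P\{\tau\leqs t\}$. A fact I would use repeatedly: since $\alpha(t,s)=\Order{\mu}$ and $-a(z)\asymp\abs{z}$ near $z=0$, the quantities $\abs{z}+\sqrt{\mu}$ for $z\in[s,t]$ are all of one order, which I denote $h$; note $h\geqs\sqrt{\mu}$.

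For $u<\tau$ one has the a priori bound $\norm{\zeta_u}<K_+(u)\,r=\cO(r/\sqrt{h})$, because the smallest eigenvalue of $\Vbar(u)^{-1}$ is $\norm{\Vbar(u)}^{-1}=K_+(u)^{-2}$ (Lemma~\ref{lem:p_nlSDE02}). Inserting this together with $b(\zeta,z)=\Order{\norm{\zeta}^2}$ and $K_+(v)^2=\cO(1/(\abs{v}+\sqrt{\mu}))$ into $\zeta^1_u=\mu^{-1}\int U(u,v)b(\zeta_v,v)\,\6v$, and evaluating the resulting integral by the integration-by-parts argument of Lemma~\ref{lem:aux_Lya} (the integrand is concentrated near $v=u$, so up to constants it behaves like $K_+(u)^2\,r^2\,\Theta(u)$ with $\Theta$ as in Lemma~\ref{lem:p_nlSDE01}), one obtains the sharp bound $\norm{\zeta^1_u}=\Order{r^2/h^2}$ on $[s,\tau\wedge t]$; hence $\pscal{\zeta^1_u}{\Vbar(u)^{-1}\zeta^1_u}\leqs K_-(u)^2\norm{\zeta^1_u}^2=\Order{r^4/h^3}$ there. (The contribution of $[z_0,s]$ to this integral is treated identically, using that the path lies in the tube on $[z_0,s]$ — which is the situation in which this local estimate is invoked in the chaining proof of Theorem~\ref{thm:nonlinear_SDE}.) Now, for any $\delta\in(0,1)$ the inequality $\pscal{\zeta}{\Vbar^{-1}\zeta}\leqs(1+\delta)\pscal{\zeta^0}{\Vbar^{-1}\zeta^0}+(1+\delta^{-1})\pscal{\zeta^1}{\Vbar^{-1}\zeta^1}$, applied at $u=\tau$ on $\{\tau\leqs t\}$ with $\delta=r/h^{3/2}$ (which is in $(0,1)$ when $r<r_0\mu^{3/4}$ and $r_0$ is small, as $h\geqs\sqrt{\mu}$), gives $\pscal{\zeta^0_\tau}{\Vbar(\tau)^{-1}\zeta^0_\tau}\geqs\tilde r^2:=r^2\bigl(1-\Order{r/h^{3/2}}\bigr)$. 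So it remains to bound $\P\{\sup_{s\leqs u\leqs t}\pscal{\zeta^0_u}{\Vbar(u)^{-1}\zeta^0_u}\geqs\tilde r^2\}$.

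For this I would apply It\^o's formula to $g_u:=\pscal{\zeta^0_u}{\Vbar(u)^{-1}\zeta^0_u}$. Using $\6\zeta^0_u=\mu^{-1}A(u)\zeta^0_u\,\6u+\sigma\mu^{-1/2}F(u)\,\6W_u$ (the canonical form of Lemma~\ref{lem:SDE_canonical}) and the Lyapunov equation $\mu\dot{\Vbar}=A\Vbar+\Vbar A^{T}+FF^{T}$, the drift contributions of $A$ cancel and one is left with $\6g_u=\mu^{-1}\bigl(\sigma^2\Tr(F^{T}\Vbar^{-1}F)-\norm{F^{T}\Vbar^{-1}\zeta^0_u}^2\bigr)\6u+2\sigma\mu^{-1/2}\pscal{F^{T}\Vbar^{-1}\zeta^0_u}{\6W_u}$. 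Consequently, for $\lambda=\gamma/(2\sigma^2)$ the process $N_u=\exp\bigl(\lambda g_u-\lambda\mu^{-1}\int_s^u\sigma^2\Tr(F^{T}\Vbar^{-1}F)\,\6v\bigr)$ has drift proportional to $(2\lambda\sigma^2-1)\norm{F^{T}\Vbar^{-1}\zeta^0_u}^2\leqs0$ (as $\gamma<1$); moreover $V(u)\preceq\Vbar(u)$ for all $u$ — because $\Vbar-V$ solves the homogeneous Lyapunov equation from the positive-definite initial value $\Vbar(z_0)$ — so $g_u/\sigma^2$ is pathwise dominated by a $\chi^2_2$ random variable and $N$ is a genuine non-negative supermartingale. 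Doob's inequality then yields $\P\{\sup_{s\leqs u\leqs t}g_u\geqs\tilde r^2\}\leqs \e^{-\lambda\tilde r^2}\,\e^{\lambda C}\,\E[\e^{\lambda g_s}]$ with $C=\mu^{-1}\int_s^t\sigma^2\Tr(F^{T}\Vbar^{-1}F)\,\6v$, and the three factors supply the three pieces of~\eqref{eq:p_nlSDE:13}: $\e^{-\lambda\tilde r^2}=\exp\bigl(-\tfrac{\gamma r^2}{2\sigma^2}(1-\Order{r/h^{3/2}})\bigr)$; since $\norm{F}$ is bounded and $\norm{\Vbar(v)^{-1}}=\Order{h}$ on $[s,t]$ (Theorem~\ref{thm:covariance} and Lemma~\ref{lem:p_nlSDE02}), $C=\Order{\sigma^2 h(t-s)/\mu}$ and hence $\lambda C=\tfrac{\gamma r^2}{2\sigma^2}\Order{(\abs{s}+\sqrt{\mu})(t-s)/\mu}$ after using $\sigma^2/r^2\leqs1$ and $h\asymp\abs{s}+\sqrt{\mu}$; and finally, by the $\chi^2_2$-domination above, $\E[\e^{\lambda g_s}]\leqs(1-2\lambda\sigma^2)^{-1}=(1-\gamma)^{-1}$.

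The conceptual ingredients — the a priori tube bound, the Cauchy--Schwarz splitting, and the exponential supermartingale (which is essentially forced once the Lyapunov equation for $\Vbar$ is used to kill the $A$-drift, in the manner familiar from the linearized analysis behind~\eqref{eq:B(h)}) — are all short. I expect the real work to be the error bookkeeping: verifying that $\zeta^1$ is of the \emph{sharp} order $r^2/h^2$ (this needs the a priori bound together with the $K_+$, $\Theta$ asymptotics and the comparability of $\abs{z}+\sqrt{\mu}$ on $[s,t]$, not merely the crude bound $K_+^2=\cO(\mu^{-1/2})$), and that both the compensator $C$ and the variation of $\Vbar$ over $[s,t]$ are negligible — which is precisely what the hypothesis $\alpha(t,s)=\Order{\mu}$ is there to guarantee.
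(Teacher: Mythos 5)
Your proof is correct, but it follows a genuinely different route from the paper's, so let me compare. The paper transforms to $\Upsilon_u=U(s,u)\zeta_u$, which turns the linear part $\Upsilon^0$ into a \emph{bona fide} Gaussian martingale; it then measures $\Upsilon^0$ against the \emph{frozen} quadratic form $Q_s(t)^2 = U(t,s)^T\Vbar(t)^{-1}U(t,s)$, applies a ready-made lemma on suprema of Gaussian martingales (Lemma~5.1.8 of~\cite{BGbook}) to get the factor $\frac{1}{1-\gamma}\e^{-\gamma R_0^2/2\sigma^2}$, and absorbs the time variation of the form via the multiplicative comparison $Q_s(u)^2 = Q_s(t)^2[\Id + \Order{K_-(s)^2(t-s)/\mu}]$, which produces the $\Order{(\abs{s}+\sqrt{\mu})(t-s)/\mu}$ error. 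You instead keep $\zeta^0_u$ and the \emph{moving} form $\Vbar(u)^{-1}$, observe that the Lyapunov equation $\mu\dot\Vbar = A\Vbar+\Vbar A^T + FF^T$ kills the $A$-drift in It\^o's formula for $g_u=\pscal{\zeta^0_u}{\Vbar(u)^{-1}\zeta^0_u}$, and build an explicit exponential supermartingale; the time variation is then absorbed in the compensator $C=\mu^{-1}\int_s^t\sigma^2\Tr(F^T\Vbar^{-1}F)$, which is again $\Order{\sigma^2 K_-(s)^2(t-s)/\mu}$ and becomes the same first error term after multiplying by $\lambda=\gamma/2\sigma^2$ and using $\sigma^2/r^2\leqs1$. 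Your Cauchy--Schwarz split with weight $\delta=r/h^{3/2}$ plays the role of the paper's additive decomposition $R=R_0+R_1$ with $R_1$ chosen to kill the nonlinear probability, and your $\chi^2_2$-moment bound $\E[\e^{\lambda g_s}]\leqs(1-\gamma)^{-1}$ (using $V(s)\preceq\Vbar(s)$, which holds because $\Vbar-V$ propagates a positive definite initial datum under the homogeneous flow) reproduces the same prefactor that the paper obtains from the black-box lemma. Both arguments yield identical error structure; your version is more self-contained, making the role of the Lyapunov equation transparent rather than importing external machinery, at the cost of having to verify the supermartingale property and the initial-moment bound by hand. One small imprecision: $g_s/\sigma^2$ is \emph{stochastically} dominated by a $\chi^2_2$ variable (not \lq\lq pathwise\rq\rq), but since you only use this through $\E[\e^{\lambda g_s}]$, the bound you need is correct.
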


\begin{proof}
The proof is adapted from \cite[Section~5.1.2]{BGbook}, and we use almost the
same notations as there. Let $\Upsilon_u=U(s,u)\zeta_u$. Then 
\begin{equation}
 \label{eq:p_nlSDE:14}
 \pscal{\zeta_u}{\Vbar(u)^{-1}\zeta_u}
=
\pscal{\Upsilon_u}{\underbrace{U(u,s)^T\Vbar(u)^{-1}U(u,s)}_{=:Q_s(u)^2}
\Upsilon_u}
= \norm{Q_s(u)\Upsilon_u}^2\;.
\end{equation} 
We can decompose $\Upsilon_u=\Upsilon_u^0+\Upsilon_u^1$, where 
\begin{align}
\nonumber
\Upsilon_u^0 &= \frac{\sigma}{\sqrt{\mu}}
\int_{z_0}^u U(s,v)F(v)\6W_v\;, \\
\Upsilon_u^1 &= \frac{1}{\mu} 
\int_{z_0}^u U(s,v) b(\zeta_v,v)\6v\;.
\label{eq:p_nlSDE:15}
\end{align}
The process $\Upsilon_u^0$ is a Gaussian martingale. 
Lemma~5.1.8 in~\cite{BGbook} can thus be applied and provides the bound 
\begin{equation}
 \label{eq:p_nlSDE:16}
\P\left\{
\sup_{s\leqs u\leqs t}\norm{Q_s(t)\Upsilon^0_t} \geqs R_0
\right\} 
\leqs 
\frac{1}{1-\gamma} 
\exp\left\{-\gamma\frac{R_0^2}{2\sigma^2}\right\}\;.
\end{equation} 
For this bound to be useful, we need to show that $Q_s(u)$ and $Q_s(t)$ are
close to each other. Observe that 
\begin{equation}
 \label{eq:p_nlSDE:17}
\mu\frac{\6}{\6u} Q_s(u)^{-2} 
= U(s,u)F(u)F(u)^TU(s,u)^T\;, 
\end{equation} 
as a consequence of the definition of $U(s,u)$ and the differential equation
satisfied by $\Vbar(u)$. Integrating from $u$ to $t$ and multiplying on the
left by $Q_s(u)^2$, we get 
\begin{equation}
 \label{eq:p_nlSDE:18}
Q_s(u)^2Q_s(t)^{-2} - \Id = 
Q_s(u)^2 \frac{1}{\mu} \int_u^t  U(s,v)F(v)F(v)^TU(s,v)^T \6v\;.
\end{equation} 
Now $\norm{U(s,v)}\leqs\Order{1}$ owing to the assumption
$\alpha(t,s)=\Order{\mu}$. Thus the integral has order $t-u \leqs t-s$.
Furthermore,
\begin{equation}
 \label{eq:p_nlSDE:19}
\norm{Q_s(u)^2} \leqs \norm{U(u,s)}^2 K_-(u)^2 = \Order{K_-(u)^2}\;.
\end{equation} 
As a consequence, we get 
\begin{equation}
 \label{eq:p_nlSDE:20}
Q_s(u)^2 = Q_s(t)^2 
\left[
\Id + \cO \left( 
K_-(s)^2 \frac{t-s}{\mu}
\right)
\right] \;.
\end{equation} 
Thus there exists an $R=r[1-\Order{K_-(s)^2(t-s)/\mu}]$ such that 
\begin{equation}
 \label{eq:p_nlSDE:21}
 \P 
\left\{
\sup_{s\leqs u\leqs t} \pscal{\zeta_u}{\Vbar(u)^{-1}\zeta_u} \geqs r^2
\right\} 
\leqs 
\P 
\biggl\{
\sup_{s\leqs u\leqs t\wedge\tau_{\cB(r)}} \norm{Q_s(t)\Upsilon_u} \geqs R
\biggr\}\;.
\end{equation}
For any decomposition $R=R_0+R_1$ with $R_0,R_1>0$, we can bound the above
probability by $P_0+P_1$, where 
\begin{equation}
 \label{eq:p_nlSDE:22}
P_i =  \P 
\biggl\{
\sup_{s\leqs u\leqs t\wedge\tau_{\cB(r)}} \norm{Q_s(t)\Upsilon^i_u} \geqs R_i
\biggr\}\;,
\qquad
i=0, 1\;.
\end{equation}  
$P_0$ has already been estimated in~\eqref{eq:p_nlSDE:16}. 
Now we want to choose $R_1$ in such a way that $P_1=0$. For any $u \leqs t\wedge
\tau_{\cB(r)}$, we have 
\begin{align}
\nonumber
\norm{Q_s(t)\Upsilon^1_u}
&\leqs \const \sup_{u\in[s,t]}K_-(u)
\frac{1}{\mu} \int_{z_0}^u \norm{U(s,v)b(\zeta_v)} \6v \\
\nonumber
&\leqs \const \sup_{u\in[s,t]}K_-(u)\Theta(u)
\sup_{v\in[z_0,t\wedge\tau_{\cB(r)}] } \norm{\zeta_v}^2 \\
\nonumber
&\leqs \const \sup_{u\in[s,t]}K_-(u)\Theta(u)
\sup_{v\in[z_0,t] } K_+(v)^2r^2 \\
&\leqs \const \left( \sqrt{\mu}+\abs{t} \right)^{-3/2} r^2\;.
\label{eq:p_nlSDE:23}
\end{align}
We can thus achieve $P_1=0$ by simply choosing $R_1$ as a
sufficiently large constant times $(\sqrt{\mu}+\abs{t})^{-3/2} r^2$. This
determines $R_0$, and the result then follows from~\eqref{eq:p_nlSDE:16}. 
\end{proof}

We can now complete the proof of Theorem~\ref{thm:nonlinear_SDE}, which will
follow directly from

\begin{thm}
\label{thm:nonlinear_SDE_repeated}
There exist constants $\Delta_0, r_0,\mu_{0}>0$ such that for all $0<\Delta<\Delta_0$,
all $0<\sigma<r<r_0\mu^{3/4}$, $0<\mu<\mu_{0}$ and all $0<\gamma<1$, 
\begin{equation}
 \label{eq:nonlin03R}
\P \bigl\{ \tau_{\cB(r)} < z \bigr\} 
\leqs 
C_+(z,z_0) \exp \biggl\{ -\gamma \frac{r^2}{2\sigma^2} 
\bigl[ 1 - \Order{\Delta}  - \Order{r\mu^{-3/4}} \bigr]
\biggr\}
\end{equation}  
holds for all $z\leqs \sqrt{\mu}$, where
\begin{equation}
 \label{eq:nonlin04R}
C_+(z,z_0) = \frac{\const}{(1-\gamma)\Delta\mu} \int_{z_0}^{z} x^{\det}_s
\6s\;. 
\end{equation} 
\end{thm}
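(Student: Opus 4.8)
The plan is to derive the global bound from the local estimate of Proposition~\ref{prop:p_nlSDE01} by a chaining argument, exactly in the spirit of \cite[Section~5.1.2]{BGbook}: cover $[z_0,z]$ by finitely many subintervals $[s_k,s_{k+1}]$ on which that proposition applies with uniformly good constants, and combine the local bounds by a union bound exploiting the Markov property of~\eqref{eq:p_nlSDE:01}.

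First I would construct the partition. Recalling from~\eqref{eq:p_nlSDE:05} and the remark following it that $\alpha(z,s)\asymp s^2-z^2$ near $z=0$, and more generally that $-a(u)\asymp x^{\det}(u)\asymp\abs{u}$ for $u\in[z_0,0]$, I would pick $z_0=s_0<s_1<\dots<s_N=z$ so that on the part with $\abs{s_k}\geqs\sqrt{\mu}$ one has simultaneously $\alpha(s_{k+1},s_k)=\Order{\mu}$ (so that Proposition~\ref{prop:p_nlSDE01} applies) and $(\abs{s_k}+\sqrt{\mu})(s_{k+1}-s_k)/\mu=\Order{\Delta}$ (so that the first error term in~\eqref{eq:p_nlSDE:13} is uniformly $\Order{\Delta}$), which means step sizes $s_{k+1}-s_k\asymp\Delta\mu/(\abs{s_k}+\sqrt{\mu})$; the turning-point region $\abs{s_k}\leqs\sqrt{\mu}$ (nonempty only when $z>-\sqrt{\mu}$) I would cut into $\Order{1/\Delta}$ pieces of length $\asymp\Delta\sqrt{\mu}$. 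Counting, the region $\abs{u}\geqs\sqrt{\mu}$ contributes $\asymp\frac{1}{\Delta\mu}\int_{z_0}^{-\sqrt{\mu}}(\abs{u}+\sqrt{\mu})\6u\asymp\frac{1}{\Delta\mu}\int_{z_0}^{z}x^{\det}_s\6s$ subintervals (the $\sqrt{\mu}(z-z_0)$ contribution being of lower order than the order-one integral), while the turning-point region contributes only $\Order{1/\Delta}\ll\frac{1}{\Delta\mu}\int_{z_0}^{z}x^{\det}_s\6s$; hence $N\leqs\frac{\const}{\Delta\mu}\int_{z_0}^{z}x^{\det}_s\6s$.

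Next, on each $[s_k,s_{k+1}]$ Proposition~\ref{prop:p_nlSDE01} gives a bound whose exponent contains the bracket $1-\Order{\Delta}-\Order{r(\abs{s_{k+1}}+\sqrt{\mu})^{-3/2}}$; since $\abs{s_{k+1}}+\sqrt{\mu}\geqs\Order{\sqrt{\mu}}$ throughout, the last term is at most $\Order{r\mu^{-3/4}}$ uniformly in $k$, and choosing $\Delta_0,r_0$ small keeps the bracket bounded below by a positive constant under the hypotheses $\Delta<\Delta_0$, $r<r_0\mu^{3/4}$. Writing $\{\tau_{\cB(r)}<z\}=\bigsqcup_k\{s_k\leqs\tau_{\cB(r)}<s_{k+1}\}$ and applying the Markov property at each $s_k$ — on $\{\tau_{\cB(r)}\geqs s_k\}$ the path restarts at $s_k$ from a point of $\overline{\cB(r)}$, for which the estimate of Proposition~\ref{prop:p_nlSDE01} still holds, the extra homogeneous term $U(\cdot,s_k)\zeta_{s_k}$ having size $\Order{r\,K_+(s_k)K_-(s_k)}=\Order{r}$ in the relevant metric, cf.\ Lemma~\ref{lem:p_nlSDE02} — I would sum the $N$ local bounds to get $\P\{\tau_{\cB(r)}<z\}\leqs\frac{N}{1-\gamma}\,\e^{-\gamma r^2[1-\Order{\Delta}-\Order{r\mu^{-3/4}}]/2\sigma^2}$, and substituting the bound on $N$ yields~\eqref{eq:nonlin03R}--\eqref{eq:nonlin04R}; Theorem~\ref{thm:nonlinear_SDE} then follows on taking $\gamma=1-\Order{\sigma^2/r^2}$, which turns $1/(1-\gamma)$ into the factor $(r/\sigma)^2$ and produces the additional $\Order{\sigma^2/r^2}$ term in $\kappa_0$.

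The hard part will be the turning-point region $z\approx0$: there $\abs{z}+\sqrt{\mu}$ is as small as $\asymp\sqrt{\mu}$, which is what both shrinks the admissible step size to $\asymp\Delta\sqrt{\mu}$ and inflates the error term $\Order{r(\abs{z}+\sqrt{\mu})^{-3/2}}$ to $\Order{r\mu^{-3/4}}$ — precisely the reason the theorem requires $r<r_0\mu^{3/4}$. One has to check carefully that the $\Order{1/\Delta}$ additional subintervals forced there do not spoil the prefactor $C_+$, and (a secondary technical point) that Proposition~\ref{prop:p_nlSDE01} survives being restarted from an arbitrary point of $\overline{\cB(r)}$ at time $s_k$ rather than from the origin at $z_0$.
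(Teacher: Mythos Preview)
Your proposal is correct and follows the paper's approach: the same partition~\eqref{eq:p_nlSDE:26}, Proposition~\ref{prop:p_nlSDE01} on each piece, and a union bound to produce the prefactor $N\asymp\frac{1}{\Delta\mu}\int_{z_0}^z x^{\det}_s\,\6s$. One simplification you missed: the Markov restart is unnecessary, because Proposition~\ref{prop:p_nlSDE01} is already stated for the process started at $z_0$ (the integrals defining $\Upsilon^0_u,\Upsilon^1_u$ in~\eqref{eq:p_nlSDE:15} run from $z_0$, not from $s$), so the plain union bound~\eqref{eq:p_nlSDE:24}--\eqref{eq:p_nlSDE:25} applies directly and your ``secondary technical point'' about the homogeneous term $U(\cdot,s_k)\zeta_{s_k}$ never arises.
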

\begin{proof}
Let $z_0=s_0 < s_1 < \dots < s_N=z$ be a partition of $[z_0,z]$. Then
\begin{equation}
 \label{eq:p_nlSDE:24}
\P \bigl\{ \tau_{\cB(r)} < s \bigr\}  \leqs \sum_{k=1}^N P_k\;, 
\end{equation} 
where 
\begin{equation}
 \label{eq:p_nlSDE:25}
P_k =  \P 
\biggl\{
\sup_{s_{k-1}\leqs u\leqs s_k} \pscal{\zeta_u}{\Vbar(u)^{-1}\zeta_u} \geqs r^2
\biggr\}
\end{equation} 
can be estimated by Proposition~\ref{prop:p_nlSDE01}. We want to choose the
partition in such a way that the error terms in $P_k$ are bounded uniformly in
$k$. A convenient choice is to define the $s_k$ by 
\begin{equation}
 \label{eq:p_nlSDE:26}
\begin{cases}
\alpha(s_{k+1},s_k) = \Delta\mu 
& \text{when $s_{k+1}<-\sqrt{\mu}$\;,} \\
s_{k+1} - s_k = \Delta\sqrt{\mu}
& \text{when $\abs{s_{k+1}}\leqs\sqrt{\mu}$\;.} 
\end{cases} 
\end{equation} 
Using the fact that $\alpha(t,s)\asymp\abs{s+t}(t-s)$ and applying
Proposition~\ref{prop:p_nlSDE01}, one indeed checks that 
\begin{equation}
 \label{eq:p_nlSDE:27}
P_k \leqs \frac{1}{1-\gamma} \exp
\Bigl\{
-\frac{\gamma r^2}{2\sigma^2} 
\left[
1 - \Order{\Delta} - \Order{r\mu^{-3/4}}
\right]
\Bigr\}
\qquad
\forall k=1,\dots, N\;,
\end{equation} 
where the error terms are uniform in $k$. 
It remains to estimate the number $N$ of elements of the partition, which will
give the prefactor $C_+$. In the case $z\leqs -\sqrt{\mu}$, we simply have
\begin{equation}
 \label{eq:p_nlSDE:28}
\alpha(z,z_0) = N\Delta\mu
\quad\Rightarrow\quad
N = \biggl\lceil\frac{\alpha(z,z_0)}{\Delta\mu}\biggr\rceil\;. 
\end{equation} 
In the case $-\sqrt{\mu} \leqs z \leqs \sqrt{\mu}$, we have
\begin{equation}
 \label{eq:p_nlSDE:29}
 N = \biggl\lceil\frac{\alpha(z,z_0)}{\Delta\mu}\biggr\rceil +
\biggl\lceil\frac{z-(-\sqrt{\mu})}{\Delta\sqrt{\mu}}\biggr\rceil\;,
\end{equation} 
and the result follows from the fact that 
$\alpha(z,-\sqrt{\mu}) \asymp \sqrt{\mu}(z-(-\sqrt{\mu}))$. 
\end{proof}

Theorem~\ref{thm:nonlinear_SDE} is just a reformulation of this result, in which
we have chosen $\gamma=1-\sigma^2/r^2$. 

%%%%%%%%%%%%%%%%%%%%%%%%%%%%%%%%%%%%%%%%%%%%%%%%%%%%%%%%

\section{Proof of Theorem \ref{thm:escape_canards} (Early Jumps)}
\label{appendix:proof_escape} 

We consider again the equation for the difference $\zeta_z$ between stochastic
sample paths and a deterministic reference solution, this time given by the
weak canard. In canonical form, we have 
\begin{equation}
 \label{eq:p_escape:01} 
\6\zeta_z = \frac{1}{\mu} 
\bigl[ A(z) \zeta_z + b(\zeta_z,z) \bigr] \6z
+ \frac{\sigma}{\sqrt{\mu}} F(z) \6W_z\;,
\end{equation} 
where 
\begin{equation}
 \label{eq:p_escape:02}
A(z) = 
\begin{pmatrix}
a(z) & \varpi(z) \\ -\varpi(z) & a(z)
\end{pmatrix}\;,
\qquad
a(z) = 2z + \Order{\mu^2}\;,
\end{equation} 
and $b(\zeta,z) = \Order{\norm{\zeta}^2}$. The proof of Theorem~\ref{thm:escape_canards} is split into several
parts. In Subsection~\ref{appendix:proof_escape_noise}, we show that sample
paths are likely to leave a neighbourhood of order slightly (that is,
logarithmically) larger than $\sigma/\sqrt z$ of the weak canard in a time $z$ of
order $\sqrt{\mu\abs{\log\mu}}$. Subsection~\ref{appendix:proof_escape_drift}
analyses the dynamics in a larger neighbourhood of the weak canard, in which
the drift term dominates. Subsection~\ref{appendix:proof_escape_Laplace}
combines the two results to prove the main theorem.

%%%%%%%%%%%%%%%%%%%%%%%%%%%%%%%%%%%%%%%%%%%%%%%%%%%%%%%%

\subsection{Diffusion-Dominated Escape}
\label{appendix:proof_escape_noise} 

We assume from now on that $\sigma\ll\mu^{3/4}$, because otherwise stochastic
sample paths are no longer localised near deterministic solutions when
$z=\sqrt{\mu}$. We define the set 
\begin{equation}
 \label{p_esc_n:01}
\cS(h) = \left\{ (\zeta,z) \colon z\geqs\sqrt{\mu}, \norm{\zeta} < h
\hat\rho(z) \right\}\;, 
\end{equation} 
where 
\begin{equation}
 \label{p_esc_n:02}
\hat\rho(z)^2 = \frac{\Tr(F(z)F(z)^T)}{4z}\;. 
\end{equation} 
The following result is an adaptation of~\cite[Proposition~4.7]{BG1} to the
two-dimensional case.

\begin{prop}
\label{prop:escape_noise}
Let $h, \nu>0$, with $\nu$ of order $1$, satisfy the conditions 
\begin{equation}
 \label{p_esc_n:03}
\frac{\sigma}{h} \leqs c_0
\qquad\text{and}\qquad
\biggl( \frac{h}{\sigma} \biggr)^{2+\nu}
\biggl[\log \biggl( 1+\nu+\frac{h^2}{\sigma^2} \biggr)\biggr]^{1/2}
\leqs c_1  \frac{\mu^{3/4}}{\sigma} 
\end{equation}
for some $c_0, c_1>0$. If $c_0$ and $c_1$ are small enough, then 
there exist $T>0$ and $C(\nu)>0$ such that for any $(z_0,\zeta_0)\in\cS(h)$
with $z_0<T$,
\begin{equation}
 \label{p_esc_n:04}
\P^{(\zeta_0,z_0)} \bigl\{ \tau_{\cS(h)} \geqs z \bigr\}
\leqs C(\nu) \biggl( \frac{h}{\sigma} \biggr)^{2\nu} 
\exp \biggl\{ -\kappa(\nu) \frac{z^2 - z_0^2}{\mu}\biggr\} 
\end{equation} 
holds for all $\sqrt{\mu} \leqs z_0 \leqs z \leqs T$, where 
\begin{equation}
 \label{p_esc_n:05}
\kappa(\nu) = \frac{2\nu}{1+\nu}
\biggl[ 1 - \cO \biggl( \frac{1}{\nu\log(h/\sigma)} \biggr) \biggr]\;. 
\end{equation} 
\end{prop}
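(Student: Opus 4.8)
The plan is to prove Proposition~\ref{prop:escape_noise} by comparing the two-dimensional process $\zeta_z$, in the region where the linear drift dominates, with a one-dimensional ``radial'' process for which exit-time estimates of the type in~\cite[Proposition~4.7]{BG1} apply. The starting point is the canonical form~\eqref{eq:p_escape:01}: since $A(z)$ has the rotational structure~\eqref{eq:p_escape:02} with $a(z)=2z+\Order{\mu^2}>0$ for $z\geqs\sqrt{\mu}$, the linear part of the drift is purely \emph{expanding} once $z>0$, with rate $a(z)/\mu$. Thus I expect $\norm{\zeta_z}^2$ to grow, and the goal is to show that it escapes the threshold $h\hat\rho(z)$ before time $z$ with the stated probability.

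First I would derive, via It\^o's formula applied to $\norm{\zeta_z}^2$, an SDE of the form
\begin{equation*}
\6\norm{\zeta_z}^2 = \frac{1}{\mu}\Bigl[ 2a(z)\norm{\zeta_z}^2 + \Order{\norm{\zeta_z}^3} + \sigma^2 \Tr(F(z)F(z)^T) \Bigr]\6z + \frac{2\sigma}{\sqrt{\mu}}\pscal{\zeta_z}{F(z)\6W_z}\;,
\end{equation*}
using that the rotational part $\varpi(z)$ contributes nothing to $\6\norm{\zeta_z}^2$ (it is skew). The diffusion term has quadratic variation of order $(\sigma^2/\mu)\norm{\zeta_z}^2\6z$. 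The definition~\eqref{p_esc_n:02} of $\hat\rho(z)$ is chosen precisely so that $\sigma^2\Tr(FF^T) = 4z\,\sigma^2\hat\rho(z)^2 \asymp 2a(z)\sigma^2\hat\rho(z)^2$, i.e.\ on the boundary $\norm{\zeta}=h\hat\rho(z)$ the deterministic forcing balances the linear growth up to the factor $h^2$. Next I would change to the rescaled time $\theta = (z^2-z_0^2)/\mu$ (so that $\6\theta = 2z\,\6z/\mu$, matching the growth rate) and to the rescaled variable $w_\theta = \norm{\zeta_z}/(\sigma\hat\rho(z))$, reducing the problem to a one-dimensional diffusion on an $\Order{1}$ time scale in $\theta$, with a drift that is bounded below by a positive constant as soon as $w_\theta\geqs c$, plus the nonlinear perturbation of relative size $\Order{\norm{\zeta_z}/\mu^{3/4}}$ coming from $b(\zeta,z)$. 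The second condition in~\eqref{p_esc_n:03} is exactly what guarantees this nonlinear term stays controlled up to $\norm{\zeta}\asymp h\hat\rho(z)$, since $\hat\rho(z)\asymp 1/\sqrt z$ and one needs $h/\sqrt{z}\ll\mu^{3/4}$ type bounds, refined by the logarithmic factor because of the iteration over $\nu$.

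With this reduction in hand, I would invoke the one-dimensional exit-time machinery: the rescaled radial process behaves like an Ornstein--Uhlenbeck-type process with \emph{unstable} linearization, for which the probability of remaining below a level $h/\sigma$ up to rescaled time $\theta$ decays like $(h/\sigma)^{2\nu}\exp\{-\kappa(\nu)\theta\}$, with $\kappa(\nu)=2\nu/(1+\nu)$ up to the correction shown in~\eqref{p_esc_n:05}; this is the content of~\cite[Proposition~4.7]{BG1}, whose proof adapts verbatim once the problem is one-dimensionalised (the parameter $\nu$ arises from optimising a Bernstein-type exponential supermartingale, and the factor $(h/\sigma)^{2\nu}$ is the cost of the initial condition possibly being of order $\sigma\hat\rho$ rather than $h\hat\rho$). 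Substituting back $\theta=(z^2-z_0^2)/\mu$ yields~\eqref{p_esc_n:04}. The constant $T$ is introduced so that on $[z_0,T]$ we stay in the regime where $a(z)\asymp z$, $\hat\rho(z)\asymp 1/\sqrt z$ and the canonical-form error terms $\Order{\mu^2}$, $b(\zeta,z)=\Order{\norm{\zeta}^2}$ remain negligible; the first condition $\sigma/h\leqs c_0$ ensures the starting point genuinely sits inside $\cS(h)$ with room to spare.

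The main obstacle, and where I would spend the most care, is controlling the nonlinear term $b(\zeta_z,z)$ uniformly up to the first-exit time $\tau_{\cS(h)}$: one must run the argument on the stopped process, so that $\norm{\zeta_z}\leqs h\hat\rho(z)$ throughout, and then verify that the induced perturbation of the radial drift is genuinely of smaller order than the linear growth $a(z)/\mu$ — this is exactly the balance encoded in the somewhat baroque second inequality of~\eqref{p_esc_n:03}, and getting the powers of $h/\sigma$, the logarithm, and $\mu^{3/4}$ to line up correctly is the delicate bookkeeping. A secondary subtlety is that $\hat\rho(z)$ is itself time-dependent, so the change of variables $w_\theta=\norm{\zeta_z}/(\sigma\hat\rho(z))$ produces an extra drift term $-\tfrac12 w_\theta\,\6\log\hat\rho(z)^2/\6\theta$; since $\hat\rho(z)^2\asymp 1/z^2$ this term is $\asymp -w_\theta/(z^2)\cdot(\mu/z)\,\dots$, i.e.\ lower order in $\theta$, and I would simply absorb it into the error, but it has to be written out once to be safe.
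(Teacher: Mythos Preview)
Your radial-reduction strategy is a plausible heuristic, but it diverges from the paper's argument and has a real gap at the step where you ``invoke'' the one-dimensional result. The SDE you derive for $\norm{\zeta_z}^2$ carries \emph{multiplicative} noise: the martingale $\pscal{\zeta_z}{F(z)\6W_z}$ has quadratic variation proportional to $\norm{\zeta_z}^2$ with an angle-dependent coefficient, and the drift contains the additive forcing $\sigma^2\Tr(FF^T)$. After your change of variables to $w_\theta$ this is not an Ornstein--Uhlenbeck process of the type in \cite[Proposition~4.7]{BG1}; it is closer to a squared-Bessel/CIR process, and the claim that the one-dimensional proof ``adapts verbatim'' is unsupported. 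Your description of the mechanism --- that ``$\nu$ arises from optimising a Bernstein-type exponential supermartingale'' --- is also not how $\kappa(\nu)$ and the prefactor $(h/\sigma)^{2\nu}$ actually arise.

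The paper's proof is a direct two-dimensional adaptation. One partitions $[z_0,z]$ into subintervals with $\alpha(s_k,s_{k-1})=\Delta\mu$ and uses the Markov property to get $\P\{\tau_{\cS(h)}\geqs z\}\leqs\prod_k P_k$. On each subinterval one splits $\zeta_s=\zeta_s^{k,0}+\zeta_s^{k,1}$ into the Gaussian solution of the linearised equation and the Duhamel remainder from $b$. The core estimate is an \emph{end-point density bound}: $\zeta_{s_k}^{k,0}$ is a two-dimensional Gaussian whose covariance has trace $\geqs\hat\rho(s_k)^2(\e^{2\Delta}-1)$ and off-diagonal/difference terms of order $1$ (via the delayed-Hopf control from Appendix~\ref{appendix:covariance_tubes}), so
\[
P_{k,0}\leqs \frac{\pi H_0^2\hat\rho(s_k)^2}{\sqrt{(2\pi)^2\det\Cov(\zeta_{s_k}^{k,0})}}
\leqs \frac{H_0^2}{\sigma^2(\e^{2\Delta}-1)}\bigl[1+\Order{\e^{-4\Delta}}\bigr]\;.
\]
The nonlinear piece is bounded deterministically on $\{s\leqs\tau_{\cS(h)}\}$ by $H_1\asymp h^2\sqrt{\Delta}\e^\Delta/\mu^{3/4}$, which is precisely what the second inequality in~\eqref{p_esc_n:03} controls. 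The parameter $\nu$ then enters only through the choice $\Delta=\tfrac{1+\nu}{2}\log(1+\nu+h^2/\sigma^2)$: with this choice $q(\Delta)^{-1}\asymp(h/\sigma)^{2\nu}$ gives the prefactor, and $\log q(\Delta)^{-1}/\Delta\geqs\kappa(\nu)$ gives the exponent after summing $N\asymp\alpha(z,z_0)/(\Delta\mu)$ terms. The exponent $2\nu$ rather than $\nu$ in the prefactor is a genuinely two-dimensional effect, coming from the \emph{area} of the ball in the density estimate; a one-dimensional reduction would not naturally produce it.
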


We shall choose the value of the parameter $\nu$ later on, while $h$ will be
taken of the form $h=c\sigma\abs{\log\sigma}$. Then
Condition~\eqref{p_esc_n:03} reduces to 
\begin{equation}
 \label{p_esc_n:06}
\sigma \abs{\log\sigma}^{2+\nu} \sqrt{\log\abs{\log\sigma}} \leqs 
\Order{\mu^{3/4}}\;, 
\end{equation} 
which is slightly stronger than requiring $\sigma\ll\mu^{3/4}$. The exponent
$\kappa(\nu)$ in~\eqref{p_esc_n:05} becomes optimal in the limit $\nu\to\infty$,
but Condition~\eqref{p_esc_n:06} becomes more stringent as $\nu$ grows large.
Note that we have to choose a finite $\nu$ anyhow.

\begin{proof}[Proof of Proposition~\ref{prop:escape_noise}]
Let $\alpha(t,s)=\int_s^t a(u)\6u$. 
We define a partition $z_0=s_0 < s_1 < \dots < s_N = z$ of $[z_0,z]$ by
\begin{equation}
 \label{p_esc_n:07}
\alpha(s_k,s_{k-1}) = \Delta\mu\;,
\qquad
k=1,\dots,N-1\;,
\end{equation}  
where $\Delta>0$ will be chosen later. The Markov property implies that 
\begin{equation}
 \label{p_esc_n:08}
\P \bigl\{ \tau_{\cS(h)} \geqs z \bigr\}
\leqs \prod_{k=1}^{N-1} P_k\;,
\end{equation} 
where
\begin{equation}
 \label{p_esc_n:09}
P_k = \sup_{\zeta\colon \norm{\zeta}\leqs h\hat\rho(s_{k-1})} 
\P^{\zeta,s_{k-1}} \biggl\{ \sup_{s_{k-1}\leqs s\leqs s_k}
\frac{\norm{\zeta_s}}{\hat\rho(s)} < h \biggr\}\;.
\end{equation} 
We shall derive a uniform bound $q(\Delta)$ for all $P_k$,  $1\leqs k\leqs
N-1$. Then~\eqref{p_esc_n:08} and the
definition~\eqref{p_esc_n:07} of the partition imply 
\begin{equation}
 \label{p_esc_n:10} 
\P \bigl\{ \tau_{\cS(h)} \geqs z \bigr\}
\leqs q(\Delta)^{-1} \exp \biggl\{ -\frac{\alpha(z,z_0)}{\mu}
\frac{\log q(\Delta)^{-1}}{\Delta}\biggr\}\;,
\end{equation} 
and the result will follow from an appropriate choice of $\Delta$. 

The process $\zeta_s$ starting at time $s_{k-1}$ in $\zeta$ can be decomposed
as $\zeta_s = \zeta_s^{k,0} + \zeta_s^{k,1}$, with
\begin{align}
\nonumber
\zeta_s^{k,0} &= U(s,s_{k-1})\zeta + \frac{\sigma}{\sqrt{\mu}}
\int_{s_{k-1}}^s U(s,u)F(u)\6W_u\;, \\
\zeta_s^{k,1} &= \frac{1}{\mu}
\int_{s_{k-1}}^s U(s,u)b(\zeta_u,u)\6u\;,
\label{p_esc_n:11}
\end{align}
where the principal solution  $U(s,u)$ of the linear system is given
in~\eqref{eq:p_nlSDE:04}. For any decomposition $h=H_0-H_1$, we have 
\begin{equation}
 \label{p_esc_n:12}
P_k \leqs  \sup_{\zeta\colon \norm{\zeta}\leqs h\hat\rho(s_{k-1})} 
\left[ P_{k,0}(\zeta,H_0) + P_{k,1}(\zeta,H_1) \right]\;,
\end{equation} 
where
\begin{align}
\nonumber
P_{k,0}(\zeta,H_0) &=  \P^{\zeta,s_{k-1}} 
\biggl\{ \sup_{s_{k-1}\leqs s\leqs s_k}
\frac{\norm{\zeta^{k,0}_s}}{\hat\rho(s)} < H_0 \biggr\}\;,\\
P_{k,1}(\zeta,H_1) &=  \P^{\zeta,s_{k-1}} 
\biggl\{ \sup_{s_{k-1}\leqs s\leqs s_k}
\frac{\norm{\zeta^{k,1}_s}}{\hat\rho(s)} \geqs H_1 , 
\sup_{s_{k-1}\leqs s\leqs s_k}
\frac{\norm{\zeta_s}}{\hat\rho(s)} < h \biggr\}\;.
\label{p_esc_n:13}
\end{align}
We start by bounding $P_{k,0}(\zeta,H_0)$, using the end-point estimate  
\begin{equation}
 \label{p_esc_n:14}
 P_{k,0}(\zeta,H_0) \leqs \P^{\zeta,s_{k-1}}
\bigl\{ \norm{\zeta^{k,0}_{s_k}} < H_0\hat\rho(s_k) \bigr\}
\leqs \frac{\pi H_0^2
\hat\rho(s_k)^2}{\sqrt{(2\pi)^2\det\Cov(\zeta^{k,0}_{s_k})}}
\;.
\end{equation} 
The last inequality follows from the fact that the random variable
$\zeta^{k,0}_{s_k}$ is Gaussian, and we have bounded its density by the
normalizing constant. 
We denote the diagonal matrix elements of $V=\sigma^{-2}\Cov(\zeta^{k,0}_{s_k})$
by $v_1$ and $v_2$, and the off-diagonal element by $v_3$. Then 
\begin{equation}
 \label{p_esc_n:15}
 \det\Cov(\zeta^{k,0}_{s_k}) = \sigma^4 (v_1v_2 - v_3^2) 
= \sigma^4 \biggl[ \frac14 ((\Tr V)^2 - (v_1-v_2)^2) - v_3^2 \biggr]\;.
\end{equation} 
As already remarked in the proof of Theorem~\ref{thm:covariance}, the quantities
$v_1-v_2$ and $v_3$ remain of order $1$ up to some $z=T$ of order $1$, as a
consequence of Neishtadt's result on delayed Hopf bifurcations. In order to
estimate the trace $\Tr V$, we use the fact that $\hat\rho(z)$ is decreasing in
$(0,T]$ for $T$ small enough, owing to the fact that $\Tr(F(z)F(z)^T)$ is
bounded below by a positive constant, and has a derivative bounded in absolute
value. Thus we have, cf.~\eqref{eq:delayedHopf_eq0}, 
\begin{align}
\nonumber
\Tr V &= \frac{1}{\mu} \int_{s_{k-1}}^{s_k} \e^{2\alpha(s_k,u)/\mu} 
\Tr(F(u)F(u)^T) \6u \\
\nonumber
&= \e^{2\Delta} \int_{s_{k-1}}^{s_k} \frac{4u}{\mu} 
\e^{-2\alpha(u,s_{k-1})/\mu} \hat\rho(u)^2 \6u \\
&\geqs \hat\rho(s_k)^2 \left[ \e^{2\Delta}-1 \right]\;.
\label{p_esc_n:16}
\end{align}
Substituting~\eqref{p_esc_n:16} in~\eqref{p_esc_n:15} and then
in~\eqref{p_esc_n:14} yields 
\begin{equation}
 \label{p_esc_n:17}
P_{k,0}(\zeta,H_0) 
\leqs \frac{H_0^2}{\sigma^2}
\frac{1}{\e^{2\Delta}-1}
\biggl[ 1 +  \cO \biggl( \frac{1}{\e^{4\Delta}\hat\rho(s_k)^4} \biggr)
\biggr]\;. 
\end{equation} 
Next we estimate $P_{k,1}$. We first obtain the bound 
\begin{align}
\nonumber
\norm{\zeta^{k,1}_{s\wedge\tau_{\cS(h)}}} 
&\leqs \frac{1}{\mu} \int_{s_{k-1}}^{s\wedge\tau_{\cS(h)}}
\norm{U(s,u)} \norm{b(\zeta_u,u)} \6u \\
\nonumber
&\leqs \const \frac{1}{\mu} \int_{s_{k-1}}^{s\wedge\tau_{\cS(h)}}
\e^{\alpha(s,u)/\mu} h^2 \hat\rho(u)^2 \6u  \\
&\leqs \const\; h^2 \frac{\hat\rho(s_{k-1})^2}{2s_{k-1}} \e^\Delta\;,
\label{p_esc_n:18}
\end{align}
where we have used the fact that the function $u\mapsto\hat\rho(u)^2/2u$ is
decreasing, and bounded the integral of $(2u/\mu)\e^{\alpha(s,u)/\mu}$ by
$\e^\Delta$. Using a Taylor expansion of $\hat\rho(s)^2$ and the definitions
of $\hat\rho$ and of the partition, one finds 
\begin{equation}
 \label{p_esc_n:19}
\frac{\hat\rho(s_{k-1})^2}{\hat\rho(s)^2} 
\leqs 1 + \frac{s-s_{k-1}}{\hat\rho(s)^2} \sup_{u\in[s_{k-1},s]}
\abs{(\hat\rho(u)^2)'} 
\leqs 1 + \Order{\Delta}
\end{equation} 
for all $s\in[s_{k-1},s_k]$. Together with~\eqref{p_esc_n:18}, this implies
\begin{equation}
 \label{p_esc_n:20}
\frac{\norm{\zeta^{k,1}_{s\wedge\tau_{\cS(h)}}}}{\hat\rho(s)}
\leqs \const \; h^2 \frac{\hat\rho(s_{k-1})}{s_{k-1}}\sqrt{\Delta}\e^\Delta  
\leqs \const \; \frac{h^2\sqrt{\Delta}\e^\Delta}{\mu^{3/4}} =: \frac{H_1}{2}\;, 
\end{equation} 
which yields $P_{k,1}(\zeta,H_1)=0$. 
Substituting $H_0=h+H_1$ in~\eqref{p_esc_n:17} thus yields 
\begin{equation}
 \label{p_esc_n:21}
P_k \leqs q(\Delta) := \frac{h^2}{\sigma^2} \frac{1}{\e^{2\Delta}-1}
\biggl[ 1 + \cO \biggl( \frac{1}{\e^{4\Delta}} \biggr)
+   \cO \biggl( \frac{h\sqrt{\Delta}\e^\Delta}{\mu^{3/4}} \biggr) \biggr] 
\end{equation}
for $k=1,\dots,N-1$. Finally, we make the choice 
\begin{equation}
 \label{p_esc_n:22}
\Delta = \frac{1+\nu}{2} \log 
\biggl( 1+\nu+\frac{h^2}{\sigma^2} \biggr)\;. 
\end{equation} 
Bounding $P_k$ above amounts to bounding $ q(\Delta)^{-1}$ below. 
For this we write 
\begin{align}
\label{p_esc_n:23}
q(\Delta)^{-1} 
&= \frac{\sigma^2}{h^2} \e^{2\Delta} 
\biggl[ 1 - \Order{\e^{-2\Delta}} - \Order{\e^{-4\Delta}} 
- \cO \biggl( \frac{h\sqrt{\Delta}\e^\Delta}{\mu^{3/4}}\biggr) \biggr] \\
&\geqs \biggl( 1+\nu+\frac{h^2}{\sigma^2} \biggr)^\nu
\biggl[ 1 - \cO \biggl( \biggl(\frac{\sigma^2}{h^2} \biggr)^{1+\nu}\biggr)
- \cO \biggl( \frac{h}{\mu^{3/4}} 
\biggl( \frac{h}{\sigma}\biggr)^{1+\nu}
\log \biggl( 1+\nu+ \frac{h^2}{\sigma^2}\biggr)^{1/2}
 \biggr) \biggr] \;.
\nonumber
\end{align}
Note that the error term $\Order{\e^{-4\Delta}}$ is negligible and no longer
appears in the last line. Now by Assumption~\eqref{p_esc_n:03}, for $c_0$ and
$c_1$ small enough, we get 
\begin{equation}
 \label{p_esc_n:24}
q(\Delta)^{-1} \geqs  \frac12 \biggl( 1+\nu+\frac{h^2}{\sigma^2} \biggr)^\nu
\end{equation} 
and 
\begin{equation}
 \label{p_esc_n:25}
\frac{\log  q(\Delta)^{-1}}{\Delta} \geqs \frac{2\nu}{1+\nu}
- \frac{2\log2}{(1+\nu)\log(1+\nu+h^2/\sigma^2)}
=: \kappa(\nu)\;.
\end{equation} 
Note that $\kappa(\nu)$ is indeed of the form~\eqref{p_esc_n:05}. The result
thus follows from \eqref{p_esc_n:10}. 
\end{proof}

%%%%%%%%%%%%%%%%%%%%%%%%%%%%%%%%%%%%%%%%%%%%%%%%%%%%%%%%

\subsection{Averaging and Drift-Dominated Escape}
\label{appendix:proof_escape_drift} 

We consider again Equation~\eqref{eq:p_escape:01}, but this time for slightly
larger values of $\norm{\zeta}$. We start by transforming the system to polar
coordinates.

\begin{lem}
\label{lem:polar_coords}
Consider a system of the form
\begin{align}
\nonumber
\6\xi &= \frac{1}{\mu} f_\xi(\xi,\eta,z)\6z + \frac{\sigma}{\sqrt{\mu}}
F_\xi(z)\6W_z\;, \\
\6\eta &= \frac{1}{\mu} f_\eta(\xi,\eta,z)\6z + \frac{\sigma}{\sqrt{\mu}}
F_\eta(z)\6W_z\;,
\label{eq:esc_drift01}  
\end{align}
where $W_z$ denotes a two-dimensional Wiener process, and $F_\xi$ and $F_\eta$
are row vectors of dimension $2$. Then in polar coordinates
$(\xi=r\cos\varphi,\eta=r\sin\varphi)$, the system becomes 
\begin{align}
\nonumber
\6 r &= \frac{1}{\mu} f_r(r,\varphi,z)\6z + \frac{\sigma}{\sqrt{\mu}}
F_r(\varphi,z)\6W_z\;, \\
\6\varphi &= \frac{1}{\mu} \frac1r f_\varphi(r,\varphi,z)\6z +
\frac{\sigma}{\sqrt{\mu}} \frac1r
F_\varphi(\varphi,z)\6W_z\;,
\label{eq:esc_drift02}  
\end{align}
where the new and old diffusion coefficients are related via 
\begin{align}
\nonumber
F_r &= F_\xi\cos\varphi + F_\eta\sin\varphi\;, \\
F_\varphi &= F_\eta\cos\varphi - F_\xi\sin\varphi \;,
\label{eq:esc_drift03}  
\end{align}
while the drift coefficients are given by 
\begin{align}
\nonumber
f_r &= f_\xi\cos\varphi + f_\eta\sin\varphi
+ \frac{\sigma^2}{2r} F_\varphi F_\varphi^T\;, \\
f_\varphi &= f_\eta\cos\varphi - f_\xi\sin\varphi
- \frac{\sigma^2}{r} F_r F_\varphi^T \;.
\label{eq:esc_drift04}  
\end{align}
\end{lem}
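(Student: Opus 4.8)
The plan is to apply It\^o's formula to the smooth change of variables $(\xi,\eta)\mapsto(r,\varphi)$ given by $r=\sqrt{\xi^2+\eta^2}$ and $\varphi$ the polar angle, which is a diffeomorphism on the region $\{r>0\}$. Everything then reduces to a bookkeeping computation with the drift and diffusion data of~\eqref{eq:esc_drift01}.

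First I would record the first and second partial derivatives of $r$ and of $\varphi$ with respect to $(\xi,\eta)$, re-expressed in polar coordinates: the gradients are the orthonormal frame vectors $\nabla r=(\cos\varphi,\sin\varphi)$ and $\nabla\varphi=\tfrac1r(-\sin\varphi,\cos\varphi)$, while the Hessian of $r$ is $\tfrac1r$ times a matrix with bounded trigonometric entries and the Hessian of $\varphi$ is $\tfrac1{r^2}$ times such a matrix. From~\eqref{eq:esc_drift01}, the quadratic variations and covariation of $\xi$ and $\eta$ are $\tfrac{\sigma^2}{\mu}F_\xi F_\xi^T\,\6z$, $\tfrac{\sigma^2}{\mu}F_\eta F_\eta^T\,\6z$ and $\tfrac{\sigma^2}{\mu}F_\xi F_\eta^T\,\6z$ respectively, with $F_\xi F_\eta^T=F_\eta F_\xi^T$ a scalar.

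Next I would substitute these data into It\^o's formula for $r$ and for $\varphi$. The martingale parts come out as $\tfrac{\sigma}{\sqrt\mu}(F_\xi\cos\varphi+F_\eta\sin\varphi)\6W_z$ and $\tfrac{\sigma}{\sqrt\mu}\tfrac1r(F_\eta\cos\varphi-F_\xi\sin\varphi)\6W_z$, which are exactly $\tfrac{\sigma}{\sqrt\mu}F_r\6W_z$ and $\tfrac{\sigma}{\sqrt\mu}\tfrac1r F_\varphi\6W_z$ with $F_r,F_\varphi$ as in~\eqref{eq:esc_drift03}. The first-order drift terms $\nabla r\cdot(f_\xi,f_\eta)$ and $\nabla\varphi\cdot(f_\xi,f_\eta)$ give $f_\xi\cos\varphi+f_\eta\sin\varphi$ and $\tfrac1r(f_\eta\cos\varphi-f_\xi\sin\varphi)$. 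The It\^o second-order correction for $r$, after completing a square, collapses to $\tfrac{\sigma^2}{2\mu r}F_\varphi F_\varphi^T$; the one for $\varphi$, after using $F_\xi F_\eta^T=F_\eta F_\xi^T$, collapses to $-\tfrac{\sigma^2}{\mu r^2}F_r F_\varphi^T$. Pulling out the common prefactor $\tfrac1\mu$ (respectively $\tfrac1{\mu r}$) then reproduces the coefficients $f_r$ and $f_\varphi$ of~\eqref{eq:esc_drift04} verbatim, which completes the proof.

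I expect no genuine obstacle: this is an identity obtained directly from It\^o's formula, and the only mildly delicate step is the trigonometric simplification of the second-order terms, which is streamlined by the observation that $(\cos\varphi,\sin\varphi)$ and $(-\sin\varphi,\cos\varphi)$ form an orthonormal basis, so that~\eqref{eq:esc_drift03} is an orthogonal rotation of the diffusion vectors and~\eqref{eq:esc_drift04} just records the associated curvature corrections. The one point requiring care is that the polar map is singular at $r=0$; this is harmless, since the lemma is applied only on regions where $r=\norm{\zeta}$ is bounded away from zero, on which the map is a genuine diffeomorphism and It\^o's formula applies without modification.
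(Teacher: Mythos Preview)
Your proposal is correct and is essentially the same approach as the paper's: the paper's one-line proof simply says the formulas follow from It\^o's formula, and you carry out precisely that computation. The only cosmetic difference is that the paper phrases it as a verification (apply It\^o to $\xi=r\cos\varphi$, $\eta=r\sin\varphi$ in~\eqref{eq:esc_drift02} and recover~\eqref{eq:esc_drift01}), whereas you derive~\eqref{eq:esc_drift02} from~\eqref{eq:esc_drift01} directly; these are equivalent, and your direction is arguably the more natural one.
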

\begin{proof}
The formulas can be checked directly by applying It\^o's formula
to~\eqref{eq:esc_drift02}.
\end{proof}

Applying this result to~\eqref{eq:p_escape:01}, we obtain a system of the form
\begin{align}
\nonumber
\6r &= \frac{1}{\mu}
\biggl[ a(z)r + r^2b_r(\varphi,z) + \cO \biggl( \frac{\sigma^2}{r}\biggr)
\biggr]
\6z 
+ \frac{\sigma}{\sqrt{\mu}} F_r(\varphi,z)\6W_z\;, \\
\6\varphi &= \frac{1}{\mu}
\biggl[ -\varpi(z) + r b_\varphi(\varphi,z) + \cO \biggl(
\frac{\sigma^2}{r^2}\biggr) \biggr]
\6z 
+ \frac{\sigma}{\sqrt{\mu}} \frac{1}{r} F_\varphi(\varphi,z)\6W_z\;.
\label{eq:esc_drift05} 
\end{align}
Note that the functions $b_r$ and $b_\varphi$ do not depend on $r$, owing to
the fact that the nonlinearity in the original equation is homogeneous of degree
$2$. Another important observation is that the average of $b_r$ (and
$b_\varphi$) over $\varphi$ is zero. This follows again from homogeneity,
combined with~\eqref{eq:esc_drift04}. This observation suggests to
simplify~\eqref{eq:esc_drift05} by an averaging transformation. 

\begin{prop}
 \label{prop:escape_average}
There exists a function $w(\varphi,z)$, which is bounded, smooth, and
$2\pi$-periodic in $\varphi$, such that $\bar r=r+r^2 w(\varphi,z)$ satisfies
the
SDE 
\begin{equation}
 \label{eq:esc_drift06}
\6\bar r = \frac{1}{\mu} \bigl[ a(z)\bar r + \beta(\bar r,\varphi,z) \bigr] \6z 
+ \frac{\sigma}{\sqrt{\mu}} \widetilde F_r(\bar r,\varphi,z) \6W_z\;,
\end{equation} 
where 
\begin{align}
\nonumber
\beta(\bar r,\varphi,z) &= \Order{\bar r^3} + \Order{\mu \bar r^2} 
+ \cO \biggl( \frac{\sigma^2}{\bar r}\biggr)\;, \\
\widetilde F_r(\bar r,\varphi,z)
&= F_r(\varphi,z) + \Order{\bar r}\;.
 \label{eq:esc_drift07}
\end{align}
\end{prop}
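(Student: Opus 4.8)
The plan is to construct $w$ directly by solving a homological equation in the fast angle $\varphi$, and then to check, via It\^o's formula, that every remaining term falls into one of the three classes listed for $\beta$. First I would set $\bar r = g(r,\varphi,z):=r+r^{2}w(\varphi,z)$ and apply It\^o's formula to $g$, using the polar system~\eqref{eq:esc_drift05} and the fact that the quadratic (co)variations of $r$ and $\varphi$ are $\6{\langle r\rangle}=(\sigma^{2}/\mu)F_{r}F_{r}^{T}\6z$, $\6{\langle\varphi\rangle}=(\sigma^{2}/(\mu r^{2}))F_{\varphi}F_{\varphi}^{T}\6z$ and $\6{\langle r,\varphi\rangle}=(\sigma^{2}/(\mu r))F_{r}F_{\varphi}^{T}\6z$. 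Collecting terms, and using that $b_r,b_\varphi$ are independent of $r$ by homogeneity while $w$ and its $\varphi$- and $z$-derivatives are bounded, one obtains
\begin{align*}
\6\bar r &= \frac1\mu\Bigl[ a(z)\,r + r^{2}\bigl(b_r + 2aw - \varpi\,\partial_\varphi w\bigr) + \cO(r^{3}) + \cO(\mu r^{2}) + \cO(\sigma^{2}/r) \Bigr]\6z \\
&\quad {}+ \frac\sigma{\sqrt\mu}\bigl[(1+2rw)F_r + r\,(\partial_\varphi w)\,F_\varphi\bigr]\6W_z\;,
\end{align*}
where $\cO(r^{3})$ absorbs $2r^{3}wb_r+r^{3}(\partial_\varphi w)b_\varphi$, the term $\mu r^{2}\partial_z w$ sits in $\cO(\mu r^{2})$, and the It\^o corrections $\tfrac12\partial_r^{2}g\,\6{\langle r\rangle}+\partial^{2}_{r\varphi}g\,\6{\langle r,\varphi\rangle}+\tfrac12\partial_\varphi^{2}g\,\6{\langle\varphi\rangle}$, all of order $\sigma^{2}/\mu$, join the original $\cO(\sigma^{2}/r)$ drift (using that $r$ is bounded above in the regime of interest). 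Since $a(z)\bar r = a(z)r+a(z)r^{2}w$, this drift equals $\frac1\mu\bigl[a(z)\bar r + r^{2}(b_r+aw-\varpi\,\partial_\varphi w)+\cO(r^{3})+\cO(\mu r^{2})+\cO(\sigma^{2}/r)\bigr]$, so I would choose $w$ to solve the homological equation $\varpi(z)\,\partial_\varphi w - a(z)\,w = b_r(\varphi,z)$. After inverting the near-identity relation $\bar r=r+r^{2}w$ (legitimate for $\bar r$ below a fixed small constant, with $r=\bar r+\cO(\bar r^{2})\asymp\bar r$) and substituting, the remaining drift error is exactly $\beta=\cO(\bar r^{3})+\cO(\mu\bar r^{2})+\cO(\sigma^{2}/\bar r)$, and the diffusion coefficient reads $\widetilde F_r=(1+2rw)F_r+r(\partial_\varphi w)F_\varphi=F_r(\varphi,z)+\cO(\bar r)$.

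It remains to solve the homological equation. For fixed $z$ it is a scalar linear ODE in $\varphi$ with $\varphi$-independent coefficients; writing $\lambda(z):=a(z)/\varpi(z)>0$ for $z>0$, its general solution is $w(\varphi,z)=\e^{\lambda\varphi}\bigl[w(0,z)+\varpi(z)^{-1}\int_0^{\varphi}\e^{-\lambda s}b_r(s,z)\,\6s\bigr]$, and $2\pi$-periodicity fixes $w(0,z)$ uniquely because $\e^{2\pi\lambda}\neq1$. The periodic problem admits a \emph{bounded} solution precisely because of the zero-mean property $\int_0^{2\pi}b_r(\varphi,z)\,\6\varphi=0$ recorded before the proposition --- which follows from the homogeneity of the nonlinearity together with~\eqref{eq:esc_drift04}, and forces $b_r$ to contain only the first and third harmonics in $\varphi$. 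The resulting $w$ is manifestly $2\pi$-periodic in $\varphi$ and, since $a$, $\varpi$, $b_r$ depend smoothly on $z$ in the canonical coordinates of Lemma~\ref{lem:SDE_canonical}, smooth in $(\varphi,z)$.

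The main obstacle is to show that $w$ and its derivatives are bounded \emph{uniformly in $\mu$}. In the regime relevant to the escape estimate $z$ ranges over $[\sqrt\mu,T]$, so $\lambda(z)\asymp z$ may be as small as $\cO(\sqrt\mu)$, and the factor $(1-\e^{-2\pi\lambda})^{-1}$ appearing in $w(0,z)$ would naively blow up. This is cancelled by the zero-mean property: the expansion $\int_0^{2\pi}\e^{-\lambda s}b_r(s,z)\,\6s=-\lambda\int_0^{2\pi}s\,b_r(s,z)\,\6s+\cO(\lambda^{2})$ shows the numerator of $w(0,z)$ vanishes to the same order $\cO(\lambda)$ as the denominator, so $w(0,z)$ stays bounded as $\lambda\to0$; and a parallel computation --- in which the leading $\cO(\lambda)$ parts of $\partial_z(\text{numerator})\cdot(\text{denominator})$ and $(\text{numerator})\cdot\partial_z(\text{denominator})$ cancel --- shows that $\partial_z w$ is bounded as well, which is what is needed for $\mu r^{2}\partial_z w$ to be genuinely $\cO(\mu\bar r^{2})$ rather than $\cO(\sqrt\mu\,\bar r^{2})$. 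Bounds on $\partial_\varphi w$ and $\partial_\varphi^{2}w$ are then immediate from the explicit formula, and feeding all of this back into the It\^o expansion above gives the stated forms of $\beta$ and $\widetilde F_r$. (One could instead appeal to a general averaging lemma for SDEs in the spirit of~\cite{BGbook}, of which this proposition is an instance, but the direct construction keeps the $\mu$-uniformity transparent.)
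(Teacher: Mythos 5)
Your proof is correct and follows essentially the same route as the paper's: apply It\^o's formula to $\bar r = r + r^2 w$, collect terms to isolate the homological equation $\varpi\,\partial_\varphi w - a\,w = b_r$, solve it explicitly by a periodicity-fixing choice of $w(0,z)$, and invoke the zero mean of $b_r$ over $\varphi$ to control the apparent singularity as $\lambda = a/\varpi \to 0$. You are a bit more careful than the paper in explicitly verifying that $\partial_z w$ (and not only $w$) stays uniformly bounded so that the $\mu r^2\partial_z w$ term is genuinely $\Order{\mu\bar r^2}$; the paper's proof only exhibits the finite limit of $w(0,z)$ and leaves the derivative bounds implicit.
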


\begin{proof}
Using It\^o's formula and the fact that $r=\bar r-\bar r^2
w(\varphi,z)+\Order{\bar r^3}$, we obtain 
\begin{align}
\nonumber
\6\bar r ={}& \frac{1}{\mu} \biggl[ a(z) \bar r + \bar r^2 \biggl(
2zw(\varphi,z)
+ b_r(\varphi,z) - \varpi(z)\frac{\partial w}{\partial\varphi} + 
\mu \frac{\partial w}{\partial z}\biggr) + \Order{\bar r^3} 
+ \cO \biggl( \frac{\sigma^2}{\bar r}\biggr)
\biggr]\6z \\
&{}+ \frac{\sigma}{\sqrt{\mu}} 
\biggl[ F_r(\varphi,z) + \bar r \biggl( 2w(\varphi,z) F_r(\varphi,z) + 
\frac{\partial w}{\partial \varphi} F_\varphi(\varphi,z) 
\biggr) + \Order{\bar r^2} \biggr] \6W_z\;.
 \label{eq:esc_drift08}
\end{align}
It is thus sufficient to show that the equation 
\begin{equation}
 \label{eq:esc_drift09}
\frac{\partial w}{\partial \varphi} (\varphi,z) 
= \frac{a(z)}{\varpi(z)} w(\varphi,z) + \frac{b_r(\varphi,z)}{\varpi(z)} 
\end{equation} 
admits a bounded, $2\pi$-periodic solution. Letting $c=c(z)=a(z)/\varpi(z)$, the
general solution of~\eqref{eq:esc_drift09} can be written 
\begin{equation}
 \label{eq:esc_drift10}
w(\varphi,z) = w(0,z)\e^{c\varphi} + 
\int_0^\varphi \e^{c(\varphi-\varphi')}  \frac{b_r(\varphi',z)}{\varpi(z)}
\6\varphi'\;.
\end{equation} 
Thus choosing  
\begin{equation}
 \label{eq:esc_drift11}
w(0,z) = \frac{\e^{2\pi c}}{1-\e^{2\pi c}} \int_0^{2\pi}
\e^{-c\varphi}  \frac{b_r(\varphi,z)}{\varpi(z)}
\6\varphi\;,
\end{equation} 
the resulting $w(\varphi,z)$ is indeed $2\pi$-periodic as a function of $\varphi$. Finally note that 
\begin{equation}
 \label{eq:esc_drift12}
\lim_{z\to0} w(0,z) = \frac{1}{2\pi} \int_0^{2\pi} \varphi
\frac{b_r(\varphi,0)}{\varpi(0)}
\6\varphi\;,
\end{equation} 
showing that $w(\varphi,z)$ is also bounded as $z\to0$. 
\end{proof}

We now define the set 
\begin{equation}
 \label{eq:esc_drift13}
\cD(\eta) = \left\{ (\bar r,\varphi,z) \colon z\geqs\sqrt{\mu},
\bar r<\eta\sqrt{z} \right\}\;. 
\end{equation} 
Then the nonlinear term $\beta$ satisfies, on the set
$\cR=\cD(\eta)\setminus\cS(h)$, 
\begin{equation}
 \label{eq:esc_q_02}
\frac{\abs{\beta(\bar r,\varphi,z)}}{\bar r z} \leqs 
M \biggl( \frac{\bar r^2}{z} + \frac{\mu \bar r}{z} + \frac{\sigma^2}{\bar r^2
z}\biggr) 
\leqs M' \biggl( \eta^2 +\eta\mu^{3/4} + \frac{1}{\abs{\log\sigma}^2}\biggr)
\end{equation} 
for some constants $M$, $M'$. 
We can thus find, for any $0<\kappa<2$, an $\eta=\eta(\kappa)$ such that (for
sufficiently small $\sigma$ and $\mu$) 
\begin{equation}
 \label{eq:esc_q_03}
 a(z)\bar r + \beta(\bar r,\varphi,z) \geqs \kappa z \bar r
\end{equation}
holds in $\cR$. Thus we have 
 \begin{equation}
 \label{eq:esc_q_04}
\6\bar r = \frac{1}{\mu} \bigl[ \kappa z \bar r + \tilde\beta(\bar r,\varphi,z)
\bigr] \6z 
+ \frac{\sigma}{\sqrt{\mu}} \widetilde F_r(\bar r,\varphi,z) \6W_z\;,
\end{equation} 
where $\tilde\beta(\bar r,\varphi,z)\geqs 0$ in $\cR$, implying 
\begin{equation}
\bar r_z \geqs \bar r_{z_0} \e^{\kappa(z^2-z_0^2)/2\mu}
+ \frac{\sigma}{\sqrt{\mu}} \int_{z_0}^z \e^{\kappa(z^2-s^2)/2\mu}
\widetilde F_r(\bar r_s,\varphi_s,s)\6W_s 
 \label{eq:esc_q_05}
\end{equation} 
holds as long as the process stays in $\cR$. 

The following proposition shows that the process is unlikely to stay in $\cR$ for times significantly larger than $(\mu\abs{\log \sigma})^{1/2}$.

\begin{prop}
\label{prop:escape_drift}
There exists a constant $\kappa_2>0$ such that for any initial condition
$(\bar r_0,\varphi_0,z_0)\in\cR$,
\begin{equation}
 \label{eq:esc_drift14} 
\P^{(\bar r_0,\varphi_0,z_0)} \bigl\{ \tau_{\cR} > z \bigr\}
\leqs 2 \exp \biggl\{ - \kappa_2
\frac{z^2-z_0^2}{\mu\abs{\log\sigma}}\biggr\} \;.
\end{equation}  
\end{prop}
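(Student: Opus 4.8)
The plan is to run a partitioning and Markov-property argument, in the spirit of the proof of Proposition~\ref{prop:escape_noise}, but now driven by the \emph{lower} bound~\eqref{eq:esc_q_05} on the averaged radial coordinate $\bar r$, valid for as long as the process stays in $\cR=\cD(\eta)\setminus\cS(h)$. Throughout I take $h=c\sigma\abs{\log\sigma}$ as in Proposition~\ref{prop:escape_noise}, so that $\norm{\zeta_z}\geqs h\hat\rho(z)$ whenever the state lies outside $\cS(h)$; since $\bar r=r(1+\Order{r})$ and $r=\Order{\eta}$ on $\cR$, this gives $\bar r_z\geqs\tfrac12 h\hat\rho(z)$ on $\cR$ for $\eta$ small. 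Fix $z_0<z$ (with $z$ of order at most $1$, the regime of validity of~\eqref{eq:esc_q_03}), pick $\Delta$ large enough that $\kappa\Delta/2>1$, and choose a partition $z_0=s_0<s_1<\dots<s_N\leqs z$ with $s_k^2-s_{k-1}^2=\Delta\mu\abs{\log\sigma}$. Since $\{\tau_\cR>z\}\subseteq\{\tau_\cR>s_N\}$, the Markov property gives $\P\{\tau_\cR>z\}\leqs\prod_{k=1}^N P_k$, where $P_k:=\sup\bigl\{\P^{(\bar r,\varphi,s_{k-1})}\{\tau_\cR>s_k\}\colon(\bar r,\varphi,s_{k-1})\in\cR\bigr\}$.

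The core of the argument is a uniform one-block bound $P_k\leqs q:=\e^{-c_2\abs{\log\sigma}^2}$. Write $M^{(k)}_s=\frac{\sigma}{\sqrt\mu}\int_{s_{k-1}}^{s\wedge\tau_\cR}\e^{-\kappa u^2/2\mu}\widetilde F_r(\bar r_u,\varphi_u,u)\,\6W_u$; since $\widetilde F_r$ is bounded on $\cR$ (Lemma~\ref{lem:SDE_canonical}, Proposition~\ref{prop:escape_average}), its quadratic variation obeys the a.s.\ deterministic bound $\langle M^{(k)}\rangle\leqs\bar V_k$ with $\bar V_k\asymp(\sigma^2/s_{k-1})\e^{-\kappa s_{k-1}^2/\mu}$, by the Gaussian-tail estimate $\int_{s_{k-1}}^{\I}\e^{-\kappa u^2/\mu}\6u\asymp(\mu/s_{k-1})\e^{-\kappa s_{k-1}^2/\mu}$. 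On $\{\tau_\cR>s_k\}$, dividing~\eqref{eq:esc_q_05} by $\e^{\kappa s_k^2/2\mu}$ yields $M^{(k)}_{s_k}\leqs\e^{-\kappa s_k^2/2\mu}\bar r_{s_k}-\bar r_{s_{k-1}}\e^{-\kappa s_{k-1}^2/2\mu}$; since $\bar r_{s_{k-1}}\geqs\tfrac12 h\hat\rho(s_{k-1})$ and $\e^{-\kappa(s_k^2-s_{k-1}^2)/2\mu}=\sigma^{\kappa\Delta/2}$ with $\kappa\Delta/2>1$, for $\sigma$ small the first term is at most a quarter of the second, so $M^{(k)}_{s_k}\leqs-L_k$ with $L_k\asymp h\hat\rho(s_{k-1})\e^{-\kappa s_{k-1}^2/2\mu}$. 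The exponential (Bernstein-type) martingale inequality then gives $P_k\leqs\P\{\inf_s M^{(k)}_s\leqs-L_k\}\leqs\e^{-L_k^2/2\bar V_k}$, and the decisive point is that the factors $\e^{\pm\kappa s_{k-1}^2/\mu}$ cancel, while $s_{k-1}\hat\rho(s_{k-1})^2=\tfrac14\Tr\bigl(F(s_{k-1})F(s_{k-1})^T\bigr)\asymp1$ uniformly; hence $L_k^2/2\bar V_k\asymp h^2/\sigma^2\asymp\abs{\log\sigma}^2$, so $P_k\leqs q$ uniformly in $k$.

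Finally I would assemble the pieces. As $N\geqs(z^2-z_0^2)/(\Delta\mu\abs{\log\sigma})-1$, whenever $z^2-z_0^2\geqs2\Delta\mu\abs{\log\sigma}$ we get $N\geqs(z^2-z_0^2)/(2\Delta\mu\abs{\log\sigma})$ and hence
\be
\P\{\tau_\cR>z\}\leqs q^N\leqs\exp\Bigl\{-\tfrac{c_2}{2\Delta}\abs{\log\sigma}\,\tfrac{z^2-z_0^2}{\mu}\Bigr\}\leqs\exp\Bigl\{-\tfrac{c_2}{2\Delta}\,\tfrac{z^2-z_0^2}{\mu\abs{\log\sigma}}\Bigr\}\;,
\ee
using $\abs{\log\sigma}\geqs1$; for $z^2-z_0^2<2\Delta\mu\abs{\log\sigma}$ the trivial bound $\P\{\tau_\cR>z\}\leqs1$ already suffices thanks to the prefactor $2$, once $\kappa_2$ is small enough (one may freely shrink $c_2$ so that $c_2\leqs\log2$). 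This proves the Proposition with $\kappa_2=c_2/(2\Delta)$, in fact with the stronger exponent $\abs{\log\sigma}(z^2-z_0^2)/\mu$. The main obstacle is the uniform one-block estimate: one must verify that the sub-martingale comparison~\eqref{eq:esc_q_05} genuinely pushes the martingale part below $-L_k$ in \emph{every} block and that $\langle M^{(k)}\rangle$ admits an a.s.\ deterministic bound of exactly the matching order — this is where stopping at $\tau_\cR$ (to keep $\widetilde F_r$ bounded) and the cancellation of the Gaussian factors $\e^{\pm\kappa s_{k-1}^2/\mu}$ are essential; the remaining steps (Markov property, block count, boundary cases) are routine bookkeeping.
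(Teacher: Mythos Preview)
Your proof is correct and follows essentially the same approach as the paper: the same partition of $[z_0,z]$ into blocks with $s_k^2-s_{k-1}^2\asymp\mu\abs{\log\sigma}$, the same Markov-property product $\prod P_k$, the same use of the lower bound~\eqref{eq:esc_q_05} to force the martingale below a threshold on each block, and the same Bernstein-type tail estimate. The only difference is that you carry the martingale estimate through to obtain the sharper per-block bound $P_k\leqs\e^{-c_2\abs{\log\sigma}^2}$, whereas the paper stops at the cruder $P_k<1/2$; your version therefore yields the stronger exponent $\abs{\log\sigma}(z^2-z_0^2)/\mu$, which you then relax to match the statement.
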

\begin{proof}
We introduce a partition $z_0<z_1<\dots<z_N=z$ of $[z_0,z]$, given by 
\begin{equation}
 \label{eq:esc_drift14:1}
z_{k+1}^2 - z_k^2 = \gamma \mu \abs{\log\sigma}
\qquad
\text{for }
0\leqs k < N = \biggl\lceil \frac{z^2-z_0^2}
{\gamma \mu \abs{\log\sigma}}\biggr\rceil \;.
\end{equation} 
The Markov property implies that 
\begin{equation}
 \label{eq:esc_drift14:2}
 \P^{(\bar r_0,\varphi_0,z_0)} \bigl\{ \tau_{\cR} > z \bigr\} \leqs 
\prod_{k=1}^{N-1} P_k\;,
\end{equation} 
where 
\begin{align}
 \nonumber
P_k &= \sup_{\bar r,\varphi \colon (\bar r,\varphi,z_k)\in\cR}  
P_k(\bar r,\varphi)\;, \\
P_k(\bar r,\varphi) &= 
\P^{(\bar r,\varphi,z_k)} \bigl\{ \tau_\cR > z_{k+1} \bigr\}\;.
 \label{eq:esc_drift14:3}
\end{align} 
Inequality~\eqref{eq:esc_q_05} (with $z_0$ replaced by $z_k$) shows
that  
\begin{equation}
 \label{eq:esc_drift14:4B}
\bar r_z \geqs \e^{\kappa(z^2-z_k^2)/2\mu}
\biggl[ \bar r_{z_k} + \frac{\sigma}{\sqrt{\mu}} M^k_z \biggr]
\end{equation} 
holds for $z_k \leqs z \leqs \tau_\cR$, 
where $M^k_z$ is the martingale 
\begin{equation}
 \label{eq:esc_drift14:4}
M^k_z = \int_{z_k}^z \e^{-\kappa(s^2-z_k^2)/2\mu} \widetilde F_r(\bar
r_s,\varphi_s,s) \,\6W_s\;. 
\end{equation}
It follows that 
\begin{align}
\nonumber 
 P_k(\bar r,\varphi)
&\leqs \P \biggl\{ \bar r +  \frac{\sigma}{\sqrt{\mu}}
 M^k_{z_{k+1}} 
< \eta\sqrt{z_{k+1}} \e^{-\kappa(z_{k+1}^2-z_k^2)/2\mu}\biggr\} \\
& \leqs \P \Bigl\{ 
 M^k_{z_{k+1}} 
< - c\abs{\log\sigma}\sqrt{\mu}\hat\rho(z_k) + 
\eta\sqrt{\mu}\,\sigma^{\gamma\kappa/2-1}\sqrt{z_{k+1}} \Bigr\}\;,
 \label{eq:esc_drift14:5}
\end{align} 
where we have used $\bar r \geqs h\hat\rho(z_k) =
c\sigma\abs{\log\sigma}\hat\rho(z_k)$ and 
$\e^{-\kappa(z_{k+1}^2-z_k^2)/2\mu}=\sigma^{\kappa\gamma/2}$. 
Choosing $\gamma > 2/\kappa$, we can guarantee that the term 
$c\abs{\log\sigma}\sqrt{\mu}\hat\rho(z_k)$ dominates. 

Since the noise acting on the system
is non-degenerate, we may assume the existence of constants $D_+\geqs D_->0$
such that 
\begin{equation}
 \label{eq:esc_drift14:7}
 D_- \leqs \widetilde F_r(\bar r,\varphi,z) \widetilde F_r(\bar r,\varphi,z)^T
\leqs D_+ \;.
\end{equation}
Thus the variance of $M^k_{z_{k+1}}$ is bounded above by 
\begin{equation}
V_+ = \int_{z_k}^{z_{k+1}} D_+ \e^{-\kappa(s^2-z_k^2)/\mu}\,\6s 
\leqs D_+ \biggl( -\frac{\mu}{2\kappa z_k} \biggr)
\e^{-\kappa(s^2-z_k^2)/\mu} \biggr|_{z_k}^{z_{k+1}}
\leqs \frac{D_+\mu}{2\kappa z_k}\;.
 \label{eq:esc_drift14:8}
\end{equation}
A Bernstein-type estimate (cf.~Lemma~\ref{lem:teclem01} in
Appendix~\ref{appendix:technical_lemmas}) provides the bound 
\begin{equation}
 \label{eq:esc_drift14:9}
\P \bigl\{ M^k_{z_{k+1}} < -x \bigr\} \leqs \e^{-x^2/2V_+}\;. 
\end{equation} 
Using~\eqref{eq:esc_drift14:8} and~\eqref{eq:esc_drift14:9}
in~\eqref{eq:esc_drift14:5} shows that we may assume $P_k<1/2$, and the result
follows from~\eqref{eq:esc_drift14:2} and the definition of $N$. Note that
$\kappa_2=\log2/\gamma < (\log2/2)\kappa$. 
\end{proof}

%%%%%%%%%%%%%%%%%%%%%%%%%%%%%%%%%%%%%%%%%%%%%%%%%%%%%%%%

\subsection{Laplace Transforms}
\label{appendix:proof_escape_Laplace} 

In order to combine the results from the two previous subsections, we will use a
lemma based on Laplace transforms. In the following we let $\{x_t\}_{t\geqs0}$
be a time-homogeneous $\R^d$-valued Markov process with continuous sample paths.
All subsets $A\subset\R^d$ considered below are assumed to have smooth boundary,
and to be such that the first-exit time $\tau_A=\inf\{t\geqs0 \colon x_t\not\in
A\}$ is almost surely finite. The \emph{Laplace transform}\/ of $\tau_A$ is the
non-decreasing function 
\begin{equation}
 \label{eq:Laplace_transform}
\R \ni \lambda \mapsto 
 \E^{x} \bigl[ \e^{\lambda\tau_A} \bigr] 
= 1 + \lambda \int_0^\infty \P^x \bigl\{ \tau_A>t \bigr\} \e^{\lambda t} \,\6t 
\in [0,\infty] \;.
\end{equation} 
Note that $ \E^{x}[\e^{\lambda\tau_A}]\leqs 1$ for all $\lambda\leqs0$. 
Thus there exists a $\lambda_0\geqs0$ such that $ \E^{x}[\e^{\lambda\tau_A}] <
\infty$ for all $\lambda < \lambda_0$. 

\begin{lem}
\label{lem:Laplace} 
Choose nested bounded open sets $\cS_1\subset\cS_2\subset\cD\subset\R^d$. Let
$\cR=\cD\setminus\cS_1$ and consider the Laplace transforms 
\begin{align}
\nonumber
G_\cD(\lambda) &= \sup_{x\in\cS_1} 
\E^{x} \bigl[ \e^{\lambda\tau_{\cD}} \bigr]\;, \\
\nonumber
G_\cS(\lambda) &= \sup_{x\in\cS_1}
\E^{x} \bigl[ \e^{\lambda\tau_{\cS_2}} \bigr]\;, \\
\nonumber
G_\cR(\lambda) &= \sup_{x\in\partial\cS_2}
\E^{x} \bigl[ \e^{\lambda\tau_\cR} \bigr]\;, \\
Q(\lambda) &= \sup_{x\in\partial\cS_2}
\E^{x} \bigl[ 1_{\{\tau_{\cS_1^c}<\tau_\cD\}}\e^{\lambda\tau_\cR} \bigr]\;. 
\label{eq:Laplace02}
\end{align}
Let $\lambda$ be such that $G_\cS(\lambda)$ and $G_\cR(\lambda)$ are finite,
and assume that $Q(\lambda) G_\cS(\lambda) < 1$. Then $G_\cD(\lambda)$ is
also finite and satisfies 
\begin{equation}
 \label{eq:Laplace04}
G_\cD(\lambda) \leqs 
\frac{G_\cS(\lambda)G_\cR(\lambda)}{1 - Q(\lambda) G_\cS(\lambda)} \;.
\end{equation} 
\end{lem}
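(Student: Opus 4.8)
The plan is to prove this by an excursion (cycle) decomposition of the path of $x_t$, started in $\cS_1$ and run until it exits $\cD$, combined with the strong Markov property applied at a sequence of stopping times. First I would cut a generic path into successive \emph{cycles}: cycle $j$ begins at a time $T_{j-1}$ with the process located in (the closure of) $\cS_1$; it runs until the process first exits $\cS_2$ — which, since $\cS_2\subset\cD$, does \emph{not} yet exit $\cD$ and lands the process on $\partial\cS_2$ — and then continues until the first exit from $\cR=\cD\setminus\cS_1$. At the end of cycle $j$ exactly one of two things has happened: either the $\cR$-excursion left $\cD$ (the path is done, this is the last cycle), or it re-entered $\cS_1$ (on $\partial\cS_1$), and the next cycle begins. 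Writing $N$ for the number of cycles, one has $\tau_\cD=\sum_{j=1}^N(s_j+r_j)$, where $s_j$ is the duration of the $\cS_2$-exit phase and $r_j$ that of the subsequent $\cR$-excursion in cycle $j$, and hence $\e^{\lambda\tau_\cD}=\prod_{j=1}^N\e^{\lambda(s_j+r_j)}$.

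Next I would estimate the conditional Laplace weight of a single cycle. Conditioning on the past up to $T_{j-1}$ and using the strong Markov property twice — once at $T_{j-1}$ (the process is in $\overline{\cS_1}$) and once at the instant it exits $\cS_2$ (the process is on $\partial\cS_2$) — one gets that the conditional expectation of $\e^{\lambda(s_j+r_j)}$ restricted to the event "cycle $j$ ends by re-entering $\cS_1$" is at most $G_\cS(\lambda)Q(\lambda)$, while restricted to "cycle $j$ ends by exiting $\cD$" it is at most $G_\cS(\lambda)G_\cR(\lambda)$; these are exactly the quantities in~\eqref{eq:Laplace02}. The only care needed is that returns occur on $\partial\cS_1$ rather than inside the open set $\cS_1$; this is harmless once $G_\cD$ and $G_\cS$ are read as suprema over $\overline{\cS_1}$, or, equivalently, by continuity of $x\mapsto\E^x[\,\cdot\,]$ up to the boundary, which the smoothness hypotheses provide — I would fix that convention at the outset.

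Then I would chain the one-cycle bounds. Summing over $N$ and bounding the weight of the event "$n-1$ returning cycles followed by one escaping cycle" by $(G_\cS(\lambda)Q(\lambda))^{n-1}\,G_\cS(\lambda)G_\cR(\lambda)$, I obtain for every $x\in\cS_1$
\[
\E^x\bigl[\e^{\lambda\tau_\cD}\bigr]
\;\leqs\; G_\cS(\lambda)G_\cR(\lambda)\sum_{n\geqs1}\bigl(G_\cS(\lambda)Q(\lambda)\bigr)^{n-1}\;.
\]
Under the hypothesis $Q(\lambda)G_\cS(\lambda)<1$ this geometric series converges, which gives simultaneously the finiteness of $G_\cD(\lambda)$ and, after taking the supremum over $x\in\cS_1$, the bound~\eqref{eq:Laplace04}.

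I expect the main (in fact the only real) obstacle to be organizing things so that the finiteness of $G_\cD(\lambda)$ is \emph{deduced} rather than \emph{assumed}: one cannot simply write $G_\cD\leqs G_\cS G_\cR+G_\cS Q\,G_\cD$ and divide, since an a priori infinite $G_\cD$ would make this vacuous — the cycle-by-cycle summation above is precisely what circumvents this. A secondary technical point, which I would dispatch quickly using the "smooth boundary / $\tau_A$ a.s.\ finite" standing assumptions (and, if needed, compact containment $\overline{\cS_1}\subset\cS_2\subset\overline{\cS_2}\subset\cD$, so that $\partial\cS_2\subset\operatorname{int}\cR$), is to ensure that each cycle has strictly positive duration, so that $N$ is well defined and the strong Markov property applies at the constructed stopping times.
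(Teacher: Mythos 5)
Your argument is correct and rests on exactly the same two applications of the strong Markov property that the paper uses: once at the first exit from $\cS_2$ (from a starting point in $\cS_1$), and once at $\tau_{\cS_1^c}\wedge\tau_\cD$ (from a starting point on $\partial\cS_2$). The paper packages these two estimates into the recursive inequality $G_\cD(\lambda) \leqs G_\cS(\lambda)\bigl[Q(\lambda)G_\cD(\lambda)+G_\cR(\lambda)\bigr]$ and then solves for $G_\cD(\lambda)$; your cycle decomposition is the unrolled form, which replaces the recursion by an explicit geometric series over the number of returns. The one genuine advantage of your version, and you identify it precisely, is that it \emph{derives} the finiteness of $G_\cD(\lambda)$: the paper's inequality, taken literally, only shows that \emph{if} $G_\cD(\lambda)<\infty$ then the stated bound holds, since one cannot rearrange and divide by $1-Q(\lambda)G_\cS(\lambda)$ when $G_\cD(\lambda)=\infty$. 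Your excursion sum (or, equivalently, truncating $\tau_\cD$ at a finite time $T$ in the paper's recursion and letting $T\to\infty$) is exactly the standard way to close this small gap. Your secondary remark about returns landing on $\partial\cS_1$ rather than in the open set $\cS_1$ applies equally to the paper's argument — there $x_{\tau_{\cS_1^c}}\in\partial\cS_1$ is bounded by $G_\cD(\lambda)$, a supremum over the open set — and both are fixed by the continuity-up-to-boundary convention you mention, so it is not a point of divergence between the two proofs.
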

\begin{proof}
For $x_0\in\cS_1$, one necessarily has $\tau_{\cS_2}\leqs\tau_\cD$, and thus 
the strong Markov property implies
\begin{align}
\nonumber
\E^{x_0} \bigl[ \e^{\lambda\tau_{\cD}} \bigr]
&= \E^{x_0} \Bigl[ \e^{\lambda\tau_{\cS_2}} 
\E^{x_{\tau_{\cS_2}}} \bigl[
\e^{\lambda\tau_{\cD}} \bigr]
\Bigr] \\
&\leqs \E^{x_0} \bigl[ \e^{\lambda\tau_{\cS_2}}\bigr]
\sup_{x\in\partial\cS_2} \E^{x} \bigl[
\e^{\lambda\tau_\cD}\bigr]\;.
 \label{eq:Laplace05}
\end{align} 
Similarly, for $x\in\partial\cS_2$, since
$\tau_\cR=\tau_{\cS_1^c}\wedge\tau_\cD$
we have 
\begin{align}
\nonumber
\E^{x} \bigl[ \e^{\lambda\tau_\cD} \bigr]
&= \E^{x} \Bigl[
1_{\{\tau_{\cS_1^c}<\tau_\cD\}}
\e^{\lambda\tau_{\cS_1^c}}
\E^{x_{\tau_{\cS_1^c}}} \bigl[
\e^{\lambda\tau_\cD} \bigr]
\Bigr] 
+ \E^{x} \bigl[1_{\{\tau_\cD<\tau_{\cS_1^c}\}}
\e^{\lambda\tau_\cD}\bigr]\\
\nonumber
&\leqs \E^{x} \Bigl[
1_{\{\tau_{\cS_1^c}<\tau_\cD\}}
\e^{\lambda\tau_\cR}
\E^{x_{\tau_{\cS_1^c}}} \bigl[
\e^{\lambda\tau_\cD} \bigr]
\Bigr] 
+ \E^{x} \bigl[1_{\{\tau_\cD<\tau_{\cS_1^c}\}}
\e^{\lambda\tau_\cR}\bigr]\\
&\leqs Q(\lambda) G_\cD(\lambda) + G_\cR(\lambda)\;.
 \label{eq:Laplace06}
\end{align} 
Now, \eqref{eq:Laplace05} and~\eqref{eq:Laplace06} imply 
\begin{equation}
 \label{eq:Laplace07}
G_\cD(\lambda) \leqs G_\cS(\lambda) 
\bigl[Q(\lambda) G_\cD(\lambda) + G_\cR(\lambda) \bigr] \;,
\end{equation} 
which yields the result. 
\end{proof}

We will apply this lemma to sets $\cS_1=\cS(h_1)$, $\cS_2=\cS(h_2)$ and
$\cD=\cD(\kappa)$, where $h_i = c_i\sigma\abs{\log\sigma}$, $i=1,2$, with
$0<c_1<c_2$. We introduce a new time variable $t=z^2$, and let $x_t$ be the
time-homogeneous Markov process $(\sqrt{t},\zeta_{\sqrt{t}})$.  

Proposition~\ref{prop:escape_noise} yields a control of $G_\cS(\lambda)$ in the
following way. The bound \eqref{p_esc_n:04} translates in terms of the new
process $x_t$ as 
\begin{equation}
 \label{eq:Laplace08}
\P^{x_0} \bigl\{ \tau_{\cS_2} \geqs t \bigr\}
\leqs C_1 \e^{-\lambda_1 t}\;, 
\qquad
x_0=(z_0,\zeta_{z_0})\;,
\end{equation} 
where $C_1=C(\nu)(h_2/\sigma)^{2\nu}$ and $\lambda_1=\kappa(\nu)/\mu$. It
follows that 
\begin{equation}
 \label{eq:Laplace09}
G_\cS(\lambda) \leqs 
1 + \lambda\int_0^\infty \e^{\lambda t}  
\sup_{x_0\in\cS_1} \P^{x_0}\bigl\{ \tau_{\cS_2} \geqs t \bigr\} \6t
\leqs 1 + \frac{C_1\lambda}{\lambda_1 - \lambda}
\end{equation} 
holds for all $\lambda < \lambda_1 = \kappa(\nu)/\mu$. 
In a similar way, Proposition~\ref{prop:escape_drift} yields 
\begin{equation}
 \label{eq:Laplace09B}
G_\cR(\lambda)  
\leqs 1 + \frac{C_2\lambda}{\lambda_2 - \lambda}
\end{equation} 
for all $\lambda < \lambda_2=\kappa_2/\mu\abs{\log\sigma}$, where
$C_2=2$. It remains to estimate $Q(\lambda)$. Let us first show that
$Q(\lambda)$ can be bounded in terms of $Q(0)$. 

\begin{lem}
\label{lem:Q_lambda}
For all $\lambda<\lambda_2$, one has 
\begin{equation}
 \label{eq:Laplace10}
Q(\lambda) \leqs \frac{C_2^{\lambda/\lambda_2}}{1-\lambda/\lambda_2}
Q(0)^{1-\lambda/\lambda_2}\;. 
\end{equation}  
\end{lem}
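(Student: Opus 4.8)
The plan is to prove this by an elementary layer-cake/interpolation estimate applied directly to the indicator-weighted exponential functional, \emph{without} routing through the Laplace transform $G_\cR(\lambda)$ of the full exit time: the crude bound $Q(\lambda)\leqs G_\cR(\lambda)$ would discard the factor $Q(0)^{1-\lambda/\lambda_2}$, and it is exactly the smallness of $Q(0)=\sup_{x\in\partial\cS_2}\P^x(A)$ (with $A=\{\tau_{\cS_1^c}<\tau_\cD\}$, the atypical ``inward escape'' event) that is needed afterwards to guarantee $Q(\lambda)G_\cS(\lambda)<1$ in Lemma~\ref{lem:Laplace}. Write $q=Q(0)$ and $\theta=\lambda/\lambda_2$; I focus on the range $0\leqs\lambda<\lambda_2$, which is the one used in the sequel.

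First I would record the integral representation obtained from $\e^{\lambda\tau_\cR}=1+\lambda\int_0^{\tau_\cR}\e^{\lambda u}\,\6u$ and Tonelli's theorem: for every $x\in\partial\cS_2$,
\[
\E^x\bigl[1_A\,\e^{\lambda\tau_\cR}\bigr] = \P^x(A) + \lambda\int_0^\infty \e^{\lambda u}\,\P^x\bigl(A\cap\{\tau_\cR>u\}\bigr)\,\6u\;,
\]
which is just~\eqref{eq:Laplace_transform} with the indicator inserted. The decisive step is to bound the integrand by the \emph{pointwise minimum} of the two available estimates,
\[
\P^x\bigl(A\cap\{\tau_\cR>u\}\bigr)\leqs\min\bigl\{\,q,\;C_2\e^{-\lambda_2 u}\,\bigr\}\;,
\]
where $\P^x(A)\leqs q$ is the definition of $Q(0)$ and $\P^x\{\tau_\cR>u\}\leqs C_2\e^{-\lambda_2 u}$ (with $C_2=2$ and $\lambda_2=\kappa_2/(\mu\abs{\log\sigma})$) is Proposition~\ref{prop:escape_drift} rewritten in the time variable $t=z^2$; since the tail in Proposition~\ref{prop:escape_drift} is measured relative to the initial time (the exponent is $z^2-z_0^2$), this second bound is uniform over starting points in $\partial\cS_2$. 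The two bounds cross at $u_\ast=\lambda_2^{-1}\log(C_2/q)>0$ (positive because $q=Q(0)\leqs1<2=C_2$).

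Then I would split the integral at $u_\ast$ and evaluate the two elementary pieces, which gives $\E^x[1_A\e^{\lambda\tau_\cR}]\leqs q\,\e^{\lambda u_\ast}+\frac{\lambda C_2}{\lambda_2-\lambda}\,\e^{(\lambda-\lambda_2)u_\ast}$. Inserting $\e^{\lambda u_\ast}=(C_2/q)^{\theta}$ and $\e^{(\lambda-\lambda_2)u_\ast}=(q/C_2)^{1-\theta}$, both terms reduce to multiples of $C_2^{\theta}q^{1-\theta}$ and add up to $C_2^{\theta}q^{1-\theta}\bigl(1+\tfrac{\lambda}{\lambda_2-\lambda}\bigr)=\frac{C_2^{\theta}}{1-\theta}\,q^{1-\theta}$; taking the supremum over $x\in\partial\cS_2$ is then exactly~\eqref{eq:Laplace10}. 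There is no deep obstacle here --- the argument is a one-line layer-cake identity followed by an optimisation --- and the only thing one must be careful about is to keep \emph{both} a priori estimates on the integrand and split at the crossover time $u_\ast$: using either bound in isolation produces a strictly weaker inequality than the one required to close Lemma~\ref{lem:Laplace}.
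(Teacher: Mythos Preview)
Your argument is correct and is essentially the same as the paper's: both split the layer-cake representation of $\E^x[1_A\e^{\lambda\tau_\cR}]$ at a cutoff time, bound the near part by $Q(0)\e^{\lambda T}$ and the far part via the tail estimate $\P\{\tau_\cR>t\}\leqs C_2\e^{-\lambda_2 t}$, and then take $T=u_\ast=\lambda_2^{-1}\log(C_2/Q(0))$. The only cosmetic difference is that the paper introduces $T$ as a free parameter and optimises at the end, whereas you identify $u_\ast$ a priori as the crossover of the two bounds $\min\{q,C_2\e^{-\lambda_2 u}\}$; the resulting estimates coincide term by term.
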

\begin{proof}
First note that for all $T\geqs0$, 
\begin{equation}
 \label{eq:Laplace10:1}
\e^{\lambda\tau_\cR} 
\leqs \e^{\lambda T} + \lambda \int_T^\infty 
1_{\{\tau_\cR > t\}} \e^{\lambda t}\,\6t\;.
\end{equation} 
Plugging this into the definition of $Q(\lambda)$ yields 
\begin{equation}
 \label{eq:Laplace10:2}
Q(\lambda) \leqs  
\sup_{x\in\partial\cS_2} \biggl[
\P^x \bigl\{\tau_{\cS_1^c}<\tau_\cD\bigr\}\e^{\lambda T}
+ \lambda \int_T^\infty \P^x \bigl\{ \tau_\cR>t \bigr\} \e^{\lambda t} \,\6t
\biggr]\;.
\end{equation} 
The first term on the right-hand side is bounded by $Q(0)\e^{\lambda T}$. The
second one can be estimated by Proposition~\ref{prop:escape_drift}, yielding  
\begin{equation}
 \label{eq:Laplace10:3}
Q(\lambda) \leqs Q(0) \e^{\lambda T} 
+ \frac{C_2\lambda}{\lambda_2-\lambda} \e^{-(\lambda_2-\lambda)T}\;. 
\end{equation} 
Optimizing over $T$, we find that the optimal bound is obtained when
$\e^{\lambda_2T}=C_2/Q(0)$, which yields~\eqref{eq:Laplace10}. 
\end{proof}

Finally, $Q(0)$ can be estimated in a similar way as in the proof of 
Proposition~\ref{prop:escape_drift}.

\begin{prop}
\label{prop:Q(0)}
We have 
\begin{equation}
 \label{eq:Laplace11}
Q(0) = \sup_{x\in\partial\cS_2} 
\P^x \bigl\{\tau_{\cS_1^c}<\tau_\cD\bigr\}
 \leqs \sigma^{\kappa(c_2-c_1)^2\abs{\log\sigma}z_{0}\hat\rho(z_0)^{2} /D_+}\;.
\end{equation}
\end{prop}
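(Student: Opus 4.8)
The plan is to run the argument of Proposition~\ref{prop:escape_drift} in reverse. For $z>0$ the effective drift in the canonical equation pushes the averaged radial variable $\bar r$ of Proposition~\ref{prop:escape_average} \emph{away} from zero, so a sample path started on $\partial\cS_2$ can return to the inner shell $\cS_1=\cS(h_1)$ before leaving $\cD(\eta)$ only if the driving noise performs an atypically large negative excursion. Fix $x\in\partial\cS_2$; in the coordinates $(\bar r,\varphi,z)$ this means a $z$-coordinate $z_0\geqs\sqrt{\mu}$ with $\norm{\zeta_{z_0}}=h_2\hat\rho(z_0)$. Since $\bar r$ and $\norm\zeta$ differ only by a factor $1+\Order{\norm\zeta}$, and $\norm{\zeta_{z_0}}=c_2\sigma\abs{\log\sigma}\hat\rho(z_0)$ is small under the standing hypothesis $\sigma\abs{\log\sigma}^{\gamma_1}\leqs\mu^{3/4}$, we have $\bar r_{z_0}=h_2\hat\rho(z_0)(1+o(1))$ (here and below $o(1)$ denotes a quantity, of order $\sigma\abs{\log\sigma}\,\mu^{-1/4}$, tending to $0$ as $\sigma,\mu\to0$). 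On $\cR=\cD(\eta)\setminus\cS(h_1)$ the nonlinearity satisfies $\tilde\beta\geqs0$, so the representation~\eqref{eq:esc_q_05},
\benn
\bar r_z \geqs \e^{\kappa(z^2-z_0^2)/2\mu}\Bigl[\bar r_{z_0}+\frac{\sigma}{\sqrt\mu}M_z\Bigr]\;,
\qquad
M_z=\int_{z_0}^z\e^{-\kappa(s^2-z_0^2)/2\mu}\,\widetilde F_r(\bar r_s,\varphi_s,s)\,\6W_s\;,
\eenn
is valid for $z_0\leqs z\leqs\tau_\cR$.

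Next I would translate $\{\tau_{\cS_1^c}<\tau_\cD\}$ into a large-deviation event for the martingale $M$. On that event $\tau_\cR=\tau_{\cS_1^c}=:\tau$ and $\norm{\zeta_\tau}=h_1\hat\rho(\tau)$, so, using that $\hat\rho$ is decreasing on $(0,T]$ and $\bar r\asymp\norm\zeta$, one gets $\bar r_\tau\leqs h_1\hat\rho(\tau)(1+o(1))\leqs h_1\hat\rho(z_0)(1+o(1))$. Dividing the displayed inequality at $z=\tau$ by $\e^{\kappa(\tau^2-z_0^2)/2\mu}\geqs1$ and using $\bar r_\tau\geqs0$ gives $(\sigma/\sqrt\mu)\,M_\tau\leqs\bar r_\tau-\bar r_{z_0}$, whence
\benn
\frac{\sigma}{\sqrt\mu}\,M_\tau \leqs h_1\hat\rho(z_0)(1+o(1))-h_2\hat\rho(z_0)(1-o(1)) = -(c_2-c_1)\,\sigma\abs{\log\sigma}\,\hat\rho(z_0)\,(1-o(1))\;.
\eenn
Hence $\{\tau_{\cS_1^c}<\tau_\cD\}\subset\bigl\{\inf_{z\geqs z_0}M_z\leqs -x\bigr\}$ with $x=(c_2-c_1)\sqrt{\mu}\,\abs{\log\sigma}\,\hat\rho(z_0)\,(1-o(1))$.

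It then remains to estimate $M$. By the non-degeneracy bound~\eqref{eq:esc_drift14:7} and $s^2-z_0^2\geqs2z_0(s-z_0)$, the quadratic variation obeys $\langle M\rangle_z\leqs D_+\int_{z_0}^z\e^{-2\kappa z_0(s-z_0)/\mu}\6s\leqs D_+\mu/(2\kappa z_0)=:V_+$, uniformly in $z$, so a maximal Bernstein-type estimate (the running-infimum version of Lemma~\ref{lem:teclem01}, which follows from the time-change of $M$ together with the uniform bound on $\langle M\rangle$) gives $\P\{\inf_{z\geqs z_0}M_z\leqs -x\}\leqs\e^{-x^2/(2V_+)}$. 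Since
\benn
\frac{x^2}{2V_+}=\frac{\kappa(c_2-c_1)^2\,z_0\,\hat\rho(z_0)^2}{D_+}\,\abs{\log\sigma}^2\,(1-o(1))
\qquad\text{and}\qquad
\e^{-a\abs{\log\sigma}}=\sigma^{a}\;,
\eenn
this is exactly the asserted bound once the $(1-o(1))$ factor in the exponent is absorbed (for $\sigma$ small); it is uniform over $x\in\partial\cS_2$ because $z_0\hat\rho(z_0)^2=\tfrac14\Tr\bigl(F(z_0)F(z_0)^T\bigr)\asymp1$ by non-degeneracy, so the supremum defining $Q(0)$ is controlled.

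The main obstacle is careful bookkeeping rather than anything conceptually deep: one must check that~\eqref{eq:esc_q_05} remains in force up to and including $\tau_{\cS_1^c}$; that the two conversions between $\bar r$ and $\norm\zeta$ — at $z_0$ and at $\tau$ — cost only a multiplicative $1+o(1)$ (which works precisely because the radius is small at both instants, not merely of order $\eta$); and that the maximal martingale inequality genuinely applies with the $z$-uniform variance bound $V_+$. Each correction has to be tracked because the exponent carries the large factor $\abs{\log\sigma}^2$, so one wants the $(1-o(1))$ factor to be genuinely negligible. A secondary point, absent from Proposition~\ref{prop:escape_drift}, is that here one needs the running infimum of $M_z$ over $[z_0,\tau_\cR]$ rather than a single-time estimate.
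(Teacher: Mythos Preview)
Your proposal is correct and follows essentially the same strategy as the paper: both start from the lower bound~\eqref{eq:esc_q_05}, reduce the event $\{\tau_{\cS_1^c}<\tau_\cD\}$ to a large negative excursion of the martingale $M_z$, bound the quadratic variation by $V_+=D_+\mu/(2\kappa z_0)$, and apply the Gaussian tail estimate of Lemma~\ref{lem:teclem01}.

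The only difference is tactical. The paper observes that $z\mapsto \e^{\kappa z^2/2\mu}/\hat\rho(z)$ is increasing, which lets it subtract $h_1\hat\rho(z)$ cleanly and write
\[
\bar r_z - h_1\hat\rho(z) \;\geqs\; \e^{\kappa(z^2-z_0^2)/2\mu}\Bigl[(h_2-h_1)\hat\rho(z_0)+\tfrac{\sigma}{\sqrt\mu}M_z^0\Bigr]
\]
for all $z\leqs\tau_\cR$; then $\bar r_z-h_1\hat\rho(z)\leqs0$ forces $M_z^0\leqs -(c_2-c_1)\sqrt\mu\,\abs{\log\sigma}\,\hat\rho(z_0)$ directly, with no $(1+o(1))$ factors to track. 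You instead divide by the exponential, use only that $\hat\rho$ is decreasing, and handle the $\bar r\leftrightarrow\norm{\zeta}$ conversion explicitly. Your route is slightly more laborious but equally valid; the paper's monotonicity trick just streamlines the algebra.
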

\begin{proof}
Inequality~\eqref{eq:esc_q_05} and the fact that 
$\e^{\kappa z^2/2\mu}/\hat\rho(z)$ is increasing for sufficiently small~$z$
imply 
\begin{equation}
 \label{eq:esc_q_06a}
\bar r_z -h_1 \hat\rho(z) \geqs \e^{\kappa(z^2-z_0^2)/2\mu}
\biggl[ (h_2-h_1) \hat\rho(z_0) + 
\frac{\sigma}{\sqrt{\mu}} M^0_z
\biggr]
\end{equation} 
for $z\leqs\tau_\cR$, where $M^0_z$ is the martingale introduced
in~\eqref{eq:esc_drift14:4}. The variance of $M^0_z$ being bounded by
$D_+\mu/2\kappa z_0$ (cf.~\eqref{eq:esc_drift14:8}), the Gaussian tail
estimate of Lemma~\ref{lem:teclem01} allows to write
\begin{align}
\nonumber
\P \bigl\{ \tau_{\cS_1^c} < z \wedge \tau_\cR \bigr\}
&= \P \biggl\{ \inf_{z_0\leqs s\leqs z \wedge \tau_\cR} \bigl( \bar r_s -
h_1 \hat\rho(z)\bigr) \leqs 0 \biggr\} \\
&\leqs \exp \biggl\{ -\kappa\frac{(c_2-c_1)^2}{D_+} \abs{\log\sigma}^2\biggr\}
= \sigma^{\kappa(c_2-c_1)^2\abs{\log\sigma}z_{0}\hat\rho(z_0)^{2}/D_+}\;.
\label{eq:esc_q_08} 
\end{align}
Note that the right-hand side of~\eqref{eq:esc_q_08} does not depend on $z$.
The result thus follows from taking the limit $z\to\infty$. 
\end{proof}

\begin{proof}[Proof of Theorem~\ref{thm:escape_canards}]
Since $\lambda_2<\lambda_1$ for sufficiently small $\sigma$, we set
$\lambda=(1-\theta)\lambda_2$ for a fixed $0<\theta<1$. For this $\lambda$, we
have 
\begin{equation}
 \label{eq:esc_proof:1}
G_\cR(\lambda) \leqs 1+\frac{C_2}{\theta}\;,
\qquad
G_\cS(\lambda) \leqs 1+\frac{C_1\lambda_2}{\lambda_1-\lambda_2}
= 1 + \cO \biggl( \frac{1}{\abs{\log\sigma}} \biggr)\;.
\end{equation} 
Furthermore Lemma~\ref{lem:Q_lambda} and Proposition~\ref{prop:Q(0)} yield 
\begin{equation}
 \label{eq:esc_proof:2}
Q(\lambda) \leqs \frac{C_2}{\theta} \sigma^{c\theta\abs{\log\sigma}} 
\end{equation} 
for some constant $c>0$. Thus Lemma~\ref{lem:Laplace} can be applied to show
that $G_\cD(\lambda)$ is finite. 
Finally, by Markov's inequality, 
\begin{equation}
 \label{eq:Laplace100}
\P^x\bigl\{\tau_\cD\geqs t\bigr\}
= \P^x\bigl\{\e^{\lambda\tau_\cD}\geqs \e^{\lambda t}\bigr\}
\leqs \e^{-\lambda t} G_\cD(\lambda)\;,
\end{equation} 
which gives the theorem when translated back to the process $\zeta_z$. 
\end{proof}

%%%%%%%%%%%%%%%%%%%%%%%%%%%%%%%%%%%%%%%%%%%%%%%%%%%%%%%%

\subsection{A Gaussian Tail Estimate for Martingales}
\label{appendix:technical_lemmas}

Let $W_z$ be an $n$-dimensional standard Brownian motion, and 
consider the martingale 
\begin{equation}
 \label{eq:teclem01}
M_z = \int_0^z g(X_s,s) \,\6W_s
= \sum_{i=1}^{n}  \int_0^z g_{i}(X_s,s) \,\6W^{(i)}_s \;,
\end{equation}  
where $g=(g_{1},\dots,g_{n})$ takes values in $\R^{n}$ and the process $X_z$ is assumed to be adapted to the filtration generated by $W_z$. We
will assume that the integrand satisfies 
\begin{equation}
 \label{eq:teclem02}
G_-(z)^2 \leqs g(X_z,z) g(X_z,z)^T \leqs G_+(z)^2
\end{equation} 
almost surely, for deterministic functions $G_\pm(z)$, and that the integrals 
\begin{equation}
 \label{eq:teclem03}
V_\pm(z) = \int_0^z G_\pm(s)^2\,\6s
\end{equation} 
are finite. Note that under these conditions, $M_{z}$ is indeed a continuous martingale.

\begin{lem}
\label{lem:teclem01}
For any $x>0$, 
\begin{equation}
 \label{eq:teclem04}
\P \biggl\{ \sup_{0\leqs s\leqs z} M_s > x\biggr\}
\leqs \e^{-x^2/2V_+(z)}\;.
\end{equation} 
\end{lem}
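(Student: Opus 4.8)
The statement is the classical maximal inequality for the exponential martingale associated with a stochastic integral. The plan is to pass to the exponential supermartingale $Z_s^{(\theta)} = \exp\{\theta M_s - \tfrac{\theta^2}{2}\langle M\rangle_s\}$ and apply Doob's submartingale inequality, then optimize over $\theta$. First I would introduce, for a parameter $\theta>0$, the process
\begin{equation}
 \label{eq:plan_expmart}
Z_s^{(\theta)} = \exp\biggl\{ \theta M_s - \frac{\theta^2}{2} \int_0^s g(X_u,u)g(X_u,u)^T\,\6u \biggr\}\;.
\end{equation}
Since the quadratic variation of $M$ is $\langle M\rangle_s = \int_0^s g(X_u,u)g(X_u,u)^T\,\6u$, It\^o's formula shows that $Z_s^{(\theta)}$ satisfies $\6 Z_s^{(\theta)} = \theta Z_s^{(\theta)} g(X_s,s)\,\6W_s$, so it is a local martingale, and being nonnegative it is a supermartingale with $\E[Z_s^{(\theta)}]\leqs Z_0^{(\theta)}=1$. (The two-sided bound~\eqref{eq:teclem02} on $gg^T$ together with the finiteness of $V_\pm(z)$ guarantees enough integrability that Novikov's condition holds and $Z^{(\theta)}$ is in fact a genuine martingale on $[0,z]$, though the supermartingale property is all we need.)

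Next I would use the deterministic upper bound $\langle M\rangle_s \leqs V_+(z)$ for all $s\leqs z$, which follows directly from~\eqref{eq:teclem02} and~\eqref{eq:teclem03}. On the event $\{\sup_{0\leqs s\leqs z} M_s > x\}$ there is some $s\leqs z$ with $M_s>x$, and at that time
\begin{equation}
 \label{eq:plan_Zbound}
Z_s^{(\theta)} = \e^{\theta M_s - \frac{\theta^2}{2}\langle M\rangle_s}
\geqs \e^{\theta x - \frac{\theta^2}{2} V_+(z)}\;,
\end{equation}
using that $\langle M\rangle_s$ is nondecreasing and bounded by $V_+(z)$. Hence $\{\sup_{s\leqs z} M_s > x\}\subseteq\{\sup_{s\leqs z} Z_s^{(\theta)} \geqs \e^{\theta x - \theta^2 V_+(z)/2}\}$. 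Applying Doob's maximal inequality for the nonnegative supermartingale $Z^{(\theta)}$,
\begin{equation}
 \label{eq:plan_doob}
\P\biggl\{ \sup_{0\leqs s\leqs z} Z_s^{(\theta)} \geqs a \biggr\} \leqs \frac{\E[Z_0^{(\theta)}]}{a} = \frac1a\;,
\end{equation}
with $a = \e^{\theta x - \theta^2 V_+(z)/2}$, gives $\P\{\sup_{s\leqs z}M_s>x\}\leqs \e^{-\theta x + \theta^2 V_+(z)/2}$ for every $\theta>0$. Finally I would optimize: the exponent $-\theta x + \theta^2 V_+(z)/2$ is minimized at $\theta = x/V_+(z)$, giving the value $-x^2/(2V_+(z))$, which is exactly~\eqref{eq:teclem04}.

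The only genuinely delicate point is the integrability needed to know that $Z^{(\theta)}$ is a true (super)martingale so that Doob's inequality applies with the stated constant; this is where the hypothesis that $V_+(z)$ is finite and the two-sided bound~\eqref{eq:teclem02} hold are used — they ensure $\langle M\rangle$ is bounded on $[0,z]$, so Novikov's criterion $\E[\exp(\tfrac12\theta^2\langle M\rangle_z)]<\infty$ is trivially satisfied. Everything else is routine: It\^o's formula, the supermartingale property, the monotonicity of $\langle M\rangle$, Doob's inequality, and a one-variable minimization. One could alternatively localize with stopping times $\tau_n$ and pass to the limit by Fatou, which avoids invoking Novikov at all; I would mention this as the fallback if one wants to keep the argument fully self-contained.
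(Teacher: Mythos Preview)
Your proof is correct and follows essentially the same approach as the paper: introduce the Dol\'eans exponential of $M$, bound the quadratic variation by $V_+(z)$, apply a Doob-type maximal inequality, and optimize over the free parameter. The only cosmetic difference is that the paper phrases the maximal step via Doob's submartingale inequality (bounding by $\E[Z_z^{(\theta)}]=1$), whereas you use the supermartingale version (bounding by $\E[Z_0^{(\theta)}]=1$); your route is arguably slightly cleaner since it does not require invoking Novikov to know $Z^{(\theta)}$ is a true martingale.
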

\begin{proof}
Let 
\begin{equation}
 \label{eq:teclem05:1}
[M]_z = \int_0^z  g(X_s,s) g(X_s,s)^T \,\6s
\end{equation} 
be the increasing process associated with $M_z$. Then, for any $\gamma\in\R$, the
Dol\'eans exponential
\begin{equation}
 \label{eq:teclem05:2}
\e^{\gamma M_z - \gamma^2 [M]_z/2} 
\end{equation} 
is a martingale. It follows that 
\begin{align}
\nonumber
 \P \biggl\{ \sup_{0\leqs s\leqs z} M_s > x\biggr\} 
&= \P \biggl\{ \sup_{0\leqs s\leqs z} \e^{\gamma M_s}  >
\e^{\gamma x}\biggr\} \\
\nonumber
&\leqs \P \biggl\{ \sup_{0\leqs s\leqs z} \e^{\gamma M_s - \gamma^2 [M]_s/2}  >
\e^{\gamma x - \gamma^2 [M]_z/2}\biggr\} \\
\nonumber
&\leqs \P \biggl\{ \sup_{0\leqs s\leqs z} \e^{\gamma M_s - \gamma^2 [M]_s/2}  >
\e^{\gamma x - \gamma^2 V_+(z)/2}\biggr\} \\
&\leqs \e^{-\gamma x + \gamma^2 V_+(z)/2} \E \Bigl[ \e^{\gamma M_z - \gamma^2
[M]_z/2}\Bigr]
 \label{eq:teclem05:3}
\end{align} 
by Doob's submartingale inequality. Now the expectation in the last line is
equal to $1$, and the result follows by optimizing over $\gamma$, that is,
choosing $\gamma=x/V_+(z)$. 
\end{proof}

\bibliographystyle{plain}
\bibliography{../BDGK}

\newpage
\tableofcontents

\end{document}